\documentclass[10pt, a4paper]{amsart}
\usepackage{mypreamble}

\setlength{\textwidth}{\paperwidth}
\addtolength{\textwidth}{-3in}
\calclayout

\newcommand{\patch}{\operatorname{\mathbf{P}}}
\newcommand{\Spf}{\operatorname{Spf}}

\title{Rigidity of Automorphic Galois Representations over CM Fields}
\author{Lambert A'Campo}
\address{University of Oxford, Radcliffe Observatory, Andrew Wiles Building, Woodstock Rd, Oxford OX2 6GG}
\email{lambert.acampo@maths.ox.ac.uk}
\date{\today}

\usepackage[pagewise]{lineno}

\begin{document}
	
	\begin{abstract}
		We show the vanishing of adjoint Bloch--Kato Selmer groups of automorphic Galois representations over CM fields. This proves their rigidity in the sense that they have no deformations which are de Rham. 
		In order for this to make sense we also prove that automorphic Galois representations over CM fields are de Rham themselves. 
		Our methods draw heavily from the 10 author paper, where these Galois representations were studied extensively. Another crucial piece of inspiration comes from the work of P. Allen who used the smoothness of certain local deformation rings in characteristic $0$ to obtain rigidity in the polarized case.
	\end{abstract}
	
	\maketitle
	
	\tableofcontents
	
	\clearpage

	\section{Introduction} 
	
	Let $F$ be a CM number field with absolute Galois group $G_F$ and $\rho : G_F \to \GL_n(\overline{\rationals}_p)$ a continuous Galois representation which is unramified outside a finite set of places $S$ and de Rham at all places above $p$. We say that $\rho$ is rigid if the geometric Bloch--Kato Selmer group 
	\[
	H^{1}_{g}(G_{F, S}, \ad \rho) := \ker \left(H^1(G_{F,S}, \ad \rho) \to \prod_{v \mid p} H^1(F_v, B_{dR} \otimes_{\rationals_p} \ad \rho) \right)
	\]
	vanishes, where $G_{F,S}$ is the maximal quotient of $G_F$ which is unramified outside $S$ and $B_{dR}$ is Fontaine's de Rham period ring. The representation $\rho$ is rigid if and only if it has no non-trivial deformations which are de Rham and almost everywhere unramified. In this paper we prove, under certain assumptions, that the Galois representations attached to cohomological cuspidal automorphic representations of $\GL_n(\mathbb{A}_F)$ are rigid.
	
	The existence of such Galois representations was proven in \cite{harris_lan_taylor_thorne} and \cite{scholze_torsion}. For an isomorphism $\iota : \overline{\rationals}_p \to \complex$ and a cohomological cuspidal automorphic representation $\Pi$ of $\GL_n(\mathbb{A}_F)$ we let 
	\[
	r_\iota(\Pi) : G_F \to \GL_n(\overline{\rationals}_p)
	\]
	denote the Galois representation from the main theorem of \cite{harris_lan_taylor_thorne}.
	See \cite{borel_jacquet_automorphic} for the definition of an automorphic representation and cohomological means algebraic as defined in \cite[Definition 1.8]{clozel1990}. Concretely, $\Pi$ is cohomological if and only if its infinitesimal character coincides with that of an algebraic representation of $\GL_n(F)$. In this paper a number field $F$ is called CM if there exists $\sigma \in \Aut(F)$ such that for all field homomorphisms $\iota : F \to \complex$, we have $\iota \circ \sigma = c \circ \iota$, where $c \in \Aut(\complex)$ is the complex conjugation. Note that this includes totally real fields (when $\sigma = \id$) and one can show that subfields of CM fields are CM.
	Now we can state our main theorem.
	
	\begin{theorem}[= Theorem \ref{main_theorem}]
		Let $F \subset L$ be CM fields and let
		$\rho : G_F \to \GL_n(E)$ be a continuous representation,
		where $E \subset \overline{\rationals}_p$ is a finite extension of $\rationals_p$
		containing the images of all field homomorphisms $L \to \overline{\rationals}_p$. Moreover, let $\Pi$ be a cohomological  cuspidal
		automorphic representation of $\GL_n(\mathbb{A}_L)$ and let $S$ be a finite set of places of $L$, stable under complex conjugation, containing the archimedean ones and those above $p$ such that $\Pi_v$ is unramified for all $v \not \in S$. Let $S_p = S_{cris} \cup S_{ss}$ be a partition of the places of $L$ lying above $p$ which is stable under complex conjugation such that the following are satisfied.
		\begin{enumerate}[(a)]
			\item $p \nmid 2n$;
			\item There exists an isomorphism $\iota : \overline{\rationals}_p \to \complex$ such that $\rho \rvert_{G_L} \otimes_{E} \overline{\rationals}_p \cong r_\iota(\Pi)$;
			\item The residual representation $\overline{\rho} \rvert_{G_L}$ is absolutely irreducible and decomposed generic \cite[Definition 4.3.1]{10author}. Moreover, $\zeta_p \not \in L$ and $\overline{\rho} \rvert_{G_{L(\zeta_p)}}$ has enormous image \cite[Definition 6.2.28]{10author}, where $\zeta_p \in \overline{L}$ is a primitive $p$th root of unity;
			\item If $v \in S \setminus S_p$, then the Weil--Deligne representation $\operatorname{WD}(\rho \rvert_{G_{L_v}})$ is generic
			\cite[Definition 1.1.2]{allen_polarizable};
			\item If $v \in S_{cris}$, then $\Pi_v$ is unramified. If $v \in S_{ss}$, then
			$\rho \rvert_{G_{L_v}}$ is de Rham and for any finite extension $L_v'/L_v$, $\operatorname{WD}(\rho \rvert_{G_{L'_v}})$ is generic.
		\end{enumerate}
		Then the Bloch--Kato Selmer group $H^1_g(G_{F,S}, \ad \rho )$ vanishes, i.e. $\rho$ is rigid. Moreover, 
		$\rho \rvert_{G_{L_v}}$ is crystalline for all $v \in S_{cris}$.
	\end{theorem}
	
	\begin{remark}
		There are two main new ingredients that allow us to prove this theorem. The first one is a generalisation of the degree-shifting argument from \cite[section 4]{10author}, where we allow ramification at $p$ and semistable Galois representations. As a result we obtain the weak semistable local-global compatibility theorem \ref{weak_semistable_local_global_thm}. See the outline below and section 4 for more details. 
		
		The second ingredient is a Taylor--Wiles patching lemma that uses the smoothness of certain characteristic 0 deformation rings to eliminate nilpotent ideals in patched Hecke algebras. The idea of using the smoothness comes from \cite{allen_polarizable}. See the outline below for more context and lemma \ref{patching_lemma} for the precise statement.
		
		The local assumptions \emph{(d)} and \emph{(e)} are as general as I could manage with these methods but they will become unnecessary when more precise local-global compatibility theorems are proven.
		
		Using the techniques of Miagkov--Thorne \cite{miagkov_thorne} it should also be possible to replace ``enormous image" with ``adequate image" \cite[Definition 2.3]{small_image_thorne} in assumption \emph{(c)}.
	\end{remark}
	
	Together with the potential automorphy theorems of \cite{10author} the main theorem implies the more digestible
	
	\begin{corollary}[= Corollary \ref{elliptic_curve_corollary}]
		Let $A/F$ be an elliptic curve without complex multiplication over a CM field $F$ and $p \geq 7$ a prime such that $\zeta_p \not \in F$ and the image of $G_F$ in $\operatorname{Aut}(A[p])$ contains $\SL_2(\mathbb{F}_p)$.
		Let
		\[
		V_p A = \left( \lim_{\longleftarrow} A[p^n](\overline{F}) \right) \otimes_{\ints_p} \rationals_p
		\]
		be the $p$-adic $G_F$-representation attached to $A$. Then $V_p A$ is rigid, or equivalently any short exact sequence
		\[
		0 \to V_p A \to V \to V_p A \to 0,
		\]
		where $V$ is a de Rham $G_F$-representation, splits. Furthermore, if $F/\rationals$ is Galois and $n$ is a positive integer, such that $p > 2n + 3$, then the symmetric power $\operatorname{Sym}^n (V_p A)$ is rigid, too.
	\end{corollary}
	
	\begin{remark}
		Serre's open image theorem \cite[assertion (6)]{serre_proprietes_galoisiennes} shows that for a given elliptic curve without CM, all but finitely many primes satisfy the conditions of the corollary. To check them at a particular prime one can often use the LMFDB \cite{lmfdb}.
	\end{remark}
	
	More generally, the recent theorem \cite[Theorem 1.4]{qian_potential} gives many instances where the conditions of our main theorem are satisfied. In particular, it implies
	
	\begin{corollary} 
		Let $F$ be a CM field and let
		$\rho : G_F \to \GL_n(E)$ be a continuous representation,
		where $E \subset \overline{\rationals}_p$ is a finite extension of $\rationals_p$
		containing the images of all field homomorphisms $F \to \overline{\rationals}_p$. Let $S$ be a finite set of places of $F$ containing those where $\rho$ is ramified. Suppose the following are satisfied.
		\begin{enumerate}[(a)]
			\item $p \nmid 2n$;
			\item For all places $v \mid p$, $\rho \rvert_{G_{F_v}}$ is potentially semistable and ordinary with regular Hodge--Tate weights \cite[Definition 1.3]{qian_potential}. Moreover, for all finite extensions $F'_v/F_v$, $\operatorname{WD}(\rho \rvert_{G_{F'_v}})$ is generic;
			\item The residual representation $\overline{\rho}$ is absolutely irreducible and decomposed generic as defined in \cite[Definition 4.3.1]{10author}. Moreover, $\zeta_p \not \in F$ and $\overline{\rho} \rvert_{G_{F(\zeta_p)}}$ has enormous image as defined in \cite[Definition 6.2.28]{10author};
			\item There exists $\sigma \in G_F \setminus G_{F(\zeta_p)}$ such that $\overline{\rho}(\sigma)$ is a scalar;
			\item If $v \in S \setminus S_p$, then the Weil--Deligne representation $\operatorname{WD}(\rho \rvert_{G_{F_v}})$ is generic.
		\end{enumerate}
		Then $H^1_g(G_{F,S}, \ad \rho )= 0$.
	\end{corollary} 
	
	Along the way we also need to prove a weak form of semistable local-global compatibility and as a consequence we obtain 
	
	\begin{theorem}[= Theorem \ref{derham_theorem}]
Let $F$ be a CM field, $\iota : \complex \to \overline{\rationals_p}$ an isomorphism and $\Pi$ a cohomological cuspidal automorphic representation of $\GL_n(\mathbb{A}_F)$. If the reduction $\overline{r_\iota(\Pi)}$ is 
		absolutely irreducible and decomposed generic, then for every place $v \mid p$ of $F$ the representation $r_{\iota}(\Pi) \rvert_{G_{F_v}}$ is potentially semistable (hence also de Rham) with Hodge--Tate weights
		\[
			HT_{\tau} := \{ \lambda_{\tau, 1} + n - 1, \lambda_{\tau,2} + n -2,  \dots, \lambda_{\tau,n}\},
		\]
		where $\lambda \in (\ints^n)^{\Hom(F, \overline{\rationals}_p)}$ is the unique dominant weight for $\GL_n/F$ such that the infinitesimal character of $\Pi_\infty$ coincides with the infinitesimal character of $(V_\lambda \otimes_{\mathcal{O}, \iota} \complex)^\vee$ (see definition \ref{highest_weight_module_def}).
		If $\Pi_v$ and $\Pi_{v^c}$ are unramified, then $r_{\iota}(\Pi) \rvert_{G_{F_v}}$ is crystalline.
	\end{theorem}
	
	\begin{remark}
		Further local-global compatibility results in the crystalline case, which also treat torsion Galois representations, will appear in forthcoming work of Ana Caraiani and James Newton. 
	\end{remark}

	\subsection{Background}
	
	Let us provide some context for our results.
	We begin by explaining how the main theorem is predicted by a combination of the Fontaine--Mazur conjecture \cite[Conjecture 1]{fontaine_mazur93} and the Bloch--Kato conjecture
	\cite[\S 5]{bloch_kato_conjecture}, to which \cite{bellaiche_BK} is an introduction.

	Let $F$ be a CM field, $p$ a prime which splits completely\footnote{This condition is just to make the exposition a bit simpler. With more bookkeeping one can also treat general primes $p$ and Galois representations with coefficients in a finite extension of $\rationals_p$.} in $F$ and $\rho : G_F \to \GL_n(\rationals_p)$ an absolutely irreducible Galois representation which is unramified outside a finite set of places and potentially semistable at all places above $p$.
	If $\ad \rho$ is semisimple, then 
	the Fontaine--Mazur conjecture implies that $\ad \rho$ is isomorphic to a subquotient of a Tate twist of a $p$-adic \'etale cohomology group of a variety over $F$ and that its L-function $L(\ad \rho, s)$ has meromorphic continuation to the whole complex plane \cite[Conjecture 3b]{fontaine_mazur93}. Moreover, we assume that the subquotient corresponds to a ``motive" of weight zero over $F$ to which we can apply the Bloch--Kato conjecture. In particular, the conjectures stated in \cite[\S 4.2.2]{fontaine_perrin-riou94} together with \cite[Proposition 3.8 and Corollary 3.8.4]{bloch_kato_conjecture} imply that
	\[
	\dim_{\rationals_p} H^1_g(G_F, \ad \rho) - \dim_{\rationals_p} H^0(G_F, \ad \rho) = \operatorname{ord}_{s = 0} L(\ad \rho(1), s) = \operatorname{ord}_{s = 1} L(\ad \rho, s).
	\]
	Since $\rho$ is absolutely irreducible, Schur's lemma implies that $\dim_{\rationals_p} H^0(G_F, \ad \rho) = 1$. If $\rho \otimes_{\rationals_p} \overline{\rationals}_p = r_\iota(\Pi)$ for some cuspidal cohomological automorphic representation $\Pi$, then \cite[Lecture 9 \S3]{cogdell_notes} shows that
	\[
	\operatorname{ord}_{s = 1} L(\ad \rho, s) = \operatorname{ord}_{s = 1} L(\Pi \times \widetilde{\Pi}, s) = -1.
	\]
	Hence the Bloch--Kato Selmer group $H^1_g(G_F, \ad \rho)$ must vanish, as confirmed by our main theorem.
	
	When $\rho$ is conjugate self-dual, many cases of this rigidity have been proven in the last 20 years but even for these our theorem often says something new  because previous results only treat the subspace of $H^1_g(G_F, \ad \rho)$ corresponding to extensions which are conjugate self-dual themselves.
	
	Recent results in the conjugate self-dual case were proven in \cite{allen_polarizable}, with even more generalisations (but still under the conjugate self-dual assumption) in \cite{newton2020adjoint}. Besides confirming cases of the Bloch--Kato conjecture, these results have also led to exciting applications to symmetric power functoriality \cite{newton_thorne_symmetric1,newton_thorne_symmetric2, newton_thorne_symmetric3}.
	
	Without the conjugate self-dual assumption very little was known except for the paper \cite{calegari_geraghty_harris_blochkato} which shows how conjectures on local-global compatibility imply the expected rigidity and also manages to show a special case unconditionally. 
	In the present paper, we obtain rigidity in a large class of cases without the conjugate self-dual assumption and without having to prove the full conjectures on local-global compatibility. 
	
	We continue the tradition of using automorphy lifting theorems to prove rigidity and use the 10 author paper \cite{10author} which recently established many cases of automorphy lifting for Galois representations over CM fields which are not necessarily conjugate self-dual. However, due to some nilpotent ideals in Hecke algebras their results do not directly imply the result we seek. This is one issue we address in the present paper. Moreover, the techniques of \cite{10author} require that $\rho$ satisfies either Fontaine--Laffaille or ordinary conditions at $p$. We weaken this and require much milder conditions on $\rho \rvert_{G_{F_v}}$ for the places $v \mid p$.

	\subsection{Notation} 
	If $F$ is a number field, i.e. a finite extension of $\rationals$, and $v$ is a place of $F$ and $S$ is a finite set of places of $F$, then
	we denote by 
	\begin{figure}[ht]
		\centering
		\begin{tabular}{l | l}
			$F_v$ & the completion of $F$ at $v$ \\
			$\mathbb{A}_F$ & the ring of adeles of $F$ \\
			$\mathbb{A}_F^\infty$ & the ring of finite adeles of $F$ \\
			$\mathbb{A}_F^S$ & the ring of $S$-adeles of $F$ \\
			$F_S$ & the maximal extension of $F$, unramified outside a set of places $S$ \\
			$G_{F,S}$ & the Galois group $\Gal(F_S/F)$ \\
			$\mathcal{O}_F$ & the ring of integers of $F$  \\
			$\mathcal{O}_{F_v}$ & the ring of integers of $F_v$
		\end{tabular}
	\end{figure}
	
	Moreover, for a general field $F$ we let $\overline{F}$ be a separable closure and $G_F := \Gal(\overline{F}/F)$.
	For any place $v$ of a number field $F$, the choice of an embedding $\overline{F} \hookrightarrow \overline{F_v}$ gives rise to an embedding of absolute Galois groups
	$G_{F_v} \hookrightarrow G_F$. The conjugacy class of $G_{F_v} \hookrightarrow G_F$ is independent of choices, hence it makes sense to restrict the isomorphism class of a representation of $G_F$ to $G_{F_v}$.
	
	We let $B_{dR}$ denote Fontaine's de Rham period ring. See \cite{berger_introduction_padic} for an introduction.
	If $p$ is a prime, $E$ is a finite extension of $\rationals_p$, $F$ is a number field such that $|\Hom(F, E)| = |\Hom(F, \overline{\rationals}_p)|$ and $\rho : G_F \to \GL_n(E)$ is a continuous representation, then we say that $\rho$ is de Rham if for each place $v \mid p$, the restriction $\rho \rvert_{G_{F_v}}$ is de Rham. 
	Assuming that $\rho$ is de Rham, we denote by $\operatorname{WD}(\rho \rvert_{G_{F_v}})$ the Weil--Deligne representation attached to $\rho \rvert_{G_{F_v}}$ for any place $v$ of $F$. This construction is recalled in \cite[1.1.4 and 1.1.6]{allen_polarizable}. 

	If $F$ is any field in which the prime $p$ is invertible, then for a continuous representation $\rho : G_F \to \GL_n(E)$ and $n \in \ints$, we define the Tate twist
	$\rho(n) := \rho \otimes_{\rationals_p} \chi_{cyc}^{n}$, where $\chi_{cyc} : G_F \to \rationals_p^\times$ is the cyclotomic character.
	
	\subsection{Outline of the Proof}
	
	Let us sketch the proof of the main theorem. Certain things we say here will be slightly wrong in order to simplify the exposition but we hope that nothing essential will be misrepresented. 
	Fix a CM field $F$ and a cohomological cuspidal automorphic representation $\Pi$ of $\GL_n(\mathbb{A}_F)$. We wish to show that the associated Galois representation $\rho := r_\iota(\Pi)$ 
	is rigid. We will get to the conditions needed to impose on $F$ and $\rho$ as we move along.
	
	Firstly, since $\Pi$ is cohomological we can find the Hecke modules attached to $\Pi$ as direct summands of the cohomology of locally symmetric spaces. 
	These spaces are defined for each open compact subgroup $K < \GL_n(\mathbb{A}_F^\infty)$ as the double quotient topological space
	\[
	X^K_{\GL_n/F} := \GL_n(F) \backslash \GL_n(\mathbb{A}_F) / \reals^\times U_n(F \otimes \reals) \cdot K.
	\]
	When $F = \rationals$, then $X^K_{\GL_2/F}$ is closely related to modular curves.
	Here is what happens when $F = \rationals(\sqrt{-2})$ and 
	\[
	K(\mathfrak{n}) = \left\{ k \in \GL_2(\widehat{\ints[\sqrt{-2}]}) : k \equiv \id \pmod{\mathfrak{n}} \right\},
	\] 
where $\widehat{\ints[\sqrt{-2}]}$ is the profinite completion of $\ints[\sqrt{-2}]$ and $\mathfrak{n} < \ints[\sqrt{-2}]$ is an ideal such that $2 \not \in \mathfrak{n}$. We have $F \otimes \reals = \complex$ and
	\[
	\GL_2(\complex)/\reals^\times U_2(\complex) \cong \mathbb{H}_3 := \{ (x,y,z) \in \reals^3 : z > 0\}.
	\] 
	Thus, by strong approximation for $\SL_2$, the connected components of $X^K_{\GL_2/F}$ are indexed by the ray class group $F^\times \backslash (\mathbb{A}^\infty_F)^\times / \det (K) \cong \Cl^{\mathfrak{n}}(F)$. Each connected component is isomorphic to the Bianchi manifold $\Gamma \backslash \mathbb{H}_3$, where
	\[
	\Gamma = \GL_2(F) \cap K(\mathfrak{n}) = \left\{\gamma \in \SL_2(\ints[\sqrt{-2}]) : \gamma \equiv \id \pmod{\mathfrak{n}} \right\}.
	\]
	
	Returning to our Galois representation, let $E \subset \overline{\rationals}_p$ be a finite extension of $\rationals_p$ containing the images of all $p$-adic embeddings of $F$.  Assume that $\rho$ takes values in $\GL_n(\mathcal{O})$, where
	$\mathcal O$ is the ring of integers of $E$. The cohomological assumption on $\Pi$ translates into the existence of an open compact subgroup $K \subset \GL_n(\mathbb{A}_F^\infty)$ and a local system $\mathcal{V}$ of finite free $\mathcal{O}$-modules on $X^K_{\GL_n/F}$ such that the characteristic polynomials of $\rho(\Frob_v)$ for all but finitely many $v$ can be read off from the action of certain Hecke operators on sheaf cohomology $H^\bullet(X^K_{\GL_n/F}, \mathcal{V})$. Namely, there is a commutative ring $\mathbb{T} \subset \End(H^\bullet(X^K_{\GL_n/F}, \mathcal{V}))$, a homomorphism
	$\lambda_\rho : \mathbb{T} \to \mathcal{O}$ and a sequence of
	 polynomials $P_v \in \mathbb{T}[Y]$ such that 
	$\det(Y \id - \rho(\Frob_v)) = \lambda_\rho(P_v)(Y)$ for all but finitely many $v$.
	The ring $\mathbb{T}$ and the polynomials $P_v$ can all be described explicitly but there is no need for that at this time. We just need to know that the automorphy of $\rho$ means that its Frobenius characteristic polynomials
	come from $\mathbb{T}$ acting on the cohomology of 
	some $X^K_{\GL_n/F}$.
	
	Conversely, the existence results from \cite{harris_lan_taylor_thorne} and \cite{scholze_torsion} show that for every homomorphism $\lambda : \mathbb{T} \to \overline{\rationals}_p$, there exists a semisimple Galois representation
	$\rho : G_F \to \GL_n(\overline{\rationals}_p)$ such that
	$\det(Y \id - \rho(\Frob_v)) = \lambda(P_v)(Y)$. In fact there is a stronger theorem proven in  \cite{scholze_torsion} which 
	states that there is a Galois representation
	$\rho : G_F \to \GL_n(\mathbb{T}^{red})$ such that
	$\det(Y \id - \rho(\Frob_v)) = P_v(Y)$, where $\mathbb{T}^{red}$ is the quotient of $\mathbb{T}$ by its nilradical. Moreover, this $\rho$ satisfies a number of ramification conditions $(C)$ depending on $K$ and $\mathcal{V}$ and the theory of Galois deformations tells us that there exists a ring $R$ and universal 
	Galois representation $\rho_{univ} : G_F \to \GL_n(R)$ satisfying $(C)$. Universality means that for any $\rho : G_F \to \GL_n(S)$ satisfying $(C)$, there exists a unique 
	ring homomorphism $\phi : R \to S$ such that $\rho = \phi \circ \rho_{univ}$.
	Now Scholze's theorem may be rephrased as saying that there exists a surjective homomorphism
	$R \to \mathbb{T}^{red}$.
	
	Finally, to prove that $\rho$ is rigid, 
	we need to show that the corresponding deformation space $H^1_g(G_{F,S}, \ad \rho)$ vanishes. By universality of $R$, there exists a unique homomorphism $R \to \mathcal{O}$ inducing $\rho$. Let $x$ be the kernel of this homomorphism. General Galois deformation theory shows that the tangent space of $\Spec R$ at $x$ is isomorphic to $H^1_g(G_{F,S}, \ad \rho)$.
	Thus, it suffices to show that the localisation $R_x$ is a field. As it happens, a generalisation of Matsushima's formula implies that $\mathbb{T}[1/p]$ is a product of fields and $R_x$ maps onto one of them by Scholze's theorem. Hence it would suffice to show that the map
	$R_x \to \mathbb{T}_{x}$ is injective. In fact, the Langlands philosophy predicts an isomorphism 
	$R \to \mathbb{T}$ and \cite{10author} already establishes that $R^{red}_x \to \mathbb{T}^{red}_x$ is an isomorphism in many cases. However, this only implies that $R_x$ is Artinian and not necessarily a field. This is why we have to do something new. But before we get there we have to set up the Taylor--Wiles patching method.
	
	Rather than proving something about the homomorphism $R \to \mathbb{T}$ directly, one puts it in a countable family of morphisms $R_n \to \mathbb{T}_{n}$. Then one can analyse a type of projective limit of these morphisms and finds that it is a lot more well-behaved. 
	We can be a bit more precise. Define $S_n := \mathcal{O}[x_1, \dots, x_q]/((x_i+1)^{p^n}-1)$ for $n \geq 0$ and $S_\infty = \lim_{\leftarrow} S_n \cong \mathcal{O}[[x_1, \dots, x_q]]$. 
	One can find ramification conditions $(C_n)$ and open compact subgroups $K_n$ such that the corresponding rings, $R_n$ and $\mathbb{T}_{n}$ are $S_n$-algebras
	and $R_n/\mathfrak{a} \cong R$ and $\mathbb{T}_{n}/\mathfrak{a} \cong \mathbb{T}$,
	where $\mathfrak{a} = (x_1, \dots, x_q)$ is the augmentation ideal of $S_n$.
	Then we obtain surjections of $S_n$-algebras
	$R_n \to \mathbb{T}_{n}/I_n$, where $I_n < \mathbb{T}_{n}$ is a nilpotent ideal. In fact, $I_n^\delta = 0$ for some $\delta$ which is independent of $n$.
	The patching construction yields $S_\infty$-algebras $R_\infty$ and $\mathbb{T}_\infty$ with a nilpotent ideal $I_\infty < \mathbb{T}_\infty$ such that $R_\infty/\mathfrak{a} = R$, 
	$\mathbb{T}_\infty/\mathfrak{a} = \mathbb{T}$ and a surjective $S_\infty$-algebra morphism
	$R_\infty \to \mathbb{T}_\infty/I_\infty$. 
	
	Now comes the main innovation of this paper. As observed in \cite{allen_polarizable}, we know that $(R_\infty)_x$ is a regular ring, hence we can find a lifting 
	\[
	\begin{tikzcd}
		& (\mathbb{T}_\infty)_x \arrow{d} \\
		(R_\infty)_x \arrow[dashed]{ru} \arrow{r} & (\mathbb{T}_\infty)_x/(I_\infty)_x
	\end{tikzcd}
	\]
	It will not necessarily be $S_\infty$-linear. 
	Nonetheless, by a depth and dimension argument explained in section 2.6 we find a finitely generated faithful $(\mathbb{T}_\infty)_x$-module (patched and localised cohomology) which is also faithful over $(R_\infty)_x$. Hence the lifting $(R_\infty)_x \to (\mathbb{T}_\infty)_x$ is injective. Using that $(R_\infty)_x \to (\mathbb{T}_\infty)_x/ (I_\infty)_x$ is already surjective by construction, one concludes that it must be an isomorphism.
	By Matsushima's formula we know that $\mathbb{T}_x$ is a field and 
    upon quotienting by
	$\mathfrak{a}$ we obtain an isomorphism $R_x \to \mathbb{T}_{x}$. Thus, $R_x$ is a field, too!
	
	The ramification conditions $(C_n)$ mentioned above are one of the most difficult pieces of the proof. To obtain them we generalise the Fontaine--Laffaille local-global compatibility from \cite{10author} to a weak form of semistable local-global compatibility in section 4. For this we adapt the degree shifting argument from \cite[Chapter 4]{10author} to the case where $p$ ramifies in $F$. The basic inductive strategy is the same but now there is a Leray--Serre spectral sequence which does not necessarily converge at the second page anymore. However, by restricting to suitable level subgroups we can arrange that its version with $\ints/p^m\ints$ coefficients converges at the second page. This allows us to extract the Hecke congruences we need for the local-global compatibility.

	\subsection{Acknowledgements} 
	First and foremost I would like to express my gratitude to my advisor James Newton without whom this paper would never have been completed. I thank him for suggesting this problem and guiding me throughout the whole project. He spent many hours discussing earlier drafts with me and explained a lot of things to me.
	I thank Ana Caraiani for generously sharing her ideas on how to generalise the degree-shifting argument. Moreover, I thank Richard Taylor for conversations about the Hodge--Tate weights of automorphic Galois representations.
	I am grateful to Toby Gee, Matteo Tamiozzo and the anonymous referees for their comments on previous versions of this text. 
	Finally, I thank Ehud Meir for answering a question of mine on MathOverflow and sharing his proof of lemma \ref{faithful_cancellation_lemma} there.
	
	\section{Abstract Patching}
	
	This section consists of pure commutative algebra and forms the foundation of the main argument. Its purpose is to prove lemma \ref{patching_lemma} which allows us to deduce the isomorphism $R_x \cong \mathbb{T}_x$ mentioned in the introduction. To do so we employ a variant of the Taylor--Wiles patching method \cite{taylor_wiles}. The main modifications that we need were proposed in \cite{calegari_geraghty_beyond} but we make a few further modifications in the main lemma below to deal with nilpotent ideals. Moreover, we use ultrafilters as a convenient organisational tool, as first introduced in this context in \cite{scholze_lubin_tate}. See also the exposition in \cite[section 4]{manning_multiplicity}.
	
	\subsection{Non-Canonical Limits} 
	It is common to define the $p$-adic integers as
	\[
	\ints_p = \lim_{\leftarrow} \ints/p^n \ints.
	\]
	Implicit in this definition is the system of reduction maps
	\[
	\ints/p\ints \leftarrow \ints/p^2 \ints \leftarrow \ints/p^3\ints \leftarrow \dots
	\]
	which is really what defines the projective limit. The goal of patching is to be able to define similar objects \emph{without} reference to such a system of morphisms. To the disciple of category theory this is of course blasphemy. However, sometimes in nature we encounter sequences of objects $(M_n)$ without any maps between them but we still could use a limit object $M_\infty$. The construction of such a limit is exactly the purpose of patching. 
	
	Let us sketch the idea. First, suppose that each $M_n$ is a $\ints/p^n\ints$-module such that $M_n$ is generated by at most $C$ elements for some constant $C$. We cannot write $M_\infty = \lim_{\leftarrow} M_n $ since we have no maps between the $M_n$. However, for each $e \geq 1$ there are only finitely many isomorphism classes of $\ints/p^e \ints$-modules which are generated by at most $C$ elements. Hence the sequence $(M_n/ p^e)_{n \geq 1}$ contains at least one isomorphism class infinitely often.
	
	In particular, choosing $e = 1$, there is a sequence $(n^{1}_k)_{k \geq 1}$ and a $\ints/p\ints$-module $M^1$ such that $M^1 \cong M_{n^1_k}/p$ for all $k \geq 1$.
	Next, there is a subsequence $(n^2_k)_{k \geq 1}$ of $(n^1_k)_{k \geq 1}$ and a $\ints/p^2 \ints$-module
	$M^2$ such that $M^2 \cong M_{n^2_k}/p^2$ for all $k$. Moreover, we have an isomorphism
	$M^2/p \cong M_{n_1^2}/p \cong M^1$. Now we can continue inductively and find a subsequence $(n^{j + 1}_k)_{k \geq 1}$ of $(n^j_k)_{k \geq 1}$ such that there exists a module $M^{j + 1}$ with  $M^{j + 1} \cong M_{n^{j + 1}_k}/p^{j + 1}$ for all $k \geq 1$ and an isomorphism $M^{j + 1}/p^j \cong M^{j}$. Let $f^{j + 1} : M^{j + 1} \to M^j$ be defined as the composition $M^{j + 1} \to M^{j + 1}/p^j \cong M^j$ and let
	\[
	M_\infty = \lim \left(M^1 \xleftarrow{f^2} M^2 \xleftarrow{f^3} M^3 \leftarrow \dots \right).
	\]

	\begin{figure}[ht]
		\centering
		\[
		\begin{tikzcd}
			M^1 \arrow[dashed, leftrightarrow]{r} & M_1/p  & M_2/p & M_3/p & M_4/p & \dots \\
			M^2 \arrow{u}{f^2} \arrow[dashed, leftrightarrow]{r} & M_1/p^2   \arrow{u} \arrow[dashed,leftrightarrow]{rr} & &  M_3/p^2   \arrow{u} & M_4/p^2 \arrow{u} &    \dots \\
			M^3 \arrow{u}{f^3} \arrow[dashed, leftrightarrow]{rrr} & & &  M_3/p^3 \arrow{u} & &   \dots \\
			\vdots \arrow{u}{f^4} & & & \vdots \arrow{u} & &
		\end{tikzcd}
		\]
		\caption{Construction of $M_\infty$}
		\label{patching_construction_figure}
	\end{figure}
	
	This construction is illustrated in figure \ref{patching_construction_figure} for some example of a sequence of modules $(M_n)_{n \geq 1}$. The $j$th row of the figure displays the sequence of modules $(M_{n^j_k}/p^j)_{k \geq 1}$, which are all isomorphic to $M^j$. The dashed arrows are isomorphisms chosen in a way such that the diagram commutes and
	in our example we have
	 \begin{linenomath*} \begin{align*}
		(n_k^1)_{k \geq 1} &= (1,2,3,4, \dots ) \\
		(n_k^2)_{k \geq 1} &= (1,3,4, \dots ) \\
		(n_k^3)_{k \geq 1} &= (1,3, \dots ) \\
	\end{align*} \end{linenomath*} 
	The module $M_\infty$ constructed in this way of course depends on many choices of subsequences. But that is not a problem in the applications as long as we make consistent choices. Ultraproducts turn out to be a very convenient tool to achieve this consistency. 
	
	\begin{definition}
		Let $X$ be a non-empty set and $\mathcal{P}(X)$ its power set. A non-empty subset $\mathcal{F} \subset \mathcal{P}(X)$	is called an ultrafilter on $X$ if the following are satisfied.
		\begin{itemize}
			\item If $A, B \in \mathcal{F}$, then $A \cap B \in \mathcal{F}$.
			\item If $A \in \mathcal{F}$ and $B \supset A$, then $B \in \mathcal{F}$.
			\item If $A \subset X$, then exactly one of $A$ and $X \setminus A$ belongs to $\mathcal{F}$.
		\end{itemize}
	\end{definition}

	For example, given an element $x \in X$, we have the ultrafilter 
	\[ 
	\mathcal{F}_x := \{A \in \mathcal{P}(X) : x \in A\} .
	\]
	An ultrafilter $\mathcal{F}$ on $X$ is called \emph{non-principal} if there is no $x \in X$ such that $\mathcal{F} = \mathcal{F}_x$. Equivalently, an ultrafilter $\mathcal{F}$ is non-principal if it contains no finite sets.
	It can be shown with Zorn's lemma that non-principal ultrafilters exist whenever $X$ is infinite.
	Given an ultrafilter $\mathcal{F}$ on $\{1,2,3, \dots \}$ and sets $M_n$, we define the ultraproduct
	\[
	\left( \prod_{n \geq 1} M_n \right)_{\mathcal{F}} := \left( \prod_{n \geq 1} M_n \right)_{/\sim},
	\] 
	where $(x_n) \sim (y_n)$ if $\{n : x_n = y_n\} \in \mathcal{F}$. If $\mathcal{F} = \mathcal{F}_{n_0}$ is principal, then the projection to $M_{n_0}$ induces a bijection $\left( \prod_{n \geq 1} M_n \right)_{\mathcal{F}} \cong M_{n_0}$. Hence ultraproducts are most interesting when $\mathcal{F}$ is non-principal.
	
	In this case we still have the key property that for any finite partition
	\[
	X = \bigcup_{i} X_i,
	\]
	exactly one of the (necessarily infinite) $X_i$  belongs to $\mathcal{F}$. This can be used to show that if all the $M_n$ have cardinality bounded by a constant, then the cardinality of $\left( \prod_{n \geq 1} M_n \right)_{\mathcal{F}}$ is bounded by the same constant and is in bijection with $M_n$ for infinitely many $n$ (but not induced by a projection!).
	Moreover, for maps $f_n : M_n \to N_n$ there is a canonical induced map $\left( \prod_{n \geq 1} M_n \right)_{\mathcal{F}} \to \left( \prod_{n \geq 1} N_n \right)_{\mathcal{F}}$. Hence
	it makes sense to form the limit 
	\[
	\lim_{\substack{\longleftarrow \\ e \geq 1}} \left( \prod_{n \geq 1} M_n/p^e \right)_{\mathcal{F}}
	\]
	for any sequence of $\ints_p$-modules $M_n$. This behaves very similarly to the construction of $M_\infty$ via subsequences above but only relies on one choice: the non-principal ultrafilter $\mathcal{F}$. Below we will define the patching functor as precisely this limit.
	In the applications we make one choice of such a filter in the beginning and never change it. This hides the non-canonical nature of the patching method quite well and saves the paper from many indices.
	
	\subsection{Ultraproducts of Finite Sets}
	
	Let $X$ be an infinite set and $\mathcal{F}$ a non-principal ultrafilter on it.
	Let $\mathbf{Set}_M$ be the category of sets of cardinality bounded by $M \in \nats$.
	Then we have a functor
	\[
	\operatorname{ulim}_{\mathcal{F}} : \prod_{\alpha \in X} \mathbf{Set}_M \to \mathbf{Set}_M : (S_\alpha) \mapsto \left( \prod_{\alpha \in X} S_\alpha \right)_{\mathcal{F}}
	\]
	To see that this is well-defined, note that there are only finitely many isomorphism classes of objects in $\mathbf{Set}_M$, one for each non-negative integer $\leq M$. This yields the partition
	\[
	X = \bigcup_{i = 0}^M X_i,
	\]  
	where $X_i = \{\alpha \in X : |S_{\alpha}| = i\}$. Let $j$ be the unique index such that $X_j \in \mathcal{F}$. For each $\alpha \in X_j$, let $f_{\alpha} : S_\alpha \cong \{1, 2, \dots, j\}$ be a bijection. Then there is a unique bijection
	\[
	f : \left( \prod_{\alpha \in X} S_\alpha \right)_{\mathcal{F}} \to \{1, 2, \dots, j\}
	\]
	such that for all $(s_\alpha)_{\alpha \in X} \in \prod_{\alpha \in X} S_\alpha$
	\[
		\{\beta \in X_j : f_\beta(s_\beta) = f((s_\alpha)_{\alpha \in X})\} \in \mathcal{F}.
	\]
	Alternatively, having cardinality bounded by $M$ is expressible as a first order formula, hence \L o\'s' theorem \cite[Theorem 9.5.1]{hodges_model} implies that the ultraproduct of such sets has again cardinality
	bounded by $M$. One can consider the same functor for groups, rings, modules over a finitely generated ring, etc. since there are only finitely many isomorphisms classes of groups, rings, modules over a finitely generated ring, etc. with cardinality bounded by $M$. 
	
	\begin{lemma} \label{ultrafilter_quotient_lemma}
		Let $A$ be a ring of finite cardinality. There is a unique ring isomorphism
		$f : \left(\prod_{\alpha \in X} A\right)_\mathcal{F} \to A$ such that for each $(x_\alpha) \in \prod_{\alpha \in X} A$ we have
		\[
		\{\beta : x_\beta = f((x_\alpha))\} \in \mathcal{F}.
		\]
	\end{lemma}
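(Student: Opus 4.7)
The plan is to construct $f$ explicitly using the key property that ultrafilters behave like $\{0,1\}$-valued measures on finite partitions.

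Given any sequence $(x_\alpha) \in \prod_{\alpha \in X} A$, I would use the fact that since $A$ is finite we have a finite partition
\[
X = \bigsqcup_{a \in A} E_a, \qquad E_a := \{\alpha \in X : x_\alpha = a\}.
\]
Since $\mathcal{F}$ is an ultrafilter on $X$, and $X \in \mathcal{F}$, a straightforward induction on $|A|$ using the defining property of ultrafilters (for any $Y \subseteq X$, exactly one of $Y$ and $X \setminus Y$ lies in $\mathcal{F}$) shows that exactly one of the sets $E_a$ belongs to $\mathcal{F}$. I define $f((x_\alpha))$ to be the unique such element $a$. This already builds the required property $\{\alpha : x_\alpha = f((x_\alpha))\} \in \mathcal{F}$ into the construction, and uniqueness of $f$ follows immediately: any other map satisfying the condition would have to take the same value by the uniqueness of the distinguished block of the partition.

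Next, I would verify that $f$ descends to the quotient and is a ring homomorphism. If $(x_\alpha) \sim (y_\alpha)$, then $D := \{\alpha : x_\alpha = y_\alpha\} \in \mathcal{F}$, and $D \cap E_a \in \mathcal{F}$ for the distinguished $a$, which forces $\{\alpha : y_\alpha = a\} \in \mathcal{F}$, hence $f((y_\alpha)) = a$. For compatibility with ring operations, if $f((x_\alpha)) = a$ and $f((y_\alpha)) = b$, then the intersection $E_a \cap E_b' = \{\alpha : x_\alpha = a, \, y_\alpha = b\}$ lies in $\mathcal{F}$ (intersection of two sets in an ultrafilter). On this set $x_\alpha + y_\alpha = a+b$ and $x_\alpha y_\alpha = ab$, so by the same uniqueness argument $f((x_\alpha) + (y_\alpha)) = a+b$ and $f((x_\alpha)(y_\alpha)) = ab$. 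The multiplicative identity is handled by the constant sequence.

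Finally, surjectivity is immediate since $f$ sends the class of the constant sequence $(a)_{\alpha \in X}$ to $a$, and injectivity follows because if $f((x_\alpha)) = a$ then $E_a \in \mathcal{F}$, so $(x_\alpha)$ is equivalent to the constant sequence $(a)$. No step is really an obstacle here; the only conceptual point is the ultrafilter/finite-partition dichotomy, after which everything is a routine check. The same argument of course establishes the general fact that the ultrapower of any finite first-order structure along a non-principal ultrafilter is canonically isomorphic to the structure itself, which is the form in which one would cite Łoś's theorem.
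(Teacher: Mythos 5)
Your proof is correct and follows essentially the same route as the paper: both arguments use the finite partition $X = \bigsqcup_{a \in A}\{\alpha : x_\alpha = a\}$ to single out the unique value lying in $\mathcal{F}$, then verify compatibility with the ring operations via closure of the ultrafilter under finite intersections and supersets, with bijectivity coming from the constant-sequence section. The only cosmetic difference is that you spell out the ultrafilter/finite-partition dichotomy by induction where the paper simply asserts it.
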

	
	\begin{proof}
		Given $(x_{\alpha})_{\alpha} \in \prod_{\alpha \in X} A$
		there exists a unique $y \in A$ such that
		$\{\alpha \in X : x_\alpha = y\} \in \mathcal{F}$ 
		since we have the finite disjoint union
		\[
		X = \bigcup_{y \in A} \{\alpha \in X : x_\alpha = y\}.
		\]
		This uniquely characterises $f$ as the function
		$f((x_\alpha)) := y$. It is clear that $(x_\alpha) \sim 0$ if and only if 
		$f((x_\alpha)) = 0$.
		Moreover, $f$ admits a section
		$s : A \to \prod_{\alpha \in X} A : a \mapsto (a)_{\alpha \in X}$. Hence $f$
		is a bijection $\left(\prod_{\alpha} A \right)_\mathcal{F} \to A$.
		If $f((x_\alpha)) = y$
		and $f((z_\alpha)) = w$,
		then 
		\[
		\{\alpha : x_\alpha = y\} \cap \{\alpha : z_\alpha = w\} \subset \{\alpha : x_\alpha z_\alpha = yw\}.
		\]
		Since filters are closed under finite intersection and supersets, we have
		$f((x_\alpha z_\alpha)) = y w$ and similarly
		$f((x_\alpha + z_\alpha)) = y + w$.
		Thus, $f$ is a ring homomorphism. We already saw that it is bijective, hence $f$ is an isomorphism.
	\end{proof}
	
	\begin{lemma} \label{ultrafilter_localisation_lemma}
		Let $A$ be a local ring of finite cardinality with maximal ideal $\mathfrak{m}$. Let $f$ be the map from lemma \ref{ultrafilter_quotient_lemma}.
		The composition 
		$g : \prod_{\alpha \in X} A \to \left(\prod_{\alpha \in X} A \right)_\mathcal{F} \xrightarrow{f} A$ is a localisation\footnote{Meaning that it satisfies the universal property of localisation.} of $\prod_{\alpha \in X} A$ at $g^{-1}(\mathfrak{m})$ in $\prod_{\alpha \in X} A$. 
	\end{lemma}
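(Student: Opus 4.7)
The plan is to verify the universal property of localization directly. Set $g = f \circ \pi \colon \prod_{\alpha \in X} A \to A$, where $\pi$ is the quotient map to the ultraproduct, and let $P = g^{-1}(\mathfrak{m})$ and $S = \prod_{\alpha \in X} A \setminus P$. Because $A$ is local, $A^\times = A \setminus \mathfrak{m}$, so by definition $g(S) \subseteq A^\times$. The universal property of the localization $\ell \colon \prod_{\alpha \in X} A \to S^{-1} \prod_{\alpha \in X} A$ then produces a unique ring map $\tilde g$ with $g = \tilde g \circ \ell$, and the whole claim reduces to showing that $\tilde g$ is an isomorphism.

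For surjectivity, I would just note that the constant-sequence section $a \mapsto (a)_{\alpha \in X}$ exhibited in the proof of Lemma \ref{ultrafilter_quotient_lemma} is a right inverse of $g$, so $\tilde g$ is surjective as a composite of surjections.

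The substantive step is injectivity, which I would prove via the standard criterion: every $r \in \ker g$ admits an annihilator in $S$. Given $r = (r_\alpha) \in \ker g$, Lemma \ref{ultrafilter_quotient_lemma} forces $T := \{\alpha : r_\alpha = 0\} \in \mathcal{F}$. Take $t = (t_\alpha)$ with $t_\alpha = 1$ on $T$ and $t_\alpha = 0$ elsewhere; then $tr = 0$ componentwise, while $g(t) = 1$ because $\{\alpha : t_\alpha = 1\} = T \in \mathcal{F}$, so in particular $t \in S$. Then for any $r/s \in \ker \tilde g$ one has $g(r)g(s)^{-1} = 0$ in $A$, hence $g(r) = 0$, hence such a $t$ exists, hence $r/s = 0$ in $S^{-1}\prod_{\alpha \in X} A$.

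The only place where anything nontrivial happens is the construction of the annihilator $t$. What makes it available for free is the componentwise ring structure on $\prod_{\alpha \in X} A$, which allows the characteristic function of a set $T \in \mathcal{F}$ to serve simultaneously as an element of $S$ and as an annihilator of any $r$ supported off $T$. I do not expect any real obstacle beyond keeping track of which set lies in $\mathcal{F}$ at each step.
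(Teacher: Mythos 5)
Your proposal is correct and uses the same key idea as the paper's proof: the characteristic function of the set $\{\alpha : r_\alpha = 0\} \in \mathcal{F}$ serves both as an element inverted by the localisation and as an annihilator of $r$. The paper verifies the universal property directly (any $g$ inverting the complement of the prime kills $\ker f$), while you show the induced map from $S^{-1}\prod_{\alpha \in X} A$ is an isomorphism; these are equivalent packagings of the same argument.
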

	
	\begin{proof}
		Firstly, note that the complement of $g^{-1}(\mathfrak{m})$ maps to units under $g$ since $A$ is local. 
		Now let $h : \prod_{\alpha \in X} A \to B$ be any ring homomorphism such that the complement of $g^{-1}(\mathfrak{m})$ is mapped to $B^\times$. Appealing to the universal property of quotients it remains to show that $\ker g \subset \ker h$.
		If $(x_\alpha) \in \ker g$,
		then $S := \{\alpha : x_\alpha = 0\} \in \mathcal{F}$. Let 
		$1_S$ be the characteristic function on $S$, i.e. the element of $\prod_{\alpha \in X} A$ such that $1_{S,\alpha} = 1$ if $\alpha \in S$ and zero otherwise.
		By assumption $1_S \cdot (x_\alpha) = 0$. Moreover $g(1_S) = 1 \not \in \mathfrak{m}$, hence $1_S \not \in g^{-1}(\mathfrak{m})$. Finally, $h((x_\alpha)) = h(1_S)^{-1} \cdot h(1_S (x_\alpha)) = 0$
		and so $\ker g \subset \ker h$.
	\end{proof}
	
	\begin{lemma} \label{ultrafilter_exact_lemma}
		Let $A$ be a local ring of finite cardinality. The functor 
		\[
		\prod_{\alpha \in X} \mathbf{Mod}_A \to \mathbf{Mod}_{\left(\prod_{\alpha \in X} A \right)_\mathcal{F}} :
		\prod_{\alpha \in X} M_\alpha \mapsto \left(\prod_{\alpha \in X} M_\alpha \right)_\mathcal{F}
		\]	
		is exact.
	\end{lemma}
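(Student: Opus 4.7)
The plan is to reduce exactness of the ultrapower functor to the exactness of two more familiar operations: the direct product of modules, and localization of a module (which by Lemma \ref{ultrafilter_localisation_lemma} is precisely what produces the ultraproduct from the product). Concretely, I would like to identify $\left(\prod_\alpha M_\alpha\right)_{\mathcal{F}}$ with the base change $\left(\prod_\alpha M_\alpha\right) \otimes_{\prod_\alpha A} \left(\prod_\alpha A\right)_{\mathcal{F}}$, after which exactness is automatic: products of $A$-modules along $X$ give an exact functor $\prod_\alpha \mathbf{Mod}_A \to \mathbf{Mod}_{\prod_\alpha A}$ (direct products in module categories are exact), and localization of $\prod_\alpha A$-modules is flat by Lemma \ref{ultrafilter_localisation_lemma}.

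The identification is the only point requiring real verification. An element $(x_\alpha) \in \prod_\alpha M_\alpha$ becomes zero in the ultraproduct iff $S := \{\alpha : x_\alpha = 0\} \in \mathcal{F}$, which happens iff the idempotent $1_S \in \prod_\alpha A$ annihilates $(x_\alpha)$. On the other hand $f(1_S) = 1$ by the construction in Lemma \ref{ultrafilter_quotient_lemma}, so $1_S$ lies outside $f^{-1}(\mathfrak{m})$ and becomes a unit after localizing; thus $(x_\alpha)$ is killed by localization precisely when such an $S \in \mathcal{F}$ exists. The two quotients therefore coincide.

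Alternatively, if one prefers to avoid invoking Lemma \ref{ultrafilter_localisation_lemma}, one can verify exactness directly from the filter axioms. Given a termwise short exact sequence $0 \to L_\alpha \to M_\alpha \to N_\alpha \to 0$, surjectivity on ultraproducts is clear by lifting componentwise; injectivity on the left uses that $L_\alpha \hookrightarrow M_\alpha$, so $\{l_\alpha = 0 \text{ in } M_\alpha\} = \{l_\alpha = 0 \text{ in } L_\alpha\}$; and exactness in the middle follows because if $(m_\alpha)$ maps to $0$ in the ultraproduct of $N_\alpha$ then $S := \{\alpha : m_\alpha \mapsto 0\} \in \mathcal{F}$, and choosing any preimages $l_\alpha \in L_\alpha$ for $\alpha \in S$ (and setting $l_\alpha = 0$ otherwise) produces a tuple whose image agrees with $(m_\alpha)$ on $S$, hence equals it in the ultraproduct.

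I do not expect any serious obstacle here: the finiteness of $A$ is only used through the previous two lemmas (so that $\left(\prod_\alpha A\right)_{\mathcal{F}}$ really is a localization and so that the target category makes sense), and the rest is bookkeeping with the filter. The only subtle point worth emphasizing is the equivalence between the ``supported on a set not in $\mathcal{F}$'' description of the kernel and the ``killed by a unit after localization'' description, which is what ties the two viewpoints together.
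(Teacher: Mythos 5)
Your first argument is essentially the paper's proof: the paper likewise factors the functor through $\prod_\alpha \mathbf{Mod}_A \to \mathbf{Mod}_{\prod_\alpha A}$ and then invokes Lemma \ref{ultrafilter_localisation_lemma} to see that passing to the ultraproduct is a (flat, hence exact) localisation, with your identification of the kernel with $\ker(f)\cdot\prod_\alpha M_\alpha$ being the same bookkeeping. The alternative direct verification from the filter axioms is also correct, but the main route matches the paper.
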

	
	\begin{proof}
		Let $f$ be the isomorphism from lemma \ref{ultrafilter_quotient_lemma} and $g = f \circ q$, where 
		\[
		q : \prod_{\alpha \in X} A \to \left(\prod_{\alpha \in X} A \right)_\mathcal{F}
		\] 
		is the quotient map. We have the commutative diagram of functors
		\[
		\begin{tikzcd}[column sep=huge, row sep = large]
			& \mathbf{Mod}_A \arrow[d, "f^*"] \\
			\prod\limits_{\alpha \in X} \mathbf{Mod}_A \arrow[r] \arrow[ru, "M\mapsto M/\ker(g)M"] & \mathbf{Mod}_{\left(\prod_{\alpha \in X} A \right)_\mathcal{F}}
		\end{tikzcd}
		\]
		The functor $f^*$ is an equivalence of categories and we have shown
		in lemma \ref{ultrafilter_localisation_lemma} that $M \mapsto M/\ker(g)M$
		is a localisation, hence an exact functor. Thus, the functor in question is exact itself.
	\end{proof}
	
	\begin{definition}
		Let $A$ be a ring. A family $(M_\alpha)_{\alpha \in X}$ of $A$-modules is called \emph{bounded} if there exists an integer $s$ such that each $M_\alpha$ is a quotient of $A^s$.	
	\end{definition}
	
	\begin{lemma} \label{finite_selection_lemma}
		Let $A$ be a ring of finite cardinality and $(M_\alpha)_{\alpha \in X}$ a bounded family of $A$-modules. There exists a set $S \in \mathcal{F}$ such that for $\beta \in S$, there is an isomorphism
		\[
		\left( \prod_{\alpha \in X} M_\alpha \right)_{\mathcal{F}} \cong M_{\beta}.
		\]
	\end{lemma}
	
	\begin{proof}
		If each $M_\alpha$ is a quotient of $A^s$, then $|M_\alpha| \leq |A|^s$. There are finitely many isomorphism classes of $A$-modules of cardinality at most $|A|^s$. Choose a set of representatives $\mathcal{R}$ for them.
		Since $\mathcal{R}$ is finite, there exists a unique $N \in \mathcal{R}$ such that
		\[
		S = \{\alpha \in X : M_\alpha \cong N\} \in \mathcal{F}.
		\]
		Let $f_\alpha : M_\alpha \to N$ be an isomorphism for $\alpha \in S$.
		It remains to find an isomorphism
		$\left(\prod_{\alpha \in X} M_\alpha \right)_\mathcal{F} \to N$.
		To define it, note that if $(m_\alpha) \in \prod_{\alpha \in X} M_\alpha$,
		then there exists a unique $n \in N$ such that 
		\[
		\{\alpha \in S : f_\alpha(m_\alpha) = n\} \in \mathcal{F} 
		\]
		and mapping $(m_\alpha) \mapsto n$ does the job. 
	\end{proof}
	
	\begin{lemma} \label{ultrafilter_hom_lemma}
		Let $A$ be a ring of finite cardinality and $(M_\alpha)_{\alpha \in X}, (N_\alpha)_{\alpha \in X}$ bounded families of 
		$A$-modules. The natural map 
		\[
		\left( \prod_{\alpha \in X} \Hom_A(M_\alpha,N_\alpha) \right)_{\mathcal{F}} \to \Hom_{A}\left(\left( \prod_{\alpha \in X} M_\alpha \right)_{\mathcal{F}},\left( \prod_{\alpha \in X} N_\alpha \right)_{\mathcal{F}}\right)
		\] 	
		is an isomorphism.
	\end{lemma}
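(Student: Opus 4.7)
The plan is to reduce both sides of the natural map to the same finite $A$-module $\Hom_A(M, N)$ via two applications of the finite selection lemma (Lemma \ref{finite_selection_lemma}), and then verify injectivity directly. Since any injective map between finite sets of equal cardinality is bijective, this will suffice.

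First, I apply Lemma \ref{finite_selection_lemma} to the bounded families $(M_\alpha)$ and $(N_\alpha)$ to obtain finite $A$-modules $M, N$ and a set $S \in \mathcal{F}$ such that $M_\alpha \cong M$ and $N_\alpha \cong N$ for all $\alpha \in S$, together with identifications $\left( \prod_{\alpha \in X} M_\alpha \right)_{\mathcal{F}} \cong M$ and $\left( \prod_{\alpha \in X} N_\alpha \right)_{\mathcal{F}} \cong N$. The target of the natural map is then isomorphic to $\Hom_A(M, N)$, which is finite since both $M$ and $N$ are. Let $s$ be a common bound on the number of generators of the $M_\alpha$ and $N_\alpha$; then $|M_\alpha|, |N_\alpha| \leq |A|^s$ and hence $|\Hom_A(M_\alpha, N_\alpha)| \leq |A|^{s|A|^s}$ uniformly in $\alpha$. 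Any finite $A$-module is a quotient of $A^k$ for $k$ its cardinality, so the family $(\Hom_A(M_\alpha, N_\alpha))_\alpha$ is itself bounded. A second application of Lemma \ref{finite_selection_lemma}, combined with the fact that $\Hom_A(M_\alpha, N_\alpha) \cong \Hom_A(M, N)$ for $\alpha \in S$, identifies the source of our map with $\Hom_A(M, N)$ as well.

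To verify injectivity, I fix generators $e_1, \dots, e_s$ of $M$ and, for each $\alpha \in S$, transport them through the chosen isomorphism $M_\alpha \cong M$ to generators $e_{j, \alpha}$ of $M_\alpha$; for $\alpha \notin S$ I pick any generating set of size at most $s$. Suppose $(f_\alpha)_\alpha$ represents an element of the source that maps to the zero homomorphism on the ultraproducts. Evaluating at each class $[(e_{j, \alpha})_\alpha] \in \left( \prod_{\alpha \in X} M_\alpha \right)_{\mathcal{F}}$ forces $S_j := \{ \alpha \in X : f_\alpha(e_{j, \alpha}) = 0 \} \in \mathcal{F}$ for $j = 1, \dots, s$. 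Since filters are closed under finite intersection, $\bigcap_{j=1}^{s} S_j \in \mathcal{F}$; on this set every $f_\alpha$ vanishes on a generating set of $M_\alpha$ and is therefore zero, so $(f_\alpha) \sim 0$.

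The step requiring the most care is the second application of the finite selection lemma, where I need the family of $\Hom$-modules to be bounded in the sense of the definition; the uniform cardinality bound coming from the common bound on generators handles this. Once both sides are identified with $\Hom_A(M, N)$, everything reduces to elementary manipulations with the filter.
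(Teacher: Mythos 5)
Your proof is correct. The setup coincides with the paper's: both arguments use Lemma \ref{finite_selection_lemma} to identify the source and the target of the natural map with a single finite module $\Hom_A(M,N)$ (the paper does this by picking $\alpha_0$ in an intersection $S_1 \cap S_2 \in \mathcal{F}$; your observation that the $\Hom$-family is itself bounded, so that the selection lemma applies to it directly, is exactly the point the paper leaves implicit). Where you diverge is the final step. The paper verifies that the natural map intertwines the two chosen identifications, i.e.\ it checks commutativity of a square and concludes the map \emph{is} the identity under those identifications. You instead prove injectivity by hand --- evaluating a representative $(f_\alpha)$ on ultraproduct classes of generators and intersecting the finitely many resulting filter sets --- and then invoke the pigeonhole principle for an injective $A$-linear map between finite modules of equal cardinality. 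Your route avoids the somewhat fiddly verification that the non-canonical isomorphisms from the selection lemma are compatible with the natural map, at the cost of not exhibiting the inverse explicitly; since Lemma \ref{patch_hom_lemma} only needs the natural map to be bijective, either version suffices for the paper's purposes.
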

	
	\begin{proof}
		By lemma \ref{finite_selection_lemma} there are sets $S_1, S_2 \in \mathcal{F}$ such that we have isomorphisms
		$M_{\beta} \cong \left( \prod_{\alpha \in X} M_\alpha \right)_{\mathcal{F}}$ for $\beta \in S_1$ and
		$N_{\beta} \cong \left( \prod_{\alpha \in X} N_\alpha \right)_{\mathcal{F}}$ for $\beta \in S_2$. Hence for $\beta \in S_1 \cap S_2 \in \mathcal{F}$ we have
		$\psi : \Hom_{A} \left( \left( \prod_{\alpha \in X} M_\alpha \right)_{\mathcal{F}}, \left( \prod_{\alpha \in X} M_{\alpha} \right)_{\mathcal{F}} \right) \cong \Hom_A(M_{\beta}, N_{\beta})$.
		The lemma will follow if we can check that the diagram
		\[
		\begin{tikzcd}
			\left( \prod_{\alpha \in X} \Hom_A(M_\alpha,N_\alpha) \right)_{\mathcal{F}} \arrow{r} \arrow{d}{\phi} & \Hom_{A}\left(\left( \prod_{\alpha \in X} M_\alpha \right)_{\mathcal{F}},\left( \prod_{\alpha \in X} N_\alpha \right)_{\mathcal{F}}\right) \arrow{d}{\psi} \\
			\Hom_A(M_{\beta}, N_{\beta}) \arrow{r}{\id} & \Hom_A(M_{\beta}, N_{\beta})
		\end{tikzcd}
		\]
		commutes, where $\phi$ is the isomorphism from lemma \ref{ultrafilter_quotient_lemma}. If 
		\[
		(f_\alpha)_{\alpha \in X} \in \prod_{\alpha \in X} \Hom_A(M_\alpha,N_\alpha),
		\]
		then
		$\phi((f_\alpha)) = f$, where
		$\{\alpha \in S_1 \cap S_2 : f_\alpha = f\} \in \mathcal{F}$. If $m \in M_{\beta}$, then $\psi((f_\alpha))(m) = n$,
		where $\{\alpha \in S_1 \cap S_2 : n = f_\alpha(m_\alpha)\} \in \mathcal{F}$ and $(m_\alpha)$ is any sequence such that $\{\alpha \in X : m_\alpha = m\} \in \mathcal{F}$. It follows that 
		$\{\alpha \in S_1 \cap S_2 : n = f_\alpha(m)\} \in \mathcal{F}$ and since $\{ \alpha \in S_1 \cap S_2 : f = f_\alpha\} \in \mathcal{F}$ we find that $f(m) = n$, as required.
	\end{proof}
	
	\subsection{Patching Functor}
	
	Consider a complete Noetherian local ring $R$ with maximal ideal $\mathfrak{m}$ and finite residue field $k := R/\mathfrak{m}$. We define the patching functor by
	\[
	\patch_R : \prod_{\alpha \in X} \mathbf{Mod}_R \to \mathbf{Mod}_R : (M_\alpha)_{\alpha \in X} \mapsto \lim_{\substack{\longleftarrow \\ n \geq 1}} \left(\prod_{\alpha \in X} M_\alpha/\mathfrak{m}^n \right)_{\mathcal{F}},
	\]
	where $M_\alpha / \mathfrak{m}^n$ is short for $M_\alpha / \mathfrak{m}^n M_\alpha$.
	The $R$-module structure is given by acting diagonally and makes $\patch_R$ into an $R$-linear functor.
	Note that this functor also depends on the ultrafilter $\mathcal{F}$ but we omit it from the notation. This is a special case of a \emph{cataproduct} as defined in \cite[Chapter 8]{schoutens_book}.
	
	\begin{lemma} \label{patch_R_lemma}
		If $M$ is a finitely generated $R$-module, then
		\[
		\patch_R \left( (M)_{\alpha \in X} \right) \cong M.
		\]
	\end{lemma}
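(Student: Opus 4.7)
The strategy is to reduce the claim, level by level in the $\mathfrak{m}$-adic filtration, to the finite case already handled by the earlier ultraproduct lemmas, then pass to the inverse limit.

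First I would observe that for each fixed $n \geq 1$, the module $M/\mathfrak{m}^n$ is finitely generated over the finite ring $R/\mathfrak{m}^n$ (finite because $R$ is noetherian with finite residue field), and so $M/\mathfrak{m}^n$ is itself a finite set. Consequently the constant family $(M/\mathfrak{m}^n)_{\alpha \in X}$ is a bounded family of $R/\mathfrak{m}^n$-modules in the sense of the definition preceding Lemma \ref{finite_selection_lemma}. Applying Lemma \ref{finite_selection_lemma} (or arguing directly as in Lemma \ref{ultrafilter_quotient_lemma}: since $M/\mathfrak{m}^n$ is finite, for any $(m_\alpha) \in \prod_\alpha M/\mathfrak{m}^n$ there is a unique $m \in M/\mathfrak{m}^n$ with $\{\alpha : m_\alpha = m\} \in \mathcal{F}$), one obtains an isomorphism
\[
\theta_n : \left( \prod_{\alpha \in X} M/\mathfrak{m}^n \right)_{\mathcal{F}} \xrightarrow{\;\sim\;} M/\mathfrak{m}^n,
\]
whose inverse is simply the diagonal map $m \mapsto [(m)_{\alpha}]$.

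Next I would verify that these $\theta_n$ are compatible with the transition maps. On the left, the transition map $(\prod_\alpha M/\mathfrak{m}^{n+1})_\mathcal{F} \to (\prod_\alpha M/\mathfrak{m}^n)_\mathcal{F}$ is induced pointwise by the reductions $M/\mathfrak{m}^{n+1} \to M/\mathfrak{m}^n$; on the right the transition is the usual reduction. The diagonal description of $\theta_n^{-1}$ makes commutativity immediate, so the $\theta_n$ assemble to an isomorphism of inverse systems, hence an isomorphism
\[
\patch_R\bigl( (M)_{\alpha \in X} \bigr) \;=\; \lim_{\substack{\longleftarrow \\ n \geq 1}} \left( \prod_{\alpha \in X} M/\mathfrak{m}^n \right)_{\mathcal{F}} \;\cong\; \lim_{\substack{\longleftarrow \\ n \geq 1}} M/\mathfrak{m}^n.
\]

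Finally, since $R$ is a complete noetherian local ring and $M$ is a finitely generated $R$-module, $M$ is $\mathfrak{m}$-adically complete, so the right hand side is just $M$. The $R$-linearity in everything is automatic from the diagonal action.

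There is no real obstacle here; the only point that requires a little care is the compatibility of the $\theta_n$ with the transition maps in the inverse system, but this is forced by the fact that both sides admit a canonical diagonal section of the reduction map. The argument only uses that $M/\mathfrak{m}^n$ is finite for each $n$ and that $M$ is $\mathfrak{m}$-adically complete, so the same proof would go through for any pro-$\mathfrak{m}$-adically complete $R$-module whose quotients $M/\mathfrak{m}^n$ are finite.
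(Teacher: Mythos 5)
Your proof is correct and follows essentially the same route as the paper: finiteness of $M/\mathfrak{m}^n$ gives the ultraproduct isomorphism at each level (the paper invokes Lemma \ref{ultrafilter_quotient_lemma}, whose argument transfers verbatim to finite modules, exactly as you note), and completeness of the finitely generated module $M$ finishes the argument in the limit. Your explicit check that the $\theta_n$ commute with the transition maps via the diagonal sections is a detail the paper leaves implicit, but it is the same proof.
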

	
	\begin{proof}
		Since $k$ is finite and $M$ is finitely generated, $M/\mathfrak{m}^n$ is finite, too. By lemma \ref{ultrafilter_quotient_lemma} there is a natural isomorphism $\left(\prod_{\alpha \in X} M/\mathfrak{m}^n \right)_\mathcal{F} \cong M/\mathfrak{m}^n$ for each $n$.
		Now we note that $M$ is complete as it is finitely generated and
		taking the inverse limit yields the desired isomorphism.
	\end{proof}
	
	\begin{lemma} \label{patch_finite_irrelevant_lemma}
		For each finite set $S \subset X$ the we obtain a non-principal ultrafilter $\mathcal{F}^S = \{ A \subset X \setminus S : A \in \mathcal{F}\}$ on $X \setminus S$.
		With respect to this ultrafilter, the functor $\patch_{R}$ factors through the category $\prod_{\alpha \in X \setminus S} \mathbf{Mod}_R$.
		\[
		\begin{tikzcd}
			\prod\limits_{\alpha \in X} \mathbf{Mod}_R  \arrow[r] \arrow{rd}[swap]{\patch_{R}} & \prod\limits_{\alpha \in X \setminus S} \mathbf{Mod}_R \arrow{d}{\patch_{R}} \\
			& \mathbf{Mod}_{R}
		\end{tikzcd}
		\]	
	\end{lemma}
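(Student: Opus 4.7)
The plan is to exploit the fact that $\mathcal{F}$ is non-principal: any finite subset $S\subset X$ has $S\notin\mathcal{F}$, hence its complement $X\setminus S$ lies in $\mathcal{F}$. I would first exhibit a natural factorisation of the underlying ultraproduct functor (before passing to the inverse limit), and then conclude for $\patch_R$ by taking the limit in $n$.

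More precisely, let $\pi_S : \prod_{\alpha \in X}\mathbf{Mod}_R \to \prod_{\alpha \in X\setminus S}\mathbf{Mod}_R$ be the restriction functor that forgets the components indexed by $S$. I would define, for each $n\geq 1$, a functor
\[
U_n : \prod_{\alpha \in X\setminus S}\mathbf{Mod}_R \to \mathbf{Mod}_R, \qquad (N_\alpha)_{\alpha\in X\setminus S}\mapsto \Bigl(\prod_{\alpha\in X\setminus S} N_\alpha/\mathfrak{m}^n\Bigr)_{\mathcal{F}|_{X\setminus S}},
\]
where $\mathcal{F}|_{X\setminus S}$ is the (non-principal) ultrafilter on $X\setminus S$ obtained by intersecting elements of $\mathcal{F}$ with $X\setminus S$; this is well-defined because $X\setminus S\in\mathcal{F}$. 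The key step is then to check that for any $(M_\alpha)_{\alpha \in X}$, the projection
\[
\prod_{\alpha\in X} M_\alpha/\mathfrak{m}^n \longrightarrow \prod_{\alpha\in X\setminus S} M_\alpha/\mathfrak{m}^n
\]
descends to an isomorphism on ultraproducts $\bigl(\prod_{\alpha\in X} M_\alpha/\mathfrak{m}^n\bigr)_{\mathcal{F}} \xrightarrow{\ \sim\ } U_n(\pi_S((M_\alpha)))$. Surjectivity is immediate by extending any tuple with zeros on $S$, and injectivity follows because if two sequences agree on a set $T\in\mathcal{F}|_{X\setminus S}$, then they agree on a set of the form $T\cup T'\supset T$ where $T'\subset S$, which lies in $\mathcal{F}$ (as $T\in\mathcal{F}$ already).

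Naturality of these isomorphisms in the tuple $(M_\alpha)$ is clear from the definition of maps in the ultraproduct category. Passing to the inverse limit over $n$ commutes with the construction of $U_n$, so we obtain a natural isomorphism
\[
\patch_R\bigl((M_\alpha)_{\alpha\in X}\bigr)\;\cong\;\lim_{\substack{\longleftarrow\\ n\geq 1}} U_n\bigl(\pi_S((M_\alpha))\bigr),
\]
which exhibits the desired factorisation through $\pi_S$. There is no real obstacle here; the only point to take care of is to verify that $\mathcal{F}|_{X\setminus S}$ is genuinely a non-principal ultrafilter on $X\setminus S$ (so that the earlier lemmas apply uniformly), and that all identifications are natural in the input tuple so that we obtain a factorisation of functors and not merely a pointwise bijection.
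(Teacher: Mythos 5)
Your argument is correct and is exactly the content of the paper's (one-line) proof, which simply invokes non-principality of $\mathcal{F}$; you have just written out the standard details that the restricted ultrafilter $\mathcal{F}|_{X\setminus S}$ computes the same ultraproducts and that this persists through the inverse limit.
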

	
	\begin{proof}
		This follows from the fact that $\mathcal{F}$ is non-principal.	
	\end{proof}
	
	\begin{lemma}[Selection Lemma] \label{selection_lemma}
		If $(M_\alpha)$ is a bounded family of $R$-modules, then
		\[
		\patch_R \left( M_\alpha \right) = \lim_{\longleftarrow} M_{\alpha_n}/\mathfrak{m}^n
		\]
		for some sequence $\alpha_n$ of elements of $X$ and transition maps 
		\[
		f_{n} : M_{\alpha_{n + 1}}/\mathfrak{m}^{n+1} \to M_{\alpha_{n+1}}/\mathfrak{m}^n \xrightarrow{\sim} M_{\alpha_{n}}/\mathfrak{m}^n.
		\]
	\end{lemma}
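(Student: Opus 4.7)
The plan is to apply Lemma \ref{finite_selection_lemma} level by level along the $\mathfrak{m}$-adic filtration and then extract a compatible sequence by a diagonal intersection of filter sets. Since $R$ is noetherian local complete with finite residue field $k$, each quotient $R/\mathfrak{m}^n$ has finite cardinality. If $s$ is an integer such that each $M_\alpha$ is a quotient of $R^s$, then each $M_\alpha/\mathfrak{m}^n M_\alpha$ is a quotient of $(R/\mathfrak{m}^n)^s$, so for every $n$ the family $(M_\alpha/\mathfrak{m}^n)_{\alpha \in X}$ is a bounded family of $R/\mathfrak{m}^n$-modules in the sense of the preceding definition.

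Next, I would apply Lemma \ref{finite_selection_lemma} at each level $n$ to obtain a set $S_n \in \mathcal{F}$ together with, for every $\alpha_0 \in S_n$, a distinguished isomorphism
\[
\varphi_{n,\alpha_0} : \left( \prod_{\alpha \in X} M_\alpha/\mathfrak{m}^n \right)_{\mathcal{F}} \xrightarrow{\sim} M_{\alpha_0}/\mathfrak{m}^n.
\]
Set $T_n := S_1 \cap \cdots \cap S_n$; since $\mathcal{F}$ is a proper filter closed under finite intersections, $T_n \in \mathcal{F}$ is nonempty, and I pick any $\alpha_n \in T_n$. By construction $\alpha_{n+1} \in T_{n+1} \subset S_n$, so $\varphi_{n,\alpha_{n+1}}$ is also defined, and I define the transition map as the composition of isomorphisms
\[
f_n := \varphi_{n,\alpha_n} \circ \varphi_{n,\alpha_{n+1}}^{-1} : M_{\alpha_{n+1}}/\mathfrak{m}^n \xrightarrow{\sim} M_{\alpha_n}/\mathfrak{m}^n.
\]

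Finally, I would check that the inverse system $\bigl( \left( \prod_\alpha M_\alpha/\mathfrak{m}^n \right)_{\mathcal{F}} \bigr)_n$ defining $\patch_R((M_\alpha))$ is identified, via the $\varphi_{n,\alpha_n}$, with the system $(M_{\alpha_n}/\mathfrak{m}^n, g_n)$, where $g_n : M_{\alpha_{n+1}}/\mathfrak{m}^{n+1} \to M_{\alpha_n}/\mathfrak{m}^n$ is the reduction mod $\mathfrak{m}^n$ followed by $f_n$. The only point requiring care is that $\varphi_{n,\alpha_0}$ commutes with reduction mod $\mathfrak{m}^n$ on the nose; but this is immediate from the construction in Lemma \ref{finite_selection_lemma}, where $\varphi_{n,\alpha_0}$ sends the class of $(m_\alpha)$ to the unique element agreeing with $m_\alpha$ on a set in $\mathcal{F}$, and such a characterisation is plainly compatible with reduction. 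Taking the inverse limit over $n$ then yields the desired description of $\patch_R((M_\alpha))$. No serious obstacle is expected; the content is a diagonal selection argument wrapped around the already-proved Lemma \ref{finite_selection_lemma}, and the main bookkeeping is just making sure the chosen $\alpha_n$ lies in the intersection of all previously chosen filter sets.
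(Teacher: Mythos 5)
Your overall strategy is exactly the paper's: the paper's entire proof is the observation that $R/\mathfrak{m}^n$ is finite so that Lemma \ref{finite_selection_lemma} applies level by level, and your diagonal intersection $T_n = S_1 \cap \dots \cap S_n \in \mathcal{F}$ is the right way to choose the $\alpha_n$. However, the one step you declare ``immediate'' is the one step that is not. The isomorphism $\varphi_{n,\alpha_0}$ produced by Lemma \ref{finite_selection_lemma} is not canonical: its construction fixes a representative $N_n$ of the prevailing isomorphism class and arbitrary isomorphisms $f_\alpha^{(n)} : M_\alpha/\mathfrak{m}^n \to N_n$, and these choices are made independently for each $n$. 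There is no reason for $f_\alpha^{(n)}$ to be the reduction of $f_\alpha^{(n+1)}$, so the square comparing $\varphi_{n+1,\alpha_{n+1}}$ with $\varphi_{n,\alpha_{n+1}}$ via the two reduction maps need not commute. Your description of $\varphi_{n,\alpha_0}$ as sending $(m_\alpha)$ to ``the unique element agreeing with $m_\alpha$ on a set in $\mathcal{F}$'' only makes sense after transporting along the $f_\alpha^{(n)}$ (the $m_\alpha$ live in different modules), and that is exactly where the choice-dependence hides. Consequently, with $f_n := \varphi_{n,\alpha_n} \circ \varphi_{n,\alpha_{n+1}}^{-1}$ the maps $\varphi_{n,\alpha_n}$ need not assemble into a morphism of inverse systems, and the identification of the two limits does not follow as written.

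The repair is cheap because the statement leaves the transition maps at your disposal. Define $g_n := \varphi_{n,\alpha_n} \circ \mathrm{red} \circ \varphi_{n+1,\alpha_{n+1}}^{-1} : M_{\alpha_{n+1}}/\mathfrak{m}^{n+1} \to M_{\alpha_n}/\mathfrak{m}^n$, so that the relevant squares commute by fiat and the $\varphi_{n,\alpha_n}$ induce an isomorphism on inverse limits. It remains to see that $g_n$ has the shape required by the lemma, i.e.\ factors as reduction followed by an isomorphism $f_n : M_{\alpha_{n+1}}/\mathfrak{m}^n \xrightarrow{\sim} M_{\alpha_n}/\mathfrak{m}^n$. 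This holds because $\varphi_{n+1,\alpha_{n+1}}$ is an isomorphism of $R/\mathfrak{m}^{n+1}$-modules and the kernel of the reduction $\bigl(\prod_\alpha M_\alpha/\mathfrak{m}^{n+1}\bigr)_{\mathcal{F}} \to \bigl(\prod_\alpha M_\alpha/\mathfrak{m}^n\bigr)_{\mathcal{F}}$ is $\mathfrak{m}^n$ times the source (by lemmas \ref{ultrafilter_quotient_lemma} and \ref{ultrafilter_exact_lemma}, as in the proof of lemma \ref{patch_quotient_lemma}); hence $\ker g_n = \mathfrak{m}^n M_{\alpha_{n+1}}$ and $g_n$ descends to the desired isomorphism $f_n$. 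With that substitution your argument is complete and coincides with what the paper's one-line proof leaves implicit.
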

	
	\begin{proof}
		Since $k$ is finite and $\mathfrak{m}$ is finitely generated, $R/\mathfrak{m}^n$ is finite, too. Hence this is a corollary to 
		lemma \ref{finite_selection_lemma}.
	\end{proof}
	
	\begin{lemma} \label{inv_limit_tensor}
		Let $A$ be a ring and $(M_n, \phi_{ij})$ a countable inverse system of finite length $A$-modules and let $N$ be a finitely presented $A$-module. The natural map
		\[
		\left(\lim_{\longleftarrow} M_n \right) \otimes_A N \to \lim_{\longleftarrow} \left(M_n \otimes_A N \right)
		\]
		is an isomorphism.	
	\end{lemma}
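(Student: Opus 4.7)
The plan is to reduce the statement to the case of finite free modules by using the finite presentation of $N$ and then exploit the surjectivity of the transition maps via the Mittag–Leffler property to transport right exactness across the inverse limit.

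First, I would choose a finite presentation $A^s \xrightarrow{g} A^r \to N \to 0$. For a free module $A^k$ the natural map $(\lim M_n)\otimes_A A^k \to \lim(M_n\otimes_A A^k)$ is trivially an isomorphism since both sides are identified with $(\lim M_n)^k = \lim M_n^k$; finite products commute with inverse limits. This gives the statement on the left two terms of the presentation.

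Next, tensoring with $N$ (right exact) produces the sequence
\[
(\lim M_n)^{s} \to (\lim M_n)^{r} \to (\lim M_n)\otimes_A N \to 0,
\]
and tensoring each $M_n$ separately yields short exact sequences $0 \to K_n \to M_n^{r} \to M_n\otimes_A N \to 0$, where $K_n$ is the image of $M_n^{s} \to M_n^{r}$. The main step is to take the inverse limit of the latter and extract right exactness. Applying $\lim$ produces
\[
0 \to \lim K_n \to \lim M_n^{r} \to \lim(M_n\otimes_A N) \to \lim\nolimits^{1} K_n,
\]
so it suffices to show $\lim^{1} K_n = 0$. The transition map $K_n \to K_{n-1}$ is surjective: given $k \in K_{n-1}$ of the form $g(x)$ with $x \in M_{n-1}^{s}$, surjectivity of $\phi_{n,n-1}^{s}: M_n^{s} \to M_{n-1}^{s}$ lets us lift $x$ to $M_n^{s}$, and its image under $g$ hits $k$. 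Hence the system $(K_n)$ is Mittag–Leffler (in fact the transition maps are surjective), so $\lim^{1} K_n = 0$.

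Combining both sequences and the identification $(\lim M_n)^{r} = \lim M_n^{r}$, a short diagram chase (or the five-lemma applied to the two right exact sequences with isomorphic left and middle terms) shows that the natural map $(\lim M_n)\otimes_A N \to \lim(M_n\otimes_A N)$ is an isomorphism. The only nontrivial point in the argument is establishing surjectivity of the transition maps on the kernel system $K_n$, which is exactly where the hypothesis on $\phi_{ij}$ is used; everything else is formal.
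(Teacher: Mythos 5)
Your argument is the same as the paper's: choose a finite presentation, use right exactness of $\otimes$ for the top row, and use surjectivity of the transition maps on the image system $K_n$ to control the bottom row. Your verification that $K_{n+1} \to K_n$ is surjective, hence ${\varprojlim}^1 K_n = 0$, is a correct and more explicit version of the paper's bare appeal to Mittag--Leffler; it yields surjectivity of $\varprojlim M_n^r \to \varprojlim (M_n \otimes_A N)$ and identifies its kernel with $\varprojlim K_n$. That is enough for surjectivity of the natural map.

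However, the final ``short diagram chase / five-lemma'' step has a genuine gap: it requires exactness of the bottom row at the middle term, i.e.\ that $\varprojlim M_n^s \to \varprojlim K_n$ is surjective. The obstruction is ${\varprojlim}^1 L_n$ where $L_n = \ker(M_n^s \to M_n^r)$, and this system consists of kernels of a fixed matrix restricted to the $M_n$; its transition maps need not be surjective and it need not be Mittag--Leffler, so nothing you (or the paper, which has the identical gap in asserting exactness of its sequence (2)) have established controls it. The step genuinely fails: take $A = \ints$, $N = \ints/p$, and $M_n = \bigl(\bigoplus_{i<n} \ints/p^2 e_i\bigr) \oplus \bigl(\bigoplus_{i \geq n} \ints/p\, e_i\bigr)$ with the coordinatewise surjections $e_i \mapsto e_i$. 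The compatible system $x = (x_n)$ with $x_n = \sum_{i<n} p e_i$ lies in $\varprojlim M_n$ and maps to $0$ in every $M_n/pM_n$, hence to $0$ in $\varprojlim(M_n \otimes \ints/p)$; but $x \notin p \varprojlim M_n$, since any $y = (y_n)$ with $py = x$ would have to have $e_i$-coordinate congruent to $1$ mod $p$ for every $i \geq n$ in $y_n$, contradicting membership in the direct sum. So $x$ gives a nonzero element of the kernel of the natural map, and the injectivity claim (hence the lemma as stated) is false. What survives of your argument is surjectivity in general, and the isomorphism under additional hypotheses that force ${\varprojlim}^1 L_n = 0$ (e.g.\ $N$ finite projective, or the $M_n$ of finite length, as in the situation where the paper actually applies the lemma).
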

	
	\begin{proof}
		Let $A^r \to A^s \to N \to 0$ be a finite presentation of $N$. Since tensor products are right exact we have an exact sequence
		\begin{equation} \label{inv_system_seq1}
			\left( \lim_{\longleftarrow} M_n \right) \otimes_A A^r \to \left( \lim_{\longleftarrow} M_n \right) \otimes_A A^s \to \left( \lim_{\longleftarrow} M_n \right) \otimes_A N \to 0
		\end{equation}
		On the other hand the finite length assumption implies that
		$\{ \ker (M_n \otimes_{A} A^r \to M_n \otimes_A A^s)\}$ is a 
		Mittag-Leffler system, hence
		\begin{equation} \label{inv_system_seq2}
			\lim_{\longleftarrow} \left( M_n \otimes_A A^r \right) \to \lim_{\longleftarrow} \left( M_n \otimes_A A^s \right) \to \lim_{\longleftarrow} \left( M_n \otimes_A N \right) \to 0
		\end{equation} 
		is exact.
		Note that the map in the claim of the lemma is an isomorphism when $N = A^d$ for some finite $d$ as can be
		checked directly by viewing the two limits as submodules of $(\prod_{n} M_n )^d \cong \prod_{n} M_n^d$. Hence the natural map from sequence \ref{inv_system_seq1} to sequence \ref{inv_system_seq2} induces the desired isomorphism. 
	\end{proof}
	
	\begin{lemma} \label{patch_quotient_lemma}
		If $(M_\alpha)$ is a bounded family of $R$-modules, then there are natural isomorphisms
		\[
		\patch_R \left( M_\alpha \right) / \mathfrak{m}^n \cong \patch_R(M_\alpha / \mathfrak{m}^n) \cong \left( \prod_{\alpha \in X} M_\alpha / \mathfrak{m}^n \right)_{\mathcal{F}}
		\]
	\end{lemma}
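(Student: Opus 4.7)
The plan is to set $P_k := \left(\prod_{\alpha \in X} M_\alpha/\mathfrak{m}^k\right)_\mathcal{F}$ so that by definition $\patch_R(M_\alpha) = \lim_{\longleftarrow} P_k$, and then to prove $\patch_R(M_\alpha)/\mathfrak{m}^n \cong P_n$ in three steps using the machinery already developed in this section. The first observation is that the transition maps $P_{k+1} \to P_k$ are surjective: each quotient map $M_\alpha/\mathfrak{m}^{k+1} \twoheadrightarrow M_\alpha/\mathfrak{m}^k$ is surjective, and applying Lemma \ref{ultrafilter_exact_lemma} over the finite local ring $R/\mathfrak{m}^{k+1}$ (which is finite since $R/\mathfrak{m}$ is finite and $\mathfrak{m}$ is finitely generated) shows the induced map on ultraproducts remains surjective.

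Next, for $k \geq n$ I would identify $P_k/\mathfrak{m}^n P_k$ with $P_n$. The short exact sequence of $R/\mathfrak{m}^k$-modules
\[
0 \to \mathfrak{m}^n(M_\alpha/\mathfrak{m}^k) \to M_\alpha/\mathfrak{m}^k \to M_\alpha/\mathfrak{m}^n \to 0
\]
stays exact after applying $(\cdot)_\mathcal{F}$ by Lemma \ref{ultrafilter_exact_lemma}, with right-hand term $P_n$. Since $R$ is noetherian, $\mathfrak{m}^n = (r_1, \dots, r_t)$ is finitely generated, so $\mathfrak{m}^n(M_\alpha/\mathfrak{m}^k)$ is the image of the $R$-linear map $(M_\alpha/\mathfrak{m}^k)^t \to M_\alpha/\mathfrak{m}^k$ sending $(x_i) \mapsto \sum_i r_i x_i$. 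Exactness of the ultraproduct functor identifies the ultraproduct of these images with the image of the analogous map $P_k^t \to P_k$, which is $\sum_i r_i P_k = \mathfrak{m}^n P_k$. Hence $P_k/\mathfrak{m}^n P_k \cong P_n$ for all $k \geq n$.

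Finally, applying Lemma \ref{inv_limit_tensor} to the surjective inverse system $(P_k)$ and the finitely presented $R$-module $R/\mathfrak{m}^n$ yields
\[
\patch_R(M_\alpha)/\mathfrak{m}^n \cong \bigl(\lim_{\longleftarrow} P_k\bigr) \otimes_R R/\mathfrak{m}^n \cong \lim_{\longleftarrow} \bigl(P_k/\mathfrak{m}^n P_k\bigr).
\]
For $k \geq n$ the inner term equals $P_n$, and under this identification the transition map on $P_n$ is the identity, because both $P_{k+1} \to P_k \to P_n$ and $P_{k+1} \to P_n$ equal the canonical surjection. The inverse limit of an eventually constant system with identity transitions is its common value, giving $\patch_R(M_\alpha)/\mathfrak{m}^n \cong P_n$ as required.

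The main obstacle is the second step: verifying that the ultraproduct of the submodules $\mathfrak{m}^n(M_\alpha/\mathfrak{m}^k)$ coincides with $\mathfrak{m}^n P_k$. This depends essentially on the noetherian hypothesis on $R$, since finite generation of $\mathfrak{m}^n$ is what allows us to express the submodule as the image of a finite-rank map that the exact ultraproduct functor preserves; without this, the ultraproduct would not commute with forming the submodule $\mathfrak{m}^n(\cdot)$.
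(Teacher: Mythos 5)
Your proof is correct and follows essentially the same route as the paper: identify the finite-level quotients $P_k/\mathfrak{m}^n P_k$ with $\left(\prod_{\alpha} M_\alpha/\mathfrak{m}^n\right)_{\mathcal{F}}$, note surjectivity of the transition maps, and conclude with Lemma \ref{inv_limit_tensor}. Your justification of the key identification (via exactness of the ultraproduct functor and finite generation of $\mathfrak{m}^n$) is somewhat more detailed than the paper's one-line appeal to Lemma \ref{ultrafilter_quotient_lemma}, but it is the same argument in substance.
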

	
	\begin{proof}
		Firstly, note that under isomorphism from lemma \ref{ultrafilter_quotient_lemma} 
		the ideal $\mathfrak{m} \subset R/\mathfrak{m}^n$ is mapped to $\left( \prod_{\alpha \in X} \mathfrak{m} \right)_{\mathcal{F}}$, hence $\left( \prod_{\alpha \in X} M_\alpha / \mathfrak{m}^{n} \right)_{\mathcal{F}} / \mathfrak{m}^k = \left(\prod_{\alpha \in X} M_\alpha / (\mathfrak{m}^n + \mathfrak{m}^k) \right)_\mathcal{F}$.
		Moreover, each $\left( \prod_{\alpha \in X} M_\alpha/\mathfrak{m}^{n} \right)_{\mathcal{F}}$ is finite length over $R$ by lemma \ref{finite_selection_lemma}.
		Now lemma \ref{inv_limit_tensor} yields the result.
	\end{proof}
	
	\begin{lemma}\label{patch_right_exact_lemma}
		$\patch_{R}$ is right exact on the category of bounded families of $R$-modules.
	\end{lemma}
	
	\begin{proof}
		Suppose 
		we are given a short exact sequence 
		\[
		0 \to \prod_{\alpha \in X} A_\alpha \to \prod_{\alpha \in X} B_\alpha \to \prod_{\alpha \in X} C_\alpha \to 0
		\]
		Using lemma \ref{ultrafilter_exact_lemma} and the fact that tensor products are right exact we already have exact sequences
		\[
		\left( \prod_{\alpha \in X} A_\alpha/\mathfrak{m}^n \right)_\mathcal{F} \to \left(\prod_{\alpha \in X} B_\alpha/\mathfrak{m}^n\right)_\mathcal{F} \to \left(\prod_{\alpha \in X} C_\alpha/\mathfrak{m}^n\right)_\mathcal{F} \to 0
		\]
		for each $n$. Since $(A_\alpha)$ is bounded, a Mittag-Leffler argument
		shows that we also have exactness in the limit. 
	\end{proof}
	
	\begin{lemma} \label{patch_bounded_fg_lemma}
		If $(M_\alpha)$ is bounded, then $\patch_R \left( M_\alpha \right)$ is finitely generated.
	\end{lemma}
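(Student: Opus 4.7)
The plan is to combine Lemma~\ref{patch_right_exact_lemma} (right-exactness of $\patch_R$) with Lemma~\ref{patch_R_lemma} (which identifies $\patch_R$ on a constant, finitely generated family). By the definition of boundedness, fix an integer $s$ with the property that each $M_\alpha$ admits a surjection $\pi_\alpha\colon R^s \twoheadrightarrow M_\alpha$, and choose one such $\pi_\alpha$ for every $\alpha \in X$ (no compatibility across different $\alpha$ is required).

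The family $(\pi_\alpha)_{\alpha\in X}$ is then a morphism in the product category $\prod_{\alpha \in X}\mathbf{Mod}_R$ from the constant family $(R^s)_{\alpha\in X}$ to $(M_\alpha)_{\alpha\in X}$, surjective in every component. Completing it on the left by the kernel family $(\ker \pi_\alpha)_{\alpha\in X}$ gives a short exact sequence in $\prod_{\alpha \in X}\mathbf{Mod}_R$, to which I apply the right-exact functor $\patch_R$. This yields a surjection
\[
\patch_R\bigl((R^s)_{\alpha \in X}\bigr) \twoheadrightarrow \patch_R\bigl((M_\alpha)_{\alpha \in X}\bigr).
\]
By Lemma~\ref{patch_R_lemma} the source is canonically $R^s$, so $\patch_R(M_\alpha)$ appears as a quotient of $R^s$ and is therefore generated by at most $s$ elements.

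There is essentially no obstacle: once the earlier formal properties of $\patch_R$ are in hand, the statement is a one-step consequence. The only point worth flagging is the independent choice of the $\pi_\alpha$, which is harmless because a morphism in the product category is by definition a family of morphisms with no coherence condition. As a sanity check one can cross-verify through Lemma~\ref{patch_quotient_lemma} and Lemma~\ref{finite_selection_lemma}, which together identify $\patch_R(M_\alpha)/\mathfrak{m}$ with $M_{\alpha_0}/\mathfrak{m}$ for some $\alpha_0$, a $k$-vector space of dimension at most $s$; combined with the fact that $\patch_R(M_\alpha)$ is $\mathfrak{m}$-adically complete and separated by construction, topological Nakayama then gives the same bound of $s$ generators.
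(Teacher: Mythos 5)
Your argument is correct and is exactly the paper's proof: choose componentwise surjections $R^s \twoheadrightarrow M_\alpha$, apply right-exactness (Lemma \ref{patch_right_exact_lemma}) and Lemma \ref{patch_R_lemma} to obtain a surjection $R^s \cong \patch_R(R^s) \twoheadrightarrow \patch_R(M_\alpha)$. The extra cross-check via Nakayama is fine but not needed.
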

	
	\begin{proof}
		Let $s$ be sufficiently large and choose for each $\alpha$, a surjection
		$f_\alpha : R^s \to M_\alpha$. Then by lemma \ref{patch_R_lemma} and 
		lemma \ref{patch_right_exact_lemma} we have a surjection
		\[
		R^s \cong \patch_R(R^s) \xrightarrow{\patch_R(f_\alpha)} \patch_R(M_\alpha). \qedhere
		\]
	\end{proof}
	
	\begin{lemma} \label{patch_hom_lemma}
		If $(M_\alpha)$ and $(N_\alpha)$ are bounded families of $R$-modules, then there is a natural isomorphism
		\[
		\Hom_{R} \left( \patch_R \left( M_\alpha \right), \patch_R\left( N_\alpha \right) \right) \cong 
		\lim_{\substack{\longleftarrow \\ n}} \left( \prod_{\alpha \in X} \Hom_R(M_\alpha/\mathfrak{m}^n, N_\alpha / \mathfrak{m}^n) \right)_\mathcal{F}
		\]	
	\end{lemma}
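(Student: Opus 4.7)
The plan is to reduce the statement to the ultrafilter Hom lemma \ref{ultrafilter_hom_lemma} applied at each truncation level $R/\mathfrak{m}^n$, and then assemble these isomorphisms via the inverse limit. The key observation is that $\patch_R(M_\alpha)$ and $\patch_R(N_\alpha)$ are both $\mathfrak{m}$-adically complete: by lemma \ref{patch_bounded_fg_lemma} they are finitely generated over the complete noetherian local ring $R$, so completeness is automatic. This allows us to compute Hom through its reductions modulo $\mathfrak{m}^n$.

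Concretely, I would first note the standard isomorphism
\[
\Hom_R\bigl(\patch_R(M_\alpha),\patch_R(N_\alpha)\bigr) \;\cong\; \lim_{\substack{\longleftarrow \\ n}} \Hom_R\bigl(\patch_R(M_\alpha)/\mathfrak{m}^n,\,\patch_R(N_\alpha)/\mathfrak{m}^n\bigr),
\]
which follows by writing $\patch_R(N_\alpha) = \lim_n \patch_R(N_\alpha)/\mathfrak{m}^n$, commuting $\Hom_R(M,-)$ with the limit in the second variable, and using that $\mathfrak{m}^n$ annihilates $\patch_R(N_\alpha)/\mathfrak{m}^n$ so that one can also quotient in the first variable. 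Then I would apply lemma \ref{patch_quotient_lemma} to rewrite each factor as
\[
\Hom_{R/\mathfrak{m}^n}\!\left(\Bigl(\prod_{\alpha\in X} M_\alpha/\mathfrak{m}^n\Bigr)_{\!\mathcal{F}},\,\Bigl(\prod_{\alpha\in X} N_\alpha/\mathfrak{m}^n\Bigr)_{\!\mathcal{F}}\right).
\]

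Next I would invoke lemma \ref{ultrafilter_hom_lemma} with the finite ring $A = R/\mathfrak{m}^n$ (finite because $k$ is finite and each $\mathfrak{m}^i/\mathfrak{m}^{i+1}$ is a finite-dimensional $k$-vector space) and with the bounded families $(M_\alpha/\mathfrak{m}^n)_\alpha$ and $(N_\alpha/\mathfrak{m}^n)_\alpha$; boundedness is inherited from that of $(M_\alpha)$ and $(N_\alpha)$ since a surjection $R^s \twoheadrightarrow M_\alpha$ induces a surjection $(R/\mathfrak{m}^n)^s \twoheadrightarrow M_\alpha/\mathfrak{m}^n$. This identifies the $n$-th term with $\bigl(\prod_\alpha \Hom_R(M_\alpha/\mathfrak{m}^n, N_\alpha/\mathfrak{m}^n)\bigr)_{\mathcal{F}}$, and taking the inverse limit over $n$ gives the claimed isomorphism.

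The steps involved are all rather formal, so there is no single serious obstacle. The main point requiring care is the verification that the isomorphisms of lemmas \ref{patch_quotient_lemma} and \ref{ultrafilter_hom_lemma} are natural in $n$ so that they assemble into an isomorphism of inverse systems; this is a matter of unwinding definitions and checking that the reduction maps on both sides are induced by the same underlying componentwise reductions. Once naturality is established, the inverse limit of isomorphisms is an isomorphism, and the proof concludes.
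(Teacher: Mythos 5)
Your proof is correct and follows essentially the same route as the paper: both use that $\patch_R(M_\alpha)$ and $\patch_R(N_\alpha)$ are finitely generated (lemma \ref{patch_bounded_fg_lemma}), hence $\Hom_R$ between them is computed as the inverse limit of $\Hom_R$ of their reductions modulo $\mathfrak{m}^n$, then identify those reductions via lemma \ref{patch_quotient_lemma} and conclude with lemma \ref{ultrafilter_hom_lemma}. Your write-up is merely more explicit about the completeness argument and the naturality checks that the paper leaves implicit.
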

	
	\begin{proof}
		For any finitely generated $R$-modules $M$ and $N$, the natural map 
		\[\Hom_R(M,N) \to \lim_{\substack{\leftarrow \\ n}} \Hom_R(M/\mathfrak{m}^n, N/\mathfrak{m}^n)\]
		is an isomorphism.
		Thus, lemmas \ref{patch_bounded_fg_lemma} and \ref{patch_quotient_lemma} imply that the natural map
		\[
		\Hom_{R} \left( \patch_R \left( M_\alpha \right), \patch_R\left( N_\alpha \right) \right) \to
		\lim_{\substack{\longleftarrow \\ n}} \Hom_{R} \left( \left( \prod_{\alpha \in X} M_\alpha/\mathfrak{m}^n \right)_\mathcal{F}, \left( \prod_{\alpha \in X} N_\alpha / \mathfrak{m}^n \right)_\mathcal{F} \right)
		\]
		is an isomorphism. Now lemma \ref{ultrafilter_hom_lemma} finishes the proof.
	\end{proof}
	
	\begin{lemma} \label{patching_change_ring_lemma}
		Let $R'$ be another complete Noetherian local ring with maximal ideal $\mathfrak{m}'$ and finite residue field $k'$ and suppose that $(M_\alpha)$ is a bounded family of $R$-modules such that each $M_\alpha$ is also an $R'$-module. If the action of $R$ on $M_\alpha$ is given by a local ring homomorphism $f_\alpha : R \to R' / \Ann_{R'}(M_\alpha)$, then the natural map $\patch_{R}(M_\alpha) \to \patch_{R'}(M_\alpha)$ is an isomorphism of $R$-modules, where the $R$-module structure on the second term comes from functoriality of $\patch_{R'}$.
	\end{lemma}	
	
	\begin{proof}
	We first show surjectivity. The assumption yields the inclusions 
	\[
	\mathfrak{m}^n M_\alpha \subset (\mathfrak{m}')^n M_\alpha
	\] 
	for all $n \geq 1$ and $\alpha \in X$. Note that the kernel of the surjective $R$-module map
		\[
			\left( \prod_{\alpha \in X} M_{\alpha} / \mathfrak{m}^n \right)_{\mathcal{F}} \twoheadrightarrow \left( \prod_{\alpha \in X} M_{\alpha} / (\mathfrak{m'})^n \right)_{\mathcal{F}}
		\]
		has finite length for each $n \geq 1$. Hence a Mittag-Leffler argument shows that the map 
		$\patch_{R}(M_\alpha) \to \patch_{R'}(M_\alpha)$ is surjective, too. Similarly to lemma \ref{selection_lemma} we can write the map in question as the limit (with $R'$-linear transition maps) of the maps
		\[
		M_{\alpha_n} / \mathfrak{m}^n \to M_{\alpha_n} / (\mathfrak{m}')^n
		\]
		for some sequence $\alpha_n$. Now let 
		\[
		(m_n) \in \lim_{\leftarrow} M_{\alpha_n}/\mathfrak{m}^n
		\] 
		be an element of the kernel. Then $m_n \in (\mathfrak{m}')^n M_{\alpha_n}$ for each $n \geq 1$ and by using the transition maps $M_{\alpha_{n+1}} \to M_{\alpha_n}$ we see that 
		$m_n \in \bigcap_{k \geq 1} (\mathfrak{m}')^{k} M_{\alpha_n} + \mathfrak{m}^n M_{\alpha_n}$. Since $M_{\alpha_n}$ is finite over $R'$, this implies that $m_n \in \mathfrak{m}^n M_{\alpha_n}$. Thus, the map is injective, too.
	\end{proof}
	
	\subsection{Derived Patching Functor}
	
	Keep the notations from the previous section. We have shown in lemma \ref{patch_right_exact_lemma} that $\patch_R$ is a right exact functor on the category of bounded families of $R$-modules, viewed as a full subcategory of $\prod_{\alpha \in X} \mathbf{Mod}_R$. Unfortunately, this category is not necessarily abelian since it is not necessarily closed under subobjects. Thus, the standard methods of homological algebra do not apply here. Instead we will only prove the minimal functoriality property needed below but with more care it should be possible to develop a more general derived patching functor.
	
	\begin{definition}
		Let $e : X \to \ints_{\geq 1} \cup \{\infty\}$ be a function. Then we say a family $(M_\alpha)_{\alpha \in X}$ in $\prod_{\alpha \in X} \mathbf{Mod}_R$ is an $e$-family if $\mathfrak{m}^{e(\alpha)} M_\alpha = 0$. Equivalently, $(M_\alpha)_{\alpha \in X}$ is an $e$-family if it comes from an object in $\prod_{\alpha \in X} \mathbf{Mod}_{R/\mathfrak{m}^{e(\alpha)}}$. We say it is $e$-free if each $M_\alpha$ is a free $R/\mathfrak{m}^{e(\alpha)}$-module, where we use the convention that $\mathfrak{m}^\infty = 0$.
	\end{definition}
	
	\begin{lemma} \label{patch_efamily_invariance_lemma}
		Let $e : X \to \ints_{\geq 1} \cup \{\infty\}$ be a function such that $\{\alpha \in X : e(\alpha) > N \} \in \mathcal{F}$ for all integers $N$. If $(M_\alpha)$ is a bounded family of $R$-modules,
		then the natural map $(M_\alpha) \to (M_\alpha/\mathfrak{m}^{e(\alpha)})$ induces an isomorphism $\patch_R(M_\alpha) \to \patch_R(M_\alpha/ \mathfrak{m}^{e(\alpha)})$.
	\end{lemma}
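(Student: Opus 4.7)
The plan is to verify the isomorphism at each level $n$ of the defining inverse limit and then pass to the limit. Recall that by lemma \ref{patch_quotient_lemma} we have
\[
\patch_R(M_\alpha)/\mathfrak{m}^n \cong \left(\prod_{\alpha \in X} M_\alpha/\mathfrak{m}^n\right)_{\mathcal{F}},
\]
and likewise for $(M_\alpha/\mathfrak{m}^{e(\alpha)})$. So it suffices to prove that for every fixed $n \geq 1$, the natural map
\[
\left(\prod_{\alpha \in X} M_\alpha/\mathfrak{m}^n\right)_{\mathcal{F}} \longrightarrow \left(\prod_{\alpha \in X} (M_\alpha/\mathfrak{m}^{e(\alpha)})/\mathfrak{m}^n\right)_{\mathcal{F}}
\]
is an isomorphism; the claim then follows from taking $\lim_n$ on both sides.

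First I would unpack the right-hand side pointwise: the quotient $(M_\alpha/\mathfrak{m}^{e(\alpha)})/\mathfrak{m}^n$ is canonically $M_\alpha/(\mathfrak{m}^n + \mathfrak{m}^{e(\alpha)})$, and the natural map from $M_\alpha/\mathfrak{m}^n$ is an isomorphism precisely when $e(\alpha) \geq n$, in which case $\mathfrak{m}^n + \mathfrak{m}^{e(\alpha)} = \mathfrak{m}^n$. By the standing hypothesis on $e$, the set $S_n := \{\alpha \in X : e(\alpha) \geq n\}$ has finite complement in $X$. Since $\mathcal{F}$ is non-principal, every cofinite subset of $X$ lies in $\mathcal{F}$, so $S_n \in \mathcal{F}$.

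Now I would use lemma \ref{patch_finite_irrelevant_lemma} (or, more directly, the definition of the ultraproduct): equivalence of two sequences under $\sim$ depends only on their values on a set in $\mathcal{F}$. Restricting both products to indices in $S_n$ does not change the ultraproduct, and on $S_n$ the component maps $M_\alpha/\mathfrak{m}^n \to M_\alpha/(\mathfrak{m}^n + \mathfrak{m}^{e(\alpha)})$ are all isomorphisms. Hence the induced map of ultraproducts is bijective, i.e.\ an isomorphism of $R$-modules.

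Finally I would take the inverse limit over $n$. Both sides sit in Mittag–Leffler inverse systems (the transition maps are all surjective, being reductions), and at every level the map is an isomorphism, so the induced map on limits $\patch_R(M_\alpha) \to \patch_R(M_\alpha/\mathfrak{m}^{e(\alpha)})$ is an isomorphism as required. There is no real obstacle: the only subtlety is keeping track that the hypothesis on $e$ is exactly what makes the ``bad'' locus finite at each level, which in turn is exactly what a non-principal $\mathcal{F}$ is built to ignore.
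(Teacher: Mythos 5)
Your proof is correct, but it takes a genuinely different route from the paper's. The paper argues through the right-exactness framework: by lemma \ref{patch_right_exact_lemma} it suffices to show that $\patch_R(\mathfrak{m}^{e(\alpha)} M_\alpha) \to \patch_R(M_\alpha)$ is the zero map, and this is deduced by observing (via lemma \ref{patch_finite_irrelevant_lemma}) that the image lands in $\mathfrak{m}^N \patch_R(M_\alpha)$ for every $N$, whence it vanishes by the Krull intersection theorem and the finite generation supplied by lemma \ref{patch_bounded_fg_lemma}. You instead unwind the definition of $\patch_R$ as $\lim_n \left(\prod_\alpha (-)/\mathfrak{m}^n\right)_{\mathcal{F}}$ and check that the induced map is a level-wise isomorphism of the defining inverse systems, using only that $(M_\alpha/\mathfrak{m}^{e(\alpha)})/\mathfrak{m}^n = M_\alpha/\mathfrak{m}^{\min(n,e(\alpha))}$ and that a non-principal ultrafilter contains every cofinite set. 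Your argument is more elementary and in fact proves more: it never invokes boundedness of the family, whereas the paper's route genuinely needs it for the Krull step. Two cosmetic remarks: the appeal to lemma \ref{patch_quotient_lemma} is superfluous, since the level-$n$ terms you compare are literally the terms of the defining limits; and the Mittag--Leffler remark at the end is unnecessary, as a level-wise isomorphism of inverse systems always induces an isomorphism on limits.
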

	
	\begin{proof}
		By lemma \ref{patch_bounded_fg_lemma} we know that 	$\patch_R(M_\alpha / \mathfrak{m}^{e(\alpha)})$ is a finitely generated $R$-module. Hence with lemma \ref{patch_quotient_lemma} we find isomorphisms
		 \begin{linenomath*} \begin{align*}
			\patch_R(M_\alpha / \mathfrak{m}^{e(\alpha)}) & \cong \lim_{N} \patch_R(M_\alpha / \mathfrak{m}^{e(\alpha)} M_\alpha)/ \mathfrak{m}^N \\
			&\cong \lim_{N} \patch_R(M_\alpha / \mathfrak{m}^{\min(N,e(\alpha))} M_\alpha) \\
			& \cong \lim_N \patch_R(M_\alpha / \mathfrak{m}^N M_\alpha) \\
			& \cong \patch_R(M_\alpha)
		\end{align*} \end{linenomath*} 
	whose inverse is the map induced by $(M_\alpha) \to (M_\alpha/\mathfrak{m}^{e(\alpha)})$.
	\end{proof}	
	
	\begin{lemma} \label{patch_efree_lemma}
		Let $e : X \to \ints_{\geq 1} \cup \{\infty\}$ be a function such that $\{\alpha \in X : e(\alpha) > N \} \in \mathcal{F}$ for all integers $N$. If $(M_\alpha)$ is a bounded $e$-free family of $R$-modules,
		then $\patch_{R}(M_\alpha)$ is a  finitely generated free $R$-module.
	\end{lemma}
	
	\begin{proof}
		By lemma \ref{patch_bounded_fg_lemma} we know that $\patch_R(M_\alpha)$ is a finitely generated $R$-module. Moreover, lemma \ref{patch_quotient_lemma} implies that 
		\[
			\patch_R(M_\alpha)/\mathfrak{m}^N \cong \patch_R(M_\alpha / \mathfrak{m}^N)
		\]
		is a free $R/ \mathfrak{m}^N$ module for each $N \geq 1$. Hence $\patch_R(M_\alpha)$ is free over $R$, too.
	\end{proof}	
	
	\begin{lemma} \label{patch_derived_natural}
		Suppose that $(C_\alpha^\bullet)_{\alpha \in X}$ is a bounded complex of bounded $e$-free families of $R$-modules, where $e : X \to \ints_{\geq 1} \cup \{\infty\}$ is a function such that $\{\alpha \in X : e(\alpha) > N \} \in \mathcal{F}$ for all integers $N$, then 
		\begin{itemize}
			\item $\patch_{R}(C_\alpha^\bullet)$ is a bounded complex of finitely generated free $R$-modules.
			\item If $(D_\alpha)_{\alpha \in X}$ is another such complex and $f_\alpha^\bullet : C_\alpha^\bullet \to D_\alpha^\bullet$ is a sequence of nullhomotopies, then $\patch_R(f_\alpha^\bullet)$ is a nullhomotopy.
\end{itemize}				
	\end{lemma}	
	
	\begin{proof}
		The first bullet point is lemma \ref{patch_efree_lemma}. The second bullet point holds because $\patch_R$ is an additive functor.
	\end{proof}
	
	\subsection{Depth and Dimension}
	
	Here we state some results from commutative algebra that will be used to prove dimension inequalities in the main lemma.
	
	\begin{definition}
		Let $R$ be a Noetherian local ring. If $M$ is a finitely generated module over $R$, then we define $\depth_R(M)$ as the supremum of the lengths of sequences of elements $x_1, \dots, x_n \in \mathfrak{m}_R$ such that
		$M/(x_1,\dots,x_n)M \neq 0$ and $x_i$ is not a zero divisor on
		$M/(x_1, \dots, x_{i-1})$.
	\end{definition}
	
	\begin{lemma}
		Let $R$ be a Noetherian local ring and $M$ a non-zero finitely generated $R$-module, then $\depth_R(M)$ is the smallest integer $i$ such that
		$\Ext^i_{R}(R/\mathfrak{m}_R,M) \neq 0$.
	\end{lemma}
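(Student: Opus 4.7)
The plan is to prove both inequalities $\depth_R(M) \geq i_0$ and $\depth_R(M) \leq i_0$ simultaneously by induction on $d := \depth_R(M)$, where $i_0$ denotes the smallest $i$ with $\Ext^i_R(k, M) \neq 0$ and $k = R/\mathfrak{m}_R$. The key engine is the long exact sequence of $\Ext$ attached to multiplication by a regular element $x \in \mathfrak{m}_R$ on $M$, combined with the fact that multiplication by any $x \in \mathfrak{m}_R$ acts as zero on $\Ext^i_R(k, M)$ because $x$ annihilates $k$.

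For the base case $d = 0$, I would show $i_0 = 0$, i.e.\ $\Hom_R(k, M) \neq 0$. Since every element of $\mathfrak{m}_R$ is a zero-divisor on $M$, the ideal $\mathfrak{m}_R$ is contained in the union of the associated primes of $M$. By prime avoidance (and since $M$ is finitely generated, so has finitely many associated primes), $\mathfrak{m}_R$ must itself be an associated prime of $M$. This gives an injection $k = R/\mathfrak{m}_R \hookrightarrow M$, hence a non-zero element of $\Hom_R(k, M) = \Ext^0_R(k, M)$.

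For the inductive step, assume $d \geq 1$ and the result known for modules of smaller depth. Choose $x \in \mathfrak{m}_R$ that is a non-zero-divisor on $M$; then $\depth_R(M/xM) = d-1$. The short exact sequence $0 \to M \xrightarrow{x} M \to M/xM \to 0$ induces a long exact sequence on $\Ext_R^\bullet(k, -)$, and since $x$ kills $k$ the connecting maps split it into short exact sequences
\[
0 \to \Ext^i_R(k, M) \to \Ext^i_R(k, M/xM) \to \Ext^{i+1}_R(k, M) \to 0
\]
for every $i \geq 0$. By the inductive hypothesis the smallest $j$ with $\Ext^j_R(k, M/xM) \neq 0$ is $d - 1$, and the vanishing $\Hom_R(k, M) = 0$ (which follows from depth $\geq 1$ via the base case applied contrapositively, or directly from the fact that $x$ is a regular element) feeds into these sequences to show by a short downward induction on $i$ that $\Ext^i_R(k, M) = 0$ for $i \leq d - 1$ and $\Ext^d_R(k, M) \neq 0$ (as a non-trivial quotient of $\Ext^{d-1}_R(k, M/xM)$). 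Thus $i_0 = d$.

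The main obstacle is really the base case: the assertion that $d = 0$ forces $\mathfrak{m}_R \in \mathrm{Ass}(M)$ depends on prime avoidance and on the description of the set of zero-divisors on a finitely generated module as the union of its associated primes, both of which require $M$ to be finitely generated and $R$ to be Noetherian. Everything after that is a formal bookkeeping argument with the long exact sequence of $\Ext$. I would note in the write-up that the result is classical and can also be cited from Matsumura or Bruns--Herzog.
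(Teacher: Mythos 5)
Your argument is correct, but note that the paper does not actually prove this lemma at all --- it simply cites \cite[\href{https://stacks.math.columbia.edu/tag/00LW}{Lemma 00LW}]{stacks-project} --- so what you have written is a genuine proof where the paper has only a reference; unsurprisingly, it is essentially the standard argument found in that reference (base case via $\operatorname{Ass}(M)$ and prime avoidance, inductive step via the $\Ext$ long exact sequence in which multiplication by $x$ acts as zero on $\Ext^i_R(k,-)$). One point deserves care: in the inductive step you assert that \emph{any} non-zero-divisor $x\in\mathfrak m_R$ on $M$ satisfies $\depth_R(M/xM)=d-1$. That statement is true, but its usual proof (Rees's theorem, that depth drops by exactly one along any regular element) is itself derived from the $\Ext$ characterisation you are in the middle of proving, so quoting it as known risks circularity. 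The fix is cheap: take $x=x_1$ to be the first term of a regular sequence $x_1,\dots,x_d$ of maximal length $d$ (finiteness of $d$ being guaranteed since regular sequences have length at most $\dim M$). Then $x_2,\dots,x_d$ is a regular sequence on $M/x_1M$, giving $\depth_R(M/x_1M)\ge d-1$, while prepending $x_1$ to any regular sequence on $M/x_1M$ gives the reverse inequality; with this choice of $x$ the rest of your induction goes through verbatim. Everything else --- the identification of the zero-divisors on $M$ with $\bigcup_{\mathfrak p\in\operatorname{Ass}(M)}\mathfrak p$, the splitting of the long exact sequence into short exact sequences, and the bookkeeping that yields $\Ext^i_R(k,M)=0$ for $i<d$ and $\Ext^d_R(k,M)\cong\Ext^{d-1}_R(k,M/xM)\neq 0$ --- is sound.
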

	
	\begin{proof}
		See \cite[\href{https://stacks.math.columbia.edu/tag/00LW}{Lemma 00LW}]{stacks-project}.
	\end{proof}
	
	\begin{lemma}[Auslander--Buchsbaum formula] \label{auslander_buchsbaum}
		Let $R$ be a Noetherian local ring and $M$ a finitely generated $R$-module of finite projective dimension, then
		\[
		\operatorname{pd}_R(M) + \depth_R(M) = \depth(R)
		\]
	\end{lemma}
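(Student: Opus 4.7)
The plan is to induct on $n := \operatorname{pd}_R(M)$. The base case $n = 0$ is immediate: then $M$ is a nonzero finitely generated free $R$-module, so $\depth_R(M) = \depth(R)$, and the formula reduces to $0 + \depth(R) = \depth(R)$.

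For the inductive step with $n \geq 1$, I would choose a minimal free cover, i.e.\ a short exact sequence $0 \to K \to F \to M \to 0$ with $F$ finitely generated and free and $K \subseteq \mathfrak{m} F$, so that $\operatorname{pd}_R(K) = n - 1$ and, by induction, $\depth_R(K) = \depth(R) - n + 1$. Writing $d := \depth(R)$, the standard depth-lemma inequalities for this short exact sequence give $\depth(M) \geq \min(\depth(K) - 1, \depth(F)) = d - n$ and $\depth(K) \geq \min(\depth(F), \depth(M) + 1)$. When $n \geq 2$, the latter inequality together with $\depth(K) = d - n + 1 < d = \depth(F)$ forces $\depth(M) + 1 \leq d - n + 1$, hence $\depth(M) \leq d - n$, and we obtain the desired equality.

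The genuinely tricky case is $n = 1$, where $K$ is free with $\depth(K) = d$ and the depth lemma alone only yields $\depth(M) \geq d - 1$. To rule out $\depth(M) \geq d$, I would pass to the long exact sequence of $\Ext^{\bullet}_R(k, -)$ with $k := R/\mathfrak{m}$. The key observation is that $\Ext^d_R(k, R)$ is annihilated by $\mathfrak{m}$: its $R$-module structure can be computed via the action on the first argument, on which $\mathfrak{m}$ acts as zero. Since the minimality of the resolution ensures that $K \hookrightarrow F$ is represented by a matrix with entries in $\mathfrak{m}$, the induced map $\Ext^d_R(k, K) \to \Ext^d_R(k, F)$ is the zero map. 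Combined with $\Ext^{d-1}_R(k, F) = 0$, the long exact sequence identifies $\Ext^{d-1}_R(k, M)$ with $\Ext^d_R(k, K)$, which is nonzero because $\depth(K) = d$. Hence $\depth(M) \leq d - 1$, closing the induction.
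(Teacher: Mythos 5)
Your proof is correct, and it is essentially the standard textbook argument (induction on $\operatorname{pd}_R(M)$ via a minimal free cover, with the $\operatorname{pd}=1$ case settled by observing that minimality forces the map $\Ext^d_R(k,K)\to\Ext^d_R(k,F)$ to vanish because $\mathfrak m$ annihilates $\Ext^\bullet_R(k,-)$). The paper itself gives no proof here --- it simply cites the Stacks Project --- so there is nothing to contrast with; your write-up is in substance the proof found in the cited reference. The only point worth flagging is that in the degenerate situations where the formula would force a negative depth (e.g.\ $\operatorname{depth}(R)=0$ and $n=1$, or more generally $d<n$), your inequalities produce a contradiction rather than the stated equality; this is fine, since it shows those situations cannot occur (equivalently, $\operatorname{pd}_R(M)\le\operatorname{depth}(R)$), but a careful version of the proof should say so explicitly.
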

	
	\begin{proof}
		See \cite[\href{https://stacks.math.columbia.edu/tag/090V}{Proposition 090V}]{stacks-project}.
	\end{proof}
	
	\begin{lemma} \label{cohom_dim_bound}
		Let $l_0 \geq 0$ and $q_0$ be integers and $S$ a Cohen--Macaulay local ring of dimension $n \geq l_0$. Let $C^\bullet$ be a bounded complex of finite free $S$-modules which is not exact. Suppose
		that $H^i(C^\bullet/\mathfrak{m}_S)$ is non-zero only if $i \in [q_0, q_0 + l_0]$. Then $\dim_S H^*(C^\bullet) \geq \dim S - l_0$ and if equality holds, then $H^i(C^\bullet)$ is non-zero only if $i = q_0 + l_0$ and $H^{q_0 + l_0}(C^\bullet)$ has projective dimension $l_0$.	
	\end{lemma}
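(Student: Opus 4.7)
The plan is to choose a regular sequence of maximal possible length inside the annihilator of $H^*(C^\bullet)$ (which exists by the Cohen-Macaulay hypothesis) and compare the two spectral sequences of the double complex $C^\bullet \otimes_S K^\bullet$, where $K^\bullet$ is the Koszul complex on that sequence. After replacing $C^\bullet$ by a minimal representative in the derived category, we may assume $C^i = 0$ for $i \notin [q_0, q_0+l_0]$, so in particular $H^i(C^\bullet) = 0$ outside this range.

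For the lower bound, suppose for contradiction that $\dim_S H^*(C^\bullet) < n - l_0$. Then $I := \operatorname{Ann}_S(H^*(C^\bullet))$ has height at least $l_0 + 1$, and Cohen-Macaulayness of $S$ produces a regular sequence $x_1, \ldots, x_{l_0+1} \in I$. Let $K^\bullet$ be its Koszul complex and $T^\bullet := \operatorname{Tot}(C^\bullet \otimes_S K^\bullet)$. Since $K^\bullet$ is a free resolution of $S/(x_1, \ldots, x_{l_0+1})$, the Koszul-first spectral sequence degenerates at $E_2$ and yields $H^N(T^\bullet) = H^N(C^\bullet / (x_1, \ldots, x_{l_0+1}) C^\bullet)$, which vanishes for $N < q_0$ since the quotient complex is still concentrated in cohomological degrees $[q_0, q_0+l_0]$. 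The other spectral sequence has $E_2^{p, q} = H^p(C^\bullet) \otimes_S \Lambda^{-q} S^{l_0+1}$, because each $x_i$ annihilates $H^*(C^\bullet)$ and so the induced Koszul differential on cohomology is zero. An induction on $k \in \{0, 1, \ldots, l_0\}$ now forces $H^{q_0+k}(C^\bullet) = 0$: at total degree $N = q_0 - l_0 - 1 + k$ the inductive hypothesis kills every $E_2$-term except $E_2^{q_0+k, -l_0-1} = H^{q_0+k}(C^\bullet)$; the same hypothesis kills the source of every potentially nonzero incoming higher differential; the bounded support of the double complex rules out every nonzero outgoing higher differential; and the vanishing of the abutment $H^N(T^\bullet)$ then forces $H^{q_0+k}(C^\bullet) = 0$. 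Running the induction to $k = l_0$ gives $H^*(C^\bullet) = 0$, contradicting non-exactness.

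For the equality case, repeat the argument with a regular sequence of length exactly $l_0$ in $I$, available since $\operatorname{height}(I) = l_0$. The same induction now only runs for $k \in \{0, 1, \ldots, l_0 - 1\}$, giving $H^p(C^\bullet) = 0$ for $p < q_0 + l_0$ and leaving only $M := H^{q_0+l_0}(C^\bullet)$ possibly nonzero. The minimal $C^\bullet$ then becomes a length-$l_0$ free resolution of $M$, so $\operatorname{pd}_S M \leq l_0$; combined with the Auslander-Buchsbaum formula (Lemma~\ref{auslander_buchsbaum}) applied with $\depth(S) = n$ and $\depth_S M \leq \dim_S M = n - l_0$, we conclude $\operatorname{pd}_S M = l_0$. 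The main technical obstacle is the inductive degeneration of the hyper-Tor spectral sequence: since the higher differentials between Tor-terms do not vanish a priori, one must use the inductive vanishing in lower cohomological degrees to kill the sources of all relevant incoming differentials while the bounded support of the double complex kills the outgoing ones.
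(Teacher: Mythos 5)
Your argument is correct, and it is essentially the proof the paper defers to: the paper cites \cite[Lemma 2.9]{khare_thorne}, whose argument is exactly this one (pass to a minimal complex so that $C^i=0$ outside $[q_0,q_0+l_0]$, choose a maximal regular sequence in $\operatorname{Ann}_S H^*(C^\bullet)$ using Cohen--Macaulayness, and play the two hypercohomology spectral sequences of $C^\bullet\otimes_S K^\bullet$ against each other, finishing with Auslander--Buchsbaum). The only cosmetic difference is that you run the induction twice, with sequences of lengths $l_0+1$ and $l_0$, where the reference runs it once with $c=\operatorname{height}(\operatorname{Ann}_S H^*(C^\bullet))$ and reads off both conclusions.
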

	
	\begin{proof}
		The idea goes back to \cite{calegari_geraghty_beyond}. See \cite[Lemma 2.9]{khare_thorne} for a detailed proof.
	\end{proof}
	
	\begin{lemma} \label{depth_le_depth_max}
		Let $(R, \mathfrak{m})$ be a Noetherian local ring and $R \to S$ a finite ring map. If $\mathfrak{n}$ is a maximal ideal of $S$ and $A$ is a finitely generated $S$-module, then
		\[
		\depth_{R} (A) \leq \depth_{S_\mathfrak{n}} (A_{\mathfrak{n}}).
		\]
	\end{lemma}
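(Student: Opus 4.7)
The plan is to build an $A_\mathfrak{n}$-regular sequence in $\mathfrak{n} S_\mathfrak{n}$ of length $d := \depth_R(A)$ by transporting a maximal $A$-regular sequence $x_1,\dots,x_d \in \mathfrak{m}_R$ along the map $R \to S \to S_\mathfrak{n}$.

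First I would observe that since $R \to S$ is finite, hence integral, every maximal ideal of $S$ lies over $\mathfrak{m}_R$. In particular $\mathfrak{m}_R S \subseteq \mathfrak{n}$, so the images of the $x_i$ in $S_\mathfrak{n}$ lie in the maximal ideal $\mathfrak{n} S_\mathfrak{n}$.

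Next, I would check that the regularity of $x_1,\dots,x_d$ is preserved after localising at $\mathfrak{n}$. For each $i$, multiplication by $x_i$ is injective on $M_i := A/(x_1,\dots,x_{i-1})A$. Since localisation is exact, the induced map $(M_i)_\mathfrak{n} \to (M_i)_\mathfrak{n}$ is also injective, and $(M_i)_\mathfrak{n} \cong A_\mathfrak{n}/(x_1,\dots,x_{i-1})A_\mathfrak{n}$. Hence each $x_i$ is a non-zero-divisor on $A_\mathfrak{n}/(x_1,\dots,x_{i-1})A_\mathfrak{n}$.

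It remains to verify that $A_\mathfrak{n}/(x_1,\dots,x_d)A_\mathfrak{n} \neq 0$, which is where Nakayama enters. If $A_\mathfrak{n} = 0$ then $\depth_{S_\mathfrak{n}}(A_\mathfrak{n})$ is conventionally $+\infty$ and the inequality is vacuous, so we may assume $A_\mathfrak{n} \neq 0$. Since $A_\mathfrak{n}$ is a finitely generated module over the local ring $S_\mathfrak{n}$ and $x_1,\dots,x_d \in \mathfrak{n} S_\mathfrak{n}$, Nakayama's lemma gives $(x_1,\dots,x_d) A_\mathfrak{n} \neq A_\mathfrak{n}$. Combined with the previous step, $x_1,\dots,x_d$ is an $A_\mathfrak{n}$-regular sequence in $\mathfrak{n} S_\mathfrak{n}$, so $\depth_{S_\mathfrak{n}}(A_\mathfrak{n}) \geq d$, as desired.

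The argument is essentially routine; the only subtle point is the convention for the depth of the zero module, and the verification that the sequence remains \emph{proper} after localisation, which is handled by Nakayama once one notices that $\mathfrak{m}_R$ is contained in every maximal ideal of $S$.
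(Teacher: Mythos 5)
Your argument is correct. The paper does not give a proof of this lemma at all --- it simply cites \cite[\href{https://stacks.math.columbia.edu/tag/0AUK}{Lemma 0AUK}]{stacks-project}, which in fact proves the stronger statement that $\depth_R(A)$ equals the \emph{minimum} of $\depth_{S_{\mathfrak{n}'}}(A_{\mathfrak{n}'})$ over all maximal ideals $\mathfrak{n}'$ of $S$, via the $\Ext$-characterisation of depth and a decomposition of $A$ over the finitely many maximal ideals above $\mathfrak{m}$. Your direct route --- lift an $A$-regular sequence of length $d=\depth_R(A)$ from $\mathfrak{m}$ into $\mathfrak{n}S_{\mathfrak{n}}$, using that every maximal ideal of $S$ contracts to $\mathfrak{m}$ by integrality, that localisation is exact and commutes with quotients, and Nakayama for properness --- proves exactly the inequality the paper needs and is more elementary and self-contained. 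The one genuine subtlety is the one you flag: if $A_{\mathfrak{n}}=0$ the inequality is only true with the convention $\depth(0)=+\infty$ (the paper's literal definition of depth as a supremum over sequences with $M/(x_1,\dots,x_n)M\neq 0$ does not obviously yield this), and this case really can occur, e.g.\ for $S=R\times R$ and $A$ supported at only one factor; adopting the Stacks convention, as you do, is the right fix and is consistent with how the lemma is used later in the main patching argument, where the relevant module is nonzero at the chosen maximal ideal.
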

	
	\begin{proof}
		See \cite[\href{https://stacks.math.columbia.edu/tag/0AUK}{Lemma 0AUK}]{stacks-project}.
	\end{proof}
	
	\begin{lemma} \label{depth_completion}
		Let $R$ be a Noetherian local ring with maximal ideal $\mathfrak{m}$ and $\widehat{R}$ its $\mathfrak{m}$-adic completion. Then for any finitely generated $R$-module $M$ we have
		\[
		\depth_R(M) = \depth_{\widehat{R}}(M \otimes_R \widehat{R})
		\]
	\end{lemma}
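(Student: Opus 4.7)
The plan is to reduce both sides of the claimed equality to the vanishing pattern of the same family of Ext modules, exploiting the flatness of completion. By the Ext characterisation of depth stated just above, $\depth_R(M)$ equals the smallest $i \geq 0$ with $\Ext^i_R(R/\mathfrak{m}, M) \neq 0$, and analogously $\depth_{\widehat{R}}(M \otimes_R \widehat{R})$ equals the smallest $i$ with $\Ext^i_{\widehat{R}}(\widehat{R}/\widehat{\mathfrak{m}}, M \otimes_R \widehat{R}) \neq 0$, where $\widehat{\mathfrak{m}} = \mathfrak{m}\widehat{R}$ is the maximal ideal of $\widehat{R}$. It will therefore suffice to produce, for each $i$, a natural isomorphism between these two Ext groups.

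Since $R$ is Noetherian local, the completion map $R \to \widehat{R}$ is faithfully flat, and $R/\mathfrak{m}$ is finitely presented over $R$. Standard flat base change for Ext then yields a natural isomorphism
\[
\Ext^i_R(R/\mathfrak{m}, M) \otimes_R \widehat{R} \;\cong\; \Ext^i_{\widehat{R}}\bigl(R/\mathfrak{m} \otimes_R \widehat{R},\, M \otimes_R \widehat{R}\bigr).
\]
On the right, $R/\mathfrak{m} \otimes_R \widehat{R} = \widehat{R}/\mathfrak{m}\widehat{R} = \widehat{R}/\widehat{\mathfrak{m}}$, which is exactly the residue field appearing in the Ext description of $\depth_{\widehat{R}}(M \otimes_R \widehat{R})$. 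On the left, $\Ext^i_R(R/\mathfrak{m}, M)$ is annihilated by $\mathfrak{m}$, hence is naturally an $R/\mathfrak{m}$-module, and tensoring an $R/\mathfrak{m}$-module with $\widehat{R}$ over $R$ is the same as tensoring with $\widehat{R}/\widehat{\mathfrak{m}} = R/\mathfrak{m}$ over $R/\mathfrak{m}$, which is the identity. Putting these together I get $\Ext^i_R(R/\mathfrak{m}, M) \cong \Ext^i_{\widehat{R}}(\widehat{R}/\widehat{\mathfrak{m}}, M \otimes_R \widehat{R})$ for every $i$, so the two smallest vanishing indices coincide, which is the claim.

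The edge case $M = 0$ is harmless: by the usual convention $\depth = \infty$ on both sides, and faithful flatness of $\widehat{R}$ over $R$ ensures that $M = 0$ if and only if $M \otimes_R \widehat{R} = 0$, so the conventions match. The only ingredient meriting any care is the flat base change isomorphism for Ext, but since $R$ is Noetherian $R/\mathfrak{m}$ admits a resolution by finitely generated free $R$-modules; computing both Ext groups from this resolution reduces the identification to $\Hom_R(R^n, -) \otimes_R \widehat{R} \cong \Hom_{\widehat{R}}(\widehat{R}^n, - \otimes_R \widehat{R})$ together with flatness of $\widehat{R}$. I do not anticipate any substantive obstacle.
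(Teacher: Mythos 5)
Your argument is correct. The paper disposes of this lemma in one line by citing the general flat base change formula for depth (Stacks Project Tag 0338: for a flat local map $R \to S$ of Noetherian local rings, $\depth_S(M \otimes_R S) = \depth_R(M) + \depth(S/\mathfrak{m}S)$), specialised to $S = \widehat{R}$, where the closed fibre $\widehat{R}/\mathfrak{m}\widehat{R} = R/\mathfrak{m}$ is a field of depth zero. You instead give a direct, self-contained proof via the Ext-characterisation of depth stated just above in the paper, combined with flat base change for Ext along the finite free resolution of $R/\mathfrak{m}$ and the observation that $\Ext^i_R(R/\mathfrak{m}, M)$ is a $\mathfrak{m}$-torsion module, so tensoring it up to $\widehat{R}$ changes nothing. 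Your route works precisely because $\mathfrak{m}\widehat{R}$ is already the maximal ideal of $\widehat{R}$ — it would not prove the general Tag 0338 formula, where the extra $\depth(S/\mathfrak{m}S)$ term appears — but for the completion it yields the stronger conclusion that the individual Ext groups $\Ext^i_R(R/\mathfrak{m},M)$ and $\Ext^i_{\widehat{R}}(\widehat{R}/\widehat{\mathfrak{m}}, M\otimes_R\widehat{R})$ are literally isomorphic, not merely that their first non-vanishing indices agree. Both approaches rest on the same input, namely flatness of $\widehat{R}$ over the Noetherian ring $R$; yours trades the external citation for a short standard computation, and your handling of the $M = 0$ edge case via faithful flatness is fine.
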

	
	\begin{proof}
		Since $R$ is Noetherian, $\widehat{R}$ is a flat $R$-module and so 
		\cite[\href{https://stacks.math.columbia.edu/tag/0338}{Lemma 0338}]{stacks-project} applies.
	\end{proof}
	
	\begin{lemma} \label{depth_eq_finite_ext}
		Let $S,R$ be Noetherian local rings and $f : S \to R$ a local ring homomorphism. If $M$ is an $R$-module which is finite over $S$, then 
		\[
		\depth_S(M) = \depth_R(M)
		\]
	\end{lemma}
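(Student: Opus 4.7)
The plan is to reduce to the case where $R$ is a finite $S$-algebra and then argue by induction on $d := \depth_R(M)$. For the reduction, I would observe that since $M$ is finite over the Noetherian ring $S$, the endomorphism ring $\End_S(M)$ is itself a finite $S$-module. The $R$-action on $M$ factors through a ring homomorphism $R \to \End_S(M)$ whose image $R/\operatorname{Ann}_R(M)$ is therefore a finite $S$-algebra, and it is local because it is a quotient of $R$. Any $M$-regular sequence in $\mathfrak{m}_R$ descends to one in $R/\operatorname{Ann}_R(M)$ since no element of the annihilator can appear in a regular sequence, and conversely any regular sequence in the quotient lifts back to $R$ because a lift acts on $M$ in the same way. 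Hence $\depth_R(M) = \depth_{R/\operatorname{Ann}_R(M)}(M)$, and we may replace $R$ by this quotient to assume $f \colon S \to R$ is a finite local ring map.

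The inequality $\depth_S(M) \leq \depth_R(M)$ is then immediate: given any $M$-regular sequence $s_1, \ldots, s_n \in \mathfrak{m}_S$, the elements $f(s_1), \ldots, f(s_n) \in \mathfrak{m}_R$ induce identical endomorphisms of $M$ and so remain $M$-regular over $R$ (alternatively one invokes Lemma \ref{depth_le_depth_max}). For the reverse inequality, I would induct on $d$, which is finite as we may assume $M \neq 0$. The base case $d = 0$ amounts to showing $\depth_S(M) = 0$. Because $R$ is finite local over $S$, the Artinian quotient $R/\mathfrak{m}_S R$ is itself local with maximal ideal $\mathfrak{m}_R/\mathfrak{m}_S R$, so $\mathfrak{m}_R$ is the unique prime of $R$ lying over $\mathfrak{m}_S$. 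Combined with the standard identification $\operatorname{Ass}_S(M) = \{\mathfrak{p} \cap S : \mathfrak{p} \in \operatorname{Ass}_R(M)\}$ for finite ring extensions, this yields the equivalence $\mathfrak{m}_S \in \operatorname{Ass}_S(M) \Leftrightarrow \mathfrak{m}_R \in \operatorname{Ass}_R(M)$, which is precisely the equivalence of depth zero on the two sides.

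For the inductive step with $d \geq 1$, the base case applied to $M$ forces $\depth_S(M) \geq 1$, so I can pick $x \in \mathfrak{m}_S$ which is $M$-regular. The same element is $M$-regular when viewed in $\mathfrak{m}_R$ via $f$, so $\depth_R(M/xM) = d - 1$ and $\depth_S(M/xM) = \depth_S(M) - 1$; applying the induction hypothesis to the finite $R$-module $M/xM$ then closes the proof. The main obstacle I anticipate is in the reduction step, where one must justify carefully both that passing to $R/\operatorname{Ann}_R(M)$ preserves $R$-depth on $M$ and that the resulting quotient is simultaneously finite over $S$ and local; once this is cleanly in place, the rest of the argument is essentially formal bookkeeping with associated primes and regular sequences.
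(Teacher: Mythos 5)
Your proof is correct, and it shares the paper's basic skeleton (induct on depth, peel off a regular element of $\mathfrak{m}_S$, which stays regular over $R$ because it acts through $f$, and reduce everything to the depth-zero case), but the two arguments diverge exactly where the real content lies, namely at depth zero. The paper handles $\depth_S(M)=0 \Rightarrow \depth_R(M)=0$ by observing that $P=\Hom_S(S/\mathfrak{m}_S,M)$ is a nonzero $R$-submodule of $M$ of finite length, hence contains a simple $R$-submodule isomorphic to $R/\mathfrak{m}_R$; this socle argument needs only that $M$ is finite over $S$, so no preliminary reduction is required, and since the paper inducts on $\depth_S(M)$ it only ever needs this one implication. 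You instead induct on $\depth_R(M)$ and prove the full depth-zero equivalence via associated primes, which forces you to know that $\mathfrak{m}_R$ is the unique prime of $R$ over $\mathfrak{m}_S$ — hence your extra reduction replacing $R$ by $R/\Ann_R(M)\hookrightarrow\End_S(M)$ to make $S\to R$ finite. That reduction is correctly justified (the image is finite over $S$ as a submodule of the finite $S$-module $\End_S(M)$, local as a proper quotient of $R$, and depth is insensitive to the replacement), and the backward direction of your equivalence is genuinely used in the inductive step to produce a regular element inside $\mathfrak{m}_S$ rather than merely in $\mathfrak{m}_R$. Net effect: your route leans on standard going-up/associated-prime machinery at the cost of an extra reduction; the paper's socle argument is leaner and works for an arbitrary local map $S\to R$ with $M$ finite over $S$.
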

	
	\begin{proof}
		This proof is from \cite[IV. 16.4.8]{EGA} but we include it for convenience. First suppose that $\depth_{S} (M) = 0$, then $0 \neq \Hom_{S}(S/\mathfrak{m}_S, M)$.
		Now 
		\[
		P := \Hom_S(S/\mathfrak{m}_S, M) \subset \Hom_S(S,M) = M
		\]
		is a sub $S$-module of $M$. Since $M$ is finite over $S$, $P$ has finite $S$-length. But $P$ is also naturally an $R$-submodule of $M$
		and has finite length over $R$ since the action of $S$ factors through $R$. Hence $P$ has a simple $R$-submodule which must be isomorphic to 
		$R/\mathfrak{m}_R$, hence $\Hom_R(R/\mathfrak{m}_R, M) \neq 0$ and 
		$\depth_R(M) = 0$ as well.
		
		If $\depth_S(M) > 0$, then we may choose an element $x \in \mathfrak{m}_S$ such that $M/f(x)M \neq 0$ and $f(x)$ is not a zero divisor on $M$. Then $\depth_S(M/xM) = \depth(M) - 1$ and $\depth_R(M/f(x)M) = \depth(M) - 1$, so the claim follows by induction. 
	\end{proof}
	
	\begin{lemma} \label{dim_le_finite_ext}
		Let $R$ be a Noetherian ring and $S \subset R$ a Noetherian subring over which $R$ is integral. Then 
		$\dim (S) = \dim(R)$.
	\end{lemma}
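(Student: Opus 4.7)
The plan is to verify the equality $\dim(S) = \dim(R)$ via the classical Cohen-Seidenberg theorems for integral extensions, proving the two inequalities $\dim(R) \leq \dim(S)$ and $\dim(S) \leq \dim(R)$ separately by manipulating chains of prime ideals. The Noetherian hypotheses play no essential role; what matters is only that $S \subseteq R$ is integral. Since this is a standard statement, I would simply cite the relevant Stacks Project tags, but for completeness let me sketch the argument.

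First I would handle $\dim(R) \leq \dim(S)$. Given a strict chain of primes $\mathfrak{q}_0 \subsetneq \mathfrak{q}_1 \subsetneq \cdots \subsetneq \mathfrak{q}_n$ in $R$, form the contracted chain $\mathfrak{q}_i \cap S$ in $S$. The incomparability theorem for integral extensions states that no prime of $R$ properly contains another prime of $R$ having the same contraction to $S$; hence the contracted chain is also strict, giving $\dim(S) \geq n$. Taking the supremum over all such chains yields $\dim(S) \geq \dim(R)$.

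For the reverse inequality $\dim(S) \leq \dim(R)$, start with a strict chain $\mathfrak{p}_0 \subsetneq \mathfrak{p}_1 \subsetneq \cdots \subsetneq \mathfrak{p}_n$ in $S$. By lying-over, there is a prime $\mathfrak{q}_0$ of $R$ with $\mathfrak{q}_0 \cap S = \mathfrak{p}_0$. Then inductively, the going-up theorem produces primes $\mathfrak{q}_0 \subsetneq \mathfrak{q}_1 \subsetneq \cdots \subsetneq \mathfrak{q}_n$ in $R$ with $\mathfrak{q}_i \cap S = \mathfrak{p}_i$; these inclusions are strict because distinct $\mathfrak{p}_i$ force distinct $\mathfrak{q}_i$. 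This gives $\dim(R) \geq n$, hence $\dim(R) \geq \dim(S)$ after taking the supremum.

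There is no real obstacle here: the whole lemma is an immediate application of Cohen-Seidenberg. In fact one could simply reference \cite[\href{https://stacks.math.columbia.edu/tag/00OK}{Lemma 00OK}]{stacks-project} (or an equivalent) rather than reproving it. The only mild subtlety is that $\dim(R)$ and $\dim(S)$ could a priori be infinite, but the chain-by-chain argument shows the equality of suprema holds in $\ints_{\geq 0} \cup \{\infty\}$ regardless.
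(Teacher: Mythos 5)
Your argument is correct and is exactly the standard Cohen--Seidenberg route (incomparability for $\dim R\leq\dim S$, lying-over plus going-up for the reverse); the paper itself just cites \cite[13.C]{matsumura}, which is precisely this material. Your observation that the Noetherian hypotheses are superfluous is also accurate.
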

	
	\begin{proof}
		See \cite[13.C]{matsumura}.
	\end{proof}
	
	\begin{lemma} \label{faithful_cancellation_lemma}
		Let $k$ be a field and $R = k[[x_1, \dots, x_n]]$. If $\mathfrak{a} < R$ is an ideal and $M$ is a finitely generated faithful $R$-module such that $\mathfrak{a} M = \mathfrak{m} M$, then $\mathfrak{a} = \mathfrak{m}$, where $\mathfrak{m} = (x_1, \dots, x_n)$ is the maximal ideal of $R$.
	\end{lemma}
	
	\begin{proof}
		We can write $\mathfrak{a} + \mathfrak{m}^2 = I + \mathfrak{m}^2$, where  $I < R$ is an ideal generated by linear polynomials. It suffices to show that $I = \mathfrak{m}$ because then
		\[
		\mathfrak{a}/\mathfrak{m} \mathfrak{a} \twoheadrightarrow \mathfrak{a}/(\mathfrak{a} \cap \mathfrak{m}^2) = I/(I \cap \mathfrak{m}^2) = \mathfrak{m}/\mathfrak{m}^2
		\] 
		which implies that $\mathfrak{a} = \mathfrak{m}$ by Nakayama's lemma.  
		
		By construction $\mathfrak{a} \subset I + \mathfrak{m}^2$, hence
		\[
		\mathfrak{m} M = \mathfrak{a} M \subset I M + \mathfrak{m}^2 M \subset \mathfrak{m} M.
		\]
		It follows that $\mathfrak{m} (M/IM) = \mathfrak{m}^2 (M/IM)$. By Nakayama's lemma this implies that $\mathfrak{m} (M/ IM) = 0$, i.e. $\mathfrak{m} M \subset I M$. Now \cite[Proposition 2.4]{atiyah_macdonald} implies that for each $x \in \mathfrak{m}$, the endomorphism $\phi_x : M \to M : m \mapsto xm$ satisfies
		an equation of the form 
		\[
		\phi_x^k + a_{k - 1} \phi_x^{k - 1} + \dots + a_0 = 0
		\]
		with $a_i \in I$. Since $M$ is faithful, $x$ itself satisfies the same equation and 
		$x^k \in I$.
		Note that $I < R$ is prime since it is generated by linear elements, hence $x \in I$ as well. Thus, $I = \mathfrak{m}$ as desired.
	\end{proof}
	
	\subsection{Main Lemma}
	
	We begin with a standard lemma which forms a crucial ingredient of the argument below.
	
	\begin{lemma}[Formal Implicit Function Theorem] \label{formal_implicit_function_lemma}
		Let $E$ be a field of characteristic 0 and $R$ be a regular local $E$-algebra. Then for every complete noetherian local $E$-algebra $T$ and local $E$-algebra map $R \to T/I$, where  $I \subset T$ is a nilpotent ideal, there exists a lift $R \to T$ as indicated in the following diagram.
		\[
		\begin{tikzcd}
			& T \arrow{d} \\
			R \arrow{r} \arrow[dashed]{ru} & T/I 
		\end{tikzcd}
		\]
	\end{lemma}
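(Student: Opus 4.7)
The plan is to reduce to the case that $R$ is complete, then apply Cohen's structure theorem to identify it with a formal power series ring over its residue field, and finally lift the coefficient field map and the images of the formal variables separately. Characteristic zero enters essentially only in lifting the coefficient field. First, since $T$ is complete noetherian local and $I$ is an ideal, the quotient $T/I$ is again complete noetherian local, so the map $R \to T/I$ extends uniquely to the $\mathfrak{m}_R$-adic completion $\widehat R$, reducing the problem to producing a lift $\widehat R \to T$. Let $K$ be the residue field of $R$. The composition $E \to R \to K$ realises $K$ as a field extension of $E$, which is separable because $\operatorname{char} E = 0$, so by Cohen's structure theorem there is a coefficient field $K \hookrightarrow \widehat R$ and an isomorphism $\widehat R \cong K[[x_1, \dots, x_n]]$ of local $E$-algebras, where $n = \dim R$.

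To construct a lift $K[[x_1, \dots, x_n]] \to T$ of the given map to $T/I$, I need two pieces of data: (a) an $E$-algebra homomorphism $K \to T$ lifting the composite $K \hookrightarrow \widehat R \to T/I$, and (b) elements $t_i \in \mathfrak{m}_T$ reducing modulo $I$ to the images of the $x_i$. Part (b) is immediate because $\mathfrak{m}_T$ surjects onto $\mathfrak{m}_{T/I}$. Once (a) and (b) are in hand, sending $x_i \mapsto t_i$ together with the chosen map on $K$ extends uniquely, by continuity and completeness of $T$, to a local $E$-algebra map $\widehat R \to T$ reducing modulo $I$ to the given map; precomposing with $R \to \widehat R$ then gives the required lift.

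The main obstacle is (a), which is the step that uses characteristic zero essentially. I would invoke the standard theorem that every separable field extension is formally smooth (e.g.\ as in EGA IV, or via transcendence bases plus Hensel's lemma). Applied to the $E$-algebra surjection $T \twoheadrightarrow T/I$ with nilpotent kernel, this immediately produces the required $E$-algebra lift $K \to T$. Without separability of $K/E$ such a lift need not exist in general, so this is the genuinely non-formal input of the lemma; everything else is bookkeeping with Cohen's structure theorem and completeness.
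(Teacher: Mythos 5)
Your proof is correct, but it takes a more hands-on route than the paper, whose entire proof is two citations: by Stacks 07EJ the map $E \to R$ is formally smooth in the $\mathfrak{m}_R$-adic topology (this is where regularity and characteristic $0$ enter), and Stacks 07NJ then gives exactly the asserted lifting property against the nilpotent surjection $T \to T/I$ with $T$ complete. Your argument unwinds what sits behind those citations: you pass to $\widehat{R}$ (correctly using that the local map $R \to T/I$ is adically continuous and $T/I$ is complete), invoke Cohen's structure theorem to write $\widehat{R} \cong K[[x_1,\dots,x_n]]$ over a coefficient field containing $E$, lift the variables trivially, and isolate the genuine input as formal smoothness of the separable extension $K/E$ --- which is the same fact underlying 07EJ. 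All the steps check out: $\mathfrak{m}_T \twoheadrightarrow \mathfrak{m}_{T/I}$ since $I$ is nilpotent, the extension of $x_i \mapsto t_i$ to power series converges by completeness of $T$, and the two maps to $T/I$ agree by continuity and separatedness. What your version buys is a self-contained proof that makes visible exactly where characteristic $0$ is used; what the paper's version buys is brevity and avoidance of the coefficient-field bookkeeping (in particular the minor point, which you should still justify, that the coefficient field can be chosen to contain the image of $E$ --- true in equicharacteristic $0$ by the usual Zorn's lemma construction).
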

	
	\begin{proof}
		By \cite[\href{https://stacks.math.columbia.edu/tag/07EJ}{Lemma 07EJ}]{stacks-project} the map $E \to R$ is formally smooth in the $\mathfrak{m}_R$-adic topology. Now the lemma follows directly from \cite[\href{https://stacks.math.columbia.edu/tag/07NJ}{Lemma 07NJ}]{stacks-project}.
	\end{proof}
	
	\begin{example}
		Consider a non-constant polynomial $F \in \reals[x,y]$ with a solution $(x_0,y_0) \in \reals^2$, and consider the ring $R = \reals[x,y]_{(x-x_0,y-y_0)}/(F)$ and take $T = \reals[t]/(t^N)$ and $I = (t^2)$ for some $N > 1$. Then the lemma follows from the implicit function theorem since $R$ is regular if and only if $dF_{(x_0,y_0)}$ is surjective.
	\end{example}
	
	\begin{definition}
		Let $R$ be a local ring with maximal ideal $\mathfrak{m}$. A complex $C^\bullet$ of $R$-modules is called minimal if the differentials of $C^\bullet \otimes_{R} R/\mathfrak{m}$ are zero.
	\end{definition}
	
	\begin{lemma}[Main Lemma] \label{patching_lemma}
		Let $S_\infty = W[[x_1, \dots, x_r]]$, where $W$ is a complete discrete valuation ring with finite residue field of characteristic $p$ and uniformizer $\varpi$ and set $\mathfrak{a} = (x_1, \dots, x_r)$ and $\mathfrak{a}_N = ((1 + x_1)^{p^N} - 1, \dots, (1 + x_r)^{p^N} - 1)$ and $S_N = S_{\infty}/\mathfrak{a}_N$ for $N \geq 0$.
		Suppose we are given 
		\begin{enumerate}[(1)]
			\item A complete noetherian local $W$-algebra $R_\infty$.
			\item For each $N \geq 0$, a local $S_N$-algebra $R_N$ which is a quotient of $R_\infty$ such that $R_N / \mathfrak{a} = R_0$.
			\item \label{givenCN} For each $N \geq 0$, a minimal complex $C_N^\bullet$ of free $S_N$-modules such that $C_N^\bullet / \mathfrak{a} = C_0^\bullet$ and
			$C_0^\bullet$ is finitely generated.
			\item Integers $q_0,l_0$ such that $C_0^\bullet[1/p]$ is not exact and concentrated in degrees $[q_0, q_0 + l_0]$ and $\dim R_\infty[1/p] + l_0 \leq \dim (S_\infty)_{\mathfrak{a}} = r$.
			\item \label{givenTN} For each $N \geq 0$, a commutative $S_N$-subalgebra $T_{N} \subset \End_{\mathbf{D}(S_N)}(C_N^\bullet)$ whose image in $\End_{\mathbf{D}(S_0)}(C_0^\bullet)$ is contained in $T_0$.
			\item A constant $\delta > 0$ such that for each $N \geq 0$, there is an ideal $I_N$ of $T_N$ satisfying $I_N^\delta = 0$ with $I_0$ equal to the nilradical of $T_0$ and such that there exists a surjective $S_N$-algebra homomorphism $R_N \twoheadrightarrow T_N/I_N$ making the square
			\[
				\begin{tikzcd}
					R_N \arrow{r} \arrow{d} & T_N/I_N \arrow{d} \\ 
					R_0 \arrow{r} & T_0/I_0
				\end{tikzcd}
			\]
			commute.
			\item A maximal ideal $\mathfrak{q}$ of $T_0[1/p]$ with image $\mathfrak{p}$ in $R_0$ under the map 
			\[
			(\Spec T_0[1/p])_{red} \to (\Spec T_0)_{red} \cong (\Spec (T_0/I_0))_{red} \to (\Spec R_0)_{red} \hookrightarrow (\Spec R_\infty)_{red}
			\] 
			such that $\widehat{(R_\infty)_\mathfrak{p}}$ is regular and $(T_0)_{\mathfrak{q}}$ is a field.
		\end{enumerate}
		Then $\dim \widehat{(R_\infty)_{\mathfrak{p}}} = r - l_0$ and there is an isomorphism $(R_0)_\mathfrak{p} \cong (T_0)_\mathfrak{q}$ which is compatible with the map $R_0 \to T_0/I_0$.
	\end{lemma}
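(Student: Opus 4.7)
The plan is to patch everything to the $S_\infty$-level, run a dimension-and-depth argument to pin down the geometry of the patched cohomology, then use the Formal Implicit Function Theorem to lift through the nilpotent ideal before descending modulo $\mathfrak{a}$.

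\textbf{Patching.} Fix a non-principal ultrafilter on $\mathbb{N}$ and apply the functor $\patch_{S_\infty}$ to all the data indexed by $N$. Since each $C_N^i$ is free over $S_N = S_\infty/\mathfrak{a}^{N+1}$ of rank bounded uniformly (via $C_N^\bullet/\mathfrak{a} = C_0^\bullet$ with $C_0^\bullet$ finitely generated), Lemma \ref{patch_efamily_invariance_lemma} and the Selection Lemma \ref{selection_lemma} produce a bounded minimal complex $C_\infty^\bullet$ of finite free $S_\infty$-modules with $C_\infty^\bullet/\mathfrak{a} \cong C_0^\bullet$, concentrated in degrees $[q_0, q_0+l_0]$. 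Patching the $T_N$ via Lemma \ref{patch_hom_lemma} produces a commutative $S_\infty$-subalgebra $T_\infty \subset \operatorname{End}_{\mathbf{D}(S_\infty)}(C_\infty^\bullet)$, which is finite over $S_\infty$ because $C_\infty^\bullet$ has finite $S_\infty$-rank. Patching the $I_N$ gives $I_\infty \subset T_\infty$ with $I_\infty^\delta = 0$. Taking the inverse limit of the surjections $R_\infty \to R_N \to T_N/I_N$, together with the completeness of $R_\infty$, yields a surjective $S_\infty$-algebra map $\pi : R_\infty \twoheadrightarrow T_\infty/I_\infty$.

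\textbf{Dimension and depth.} Set $M := H^*(C_\infty^\bullet)$. By minimality and the degree concentration of $C_\infty^\bullet$, Lemma \ref{cohom_dim_bound} applied to $S_\infty$ (Cohen-Macaulay of dimension $r+1$) gives $\dim_{S_\infty} M \geq r+1-l_0$. The $S_\infty$-action on $M$ factors through $T_\infty$, and through $T_\infty/I_\infty$ up to the dimension-invisible nilpotent ideal $I_\infty$, hence through $R_\infty$ via $\pi$. Inverting $p$ then gives $\dim M[1/p] \leq \dim R_\infty[1/p] \leq r - l_0$ by hypothesis (4), while the lower bound forces $\dim M[1/p] \geq \dim M - 1 \geq r - l_0$. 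Equality therefore holds throughout, and the equality clause of Lemma \ref{cohom_dim_bound} pins $M$ down to degree $q_0+l_0$ with projective dimension exactly $l_0$ over $S_\infty$. Auslander-Buchsbaum (Lemma \ref{auslander_buchsbaum}) then gives $\operatorname{depth}_{S_\infty} M = r+1-l_0$, and Lemmas \ref{depth_eq_finite_ext}--\ref{depth_completion} propagate this depth down to $\widehat{R} := \widehat{(R_\infty)_{\tilde{\mathfrak{p}}}}$ (where $\tilde{\mathfrak{p}}$ is the preimage of $\mathfrak{p}$ in $R_\infty$), forcing $\dim \widehat{R} = r - l_0$: the lower bound from depth, the upper bound from $\dim R_\infty[1/p] \leq r-l_0$.

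\textbf{Lift and descend.} By hypothesis $\widehat{R}$ is regular, and since $p \notin \tilde{\mathfrak{p}}$, its residue field has characteristic zero. Let $\mathfrak{q}'$ be the preimage of $\mathfrak{q}$ in $T_\infty$; since $I_\infty$ is nilpotent, the kernel of $\widehat{(T_\infty)_{\mathfrak{q}'}} \twoheadrightarrow \widehat{(T_\infty/I_\infty)_{\mathfrak{q}'}}$ is nilpotent, so Lemma \ref{formal_implicit_function_lemma} lifts the localization of $\pi$ to a local map $\tilde{\pi} : \widehat{R} \to \widehat{(T_\infty)_{\mathfrak{q}'}}$. Nakayama forces $\tilde{\pi}$ to be surjective (and hence finite), and since $\widehat{R}$ is a regular local domain of dimension $r-l_0$ mapping onto a local ring of the same dimension, the kernel of $\tilde{\pi}$ must vanish, so $\tilde{\pi}$ is in fact an isomorphism. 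Reducing modulo $\mathfrak{a}$ and passing to the localization at $\mathfrak{p}$ (using faithful flatness of completion and the finite-dimensionality of the $T_0[1/p]$-module localizations at $\mathfrak{q}$) delivers the required finite injective map $(R_0)_\mathfrak{p} \to (T_0)_\mathfrak{q}$, compatible with $R_0 \to T_0/I_0$ by construction.

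\textbf{Main obstacle.} The crux is the dimension-and-depth analysis in the middle stage: forcing the equality clause of Lemma \ref{cohom_dim_bound} requires the inequalities $\dim M \geq r+1-l_0$ and $\dim M[1/p] \leq r-l_0$ to close up exactly, which is precisely where the hypothesis $\dim R_\infty[1/p] + l_0 \leq r$ does its work, and the chain $S_\infty \to R_\infty \to T_\infty/I_\infty$ has to be tracked carefully through the nilpotent ideal and through the inversion of $p$. A secondary delicate point is justifying the descent of the isomorphism $\tilde{\pi}$ from completions to the actual localizations $(R_0)_\mathfrak{p}, (T_0)_\mathfrak{q}$, where one must verify that the injectivity of $\tilde{\pi}$ is preserved after reducing modulo $\mathfrak{a}$ and not merely retains a lift landing in $T_0/I_0$.
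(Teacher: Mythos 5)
Your overall architecture (patch, bound dimensions, lift through the nilpotent ideal with Lemma \ref{formal_implicit_function_lemma}, descend mod $\mathfrak{a}$) matches the paper's, but the dimension-and-depth stage has a genuine gap. You assert that $C_\infty^\bullet$ is ``concentrated in degrees $[q_0,q_0+l_0]$'' and then apply Lemma \ref{cohom_dim_bound} over $S_\infty$ itself. Hypothesis (4) only says that $C_0^\bullet[1/p]$ is concentrated in those degrees; the integral complexes $C_0^\bullet$ and $C_\infty^\bullet$ are not, and in the intended application they carry torsion cohomology outside that range. Lemma \ref{cohom_dim_bound} needs $H^i(C^\bullet/\mathfrak{m}_S)$ concentrated in $[q_0,q_0+l_0]$, and $C_\infty^\bullet\otimes k$ is just the underlying free modules by minimality, so that hypothesis fails over $S_\infty$. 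Your attempted bridge, $\dim M[1/p]\geq \dim M-1$, is false in general (take $M$ killed by $\varpi$). The correct move is to localize at $\mathfrak{a}$ \emph{first}: $(S_\infty)_{\mathfrak{a}}$ is a CM local ring of dimension $r$ with $p$ invertible, and $(C_\infty^\bullet)_{\mathfrak{a}}/\mathfrak{a}=C_0^\bullet[1/p]$ is exactly the complex the hypothesis controls; then Lemma \ref{cohom_dim_bound} applies and the chain of inequalities through $(T_\infty)_{\mathfrak{a}}$, $(T_\infty)_{\mathfrak{a}}/I_\infty$ and $\patch_{S_\infty}(R_N)_{\mathfrak{a}}$ closes up against $\dim R_\infty[1/p]\leq r-l_0$.

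Two further points in the final stage need repair. First, your claim that Nakayama makes the lift $\tilde\pi$ surjective is unjustified (the hypotheses only give surjectivity onto $\widehat{(T_\infty)_{\mathfrak{q}}}/I_\infty$, whence finiteness, not surjectivity); the paper instead shows $\widehat M:=M_{\mathfrak{a}}\otimes\widehat{(T_\infty)_{\mathfrak{q}}}$ is free and nonzero over the regular ring $\widehat{(R_\infty)_{\mathfrak{p}}}$ by Auslander--Buchsbaum, and deduces the isomorphism (and $(I_\infty)_{\mathfrak{q}}=0$) from the faithfulness of the $T_\infty$-action on $M$. Second, you cannot obtain injectivity of $(R_0)_{\mathfrak{p}}\to(T_0)_{\mathfrak{q}}$ by ``reducing the isomorphism mod $\mathfrak{a}$'': the map $T_\infty/\mathfrak{a}\to T_0$ is only known to land \emph{in} $T_0$, not onto it, so $(T_\infty)_{\mathfrak{q}}/\mathfrak{a}\not\cong(T_0)_{\mathfrak{q}}$ a priori. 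The paper again goes through the module: $(M_{\mathfrak{a}}\otimes(T_\infty)_{\mathfrak{q}})/\mathfrak{a}\cong H^{q_0+l_0}(C_0^\bullet[1/p])\otimes_{T_0[1/p]}(T_0)_{\mathfrak{q}}$ is free over $(R_0)_{\mathfrak{p}}$ and faithful over $(T_0)_{\mathfrak{q}}$, which is what forces injectivity. Your proposal names this as a ``delicate point'' but does not supply the argument.
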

	
	\begin{proof}
		Let $\mathcal{F}$ be a non-principal ultrafilter on $\ints_{\geq 0}$ and set
		 \[
			C_\infty^\bullet := \patch_{S_\infty} \left( C_N^\bullet \right) ,\quad
			T_\infty' :=
			\patch_{S_\infty} \left( T_{N} \right) ,\quad
			I_\infty' := \patch_{S_\infty} \left( I_N \right).
		\]
		Then 
		\begin{enumerate}[(a)]
			\item Using \eqref{givenCN} and lemma \ref{patch_derived_natural} we find that $C_\infty^\bullet$ is a minimal complex of finitely generated free $S_\infty$-modules. By right exactness of $\patch_{S_\infty}$ we have
			$C_\infty^\bullet \otimes_{S_\infty} S_0 = C_0^\bullet$.
			\item From \eqref{givenTN} we obtain a commutative square for each $N \geq 1$
			\[
			\begin{tikzcd}
				T_N \arrow{r} \arrow{d} & \End_{\mathbf{D}(S_N/\mathfrak{m}_{S_N}^{e(N)})}(C_N^\bullet/\mathfrak{m}^{e(N)}) \arrow{d} \\
				T_0 \arrow{r} & \End_{\mathbf{D}(S_0/\mathfrak{m}_{S_0}^{e(N)})}(C_0^\bullet/\mathfrak{m}^{e(N)})
			\end{tikzcd}
			\]
			and by the second bullet point of lemma \ref{patch_derived_natural} we obtain a commutative square
			\[
			\begin{tikzcd}
				T_\infty' \arrow{d} \arrow{r} & \End_{\mathbf{D}(S_\infty)}(C_\infty^\bullet) \arrow{d} \\
				T_0 \arrow{r} &  \End_{\mathbf{D}(S_0)}(C_0^\bullet)
			\end{tikzcd}
			\]
			We denote the image of $T_\infty' \to \End_{\mathbf{D}(S_\infty)}(C_\infty^\bullet)$ by $T_\infty$. Since the family $(C_N^\bullet)$ is bounded (as $C_N^\bullet/\mathfrak{m}_{S_N} \cong C_0/\varpi$), we find that $C_\infty^\bullet$ is a finitely generated $S_\infty$-module
			by lemma \ref{patch_bounded_fg_lemma} and so is $\End_{\mathbf{D}(S_\infty)}(C_\infty^\bullet)$. 
			Hence $T_\infty$ is a finite $S_\infty$-algebra as $S_\infty$ is Noetherian.
			From the diagram we also conclude that 
			the image of $T_\infty$ in $\End_{\mathbf{D}(S_0)}(C_0^\bullet)$
			is contained in $T_0$. 
			\item We define $I_\infty$ as the image of $I_\infty'$ in $T_\infty$. It satisfies $I_\infty^\delta = 0$ since that can be described by a first order formula which holds for each $I_N$.
			\item By the right exactness of $\patch_{R_\infty}$, we find a $W$-algebra surjection 
			$R_\infty \twoheadrightarrow \patch_{R_\infty}(R_N)$.
			Moreover, the $S_\infty$-algebra homomorphism $\patch_{R_\infty}(R_N) \twoheadrightarrow \patch_{R_\infty}(T_N/I_N)$ is surjective. Lemma \ref{patching_change_ring_lemma} gives a natural isomorphism of $S_\infty$-algebras
			$\patch_{R_\infty}(T_N/I_N) \cong \patch_{S_\infty}(T_N/I_N) \cong T_\infty'/I'_\infty$.
			Together with the commutative diagram from assumption $(6)$ we obtain a commmutative diagram
			\[
			\begin{tikzcd}
				R_\infty \arrow[two heads]{d} \arrow[two heads]{r} & \patch_{R_\infty}(R_N) \arrow[two heads]{r} & T_\infty/I_\infty \arrow{d} \\
				R_0  \arrow[two heads]{rr} & & T_0/I_0
			\end{tikzcd}
			\]
			where all arrows except those starting at $R_\infty$ are $S_\infty$-linear.
		\end{enumerate}

		We apply lemma \ref{cohom_dim_bound} to $(C_\infty^\bullet)_{\mathfrak{a}}$,
		noting that $(C_\infty^\bullet)_{\mathfrak{a}}/\mathfrak{a} = C_0^\bullet[1/p]$ is
		concentrated in degrees $[q_0, q_0 + l_0]$.
		Hence $\dim_{(S_\infty)_{\mathfrak{a}}} H^{q_0 + l_0}((C_\infty^\bullet)_{\mathfrak{a}}) \geq \dim (S_\infty)_{\mathfrak{a}} - l_0$ and if equality holds, then $H^{q_0 + l_0}((C_\infty^\bullet)_{\mathfrak{a}})$ has projective dimension $l_0$.
		
		On the other hand, since $C_\infty^\bullet$ is finitely generated, the map
		\[
		(S_\infty)_\mathfrak{a}/\Ann_{(S_\infty)_\mathfrak{a}} (H^{q_0 + l_0}(C_\infty^\bullet)_\mathfrak{a}) \hookrightarrow
		(T_\infty)_{\mathfrak{a}}/\Ann_{(T_\infty)_\mathfrak{a}}(H^{q_0 + l_0}(C_\infty^\bullet)_\mathfrak{a})
		\]
		is finite and with lemma \ref{dim_le_finite_ext} we conclude 
		\[
			\dim_{(S_\infty)_{\mathfrak{a}}} H^{q_0 + l_0}(C_\infty^\bullet)_\mathfrak{a} = \dim_{(T_\infty)_\mathfrak{a}} H^{q_0 + l_0}(C_\infty^\bullet)_\mathfrak{a}.
		\]
		Since Krull dimension decreases with surjective ring homomorphisms and is insensitive to nilpotent thickenings we find that
		\[
			\dim_{(S_\infty)_{\mathfrak{a}}} H^{q_0 + l_0}(C_\infty^\bullet)_\mathfrak{a}  \leq \dim (T_\infty)_\mathfrak{a} = \dim (T_\infty)_\mathfrak{a}/I_\infty
			 \leq \dim \patch_{R_\infty} (R_N)_\mathfrak{a}.
		\]
		Moreover, $\patch_{R_\infty}(R_N)_{\mathfrak{a}}$ is a localisation of $\patch_{R_\infty}(R_N)[1/p]$ since $p \not\in \mathfrak{a}$. Hence
		\[
		\dim \patch_{R_\infty}(R_N)_{\mathfrak{a}} \leq \dim \patch_{R_\infty}(R_N)[1/p].
		\]
		With the bound above and the surjection $R_\infty \to \patch_{R_\infty}(R_N)$ we conclude
		\[
		\dim_{(S_\infty)_{\mathfrak{a}}} H^{q_0 + l_0}(C_\infty^\bullet)_\mathfrak{a} \leq \dim R_\infty[1/p] \leq \dim (S_\infty)_{\mathfrak{a}} - l_0,
		\]
		hence we must have
		equalities throughout. Thus, the projective dimension of the $(S_\infty)_{\mathfrak{a}}$-module
		$H^{q_0 + l_0}((C_\infty^\bullet)_{\mathfrak{a}})$ is $l_0$ and by the Auslander--Buchsbaum formula (lemma \ref{auslander_buchsbaum}), we find that
		$\depth_{(S_\infty)_\mathfrak{a}} H^{q_0 + l_0}((C_\infty^\bullet)_{\mathfrak{a}}) = \dim (S_\infty)_{\mathfrak{a}} - l_0 = \dim R_\infty[1/p]$.
		Now $(C_\infty^\bullet)_\mathfrak{a}$ is a projective resolution of $H^{q_0 + l_0}(C_\infty^\bullet)_{\mathfrak{a}}$ so that in fact we can view
		$(T_\infty)_{\mathfrak{a}} \subset \End_{(S_\infty)_{\mathfrak{a}}}(M_{\mathfrak{a}})$, where
		$M := H^{q_0 + l_0}(C_\infty^\bullet)$.
		
		Denote the kernel of $T_\infty \to T_0/\mathfrak{q}$ also by
		$\mathfrak{q}$, then $\mathfrak{a} T_\infty \subset \mathfrak{q}$ and
		the image of $\mathfrak{p} < R_\infty$ in $T_\infty/ I_\infty$ equals 
		$\mathfrak{q}$ as can be seen from the diagram
		\[
		\begin{tikzcd}
			R_\infty \arrow{r} \arrow[two heads]{d} & R_0 \arrow{r} \arrow{d} & R_0/\mathfrak{p}  \arrow[hook]{d} \\
			T_\infty/I_\infty \arrow{r} & T_0/I_0 \arrow{r} & T_0/\mathfrak{q}
		\end{tikzcd}
		\]
		Thus, we obtain a surjective local map $(R_\infty)_{\mathfrak{p}} \to (T_\infty)_{\mathfrak{q}}/I_\infty$, hence also a surjective local map on completions $\widehat{(R_\infty)_{\mathfrak{p}}} \to \widehat{ (T_\infty)_{\mathfrak{q}}/I_\infty}$.
		To simplify the notation we define 
		 \begin{linenomath*} \begin{align*}	
			\widehat{R} & := \widehat{(R_\infty)_{\mathfrak{p}}}, \\
			\widehat{T} & := \widehat{(T_\infty)_{\mathfrak{q}}}, \\
			\widehat{I} & := \ker( \widehat{(T_\infty)_{\mathfrak{q}}} \to \widehat{(T_\infty)_{\mathfrak{q}}/I_\infty}).
		\end{align*} \end{linenomath*} 
		Since $I_\infty^\delta = 0$, we have $(I_\infty + \mathfrak{q}^n)^\delta \subset \mathfrak{q}^n$ for all $n \geq 1$ and thus $\widehat{I}^\delta = 0$ as well.
		Since $\widehat{R} = \widehat{(R_\infty)_\mathfrak{p}}$ is a regular $W[1/p]$-algebra, we can apply Lemma \ref{formal_implicit_function_lemma} to choose a lifting
		\[
		\begin{tikzcd}
			&  \widehat{T}\arrow{d} \\
			\widehat{R} \arrow{r} \arrow{ru} & \widehat{T}/\widehat{I}
		\end{tikzcd}
		\]
		We use it to equip $\widehat M := M_{\mathfrak{a}} \otimes_{(T_\infty)_{\mathfrak{a}}} \widehat{(T_\infty)_{\mathfrak{q}}}$ with a $\widehat{R}$-module structure. This is the key step for dealing with the nilpotent ideals. 
		
		Note that $\widehat{R} \to \widehat{T}$ is a finite local ring map since $\widehat{R} \to \widehat{T}/ \widehat{I}$ is surjective and $\widehat{I}$ is nilpotent and finitely generated, i.e. $\widehat{T}$ is a quotient of
		$\widehat{R}[\epsilon_1, \dots, \epsilon_t]/(\epsilon_i^N)$. Hence lemma \ref{depth_eq_finite_ext} implies that
		\[
		\depth_{\widehat{R}} \widehat{M} = \depth_{\widehat{T}} \widehat{M} = \depth_{(T_\infty)_{\mathfrak{q}}} (M_{\mathfrak{a}} \otimes_{(T_\infty)_{\mathfrak{a}}} (T_\infty)_{\mathfrak{q}})
		\]
		where the second equality follows from lemma \ref{depth_completion}. Now lemma \ref{depth_le_depth_max} implies
		\[
		\depth_{\widehat{R}} \widehat{M} \geq \depth_{(S_\infty)_\mathfrak{a}} M_\mathfrak{a} \geq \dim R_\infty[1/p] \geq \dim (R_{\infty})_{\mathfrak{p}} = \dim \widehat{R}.
		\]
		
		Applying the Auslander--Buchsbaum formula to the regular ring $\widehat{R}$, we find that $\widehat{M}$ is a projective, hence free $\widehat{R}$-module. It is non-zero since $\mathfrak{q}$ is in the support of $\widehat{M} / \mathfrak{a}$ as $C_0$ is not exact. In particular, $\dim \widehat{R} = \dim R_\infty[1/p] = r - l_0$. 
		
		Since $\widehat{M}$ is free over $\widehat{R}$, the map 
		$\widehat{R} \to \widehat{T}$ is injective.
		The regularity of $\widehat{R}$ implies that it is reduced, hence $\widehat{R} \to \widehat{T}/\widehat{I}$ is injective as well. 
		By construction it is surjective and factors through the map $\widehat{\patch_{R_\infty}(R_N)_{\mathfrak{p}}} \to \widehat{T}/ \widehat{I}$. Thus, 
		we have isomorphisms
		\[
		\widehat{R} \cong \widehat{\patch_{R_\infty}(R_N)_{\mathfrak{p}}} \cong \widehat{T} / \widehat{I}
		\]
		and we equip $\widehat{R}$ with an $S_\infty$-algebra structure via these isomorphisms.
		
		$(T_0)_{\mathfrak{q}}$ is a field by assumption $(7)$, i.e. $\mathfrak{q} (T_0)_{\mathfrak{q}} = 0$. Recall that from (b) above we know that the image of $T_\infty$ in $\End_{S_0}(C_0)$ is contained in $T_0$. Consequently, $\mathfrak{q} T_\infty$ acts as $0$ on  
		\[
		\widehat{H^{q_0 + l_0}(C_0)_{\mathfrak{q}}} \cong \widehat{M} / \mathfrak{a} \widehat{M}
		\]
		and $\Ann_{\widehat{T}}(\widehat{M}/ \mathfrak{a} \widehat{M})$ must be the maximal ideal of $\widehat{T}$. Thus,
		\[
		\mathfrak{m}_{\widehat{T}} \widehat{M} \subset \mathfrak{a} \widehat{M} \subset \mathfrak{m}_{\widehat{T}} \widehat{M}.
		\]
		In particular, we have an equality of $\widehat{R}$-modules
		\[
		\mathfrak{a} \left( \widehat{M}/ \widehat{I} \widehat{M} \right) = \mathfrak{m}_{\widehat{R}} \left( \widehat{M} / \widehat{I}\widehat{M} \right) .
		\]
		The module $\widehat{M}/ \widehat{I} \widehat{M}$ is faithful over $\widehat{R}$
		since if $r \in \widehat{R}$ kills it, then any lift $\tilde{r} \in \widehat{T}$ satisfies $\tilde{r} \widehat{M} \subset \widehat{I} \widehat{M}$. Now \cite[Proposition 2.4]{atiyah_macdonald} shows that $\tilde{r}^k \in \widehat{I}$ for some $k \geq 1$. Hence $r^k = 0$ and the regularity of $\widehat{R}$ implies that $r = 0$.
		
		By the Cohen structure theorem we can apply lemma \ref{faithful_cancellation_lemma}
		to the $\widehat{R}$-module $\widehat{M}/ \widehat{I} \widehat{M}$ and find that
		\[
		\mathfrak{a} \widehat{R} = \mathfrak{m}_{\widehat{R}}.
		\]
		Hence $(R_0)_{\mathfrak{p}} = \widehat{(R_0)_{\mathfrak{p}}} = \widehat{R} / \mathfrak{a} \widehat{R}$ is a field and the natural map 
		$(R_0)_{\mathfrak{p}} \to (T_0)_{\mathfrak{q}}$ must be injective. By construction it is compatible with the initial $R_0 \to T_0/I_0$, hence also surjective.
	\end{proof}
	
	\section{Locally Symmetric Spaces}
	
	In this section we define the spaces we need for the main argument. Their cohomology will carry the Hecke eigensystems corresponding to the Galois representations we wish to study. This is essentially a recollection of results from \cite{borel_serre} and \cite{newton_thorne_16}.
	
	\subsection{The Symmetric Space of a Linear Algebraic Group}
	The starting point for our definitions is a linear algebraic group $G$, defined over a number field $F$. Moreover, for the remainder of this section we assume that we have picked an algebraic maximal compact subgroup $K_\infty \subset G(F \otimes \reals)$, i.e. a maximal compact subgroup such that its Cartan involution is the restriction of an algebraic automorphism of the Zariski closure of $G(F \otimes \reals)$ in $G(F \otimes \complex)$.
	
	\begin{definition}
		Let $R_u G$ be the unipotent radical of $G$ and let $S_G$ be the identity component of the real points of the $\rationals$-split part of the centre of $\Res_{F/\rationals}(G/ R_u G)$.
	\end{definition}

	\begin{example}
		If $G = \GL_n/F$, where $F$ is any number field, then $S_G \cong \reals^\times_{>0}$ embedded as scalar matrices in $\GL_n(F \otimes \reals)$.
	\end{example}
	
	\begin{definition}
		Let $G$ be a linear algebraic group over a number field $F$. For a section $s : (G/R_u G)(F \otimes \reals) \to G(F \otimes \reals)$ we define the symmetric space (a smooth real manifold)
		\[
		X_{G,s} := G(F \otimes \reals) / K_\infty s(S_G) .
		\]
	\end{definition}
	
	\begin{example} Here are some examples of these symmetric spaces
		\begin{enumerate}
			\item If $G = \mathbb{G}_m$ over $F$, then $R_u G = 1$ and we can choose $s$ as the identity. Then we find a diffeomorphism
			\[
			X_{G,s} \to\mathbb{R}^{N_\infty}/ \reals (1, 1, \dots, 1): x \mapsto (\log | x |_{i} )_{i \in N_\infty}, 
			\]
			where $N_\infty$ denotes the set of equivalence classes of archimedean absolute values on $F$. Note that the regulator of $F$ is the volume of $\mathcal{O}_F^\times \backslash X_{G,s}$, where $X_{G,s}$ is given its euclidean metric.
			\item If $G = \GL_2/\rationals$, $s = \id$ and $K_\infty = O_2(\reals)$, then 
			\[
			X_{G,s} \to \mathbb{H} : \begin{bmatrix}
				a & b \\ c & d 
			\end{bmatrix} \mapsto  \frac{a \varepsilon i + b}{c \varepsilon i + d}
			\]
			is a diffeomorphism, where $\mathbb{H} = \{x + i y \in \complex : y > 0 \}$ and $\varepsilon = \operatorname{sign}(ad - bc)$.
			\item If $G = \GL_n / \rationals$, $K_\infty = O_n(\reals)$, $s = \id$ and $Y \subset M_n(\reals)$ is the set of positive definite, symmetric matrices of determinant 1, then 
			\[
			X_{G,s} \to Y : A \mapsto A^T A |\det A|^{-2/n}
			\]
			is a diffeomorphism.
			\item If $G = \begin{bmatrix}
				* & * & * \\
				0 & * & * \\
				0 & * & * 
			\end{bmatrix} \subset \GL_3$ over $F = \rationals$,
			$K_\infty = \{\pm 1\} \times O_2(\reals)$
			and $s : G/R_u G \cong \GL_1 \times \GL_2 \to \GL_3$ is the natural embedding, then
			\[
			X_{G,s} \to \mathbb{H} \times \reals^2 : \begin{bmatrix}
				1 & t & u \\
				0 & a & b \\
				0 & c & d
			\end{bmatrix} \mapsto \left(  \frac{a \varepsilon i + b}{c \varepsilon i + d}, \frac{t}{|ad - bc|^{1/2}}, \frac{u}{|ad - bc|^{1/2}} \right)
			\]
			is a diffeomorphism, where $\varepsilon = \operatorname{sign}(ad - bc)$.
		\end{enumerate}
	\end{example}
	
	\begin{prop}
		Let $G$ be a reductive group over $F$ and $P \subset G$ a parabolic subgroup. Then $P(F \otimes \reals)$ contains a unique Levi subgroup stable under the Cartan involution associated with $K_{\infty}$. Thus, there is a unique section $s : (P/R_u P)(F \otimes \reals) \to P(F \otimes \reals) \subset G(F \otimes \reals)$ whose image is stable under the Cartan involution.
	\end{prop}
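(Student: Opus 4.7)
The plan is to prove existence and uniqueness of a Cartan-involution-stable Levi subgroup of $P(F \otimes \reals)$; sections $s : (P/R_uP)(F\otimes\reals)\to P(F\otimes\reals)$ correspond bijectively to Levi subgroups of $P(F\otimes\reals)$ via their images, so this is equivalent to the statement. Write $\theta$ for the Cartan involution of $G(F\otimes\reals)$ associated to $K_\infty$.

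For uniqueness, I plan to characterize any $\theta$-stable Levi $L\subset P(F\otimes\reals)$ intrinsically as the intersection $L = P(F\otimes\reals)\cap\theta(P(F\otimes\reals))$. The inclusion $L\subset P\cap\theta(P)$ is immediate from $\theta(L)=L$. For the reverse inclusion, $L$ being $\theta$-stable and reductive forces its connected split central torus $A_L$ to be $\theta$-stable, and a standard fact about Cartan involutions of real reductive groups is that $\theta$ acts as inversion on every $\theta$-stable $\reals$-split torus. Consequently $\theta$ interchanges positive and negative weight spaces of $A_L$ on $\operatorname{Lie} G$, so that $\theta(R_uP)$ is the unipotent radical of the parabolic of $G$ opposite to $P$ relative to $L$. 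The classical identity $P\cap P^{\mathrm{opp}}=L$ then closes the argument, and since the characterization involves only $P$ and $\theta$, the Levi is unique.

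For existence, I would invoke an Iwasawa-type decomposition. Fix a minimal parabolic $P_0=M_0A_0N_0$ of $G$ with $A_0$ a maximal $\reals$-split torus, chosen together with $M_0$ to be $\theta$-stable (this is standard; see e.g.\ Chapter VI of Knapp, \emph{Lie Groups Beyond an Introduction}). The Iwasawa decomposition $G(F\otimes\reals)=K_\infty\cdot P_0(F\otimes\reals)$ implies that $P$ is $K_\infty$-conjugate to a standard parabolic $P_{\mathrm{std}}\supseteq P_0$. Its standard Levi $L_{\mathrm{std}} = Z_G(A_{P_{\mathrm{std}}})$, for the split central torus $A_{P_{\mathrm{std}}}\subset A_0$, is $\theta$-stable because centralizers of $\theta$-stable subgroups are $\theta$-stable. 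Since $\theta$ is the identity on $K_\infty$, conjugation by any $k\in K_\infty$ transports this to a $\theta$-stable Levi of $P$. The main obstacle is really bookkeeping: the two nontrivial inputs — existence of a $\theta$-stable maximal $\reals$-split torus, and inversion of $\theta$-stable $\reals$-split tori by $\theta$ — are classical and will be cited; the remainder is formal manipulation of the Iwasawa and Langlands decompositions together with the elementary identity $P\cap P^{\mathrm{opp}}=L$.
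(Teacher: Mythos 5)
Your argument is correct, but it is worth noting that the paper does not actually prove this proposition: it simply cites \cite[Proposition 1.6]{borel_serre}, so there is no internal proof to compare against. What you have written is a self-contained argument for the cited fact, and both halves hold up. The uniqueness step via the intrinsic characterization $L = P(F\otimes\reals)\cap\theta\bigl(P(F\otimes\reals)\bigr)$ is the right idea: a $\theta$-stable split torus has Lie algebra contained in the $(-1)$-eigenspace $\mathfrak{p}$ of $d\theta$ (its intersection with $\mathfrak{k}$ would generate a subgroup with compact closure inside a torus isomorphic to a product of copies of $\reals_{>0}$, hence is trivial), so $\theta$ really does invert $A_L$, flips the $A_L$-weight spaces, and sends $P$ to the opposite parabolic relative to $L$; the identity $P\cap P^{\mathrm{opp}}=L$ then gives uniqueness. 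The existence step via the Iwasawa decomposition $G(F\otimes\reals)=K_\infty\cdot P_0(F\otimes\reals)$, which lets you arrange the conjugating element to lie in $K_\infty$ and hence commute with $\theta$, is also standard and correct. Two small points you should make explicit if this were to replace the citation: first, you are implicitly using that every closed complement to $R_uP(F\otimes\reals)$ arising as the image of a section is ($R_uP(F\otimes\reals)$-conjugate to, hence equal to) the group of real points of an algebraic Levi, which is Mostow's conjugacy theorem for Levi subgroups of real Lie groups; second, the hypothesis that $K_\infty$ is an \emph{algebraic} maximal compact subgroup is exactly what guarantees that $\theta$ extends to an algebraic involution, so that $\theta(P)$ is again a parabolic subgroup and the weight-space bookkeeping makes sense at the level of algebraic groups.
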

	
	\begin{proof}
		See \cite[Proposition 1.6]{borel_serre}.
	\end{proof}
	
	\begin{definition}
		Let $G$ be a reductive group over $F$ and let $K_\infty$ be an algebraic maximal compact subgroup of $G(\reals \otimes F)$. For a parabolic subgroup $P \subset G$ the proposition provides us with a canonical choice of section $s$ and we will simply write
		\[
		X_P := X_{P,s}
		\]
		for this choice of $s$.
	\end{definition}
	
	\begin{definition}
		Let $P \subset G$ be a parabolic subgroup of a reductive group over $F$, then we define $A_P := s(S_P)/S_G$, where $s$ is the section stable under Cartan involution.
	\end{definition}
	
	\begin{lemma}
		If $P \subset G$ is a parabolic subgroup of a reductive group over $F$, then
		there is a natural diffeomorphism $X_P \cong X_G/A_P$.
	\end{lemma}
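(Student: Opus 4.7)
The plan is to identify both $X_P$ and $X_G/A_P$ with the same quotient of $P(F \otimes \reals)$. The key input is the Iwasawa decomposition $G(F \otimes \reals) = P(F \otimes \reals) \cdot K_\infty$, which makes the natural composition
\[
P(F \otimes \reals) \hookrightarrow G(F \otimes \reals) \twoheadrightarrow X_G = G(F \otimes \reals)/K_\infty S_G \twoheadrightarrow X_G/A_P
\]
surjective. The lemma will follow by computing the kernel of this composition and identifying it with $K_{\infty,P} \cdot s(S_P)$, where $K_{\infty,P} := K_\infty \cap P(F \otimes \reals)$ is the canonical (Cartan-stable) maximal compact subgroup of $P(F \otimes \reals)$.

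First I would compute the stabiliser in $P(F \otimes \reals)$ of the basepoint of $X_G$. Since $G$ is reductive, $S_G$ lies in the centre of $G(F \otimes \reals)$, hence in $P(F \otimes \reals)$. Therefore any factorisation $p = k s$ with $p \in P(F \otimes \reals)$, $k \in K_\infty$ and $s \in S_G$ forces $k = p s^{-1} \in P(F \otimes \reals)$, so $P(F \otimes \reals) \cap K_\infty S_G = K_{\infty,P} \cdot S_G$. This already gives a canonical identification
\[
X_G \cong P(F \otimes \reals) / K_{\infty,P} S_G.
\]

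Next I would interpret the $A_P$-action on $X_G$ through this identification: $A_P = s(S_P)/S_G$ acts by right translation via lifts in $s(S_P) \subset M(F \otimes \reals)$, where $M$ is the Levi of $P$ stable under the Cartan involution attached to $K_\infty$, as provided by the previous proposition. This action is well-defined because $s(S_P)$ lies in the centre of $M$, so it commutes with $S_G$ and with $K_{\infty,P} = K_\infty \cap M(F \otimes \reals)$. The subgroup $S_G \subseteq s(S_P)$ (coming from $Z(G)(F \otimes \reals)^0 \subseteq Z(M)(F \otimes \reals)^0$) acts trivially, so passing to the quotient yields
\[
X_G/A_P \cong P(F \otimes \reals) / K_{\infty,P} \cdot s(S_P) = X_P,
\]
which is precisely the definition of $X_P$ for the canonical section. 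Naturality is automatic since every subgroup used is intrinsic to the triple $(G, P, K_\infty)$.

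The only mildly subtle point, rather than an obstacle, is the commutation of $s(S_P)$ with $K_{\infty,P}$, which I would record as a consequence of the Cartan-stability of $M$ together with the general fact that the identity component of the centre of a real reductive Lie group centralises any of its maximal compact subgroups. Once this observation and the stabiliser computation above are in hand, the desired isomorphism follows formally.
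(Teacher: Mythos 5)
Your proof is correct and follows essentially the same route as the paper: both arguments rest on the Iwasawa decomposition $G(F \otimes \reals) = P(F \otimes \reals) K_\infty$ and the resulting identification $X_G \cong P(F \otimes \reals)/(K_\infty \cap P(F \otimes \reals)) S_G$, after which quotienting by $A_P$ gives $X_P$. You simply spell out the stabiliser computation and the commutation of $s(S_P)$ with $K_\infty \cap P(F \otimes \reals)$, which the paper leaves implicit.
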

	
	\begin{proof}
		We have
		$G(F \otimes \reals) = P(F \otimes \reals) K_\infty$ by the Iwasawa decomposition.
		Hence naturally $X_G \cong P(F \otimes \reals)/ (K_\infty \cap P(F \otimes \reals)) S_G$. Now $K_\infty \cap P(F \otimes \reals)$ is a maximal compact subgroup of $P(F \otimes \reals)$, hence $X_G/A_P \cong X_P$. 
	\end{proof}
	
	\begin{example} Let $P = \begin{bmatrix}
			* & * \\ 0 & * 
		\end{bmatrix} \subset \GL_2/\rationals$. 
		Then $A_P = \left\{ \begin{bmatrix}
			t & 0 \\ 0 & 1  \end{bmatrix} : t > 0 \right\}$ and under the identification
		$X_{\GL_2} = \mathbb{H}$ from above we have
		\[
		(x + i y) \cdot \begin{bmatrix}
			t & 0 \\ 0 & 1 
		\end{bmatrix} = \begin{bmatrix}
			y & x \\ 0 & 1 
		\end{bmatrix} \begin{bmatrix}
			t & 0 \\ 0 & 1
		\end{bmatrix} = x + i ty,
		\]
		hence we can see how $X_{\GL_2} / A_P = \reals = X_P$.
	\end{example}
	
	\begin{definition}
		If $A$ is a finite product of copies of $\reals^\times_{>0}$, we define a manifold with corners $\overline{A}$ by replacing each $\reals^\times_{>0}$ factor with $\reals^\times_{>0} \cup \{\infty\}$.
		We let $A$ act on $\overline{A}$ in the obvious way fixing $\infty$.
	\end{definition}
	
	\begin{definition}
		Given topological spaces $X, Y$ and a topological group $G$ acting on the right on $X$ and on the left on $Y$, we define 
		$X \times_G Y := (X \times Y) / G$, where $g \in G$ acts diagonally. 
	\end{definition}	
	
	\begin{lemma}
		Let $P \subset G$ be a parabolic subgroup of a reductive group over $F$, then
		the topological space $Y_P := X_G \times_{A_P} \overline{A}_P$ is a disjoint union
		\[
		Y_P = \bigcup_{P \subset Q} X_Q
		\]
		over all parabolic subgroups of $G$ containing $P$ and the $X_Q$ are naturally identified with locally closed subsets of $Y_P$.
	\end{lemma}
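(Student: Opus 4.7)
The plan is to stratify $\overline{A}_P$ into $A_P$-orbits indexed by the parabolic subgroups $Q \supset P$, and then compute the fiber product $X_G \times_{A_P} \overline{A}_P$ one stratum at a time. Concretely, since $S_P$ is the identity component of the centre of the real points of the Levi quotient of $P$, the group $A_P = s(S_P)/S_G$ is naturally a product $(\reals_{>0}^{\times})^d \times T_c$ with $T_c$ a compact torus and $d$ equal to the parabolic rank of $P$ in $G$. Therefore $\overline{A}_P = ((0,\infty])^d \times T_c$, and the $A_P$-action on $\overline{A}_P$ fixes the coordinate $\infty$ in each split factor. Its orbits are consequently parametrised by subsets $I \subset \{1,\dots,d\}$ (namely, the set of split coordinates which equal $\infty$), and the orbit $\overline{A}_P^{\,I}$ is $A_P$-equivariantly isomorphic to $A_P / A_I$ for a certain sub-torus $A_I \subset A_P$.

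Next I would match these orbits with parabolic subgroups containing $P$. The $d$ split coordinates of $A_P$ correspond canonically to the simple relative roots of $(S_P/S_G, G)$, and the sub-torus $A_I$ obtained by requiring the roots indexed by $I$ to vanish is exactly the image in $A_P$ of the connected centre of the Levi of a unique standard parabolic $Q = Q(I) \supset P$ attached to $I$ by Borel--Tits theory. In particular $A_I = A_Q$, and $I \mapsto Q(I)$ gives a bijection between subsets of the simple relative roots and the set of parabolics containing $P$. This combinatorial translation is standard Lie theory but is the step that really demands care.

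With the orbit stratification identified, the rest is formal. Using the previous lemma $X_G/A_Q \cong X_Q$ we compute
\[
X_G \times_{A_P} \overline{A}_P^{\,I} \;\cong\; X_G \times_{A_P} (A_P/A_Q) \;\cong\; X_G / A_Q \;\cong\; X_Q,
\]
which yields the set-theoretic partition $Y_P = \bigcup_{P \subset Q} X_Q$. To see that each $X_Q$ sits in $Y_P$ as a locally closed subset, I would check at the level of $\overline{A}_P$ that the closure of $\overline{A}_P^{\,I}$ is $\bigcup_{J \supset I} \overline{A}_P^{\,J}$, so that $\overline{A}_P^{\,I}$ is open in its closure; since taking the quotient by a proper $A_P$-action preserves this property, $X_Q$ is locally closed in $Y_P$.

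The main obstacle is the second step: matching the combinatorics of $A_P$-orbits on the corner manifold $\overline{A}_P$ with the poset of parabolic subgroups $Q$ with $P \subset Q \subset G$. Once that bijection is established, the rest is an unwinding of definitions together with the earlier identification $X_Q \cong X_G/A_Q$.
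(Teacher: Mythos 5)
The paper offers no proof of this lemma at all---it simply cites \cite[\S 5]{borel_serre}---so your argument is not competing with one in the text; it is a reconstruction of the standard Borel--Serre argument, and its overall strategy (stratify $\overline{A}_P$ into $A_P$-orbits, match orbits with parabolics $Q\supset P$ via the simple relative roots, then use $X_G\times_{A_P}(A_P/A_Q)\cong X_G/A_Q\cong X_Q$ and the closure relations among orbits for local closedness) is the right one. You also correctly isolate the genuine content: the definition of $\overline{A}_P$ is only canonical once the $\reals_{>0}^\times$-coordinates of $A_P$ are taken to be the simple relative roots of $S_P/S_G$ acting on $\operatorname{Lie}(R_uP)$, and the Borel--Tits bijection between subsets of those roots and parabolics $P\subset Q\subset G$ is what makes the indexing work.

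There is, however, a concrete indexing error in your second step. If $I$ is the set of split coordinates sitting at $\infty$, the stabilizer of a point of that orbit consists of the elements $b\in A_P$ with $\alpha_i(b)=1$ for all $i\notin I$ (the coordinates at $\infty$ absorb anything; the finite ones must be fixed), i.e.\ the subtorus where the roots \emph{outside} $I$ vanish, which is $A_{Q}$ for $Q=Q(I^c)$ in your notation---not the subtorus where the roots in $I$ vanish. Your convention fails the sanity check against the paper's $\GL_3$ example: the deepest corner, $I=\{1,\dots,d\}$, must have stabilizer all of $A_P$ and produce the closed stratum $X_P$, whereas your $A_I$ would there be trivial and produce $X_G$ again. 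This is a fixable complementation slip rather than a missing idea, but as written the bijection $I\mapsto Q(I)$ is the wrong one, and every subsequent identification $X_G\times_{A_P}\overline{A}_P^{\,I}\cong X_Q$ inherits the flip; once corrected, the rest of the argument (including the closure computation $\overline{\overline{A}_P^{\,I}}=\bigcup_{J\supset I}\overline{A}_P^{\,J}$ and openness of the quotient map) goes through.
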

	
	\begin{proof}
		See \cite[\S 5]{borel_serre}.
	\end{proof}

	\begin{example} Here are two examples of $Y_P$ for different $P$.
		\begin{enumerate}
			\item	Let $P = \begin{bmatrix}
				* & * \\ 0 & * 
			\end{bmatrix} \subset \GL_2/\rationals$. 
			Then $A_P = \left\{ \begin{bmatrix}
				t & 0 \\ 0 & 1  \end{bmatrix} : t > 0 \right\} \cong \reals_{>0}^\times$
			and so we have
			\[
			Y_P = X_{\GL_2} \times_{A_P} A_P \cup X_{\GL_2} \times_{A_P} \{\infty\} = X_{\GL_2} \cup X_{\GL_2}/A_P = X_{\GL_2} \cup X_P.
			\]
			From the previous example we know that the $A_P$ orbits in $X_{\GL_2}$ are of the form $\gamma_x(t) = x + i t$ and to each such orbit we add a limit at $t = \infty$ to obtain $Y_P$.
			
			\item Let $B = \begin{bmatrix}
				* & * & * \\
				0 & * & * \\
				0 & 0 & *
			\end{bmatrix} \subset \GL_3/\rationals$ and 
			$P = \begin{bmatrix}
				* & * & * \\
				* & * & * \\
				0 & 0 & *
			\end{bmatrix}, Q = \begin{bmatrix}
				* & * & * \\
				0 & * & * \\
				0 & * & *
			\end{bmatrix}$. Then 
			\[
			A_B \cong \reals_{>0}^\times \times \reals_{>0}^\times \cong \left\{ \begin{bmatrix}
				s t^{-2} & 0 & 0 \\
				0 & st & 0 \\
				0 & 0 & t s^{-2}
			\end{bmatrix} : s,t > 0 \right\}
			\]
			and $A_P = \{(s,1) \in A_B\}$ and $A_Q = \{(1,t) \in A_B\}$. Hence
			$\overline{A}_B \cong A_B \cup A_B/A_P \cup A_B/A_Q \cup A_B/A_B$ so that
			\[
			Y_B = X_G \cup X_P \cup X_Q \cup X_B.
			\]
			$X_B \subset Y_B$ is closed, $X_G \subset Y_B$ is open and dense, and $\partial X_P = \partial X_Q = X_B$. Combinatorially, $X_G$ is the interior of a polyhedron $Y_B$, $X_P$ and $X_Q$ are faces and $X_B$ is an edge. These closure relations can also be read off from the lattice of parabolic subgroups containing $B$:
			\[
			\begin{tikzcd}
				& P  \arrow{rd} & \\
				B \arrow{ru} \arrow{rd} &  & G \\
				& Q  \arrow{ru} &	
			\end{tikzcd}
			\]
		\end{enumerate}
	\end{example}
	
	\begin{theorem}[Borel--Serre] \label{borel_serre_thm}
		Let $G$ be a reductive group over $F$. Then the disjoint union over parabolic subgroups of $G$
		\[
		\overline{X}_G := \bigcup_{P \subset G} X_P
		\]
		carries the structure of a manifold with corners such that for each $P$, the set 
		\[
		\bigcup_{P \subset Q} X_Q
		\]
		is open and has the topology given by $X_G \times_{A_P} \overline{A}_P$.
		Moreover, the action of $G(F)$ on $X_G$ extends to $\overline{X}_G$.
	\end{theorem}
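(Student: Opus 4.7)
The plan is to define the topology on $\overline{X}_G$ by declaring that each subset $Y_P = X_G \times_{A_P} \overline{A}_P$, identified as a set with $\bigcup_{P \subset Q} X_Q$ via the previous lemma, is an open subspace with the manifold-with-corners structure coming from the product $X_G \times_{A_P} \overline{A}_P$. Since $A_P$ is a connected abelian real Lie group which is a product of copies of $\reals^\times_{>0}$ (as it is $s(S_P)/S_G$ for a parabolic Levi section), its partial compactification $\overline{A}_P$ is literally a product of half-open intervals $(0,\infty]$, hence a manifold with corners. The free action of $A_P$ on $X_G$ is smooth and proper, and $\overline{A}_P$ is an equivariant partial compactification; thus $X_G \times_{A_P} \overline{A}_P$ carries a canonical manifold-with-corners structure whose interior is $X_G$ and whose faces are the $X_Q$ for $P \subsetneq Q$.

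The main step will be to verify that the collection $\{Y_P\}_{P}$ forms a compatible atlas on $\overline{X}_G$. For this I would show that if $P \subset P'$ are two parabolics, then there is a natural inclusion $Y_{P'} \hookrightarrow Y_P$ which is open and which respects the manifold-with-corners structures. This rests on two observations: first, the set-theoretic decomposition $Y_P = \bigsqcup_{P \subset Q} X_Q$ of the previous lemma shows $Y_{P'} \subset Y_P$ as sets; second, $A_{P'} \subset A_P$ is a direct factor (cut out by the vanishing of some of the $\reals^\times_{>0}$ coordinates), so the partial compactification $\overline{A}_{P'}$ is an open submanifold-with-corners of $\overline{A}_P$ obtained by removing the faces at infinity corresponding to directions in $A_P/A_{P'}$. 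Passing to the quotient by the respective tori and using the Iwasawa decomposition $G(F\otimes\reals) = P(F\otimes\reals) K_\infty$ (together with its analogue for $P'$) identifies $Y_{P'}$ with the open subset of $Y_P$ where the directions in $A_P/A_{P'}$ are finite. The resulting charts thus glue, and because $Y_B$ for $B$ a minimal parabolic already contains the symbols $X_Q$ for all $Q \supset B$, finitely many $Y_B$ (one per Weyl chamber, or equivalently per $G(F)$-conjugacy representative up to translation) cover $\overline{X}_G$. The hardest technical point will be making the identification $Y_{P'} \subset Y_P$ rigorous at the level of manifolds with corners, but this is precisely the content of \cite[\S 5--\S 7]{borel_serre}, which I would quote.

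Finally, to extend the $G(F)$-action, I would note that $g \in G(F)$ acts on $X_G$ on the right (or left, according to convention) and sends the parabolic $P$ to $gPg^{-1}$, inducing an isomorphism $A_P \cong A_{gPg^{-1}}$ of Lie groups that extends to $\overline{A}_P \cong \overline{A}_{gPg^{-1}}$. This gives compatible isomorphisms $Y_P \cong Y_{gPg^{-1}}$ of manifolds with corners which, when assembled over all $P$, yield a smooth action of $G(F)$ on $\overline{X}_G$ restricting to the given action on $X_G$. With the charts $Y_P$ and their compatibilities in hand, this amounts to observing that the action of $G(F)$ permutes the chart data, and no additional analysis is required. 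In summary, I would organize the proof as (i) define local model $Y_P$ and verify the manifold-with-corners structure on each, (ii) check compatibility of charts along inclusions $P \subset P'$, (iii) glue to obtain $\overline{X}_G$, (iv) extend the $G(F)$-action by functoriality of the construction in $P$, with step (ii) absorbing the bulk of the work and following \cite{borel_serre}.
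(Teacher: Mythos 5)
Your proposal is correct and follows the same route as the paper, which simply cites Borel--Serre for this theorem: you outline the standard construction (local models $Y_P = X_G \times_{A_P} \overline{A}_P$, gluing along inclusions of parabolics, functoriality in $P$ for the $G(F)$-action) and then, as the paper does, defer the technical heart of the chart compatibilities to \cite[\S 5--\S 7]{borel_serre}. The sketch is consistent with the surrounding lemmas in this section (in particular the decomposition $Y_P = \bigcup_{P \subset Q} X_Q$), so no further comment is needed.
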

	
	\begin{proof}
		See \cite[\S 7.1 and Proposition 7.6]{borel_serre}.
	\end{proof}
	
	\subsection{Quotients by Congruence Subgroups}
	The symmetric spaces $X_G$ and their Borel--Serre compactifications $\overline{X}_G$ carry an action by $G(F)$. This action places us in the subject of number theory. 
	The Hecke actions that we study in this paper make use of the family of congruence subgroups of $G(F)$. In particular we are interested in the cohomology of quotients $\Gamma \backslash X_G$ and $\Gamma \backslash \overline{X}_G$ for congruence subgroups $\Gamma < G(F)$ and how it varies with $\Gamma$.\footnote{The first of these quotients is called an arithmetic locally symmetric space and the second is its Borel--Serre compactification.}  In our case, $\Gamma$ usually acts freely and $X_G$ is contractible so that the cohomology of $\Gamma \backslash X_G$ is isomorphic to the group cohomology of $\Gamma$.
	
	\subsubsection{Adelic Formalism} We introduce some notation to define congruence subgroups and the corresponding locally symmetric spaces in modern (adelic) terms. Let $F$ be a number field. Then we denote the adeles of $F$ by $\mathbb{A}_F$ and the finite adeles by $\mathbb{A}_F^\infty$. Let $G$ be a linear algebraic group over $F$ and define
	\[
	\mathfrak{X}_G := G(F) \backslash (X_G \times G(\mathbb{A}_F^\infty)) \qquad \overline{\mathfrak{X}}_G := G(F) \backslash (\overline{X}_G \times G(\mathbb{A}_F^\infty)),
	\]
	where $G(F)$ acts diagonally by left multiplication and $G(\mathbb{A}_F^\infty)$ is given the \emph{discrete} topology. Now $\mathfrak{X}_G$ is a topological space with a right action by the discrete group $G(\mathbb{A}_F^\infty)$. In particular for any open compact subgroup $K \subset G(\mathbb{A}_F^\infty)$, we can define
	\[
	X_G^K := \mathfrak{X}_G/K \qquad \overline{X}_G^K := \overline{\mathfrak{X}}_G/K.
	\]
	
	\begin{prop} \label{stratification_lemma}
		Let $G$ be a reductive group over $F$ and 
		let $\mathcal{P}$ be a set of representatives of $G(F)$ conjugacy classes of $F$-rational parabolic subgroups of $G$. 
		There is a $G(\mathbb{A}_F^\infty)$-equivariant stratification 
		\[
		\overline{\mathfrak{X}}_G \cong \bigcup_{P \in \mathcal{P}} \mathfrak{X}_P \times_{P(\mathbb{A}_F^\infty)} G(\mathbb{A}_F^\infty)
		\]
	\end{prop}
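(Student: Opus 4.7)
The plan is to decompose $\overline{X}_G$ using Theorem \ref{borel_serre_thm}, then carefully unpack the $G(F)$-quotient of $\overline{X}_G \times G(\mathbb{A}_F^\infty)$ by sorting parabolics into conjugacy classes. First I would recall from Theorem \ref{borel_serre_thm} that $\overline{X}_G = \bigsqcup_{P \subset G} X_P$, where the union runs over \emph{all} $F$-rational parabolic subgroups, and that the $G(F)$-action on $\overline{X}_G$ permutes the strata via conjugation on parabolic subgroups, with $g \cdot X_P = X_{gPg^{-1}}$.

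Next, I would use the fact that the normalizer of a parabolic subgroup $P$ in $G$ is $P$ itself, so that the stabilizer of the stratum $X_P$ in $G(F)$ is exactly $P(F)$. Hence for a chosen representative $P \in \mathcal{P}$, orbit--stabilizer gives a $G(F)$-equivariant identification
\[
\bigsqcup_{Q \sim P} X_Q \;\cong\; G(F) \times_{P(F)} X_P,
\]
where $P(F)$ acts on the right on $G(F)$ and on the left on $X_P$. Taking the product with $G(\mathbb{A}_F^\infty)$ (with diagonal $G(F)$-action coming from $G(F) \hookrightarrow G(\mathbb{A}_F^\infty)$) and then quotienting by $G(F)$, a representative-chasing argument—normalising the $G(F)$ component to $1$ using the $G(F)$-action, and then tracking what the $P(F)$-equivalence becomes—shows
\[
G(F) \backslash \bigl( (G(F) \times_{P(F)} X_P) \times G(\mathbb{A}_F^\infty) \bigr) \;\cong\; P(F) \backslash (X_P \times G(\mathbb{A}_F^\infty)),
\]
where $P(F)$ now acts diagonally via its action on $X_P$ and by left multiplication on $G(\mathbb{A}_F^\infty)$ through $P(F) \hookrightarrow P(\mathbb{A}_F^\infty)$.

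To identify the right-hand side with $\mathfrak{X}_P \times_{P(\mathbb{A}_F^\infty)} G(\mathbb{A}_F^\infty)$, I would define the map
\[
\mathfrak{X}_P \times_{P(\mathbb{A}_F^\infty)} G(\mathbb{A}_F^\infty) \to P(F) \backslash (X_P \times G(\mathbb{A}_F^\infty)),\qquad [[x,p_1],g] \mapsto [x, p_1 g],
\]
and check it is well-defined on both the $P(\mathbb{A}_F^\infty)$-coinvariants and the $P(F)$-orbits; the inverse sends $[x,g]$ to $[[x,1], g]$, and a small computation using the fact that in $\mathfrak{X}_P$ we have $[\gamma x, 1] = [x, \gamma^{-1}]$ for $\gamma \in P(F)$ confirms that it is well-defined and mutually inverse.

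Summing over $P \in \mathcal{P}$ produces the claimed stratification. Finally, $G(\mathbb{A}_F^\infty)$-equivariance is immediate from both sides carrying the right action via the $G(\mathbb{A}_F^\infty)$-factor, and this right action commutes with every quotient taken above. The only genuinely nontrivial point—and thus the \emph{main obstacle}—is the bookkeeping of how the $P(F)$-equivalence inside $G(F) \times_{P(F)} X_P$ interacts with the global $G(F)$-quotient acting simultaneously on the adelic factor; everything else is formal from Borel--Serre and the self-normalizing property of parabolics.
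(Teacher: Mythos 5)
Your argument is correct and follows essentially the same route as the paper: decompose $\overline{\mathfrak{X}}_G$ via the Borel--Serre stratification, use that parabolics are self-normalizing to collapse each conjugacy class to a single $P(F)\backslash(X_P \times G(\mathbb{A}_F^\infty))$, and then identify this with $\mathfrak{X}_P \times_{P(\mathbb{A}_F^\infty)} G(\mathbb{A}_F^\infty)$. The only difference is that you spell out the explicit mutually inverse maps for the last identification, which the paper dismisses as ``straightforward.''
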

	
	\begin{proof}
		By \cite[Proposition 7.6]{borel_serre}, the natural map $X_P \to X_{gPg^{-1}}$ is an isomorphism for all parabolic subgroups $P$ and $g \in G(F)$.
		Since each parabolic group is its own stabilizer under the conjugation action of $G(F)$ we find that
		\[
		\overline{\mathfrak{X}}_G \cong G(F) \backslash \left(\bigcup_{P \subset G} X_P \times G(\mathbb{A}_F^\infty) \right) \cong \bigcup_{P \in \mathcal{P}} P (F) \backslash (X_P \times G(\mathbb{A}_F^\infty))
		\]
		Now there is a straightforward identification
		\[
		P (F) \backslash (X_P \times G(\mathbb{A}_F^\infty)) \cong \mathfrak{X}_P \times_{P(\mathbb{A}_F^\infty)} G(\mathbb{A}_F^\infty)
		\]
		and the claim follows.
	\end{proof}
	
	\begin{corollary} \label{adelic_torus_bundle_cor}
		Let $G$ be a reductive group over $F$, $P \subset G$ a proper parabolic subgroup and $M = P/R_u P$.
		There are $P(\mathbb{A}_F^\infty)$-equivariant maps
		\[
		\begin{tikzcd}
			\overline{\mathfrak{X}}_G \setminus \mathfrak{X}_G =: \partial \mathfrak{X}_G & \mathfrak{X}_P  \arrow[hookrightarrow]{l}{i_P} \arrow[twoheadrightarrow]{d}{f_P} \\
			& \mathfrak{X}_M 
		\end{tikzcd}
		\]
		where $P(\mathbb{A}_F^\infty)$ acts on $\mathfrak{X}_M$ via the quotient $M(\mathbb{A}_F^\infty)$ and $f_P : \mathfrak{X}_P \to \mathfrak{X}_M$ is the map induced by the morphism $P \to M$. Moreover, if $N = R_u P$, then $f_P$ is a $N(F) \backslash N(\mathbb{A}_F)$-bundle.
	\end{corollary}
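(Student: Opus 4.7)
The plan is to construct $i_P$ from the stratification of Proposition \ref{stratification_lemma} and to construct $f_P$ directly from the algebraic quotient map $P \twoheadrightarrow M$, after trivialising $X_P$ via the Levi decomposition.

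First I would handle $i_P$. Since $P$ is a proper parabolic, the stratum $\mathfrak{X}_P \times_{P(\mathbb{A}_F^\infty)} G(\mathbb{A}_F^\infty)$ appearing in Proposition \ref{stratification_lemma} lies entirely in $\partial \mathfrak{X}_G$. The map $x \mapsto [x, 1]$ defines a natural morphism $\mathfrak{X}_P \to \mathfrak{X}_P \times_{P(\mathbb{A}_F^\infty)} G(\mathbb{A}_F^\infty)$ which is a bijection onto the $G(\mathbb{A}_F^\infty)$-orbit of the identity, hence injective. $P(\mathbb{A}_F^\infty)$-equivariance is built into the balanced product: right multiplication by $p$ on the first factor corresponds to left multiplication by $p$ on the second, which is the restriction of the right $G(\mathbb{A}_F^\infty)$-action to $P(\mathbb{A}_F^\infty)$.

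Next, for $f_P$, I would use the canonical Cartan-involution-stable Levi section $s : M(F \otimes \reals) \hookrightarrow P(F \otimes \reals)$ to obtain the Iwasawa-type decomposition $P(F \otimes \reals) = N(F \otimes \reals) \rtimes s(M(F \otimes \reals))$. Because $K_\infty \cap P(F \otimes \reals)$ is contained in the Levi $s(M(F \otimes \reals))$, and $s(S_P) = s(S_M)$, this yields a canonical diffeomorphism
\[
X_P \cong N(F \otimes \reals) \times X_M.
\]
Projecting onto the second factor and combining with $P(\mathbb{A}_F^\infty) \twoheadrightarrow M(\mathbb{A}_F^\infty)$ gives a $P(F)$-equivariant map $X_P \times P(\mathbb{A}_F^\infty) \to X_M \times M(\mathbb{A}_F^\infty)$, which descends to the required $f_P$. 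Surjectivity and $P(\mathbb{A}_F^\infty)$-equivariance are immediate.

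Finally, to upgrade $f_P$ to an $N(F) \backslash N(\mathbb{A}_F)$-bundle, I would compute fibres pointwise and then produce local trivialisations from the Levi decomposition. Over a point with lift $(\bar x, \bar m) \in X_M \times M(\mathbb{A}_F^\infty)$, the preimage in $X_P \times P(\mathbb{A}_F^\infty)$ is a torsor for $N(F \otimes \reals) \times N(\mathbb{A}_F^\infty) \cong N(\mathbb{A}_F)$, and after passing to the $P(F)$-quotient the stabiliser of $(\bar x, \bar m)$ in $P(F)$ acts through its unipotent part, giving the fibre $N(F) \backslash N(\mathbb{A}_F)$. Local trivialisations come from choosing local set-theoretic sections of $P(\mathbb{A}_F^\infty) \twoheadrightarrow M(\mathbb{A}_F^\infty)$ combined with the product decomposition of $X_P$. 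The main subtlety I expect is precisely this last point: verifying that the fibration is locally trivial rather than a mere quotient map, which will require unpacking how the discrete $P(F)$-action interacts with the Levi decomposition on small enough open sets of $\mathfrak{X}_M$. Once local trivialisations are in hand, the structure group is transparently $N(F) \backslash N(\mathbb{A}_F)$, acting by translation on the fibres through conjugation by a Levi representative.
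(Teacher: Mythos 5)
Your argument is correct and is exactly the unwinding of definitions that the paper leaves implicit (no proof is given there): $i_P$ comes from the stratum of Proposition \ref{stratification_lemma} via $x \mapsto [x,1]$, and $f_P$ from the Cartan-stable Levi decomposition $X_P \cong N(F\otimes\reals)\times X_M$ together with $P(\mathbb{A}_F^\infty)\twoheadrightarrow M(\mathbb{A}_F^\infty)$. The only point worth stating explicitly in your fibre computation is that the stabiliser in $P(F)$ of a lift $(\bar x,\bar m)$ is \emph{exactly} $N(F)$ because $M(F)$ acts freely on $X_M\times M(\mathbb{A}_F^\infty)$ ($M(\mathbb{A}_F^\infty)$ being discrete and $M(F)\hookrightarrow M(\mathbb{A}_F^\infty)$ injective), which also supplies the properly discontinuous action needed for your local trivialisations.
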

	
	\begin{definition}
		An open compact subgroup $K < G(\mathbb{A}_F^\infty)$
		is said to be neat if all of its
		elements are neat. An element $g = (g_v)_v \in G(\mathbb{A}_F^\infty)$ is said to be neat if the
		intersection $\bigcap_v \Gamma_v$ is trivial, where $\Gamma_v \subset \overline{\rationals}^\times$ 
		is the torsion subgroup of the
		subgroup of $\overline{F}_v^\times$ generated by the eigenvalues of $g_v$ under the faithful representations of $G_{F_v}$.
	\end{definition}

	\begin{prop} \label{borel_serre_prop}
		Let $G/F$ be a reductive group.
		If $K$ is neat, then $X^K_G$ is a smooth manifold and $\overline{X}^K_G$ is a smooth compact manifold with corners. $X_G^K$ is the interior of $\overline{X}_G^K$ and the inclusion $X_G^K \hookrightarrow \overline{X}_G^K$ is a homotopy equivalence.
	\end{prop}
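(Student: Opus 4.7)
The plan is to reduce the proposition to the corresponding statements for congruence subgroups acting on the Borel--Serre compactification, which are the content of \cite{borel_serre}. First I would invoke the finiteness of the class number of $G$: the double coset space $G(F) \backslash G(\mathbb{A}_F^\infty) / K$ is finite, so we can pick representatives $g_1, \dots, g_r$ and obtain decompositions
\[
X_G^K \cong \bigsqcup_{i=1}^r \Gamma_i \backslash X_G, \qquad \overline{X}_G^K \cong \bigsqcup_{i=1}^r \Gamma_i \backslash \overline{X}_G,
\]
where $\Gamma_i := G(F) \cap g_i K g_i^{-1}$ is an arithmetic congruence subgroup of $G(F)$, sitting discretely inside $G(F \otimes \reals)$. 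This reduces both claims to properties of each quotient $\Gamma_i \backslash \overline{X}_G$.

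Next I would use the neatness hypothesis. Since $K$ is neat, each $g_i K g_i^{-1}$ is neat, so $\Gamma_i$ is neat as a subgroup of $G(F)$; in particular $\Gamma_i$ is torsion-free. Since $\overline{X}_G$ is a manifold with corners carrying an extended $G(F)$-action (Theorem~\ref{borel_serre_thm}), and since the stabilizer of any point in $\overline{X}_G$ for the $G(F \otimes \reals)$-action lies in a parabolic subgroup and modulo a compact group has only torsion stabilizers, neatness of $\Gamma_i$ forces the $\Gamma_i$-action on $\overline{X}_G$ to be free. Combined with proper discontinuity of the action (which follows from the discreteness of $\Gamma_i$ in $G(F \otimes \reals)$ and the properness of the $G(F \otimes \reals)$-action on $\overline{X}_G$, both proved in \cite{borel_serre}), this shows that each $\Gamma_i \backslash \overline{X}_G$ inherits from $\overline{X}_G$ the structure of a smooth manifold with corners, and each $\Gamma_i \backslash X_G$ the structure of a smooth manifold.

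Finally, compactness of $\overline{X}_G^K$ and the homotopy equivalence statement follow by passing to quotients from the corresponding statements for $\overline{X}_G$. Compactness of $\Gamma_i \backslash \overline{X}_G$ is the main theorem of Borel--Serre. The inclusion $X_G \hookrightarrow \overline{X}_G$ is a $G(F)$-equivariant homotopy equivalence because $X_G$ is the interior of the manifold with corners $\overline{X}_G$, and such an inclusion admits an equivariant deformation retract obtained by pushing collar neighborhoods inward along the $A_P$-directions described in Theorem~\ref{borel_serre_thm}. Taking the quotient by the free, properly discontinuous action of $\Gamma_i$ preserves the homotopy equivalence and the identification of interiors, giving the required statements for $X_G^K \hookrightarrow \overline{X}_G^K$.

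The one step that requires genuine care rather than bookkeeping is showing that neatness forces the action on all boundary strata $X_P$ of $\overline{X}_G$ (and not just on $X_G$) to be free; this is where the precise definition of neatness -- controlling eigenvalues at all places simultaneously -- is needed, since fixed points on the cusp strata correspond to elements whose semisimple parts have roots of unity among their eigenvalues. Everything else is a direct transcription of results from \cite{borel_serre} via the finite decomposition into connected components.
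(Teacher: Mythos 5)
Your proposal is correct and is essentially the argument the paper is invoking: the paper's proof consists of citing \cite[Theorem 9.3]{borel_serre} (neat arithmetic quotients of $\overline{X}_G$ are compact manifolds with corners) and \cite[Lemma 8.3.1]{borel_serre} (the interior inclusion is a homotopy equivalence), after the standard reduction, via finiteness of the class number, to the connected components $\Gamma_i \backslash \overline{X}_G$. Your unpacking of why neatness gives freeness on the boundary strata and how the collar retraction descends is exactly what those references contain, so there is no substantive difference in approach.
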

	
	\begin{proof}
		See \cite[Theorem 9.3]{borel_serre} and \cite[Lemma 8.3.1]{borel_serre} for the homotopy equivalence part.
	\end{proof}
	
	\begin{definition}
		An open compact subgroup $K < G(\mathbb{A}_{F}^\infty)$
		is called good if it is neat
		and of the form $K = \prod_{v} K_v$.
	\end{definition}
	
	\begin{lemma} \label{neat_lemma}
		Let $K = \prod_{v} K_v < \GL_n(\mathbb{A}_F^\infty)$ be an open compact subgroup and let $v$ be a place of $F$ such that $v$ does not divide $\prod_{j = 2}^{n [F:\rationals] + 1} \Phi_j(1)$, where $\Phi_j$ denotes the $j$th cyclotomic polynomial.  If $K_v$ is contained in the subgroup $Iw_{v,1} < \GL_n(\mathcal{O}_{F_v})$ of matrices which are unipotent and upper triangular mod $\varpi_v$, then $K$ is neat.
	\end{lemma}
	
	\begin{proof}
		If $\alpha$ is an eigenvalue of $g_v \in Iw_{v,1}$ under some faithful representation of $\GL_n/F_v$, then
		$\alpha - 1$ lies in the maximal ideal of $\mathcal{O}_{\overline{F}_v}$, hence $\Phi_j(\alpha)$ is a unit for all $ 1 < j \leq n [F: \rationals] + 1$ and so if $\alpha$ is a non-trivial root of unity, it must have order at least $n [F: \rationals] + 2$. Thus, if $K$ is not neat, then there exists a root of unity $\zeta \in \Gamma_v$ of prime order $q > n[F:\rationals] + 1$. But this would also imply that $q$ is the residue characteristic of $v$ since $\zeta - 1$ is in the maximal ideal of $\mathcal{O}_{\overline{F}_v}$. In this case $[\rationals_q(\zeta) : \rationals_q] = q - 1 > n [F: \rationals]$ contradicts the fact that $g_v$ satisfies its characteristic polynomial. 
	\end{proof}	
	
	\begin{definition}
		Let $K$ be a good open compact subgroup of $G(\mathbb{A}_F^\infty)$ and $S$ a finite set of places such that $K_v$ is hyperspecial for $v \not \in S$. If $V$ is a $\ints[K_S]$-module, where $K_S := \prod_{v \in S} K_v$, then we 
		let $\mathcal{V}$ denote the $K_S \times G(\mathbb{A}_F^S)$-equivariant constant sheaf on $\overline{\mathfrak{X}}_G$ associated with $V$. In other words, if we identify $V$ with an equivariant sheaf on a point, then its pull back to $\overline{\mathfrak{X}}_G$ under the constant map is denoted by $\mathcal{V}$.
		By abuse of notation we also let $\mathcal{V}$ be the same sheaf restricted to $\partial \mathfrak{X}_G$ or $\mathfrak{X}_G$.
	\end{definition}
	
	\begin{definition}
		Let $G$ be a group and $X$ a topological space with $G$-action. The category of $G$-equivariant sheaves of abelian groups on $X$ is denoted $\mathbf{Sh}^G(X)$.
		Let $Y$ be a topological space equipped with trivial $G$-action. If $f : X \to Y$ is a $G$-equivariant map and $\mathcal{F}$ is a $G$-equivariant sheaf on $X$, then we define a sheaf
		$f_*^G \mathcal{F}$ on $Y$ by the formula
		\[
		f_*^G \mathcal{F} (U) = \mathcal{F}(f^{-1}(U))^G
		\]
		for any open $U \subset Y$. 
	\end{definition}
	
	\begin{lemma} \label{good_quotient_exact}
		If $K < G(\mathbb{A}_F^\infty)$ is a good subgroup and $p^K : \mathfrak{X}_G \to X_G^K$ is the quotient map, then $p_*^K : \mathbf{Sh}^{K_S \times G(\mathbb{A}^S_F)}(\mathfrak{X}_G) \to \mathbf{Sh}(X_G^K)$ is exact and preserves injectives. 
	\end{lemma}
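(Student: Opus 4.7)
The plan is to realise $p$ as a principal $K$-bundle with discrete structure group and then compute $p_*^K$ locally. First I check that $K$ acts freely on $\mathfrak{X}_G$: if $k \in K$ fixes $[(x,g)]$, then there is $\gamma \in G(F)$ with $\gamma x = x$ and $g^{-1}\gamma g = k$, so $\gamma$ lies in the arithmetic subgroup $\Gamma_g := G(F) \cap g K g^{-1}$. Neatness of $K$ passes to $\Gamma_g$, and neat arithmetic subgroups act freely on $X_G$ by Proposition \ref{borel_serre_prop}, whence $\gamma = 1$ and $k = 1$. Since $G(\mathbb{A}_F^\infty)$ carries the discrete topology, $K$ is discrete as a topological space, and the free action makes $p$ a principal $K$-bundle, in particular a covering.

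Locally, for a small enough open $U \subset X_G^K$ I have $p^{-1}(U) \cong \bigsqcup_{k \in K} U_k$ with each $U_k \hookrightarrow \mathfrak{X}_G$ an open embedding and $K$ permuting the components by right translation. For $\mathcal{F} \in \mathbf{Sh}^{K_S \times G(\mathbb{A}_F^S)}(\mathfrak{X}_G)$, the underlying $K$-equivariant structure produces canonical isomorphisms $\mathcal{F}(U_k) \cong \mathcal{F}(U_1)$ that convert the $K$-action on $\prod_k \mathcal{F}(U_k)$ into permutation of factors. Taking $K$-invariants yields
\[
(p_*^K \mathcal{F})(U) = \Bigl(\prod_{k \in K} \mathcal{F}(U_k)\Bigr)^K \cong \mathcal{F}(U_1),
\]
so $p_*^K \mathcal{F}$ is locally just the restriction of $\mathcal{F}$ along the open embedding $U_1 \hookrightarrow \mathfrak{X}_G$, which is manifestly exact. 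This establishes exactness of $p_*^K$.

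For preservation of injectives I factor
\[
\mathbf{Sh}^{K_S \times G(\mathbb{A}_F^S)}(\mathfrak{X}_G) \xrightarrow{\text{forget}} \mathbf{Sh}^K(\mathfrak{X}_G) \xrightarrow{\;\sim\;} \mathbf{Sh}(X_G^K),
\]
where the second arrow is the equivalence of categories arising from descent along the principal $K$-bundle $p$ (with quasi-inverse $p^{-1}$ given its canonical $K$-equivariance); this equivalence obviously preserves injectives. The forgetful functor admits an exact left adjoint given by the induction $\mathcal{F} \mapsto \bigoplus_{h \in H/K} h^*\mathcal{F}$ (with $H := K_S \times G(\mathbb{A}_F^S)$ acting by permutation of summands), which is a direct sum of the exact functors $h^*$ and hence exact, so it too preserves injectives. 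The main point requiring care is setting up this induction rigorously as a sheaf-level construction and verifying Frobenius reciprocity, since the coset space $H/K$ is infinite; but this is standard for discrete group actions and amounts to routine bookkeeping.
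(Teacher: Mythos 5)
Your proof is correct and follows essentially the same route as the paper's: the paper reduces to the forgetful functor $\mathbf{Sh}^{K_S \times G(\mathbb{A}_F^S)}(\mathfrak{X}_G) \to \mathbf{Sh}^{K}(\mathfrak{X}_G)$ (exact and injective-preserving, for the exact-left-adjoint reason you spell out) and then invokes the equivalence $\mathbf{Sh}^{K}(\mathfrak{X}_G) \simeq \mathbf{Sh}(X_G^K)$ from Newton--Thorne, which is precisely the descent statement you prove by hand from the free, properly discontinuous $K$-action. Your explicit local computation of $p_*^K$ is an unpacking of that citation rather than a different argument.
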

	
	\begin{proof}
		Since the forgetful functor $\mathbf{Sh}^{K_S \times G(\mathbb{A}_F^S)}(\mathfrak{X}_G) \to \mathbf{Sh}^{K}(\mathfrak{X}_G)$ is exact and preserves injectives, it suffices to show that 
		$p_*^K : \mathbf{Sh}^{K}(\mathfrak{X}_G) \to \mathbf{Sh}(X_G^K)$ is exact and preserves injectives. But this is even an equivalence of abelian categories by \cite[Lemma 2.17]{newton_thorne_16}
		and in particular exact. 
	\end{proof}
	
	\subsection{Hecke Algebras}
	
	The locally symmetric spaces we just introduced carry many cohomological correspondences that give their cohomology the structure of a module over a Hecke algebra. 
	The rings $T_N$ and the complexes $C_N$ appearing in our application of lemma \ref{patching_lemma} will be a special case. For an introduction to Hecke algebras of $p$-adic groups we refer to \cite{cartier_representations}.
	
	\begin{definition}
		Let $G$ be a locally profinite group and $K$ an open compact subgroup of $G$, then we define a ring with underlying set 
		\[
		\mathcal{H}(G,K) := \{f : G \to \ints \mid f \text{ compactly supported and } \forall k_1,k_2 \in K, \, f(k_1 g k_2) = f(g)\}.
		\] The addition is pointwise and the multiplication is given by
		\[
		(f_1 * f_2)(g) = \int_{G} f_1(x) f_2(x^{-1} g) d\mu(x),
		\]
		where $\mu$ is the left Haar measure on $G$ such that $\mu(K) = 1$. This ring has a $1$ given by the indicator function of $K \subset G$. For $g \in G$, we denote the
		the indicator function of the double coset $K g K$ by
		$[KgK] \in \mathcal{H}(G,K)$.
	\end{definition}
	
	\begin{lemma} \label{hecke_integral_action}
		For any $\ints[G]$-module $M$, the invariants $M^K$ form a module over the ring $\mathcal{H}(G,K)$ via the formula
		\[
		f * m = \int_{G} f(g) gm \, d\mu(g)
		\]
	\end{lemma}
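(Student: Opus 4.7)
The plan is to check three things: first that the integral makes sense as a finite sum, second that its value lies in $M^K$, and finally that the resulting pairing is associative and unital, making $M^K$ a module.

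For well-definedness, observe that since $m \in M^K$, the map $g \mapsto gm$ is constant on each right $K$-coset $gK$. Because $f$ is right $K$-invariant, the integrand $g \mapsto f(g) gm$ is constant on right $K$-cosets as well; because $f$ is compactly supported on a locally profinite group and $K$ is open, its support meets only finitely many such cosets. Hence the integral collapses to the finite sum
\[
f * m = \sum_{i} f(g_i) g_i m,
\]
where $g_i$ run over representatives of the $K$-cosets in $\operatorname{supp}(f)/K$ (with the normalization $\mu(K) = 1$). No measure-theoretic subtleties arise.

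For $K$-invariance, pick $k \in K$ and compute
\[
k(f*m) = \sum_i f(g_i) (k g_i) m = \sum_i f(k^{-1}(kg_i)) (k g_i) m = \sum_j f(g_j) g_j m = f * m,
\]
using left $K$-invariance of $f$ together with the fact that $g \mapsto k g$ permutes the right $K$-cosets while preserving $\operatorname{supp}(f)$. Hence $f * m \in M^K$, so the formula defines a map $\mathcal{H}(G,K) \otimes_\ints M^K \to M^K$, which is $\ints$-bilinear by inspection.

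The unit $\mathbf{1}_K$ satisfies $\mathbf{1}_K * m = \mu(K) \cdot m = m$. For associativity, unwind the convolution: given $f_1, f_2 \in \mathcal{H}(G,K)$,
\[
(f_1 * f_2) * m = \sum_{g} (f_1 * f_2)(g) g m = \sum_{g} \sum_{x} f_1(x) f_2(x^{-1} g) g m,
\]
where all sums run over coset representatives (so are finite). Substituting $g = x y$ and interchanging the finite sums gives $\sum_x f_1(x) \sum_y f_2(y) (xy) m = \sum_x f_1(x) \, x (f_2 * m) = f_1 * (f_2 * m)$. The main point requiring care is simply the reduction of all integrals to finite sums so that Fubini is trivial; this is exactly why the bi-$K$-invariance and compact support hypotheses were imposed on elements of $\mathcal{H}(G,K)$, and is the only place where the argument could go wrong.
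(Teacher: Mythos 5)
Your proof is correct and is the standard direct verification: the paper itself does not spell out an argument but simply cites \cite[Lemma 2.3]{newton_thorne_16}, and what you have written is exactly the computation that reference contains (reduction of the integral to a finite sum over right $K$-cosets using openness of $K$ and compact support of $f$, then checking invariance, the unit, and associativity via the substitution $g = xy$). Nothing is missing.
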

	
	\begin{proof}
		See \cite[Lemma 2.3]{newton_thorne_16}.
	\end{proof}
	
	\begin{prop}
		Let $G$ be a reductive group over $F$ and $K$ a good open compact subgroup of $G(\mathbb{A}_F^\infty)$ and $S$ a finite set of places such that $K_v$ is hyperspecial for all $v \not \in S$. The Hecke algebra
		$\mathcal{H}(G(\mathbb{A}_F^S), K^S)$ acts by natural transformations on
		each of the functors 
		\begin{itemize}
			\item $\mathbf{D}(\mathbf{Sh}^{K_S \times G(\mathbb{A}_F^{S})}(\mathfrak{X}_G)) \to \mathbf{D}(\ints) : A \mapsto R\Gamma(X_G^K, p_*^K A)$
			\item $\mathbf{D}(\mathbf{Sh}^{K_S \times G(\mathbb{A}_F^{S})}(\partial \mathfrak{X}_G)) \to \mathbf{D}(\ints) : A \mapsto R\Gamma(\partial X_G^K, p_*^K A)$
		\end{itemize}
		such that for injective objects $I$, the action on $\Gamma(X_G^K, p_*^K I) = \Gamma(\mathfrak{X}_G, I)^K$ is the one given by lemma \ref{hecke_integral_action}.
	\end{prop}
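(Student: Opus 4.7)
The plan is to construct the Hecke action by lifting to injective resolutions in the equivariant sheaf category, reducing to Lemma \ref{hecke_integral_action} degree-by-degree, and then checking compatibility with morphisms in the derived category.

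First, I would invoke Lemma \ref{good_quotient_exact}: the functor $p_*^K : \mathbf{Sh}^{K_S \times G(\mathbb{A}_F^S)}(\mathfrak{X}_G) \to \mathbf{Sh}(X_G^K)$ is exact and preserves injectives. Therefore, given an object $A$ in the equivariant derived category, I may choose a quasi-isomorphism $A \to I^\bullet$ with each $I^n$ injective in the equivariant category. Applying $p_*^K$ then yields a complex of injectives on $X_G^K$ computing $R\Gamma(X_G^K, p_*^K A)$, and by the definition of $p_*^K$,
\[
R\Gamma(X_G^K, p_*^K A) \simeq \Gamma(X_G^K, p_*^K I^\bullet) = \Gamma(\mathfrak{X}_G, I^\bullet)^K
\]
in $\mathbf{D}(\ints)$. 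The analogous identification with $\partial \mathfrak{X}_G$ in place of $\mathfrak{X}_G$ handles the boundary variant, the point being that neatness of $K$ makes the $K$-action free on each stratum of Proposition \ref{stratification_lemma} so the same invariants-equals-quotient-sections argument applies.

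Second, each $\Gamma(\mathfrak{X}_G, I^n)$ is naturally a $K_S \times G(\mathbb{A}_F^S)$-module via the equivariant structure on $I^n$, and the differentials of $I^\bullet$ are equivariant because they are morphisms in the equivariant category. Since $K = K_S \times K^S$ with $K^S \subset G(\mathbb{A}_F^S)$ an open compact subgroup, one has
\[
\Gamma(\mathfrak{X}_G, I^n)^K = \bigl( \Gamma(\mathfrak{X}_G, I^n)^{K_S} \bigr)^{K^S},
\]
and Lemma \ref{hecke_integral_action} applied with the group $G(\mathbb{A}_F^S)$ endows the right-hand side with an action of $\mathcal{H}(G(\mathbb{A}_F^S), K^S)$ commuting with the differentials. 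This provides an $\mathcal{H}(G(\mathbb{A}_F^S), K^S)$-module structure on the complex $\Gamma(\mathfrak{X}_G, I^\bullet)^K$, and hence on $R\Gamma(X_G^K, p_*^K A)$, which on $\Gamma(X_G^K, p_*^K I)$ for injective $I$ recovers the action of Lemma \ref{hecke_integral_action} by construction.

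For naturality, any morphism $A \to B$ in the equivariant derived category lifts to a map $I^\bullet_A \to I^\bullet_B$ of injective resolutions, which induces a map on global sections respecting the full $G(\mathbb{A}_F^S)$-action. It therefore commutes with the Hecke operators via the explicit integral formula of Lemma \ref{hecke_integral_action}. Independence of the lift up to chain homotopy, together with the uniqueness of injective resolutions, shows that the resulting map in $\mathbf{D}(\ints)$ is well-defined and Hecke-equivariant, producing a natural transformation of functors for each element of $\mathcal{H}(G(\mathbb{A}_F^S), K^S)$ (and compatibility with the algebra structure follows from the degreewise compatibility). The main subtlety is verifying that the Hecke action really descends to the derived category rather than depending on resolution choices; this is formal once the equivariant category has enough injectives and $p_*^K$ preserves them, which is exactly Lemma \ref{good_quotient_exact}.
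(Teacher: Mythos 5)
Your proposal is correct and follows essentially the same route as the paper: the paper's (much terser) proof likewise reduces everything to Lemma \ref{good_quotient_exact} (exactness and preservation of injectives by $p_*^K$), identifies $R\Gamma(X_G^K, p_*^K A)$ with derived $K$-invariants of $R\Gamma(\mathfrak{X}_G, A)$ computed on an equivariant injective resolution, and transports the action of Lemma \ref{hecke_integral_action} degreewise. Your additional checks (naturality via lifts of maps of resolutions, the boundary case) are exactly the details the paper leaves implicit.
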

	
	\begin{proof}
		This follows from the fact that $p_*^K$ is exact and preserves injectives, see lemma \ref{good_quotient_exact}. Namely, if we choose an injective resolution $A \to I^\bullet$, then
		\[
			R\Gamma(X_G^K, p_*^K A) \cong \Gamma(X_G^K, p_*^K I^\bullet) \cong  \Gamma(\mathfrak{X}_G, I^\bullet)^K
		\]
		carries a Hecke action by lemma \ref{hecke_integral_action}. This is well-defined since injective resolutions are unique up to homotopy.
	\end{proof}
	
	\begin{lemma} \label{hecke_flath_lemma}
		Let $G$ be a reductive group over $F$ and $K$ a good open compact subgroup of $G(\mathbb{A}_F^\infty)$ and $S$ a finite set of places such that $K_v$ is hyperspecial for all $v \not \in S$. There is an isomorphism
		\[
		\mathcal{H}(G(\mathbb{A}_F^S), K^S) \cong \bigotimes_{v \not \in S}' \mathcal{H}(G(F_v), G(\mathcal{O}_{F_v}))
		\]
		and each $\mathcal{H}(G(F_v), G(\mathcal{O}_{F_v}))$ is commutative.
	\end{lemma}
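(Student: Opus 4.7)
The proof plan has two largely independent parts, one for the tensor product decomposition and one for commutativity.

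For the isomorphism with the restricted tensor product, I would first construct the map in the direction from the tensor product to $\mathcal{H}(G(\mathbb{A}_F^S), K^S)$. For each finite set of places $T$ disjoint from $S$, send a pure tensor $\bigotimes_{v \in T} f_v$ to the bi-$K^S$-invariant function whose value at $(g_v)_{v \notin S}$ is $\prod_{v \in T} f_v(g_v)$ when $g_v \in K_v$ for all $v \notin S \cup T$, and zero otherwise. These maps are compatible with enlarging $T$ (since $f_v = 1_{K_v}$ is the unit element in the spherical Hecke algebra), so they glue to a ring homomorphism from the restricted tensor product; compatibility with convolution is just Fubini for the product Haar measure normalized so that each $K_v$ has measure one.

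The key step is bijectivity. Any $f \in \mathcal{H}(G(\mathbb{A}_F^S), K^S)$ has compact support and is bi-$K^S$-invariant, so it is a finite $\mathbb{Z}$-linear combination of indicator functions $[K^S g K^S]$. For such a $g = (g_v)$, the compactness forces $g_v \in K_v$ for all $v$ outside some finite set $T(g)$ disjoint from $S$, and then
\[
K^S g K^S = \prod_{v \in T(g)} K_v g_v K_v \times \prod_{v \notin S \cup T(g)} K_v,
\]
so $[K^S g K^S]$ is the image of the pure tensor $\bigotimes_{v \in T(g)} [K_v g_v K_v]$. This gives surjectivity, and injectivity is immediate because indicator functions of distinct double cosets in the image are linearly independent.

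For commutativity of the spherical Hecke algebra at an unramified place, I would use Gelfand's trick: produce a continuous anti-automorphism $\sigma$ of $G(F_v)$ with $\sigma(K_v) = K_v$ and $\sigma(K_v g K_v) = K_v g K_v$ for every $g$. Given such a $\sigma$, the induced involution $\tilde\sigma$ of $\mathcal{H}(G(F_v), K_v)$ satisfies $\tilde\sigma(f) = f$ for every bi-$K_v$-invariant $f$ (since $f$ is constant on double cosets and $\sigma$ preserves them), so $f_1 * f_2 = \tilde\sigma(f_2 * f_1) = f_2 * f_1$. The construction of $\sigma$ is the main subtlety: for split $G$ one combines the Chevalley involution with the fact that the Cartan decomposition $G(F_v) = \bigsqcup_{\lambda \in X_*(T)^+} K_v \lambda(\varpi_v) K_v$ gives double coset representatives that lie in $T(F_v)$ and are therefore fixed by the Chevalley involution; for general reductive groups with a hyperspecial maximal compact this is part of the theory underlying the Satake isomorphism. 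I would cite \cite{cartier_representations} for the result in the needed generality, since a self-contained proof would require developing the structure theory of unramified reductive groups that we do not otherwise need.
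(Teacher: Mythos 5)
Your proposal is correct, but it takes a different route from the paper. The paper's proof is a two-line reduction: it cites Flath's theorem for the restricted tensor product decomposition (and commutativity) with \emph{complex} coefficients, and then descends to $\ints$ by observing that $\mathcal{H}(G(\mathbb{A}_F^S), K^S)$ is a free $\ints$-module, hence embeds into its complexification. You instead prove the tensor decomposition directly over $\ints$, by matching the $\ints$-basis of double-coset indicator functions on both sides via the factorisation $K^S g K^S = \prod_v K_v g_v K_v$ (valid because $K^S$ is a product), with Fubini giving compatibility of convolutions; this is more self-contained and makes the integral structure transparent, at the cost of a page of bookkeeping that Flath's reference absorbs. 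For commutativity you invoke Gelfand's trick and ultimately defer to \cite{cartier_representations} for general unramified reductive groups, which is consistent with how the paper itself handles the analogous claim for $\mathbb{T}^S$ and $\widetilde{\mathbb{T}}^{\overline{S}}$ elsewhere (commutativity over $\complex$ from \cite[Corollary 4.1]{cartier_representations}, then descent by $\ints$-freeness). Both arguments are sound; the only point worth flagging is that the existence of a double-coset-preserving anti-involution is genuinely nontrivial beyond $\GL_n$, so your decision to cite the literature there rather than construct $\sigma$ by hand is the right call.
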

	
	\begin{proof}
		See \cite{flath_tensor} for the version with complex coefficients. The $\ints$-coefficient statement follows from this since $	\mathcal{H}(G(\mathbb{A}_F^S), K^S)$ is a free $\ints$-module.
	\end{proof}
	
	\begin{definition}
		Let $G$ be a reductive group over $F$ and $K$ a good open compact subgroup of $G(\mathbb{A}_F^\infty)$ and $S$ a finite set of places such that $K_v$ is hyperspecial for all $v \not \in S$. Suppose $P \subset G$ is a parabolic subgroup with levi $M \subset P$, then
		we use the previous lemma to define 
		 \begin{linenomath*} \begin{align*}
			r_P &: \mathcal{H}(G(\mathbb{A}_F^S), K^S) \to \mathcal{H}(P(\mathbb{A}_F^S), K \cap P(\mathbb{A}_F^S)) \\
			r_M &: \mathcal{H}(P(\mathbb{A}_F^S), K \cap P(\mathbb{A}_F^S)) \to 
			\mathcal{H}(M(\mathbb{A}_F^S), K \cap M(\mathbb{A}_F^S))
		\end{align*} \end{linenomath*} 
		with local components given by \cite[2.2.3 and 2.2.4]{newton_thorne_16} for each $v \not \in S$. Moreover, we set $\operatorname{Sat} = r_M \circ r_P$.
	\end{definition}
	
	\begin{definition}
		We abbreviate the following functors
		\begin{itemize}
			\item $E_G^K : \mathbf{D}(\mathcal{O}[K_S]) \to \mathbf{D}(\mathbf{Sh}^{K_S \times G(\mathbb{A}_F^{S})}(\mathfrak{X}_G)) \to \mathbf{D}(\ints) : V \mapsto R\Gamma(X_G^K, p_*^K \mathcal{V})$
			\item $\partial E_G^K : \mathbf{D}(\mathcal{O}[K_S]) \to \mathbf{D}(\mathbf{Sh}^{K_S \times G(\mathbb{A}_F^{S})}(\partial \mathfrak{X}_G)) \to \mathbf{D}(\ints) : V \mapsto R\Gamma(\partial X_G^K, p_*^K \mathcal{V})$
		\end{itemize}
	\end{definition}
	
	\begin{prop} \label{hecke_parabolic_satake_prop}
		Let $G$ be a reductive group over $F$ and $P \subset G$ a parabolic subgroup and let $M \subset P$ be a Levi subgroup. Suppose $K$ is a good open compact subgroup of $G(\mathbb{A}_F^\infty)$ such that $M(\mathbb{A}_F^\infty) \cap K \to (P(\mathbb{A}_F^\infty) \cap K)/(U(\mathbb{A}_F^\infty) \cap K)$ is an isomorphism, where $U = R_u P$ is the unipotent radical of $P$. Let $S$ be a finite set of places containing those above $p$, such that $K_v$ is hyperspecial for $v \not \in S$. If $V$ is a continuous $\ints_p[K_S]$-module which is finitely generated over $\ints_p$, then  there is a commutative diagram
		\[
		\begin{tikzcd}
			E_M^{K \cap M} R\Gamma((U \cap K)_S, V) \arrow{r}{\operatorname{Sat}(T)} & E_M^{K \cap M} R\Gamma((U \cap K)_S, V)  \\
			E_P^{K \cap P} (V) \arrow{r}{r_P(T)} \arrow{u}{\phi} & E_P^{K \cap P} (V) \arrow{u}{\phi} \\
			\partial E_G^K (V) \arrow{r}{T} \arrow{u}{i_P^*} & \partial E_G^K (V) \arrow{u}{i_P^*}
		\end{tikzcd}
		\] 
		where $T$ is any element of $\mathcal{H}(G(\mathbb{A}_F^S), K^S)$, $\phi$ is an isomorphism and $R\Gamma((U \cap K)_S, -)$ denotes the complex computing continuous $(U \cap K)_S$-group cohomology, seen as $\ints_p[M \cap K]$-modules.
	\end{prop}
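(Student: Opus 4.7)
The plan is to work at the adelic level and derive both squares from the geometric structures recorded in Proposition \ref{stratification_lemma} and Corollary \ref{adelic_torus_bundle_cor}. The bottom square will come from restricting to the $P$-stratum of $\partial \mathfrak{X}_G$, the top square will come from a Shapiro/Leray-type isomorphism $\phi$ attached to the fibration $f_P : \mathfrak{X}_P \to \mathfrak{X}_M$, and Hecke compatibility will reduce in both cases to place-by-place identities that are built into the definitions of $r_P$ and $r_M$ in \cite[Section 2.2]{newton_thorne_16}.

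For the lower square, Proposition \ref{stratification_lemma} identifies the stratum of $\partial \mathfrak{X}_G$ corresponding to $P$ with $\mathfrak{X}_P \times_{P(\mathbb{A}_F^\infty)} G(\mathbb{A}_F^\infty)$, so after taking $K$-invariants the inclusion $i_P$ yields a canonical pullback map $i_P^* : \partial E_G^K(V) \to E_P^{K \cap P}(V)$. Compatibility of this pullback with Hecke operators is then local at each $v \notin S$: the map $r_P$ is designed precisely so that restriction of $K_v$-biinvariant functions on $G(F_v)$ to the $P$-stratum intertwines the $T$-action and the $r_P(T)$-action, which is the content of the definition cited at \cite[2.2.3]{newton_thorne_16}.

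For the upper square, Corollary \ref{adelic_torus_bundle_cor} exhibits $f_P$ as a $U(F) \backslash U(\mathbb{A}_F)$-bundle, with $U = R_u P$. The hypothesis $M(\mathbb{A}_F^\infty) \cap K \xrightarrow{\sim} (P(\mathbb{A}_F^\infty) \cap K)/(U(\mathbb{A}_F^\infty) \cap K)$ guarantees that $f_P$ descends to a proper map $X_P^{K \cap P} \to X_M^{K \cap M}$ with compact fibers of the form $U(F) \backslash U(\mathbb{A}_F) / (U \cap K)$. Strong approximation for unipotent groups together with $U \cap K_v = U(\mathcal{O}_{F_v})$ for $v \notin S$ identifies the derived pushforward of $\mathcal{V}$ with a complex on $X_M^{K \cap M}$ whose stalks compute the continuous $(U \cap K)_S$-group cohomology $R\Gamma((U \cap K)_S, V)$ regarded as an $\mathcal{O}[M \cap K]$-module. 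Taking cohomology over $X_M^{K \cap M}$ then yields $\phi$. The Hecke compatibility is finally the defining property of the unnormalized Satake transform: at each $v \notin S$, $r_M$ converts the $r_P(T)$-action on $P(F_v)$-double cosets into the $\operatorname{Sat}(T)$-action on $M(F_v)$-double cosets precisely through this pushforward along $f_P$.

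The main obstacle is organizing all of this honestly in the derived category rather than merely on cohomology: one must verify that $\phi$ is truly an isomorphism of complexes equivariant for the whole Hecke action, not just an $E_2$-identification in some spectral sequence. This is handled by passing to $K$-injective resolutions of $\mathcal{V}$ at the adelic level (using lemma \ref{good_quotient_exact} to guarantee that taking $K$-invariants is exact on injectives), together with the fact that $V$ is finitely generated over $\mathbb{Z}_p$, so that the continuous cohomology complex $R\Gamma((U \cap K)_S, V)$ is well-defined and commutes with the relevant pushforwards, and that $f_P$ has compact fibres so that higher pushforwards behave well. Once these technicalities are in place, the proof reduces to the local unnormalized Satake identity already recorded in \cite[Section 2.2]{newton_thorne_16}, applied place by place.
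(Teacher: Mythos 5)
Your overall route matches the paper's: the bottom square comes from restriction to the $P$-stratum of $\partial\mathfrak{X}_G$ via Proposition \ref{stratification_lemma} and the local definition of $r_P$, and the top square comes from the fibration $f_P : \mathfrak{X}_P \to \mathfrak{X}_M$ of Corollary \ref{adelic_torus_bundle_cor} together with \cite[Lemma 2.7]{newton_thorne_16} for the $\operatorname{Sat}$-compatibility. However, there is a genuine gap at the single most delicate point of the argument, namely the assertion that ``strong approximation for unipotent groups together with $U \cap K_v = U(\mathcal{O}_{F_v})$ for $v \notin S$ identifies the derived pushforward of $\mathcal{V}$ with a complex whose stalks compute the continuous $(U \cap K)_S$-group cohomology.'' What the geometry actually gives you is different: the fibres of $f_P$ are disjoint unions of copies of the contractible space $U(F \otimes \reals)$, and after dividing by $U \cap K$ the stalks of $R f_{P*}^{U \cap K}(\mathcal{V}/p^m)$ compute the \emph{discrete} group cohomology $H^i_{disc}(U(F) \cap K, V/p^m)$ of the arithmetic group $U(F) \cap K$ (via Shapiro's lemma applied to $\operatorname{Ind}_{U(F)\cap K}^{U(\mathbb{A}_F^\infty)\cap K}(V/p^m)$). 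Strong approximation only tells you that $U(F)\cap K$ is dense in $U \cap K$, i.e.\ it handles $H^0$ and the connectedness of the fibre quotient; it says nothing about the comparison in higher degrees.

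To close the gap one needs two further inputs, which constitute the technical heart of the paper's proof of the upper square. First, $H^i_{cont}((U \cap K)_S, V/p^m) \to H^i_{cont}(U \cap K, V/p^m)$ is an isomorphism because $V/p^m$ is a finite abelian $p$-group and $(U \cap K)_S$ contains the pro-$p$ part of $U \cap K$. Second, and crucially, $U(F) \cap K$ is a \emph{good} group in the sense of Serre --- it has a finite index subgroup filtered with free abelian quotients, and $\ints^r$ is good --- so that $H^i_{cont}(U \cap K, V/p^m) \to H^i_{disc}(U(F) \cap K, V/p^m)$ is an isomorphism for all $i$. Without the goodness argument the identification of $\phi$'s target with continuous $(U\cap K)_S$-cohomology is unjustified, and this is exactly the kind of statement that fails for general arithmetic groups (e.g.\ lattices whose profinite completion does not see their rational cohomology). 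Your closing paragraph correctly flags the derived-category bookkeeping (injective equivariant resolutions, stalkwise verification on a basis of contractible opens, passage to the limit over $m$ using that $V$ is finite over $\ints_p$), but that is the routine part; the discrete-versus-continuous comparison is the step your write-up is missing.
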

	
	\begin{proof}
		By corollary \ref{adelic_torus_bundle_cor} we obtain a $(K \cap P)_S \times P(\mathbb{A}_F^S)$-equivariant diagram
		\[
		\begin{tikzcd}
			\partial \mathfrak{X}_G & \mathfrak{X}_P \arrow[hookrightarrow]{l}{i_P} \arrow[twoheadrightarrow]{d}{f_P} \\
			& \mathfrak{X}_M
		\end{tikzcd}
		\]
		where $f_P$ is a $U(F) \backslash (U(F \otimes \reals) \times U(\mathbb{A}_F^{\infty}))$-bundle. 
		Now we have a natural transformation of functors
		\[
		\Res^{K_S \times G(\mathbb{A}_F^S)}_{(K \cap P)_S \times P(\mathbb{A}_F^S)} \Gamma(\partial \mathfrak{X}_G, \mathcal{F}) \to \Gamma(\mathfrak{X}_P , i_P^* \mathcal{F}) 
		\]
		where $\mathcal{F}$ runs through the $K_S \times G(\mathbb{A}_F^S)$-equivariant sheaves on $\mathfrak{X}_G$. Hence \cite[Lemma 2.4]{newton_thorne_16} and taking derived functors gives us the lower square of the theorem. 
		
		Let $V = \lim V/p^m$ be a continuous $\ints_p[K_S]$-module, finite over $\ints_p$. Each $V/p^m$ is a discrete, smooth $\ints/p^m\ints[K]$-module
		and
		\begin{equation} \label{torus_bundle_sections_eq}
			R\Gamma(U \cap K, R \Gamma(\mathfrak{X}_P, \mathcal{V}/p^m)) = R \Gamma(\mathfrak{X}_M, R f_{P*}^{U \cap K} (\mathcal{V}/p^m) ).
		\end{equation}
		Now for any open subset 
		$W \subset \mathfrak{X}_M$ and smooth $\ints/p^m\ints[(P \cap K)_S]$-module $V/p^m$ we have 
		\[
		f_{P *} (\mathcal{V}/p^m) (W) = \{h : \pi_0( f_P^{-1}(W) ) \to V/p^m \}
		\]
		with $(U \cap K)$-action given by $(u \cdot h)(x) = u h(x u)$.
		If $w \in \mathfrak{X}_M$, then $f_P^{-1}(w) = U(F) \backslash (U(F \otimes \reals) \times U(\mathbb{A}_F^\infty))$ is homeomorphic to a disjoint union of copies of $U(F \otimes \reals)$, which is contractible as $U$ is unipotent. Hence $R^i f_{P *} = 0$ for $i > 0$. 
		By strong approximation, we have that for all $w \in W$,
		$f_P^{-1}(w)/(U \cap K) \cong (U(F) \cap K) \backslash U(F \otimes \reals)$ 
		is connected since $U$ is unipotent. Thus, we can also write
		\[
		f_{P *}^{U \cap K} \mathcal{V}/p^m (W) = \{ h : \pi_0(W) \to (V/p^m)^{U \cap K} \} = \{ h : \pi_0(W) \to (V/p^m)^{(U \cap K)_S} \}
		\]
		Hence $f_{P *}^{U \cap K} (\mathcal{V}/p^m)$ is the constant sheaf associated with $(V/p^m)^{(U \cap K)_S}$. 
		Consequently, we obtain a natural map 
		\[
		s^* R\Gamma((U\cap K)_S, V/p^m) \to R f_{P*}^{U \cap K} (\mathcal{V}/p^m)
		\]
		where $s$ is the constant map. We wish to show that this is a quasi-isomorphism. 
		We may check it on stalks, so let $x \in \mathfrak{X}_M$ and $W_{\alpha}$ a basis
		of contractible open neighbourhoods of $x$. If $\mathcal I$ is an injective $(U \cap K)$-equivariant sheaf on $\mathfrak{X}_M$, then we claim that
		$\mathcal{I}(W_{\alpha})$ is an injective $(U \cap K)$-module. To see this, note that
		\[
		\Hom^{U \cap K} ( A, \mathcal{I}(W_{\alpha})) = \Hom_{\mathbf{Sh}^{U \cap K}(W_{\alpha})}\left( \mathcal{A}, \mathcal{I} \rvert_{W_{\alpha}} \right) = \Hom_{\mathbf{Sh}^{U \cap K}(\mathfrak{X}_M)}(i_! \mathcal{A}, \mathcal{I}),
		\]
		where $i : W_{\alpha} \to \mathfrak{X}_M$ is the inclusion and $\mathcal{A}$ is the constant $U \cap K$-equivariant sheaf on $W_{\alpha}$ associated with $A$. But $i_!$ is exact and $\mathcal{I}$ is injective, so we have written $\Hom^{U \cap K} ( -, \mathcal{I}(W_{\alpha}))$ as the composition of two exact functors. Now choose an injective resolution \[
		f_{P *} (\mathcal{V}/p^m) \to \mathcal{I}^\bullet,\]
		then $\mathcal{I}^\bullet(W_{\alpha})$ is exact by contractibility of $W_{\alpha}$ and thus it is an injective resolution of $\mathcal{V}/p^m(f_P^{-1}(W_\alpha))$ and
		\[
		R\Gamma(W_\alpha, R f_{P*}^{U \cap K} (\mathcal{V}/p^m)) = \mathcal{I}^\bullet(W_{\alpha})^{U \cap K}.
		\]
		Since $W_{\alpha}$ is contractible and the $R^i f_{P *}^{U \cap K} (\mathcal{V}/p^m)$ are locally constant we see that the cohomology of the left hand side is independent of $\alpha$.
		Taking the colimit over all $\alpha$, which is exact and commutes with $U \cap K$-invariants, we find the stalk 
		\[
		R^i f_{P*}^{U \cap K} (\mathcal{V}/p^m)_x = \lim_{\longrightarrow}  H^i\left(\mathcal{I}^\bullet(W_{\alpha})^{U \cap K} \right) = H^i\left(\mathcal{I}^\bullet(W_{\alpha_0})^{U \cap K} \right)
		\]
		for any fixed $\alpha_0$. But $\mathcal{V}/p^m(f_P^{-1}(W_{\alpha_0})) = (f_{P *} \mathcal{V}/p^m)_x$ and $\mathcal{I}^\bullet(W_{\alpha_0})$ is an injective resolution of $\mathcal{V}/p^m(f_P^{-1}(W_{\alpha_0}))$  and so
		\[
		R^i f_{P*}^{U \cap K} (\mathcal{V}/p^m)_x = H^i_{disc}(U \cap K, (f_{P *} \mathcal{V}/p^m)_x),
		\]
		where $H^i_{disc}$ denotes \emph{discrete} group cohomology. The description of the fibres implies 
		$f_{P*}(\mathcal{V}/p^m)_x \cong \Ind_{U(F) \cap K}^{U(\mathbb{A}_F^\infty) \cap K} (V/p^m)$
		and by Shapiro's lemma  $R^i f_{P*}^{U \cap K} (\mathcal{V}/p^m)_x = H^i_{disc}(U(F) \cap K, V/p^m)$.
		Now to verify the quasi-isomorphism in question it suffices to check that the natural map
		\[
		H^i_{cont}( (U \cap K)_S, V/p^m) \to H^i_{disc}( U(F) \cap K, V/p^m)
		\]
		is an isomorphism. To see this, first observe that $H^i_{cont}( (U \cap K)_S, V/p^m) \to H^i_{cont}( U \cap K, V/p^m)$ is an isomorphism since $V/p^m$ is a finite abelian $p$-group and $(U \cap K)_S$ contains the pro-$p$ part of $U \cap K$. 
		
		Further, we can find a finite index subgroup of $U(F) \cap K$ which admits a filtration with free abelian quotients. 
		Now using \cite[I. \S 2.6 Exercise 2]{serre_galois_cohomology} we see that 
		$U(F) \cap K$ is good because $\ints^r$ is good for any $r$, where a group $G$ is called good if 
		$H^i_{cont}(\hat{G}, M) \to H^i_{disc}(G, M)$ is an isomorphism for all $i$, whenever $M$ is a finite abelian group with continuous $\hat{G}$-action.
		It follows that $H^i_{cont}( U \cap K, V/p^m) \to H^i_{disc}( U(F) \cap K, V/p^m)$ is an isomorphism, as required.
		
		By taking derived $K \cap P$-invariants in \eqref{torus_bundle_sections_eq} and passing to the limit, we get the desired
		isomorphism between $E_P^{K \cap P}(V)$ and $E_M^{K\cap M} ( R \Gamma((U \cap K)_S, V))$. Moreover, \cite[Lemma 2.7]{newton_thorne_16} implies that the corresponding isomorphism of functors
		\[
		\phi : E_M^{K\cap M} ( R \Gamma((U \cap K)_S, V)) \to E_P^{K \cap P}(V)
		\]
		satisfies $\operatorname{Sat}(T) \circ \phi = \phi \circ r_P(T)$ for all $T \in \mathcal{H}(G(\mathbb{A}_F^S), K^S)$. 
	\end{proof}
	
	\begin{remark}
		The previous proof is very similar to the arguments used in the proof of \cite[Theorem 4.2]{newton_thorne_16}.
	\end{remark}
	
	\subsection{The Quasi-Split Unitary Group}
	
	Let $F/F^+$ be a CM field with complex conjugation $c \in \Gal(F/F^+)$. Let $n \geq 2$ be an integer and 
	\[
	J_n = \begin{bmatrix}
		0 & \Psi_n \\
		- \Psi_n & 0
	\end{bmatrix},
	\]
	where $\Psi_n$ has $1$'s on the anti-diagonal and $0$'s elsewhere. We consider the hermitian pairing 
	\[
	\mathcal{O}_F^{2n} \times \mathcal{O}_F^{2n} \to \mathcal{O}_F : (x,y) \mapsto x^{T} J_n y^c
	\]
	and use it to define the $\mathcal{O}_{F^+}$-group scheme ${\widetilde{G}}$ with functor of points
	\[
	{\widetilde{G}}(R) = \{ g \in \GL_{2n}(\mathcal{O}_F \otimes_{\mathcal{O}_{F^+}} R) : g^{T} J_n g^{c} = J_n\}.
	\]
	We define the Siegel parabolic $P \subset \widetilde{G}$ as the closed subgroup scheme preserving the subspace $\mathcal{O}_F^n \oplus 0^n \subset \mathcal{O}_F^{2n}$, and 
	$G \subset P$ as the subgroup scheme which preserves both 
	$\mathcal{O}_F^n \oplus 0^n$ and $0^n \oplus \mathcal{O}_F^n$.
	
	\begin{lemma}
		There is an isomorphism $G \cong \Res^F_{F^+} \GL_n$.
	\end{lemma}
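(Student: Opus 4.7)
The plan is to compute directly that elements of $G$ are block diagonal and that the upper block determines the lower block, yielding the desired isomorphism with $\Res^F_{F^+}\GL_n$.

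First, I would unwind the definition. By construction, $G(R)$ consists of those $g \in \widetilde{G}(R)$ which preserve both of the rank-$n$ summands $\mathcal{O}_F^n \oplus 0^n$ and $0^n \oplus \mathcal{O}_F^n$ of $\mathcal{O}_F^{2n}$. Preserving both summands simultaneously forces $g$ to be block diagonal, so I can write
\[
g = \begin{bmatrix} A & 0 \\ 0 & D \end{bmatrix}, \quad A, D \in \GL_n(\mathcal{O}_F \otimes_{\mathcal{O}_{F^+}} R).
\]

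Next, I would substitute into the defining equation $g^T J_n g^c = J_n$ of $\widetilde{G}$. A short block computation reduces this to the pair of relations
\[
A^T \Psi_n D^c = \Psi_n \quad \text{and} \quad D^T \Psi_n A^c = \Psi_n,
\]
and one checks immediately that the second relation is just the $c$-conjugate transpose of the first, so the two are equivalent. Using $\Psi_n^2 = I$, this single relation rearranges to
\[
D = \Psi_n \bigl(A^{-c}\bigr)^T \Psi_n,
\]
so $D$ is uniquely determined by $A$, and any $A \in \GL_n(\mathcal{O}_F \otimes_{\mathcal{O}_{F^+}} R)$ gives rise to a valid $g$.

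Finally, I would package this as the morphism of $\mathcal{O}_{F^+}$-group schemes $G \to \Res^F_{F^+}\GL_n$ sending $g$ to the upper block $A$. The computation above shows this map is a bijection on $R$-points functorially in $R$, with inverse $A \mapsto \operatorname{diag}(A, \Psi_n (A^{-c})^T \Psi_n)$, and both it and its inverse are clearly morphisms of schemes. Hence it is an isomorphism of $\mathcal{O}_{F^+}$-group schemes. There is no real obstacle here; the only thing to be careful about is keeping the indices straight in the block computation and correctly using $\Psi_n^{-1} = \Psi_n$.
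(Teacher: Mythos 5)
Your computation is correct: for block-diagonal $g=\operatorname{diag}(A,D)$ the condition $g^{T}J_{n}g^{c}=J_{n}$ reduces to $A^{T}\Psi_{n}D^{c}=\Psi_{n}$ (the other block relation being its conjugate-transpose), which gives $D=\Psi_{n}(A^{-c})^{T}\Psi_{n}$, and projection onto $A$ is then an isomorphism of functors onto $R\mapsto\GL_{n}(\mathcal{O}_{F}\otimes_{\mathcal{O}_{F^{+}}}R)$. The paper itself only cites \cite[Lemma 5.1]{newton_thorne_16} here, and your direct verification is exactly the content of that lemma; it is also consistent with the explicit formula for the embedding $G\to\widetilde{G}$, $A\mapsto\operatorname{diag}(A,\Psi_{n}A^{-cT}\Psi_{n})$, recorded later in the same section.
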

	
	\begin{proof}
		See \cite[Lemma 5.1]{newton_thorne_16}.
	\end{proof}
	
	Now for a good open compact subgroup $\widetilde{K} \subset \widetilde{G}(\mathbb{A}_{F^+}^\infty)$ satisfying the conditions of proposition \ref{hecke_parabolic_satake_prop} with the Siegel parabolic $P$, the following diagram summarises the relation between the locally symmetric space attached to $\widetilde{G}$ and the one attached to $G$.

	\begin{equation} \label{locally_symmetric_spaces_diagram}
		\begin{tikzcd}
			X_P^{\widetilde K \cap P} \arrow{d} \arrow[hookrightarrow]{r} & \partial X_{\widetilde G}^{\widetilde K} \arrow[hookrightarrow]{r} & \overline{X}_{\widetilde G}^{\widetilde K} & X_{\widetilde G}^{\widetilde K} \arrow[hookrightarrow]{l} \\
			X_G^{\widetilde{K} \cap G} & & &
		\end{tikzcd}
	\end{equation}
	
	Here is a sketch of the roles that these spaces will play later. We started with an automorphic representation of $\GL_n/F$, which will give rise to some cohomology classes on $X_G^K$. Hence we will need to associate Galois representations to Hecke modules appearing in the singular cohomology of $X_G^K$. The cohomology of $X_{\widetilde G}^{\widetilde K}$ is known to have good properties by \cite{caraiani_scholze_non_compact}. More precisely, in degree $d = 1 + \dim X_G$, a certain direct summand $H^d(X_{\widetilde G}^{\widetilde K}, \ints_p)_{\widetilde{\mathfrak{m}}} \subset H^d(X_{\widetilde G}^{\widetilde K}, \ints_p)$ is $p$-torsion free, hence can be computed in terms of
	automorphic representations on $\widetilde{G}$ as in \cite{franke_schwermer}. Now we use \cite[Appendix]{goldring_galois_reps} to attach Galois representations of $G_F$ to these automorphic representations.
	
	Thus, we would like to pass from $H^q(X_G^K)$ to $H^d(X_{\widetilde G}^{\widetilde K})_{\widetilde{\mathfrak{m}}}$ by walking along the arrows of diagram \eqref{locally_symmetric_spaces_diagram}.
	The vertical arrow is addressed in proposition \ref{hecke_parabolic_satake_prop}. 
	The comparison between $H^\bullet(X_P^{\widetilde{K} \cap P})$ and $H^\bullet(\partial X_{\widetilde{G}}^{\widetilde{K}})$ is done in proposition \ref{non_eisenstein_boundary_prop}. Finally to reach $X_{\widetilde{G}}^{\widetilde{K}}$ from $\partial X_{\widetilde{G}}^{\widetilde{K}}$ we use the vanishing theorem of \cite{caraiani_scholze_non_compact}.
	
	Then it remains to show why it is enough to study only the degree $d$ cohomology on $X_{\widetilde{G}}^{\widetilde{K}}$. This is done by the degree-shifting argument from \cite{10author} which we generalise in the next section. Before we can execute this strategy we need to set up some more definitions.
	
	\begin{definition}
		If $S$ is a finite set of places of $F$, then we define the ring
		\[
		\mathbb{T}^{S} := \mathcal{H}(\GL_n(\mathbb{A}_F^S), \prod_{v \not \in S} \GL_n(\mathcal{O}_{F_v}))
		\] 
		which is commutative by lemma \ref{hecke_flath_lemma}.
		For a ring $R$ we also set $\mathbb{T}^S_R := \mathbb{T}^S \otimes_\ints R$. Similarly we
		let $\overline{S}$ denote the set of places of $F^+$ lying below a place of $S$ and define $\widetilde{\mathbb{T}}^{\overline{S}}$ as $\mathcal{H}(\widetilde{G}(\mathbb{A}_{F^+}^{\overline{S}}), \prod_{\overline{v} \not \in \overline{S}} \widetilde{G}(\mathcal{O}_{F_v}))$.
	\end{definition}
	
	\begin{definition}
		Keep the conditions from the previous definition. For $v \not \in S$ we let $P_v(z) \in \mathbb{T}^S[z]$ be the polynomial with the same name defined in \cite[2.2.5]{10author}. Similarly for $\overline{v} \not \in \overline{S}$, we let $\widetilde{P}_v(z) \in \widetilde{\mathbb{T}}^{\overline{S}} [q_{\overline{v}}^{-1}, z]$ be the other polynomial defined in \cite[2.2.6]{10author}. 
	\end{definition}

	Let $E$ be a finite extension of $\rationals_p$ which contains all the $p$-adic embeddings of $F$ and let $\mathcal O$ be the ring of integers of $E$ with uniformiser $\varpi$. Let $T \subset G$ be the standard torus of $\Res^{F}_{F^+} \GL_n$. Note that $T$ splits over $E$, $T(F^+ \otimes_{\rationals_p} E) = \prod_{\widetilde{\tau} : F \to E} (E^\times)^n$, $G(F^+ \otimes_{\rationals_p} E) = \prod_{\widetilde{\tau} : F \to E} \GL_n(E)$ and $\widetilde{G}(F^+ \otimes_{\rationals_p} E) = \prod_{\tau : F^+ \to E} \GL_{2n}(E)$. 
	
	And with these identifications, the inclusion $G \to \widetilde{G}$ is given by
	\[
	\prod_{\tau : F^+ \to E} \GL_n(E) \times \GL_n(E) \to \prod_{\tau : F^+ \to E} \GL_{2n}(E) : (g_{\widetilde{\tau}})_{\widetilde \tau} \mapsto \left( \begin{bmatrix}
		g_{\widetilde{\tau}} & 0 \\ 0 & \Psi_n g_{\widetilde{\tau}c}^{-T} \Psi_n
	\end{bmatrix}\right)_{\tau}
	\]
	Thus, $(\lambda_{\widetilde{\tau},1}, \dots, \lambda_{\widetilde{\tau},n}) \in X^\bullet(T(E \otimes_{\rationals_p} F^+)) \cong (\ints^n)^{\Hom(F, E)}$ is dominant for $G$ if and only if
	\[
	\lambda_{\widetilde{\tau}, 1} \geq \dots \geq \lambda_{\widetilde{\tau},n}
	\]
	for all $\widetilde{\tau} : F \to E$. It is dominant for $\widetilde{G}$ if and only if
	\[
	- \lambda_{\widetilde{\tau} c, n} \geq \dots \geq - \lambda_{\widetilde{\tau} c, 1} \geq \lambda_{\widetilde{\tau}, 1} \geq \dots \geq \lambda_{\widetilde{\tau}, n}
	\]
	
	\begin{definition} \label{highest_weight_module_def}
		If $\widetilde{\lambda} \in X^\bullet(T(E \otimes_{\rationals_p} F^+))$ is dominant for $\widetilde{G}$, then we can also see $\widetilde{\lambda}$ as a character $(\mathbb{G}_{m, \mathcal{O}}^n)^{\Hom(F,E)} \to \mathbb{G}_{m,\mathcal{O}}$ and we can consider the $\mathcal{O}$-points of the algebraic induction 
		\[ 
			V_{\widetilde{\lambda}} := \left( \Ind_{B_{2n}^{\Hom(F^+,E)}}^{\GL_{2n}^{\Hom(F^+,E)}} \widetilde{\lambda} \right) (\mathcal{O}),
		\] 
		where $B_{2n} \subset \GL_{2n}$ is the standard Borel over $\mathcal{O}$. We view it as a continuous module over $\mathcal{O}[\prod_{\overline{v} \mid p} \widetilde{G}(\mathcal{O}_{F^+, \overline{v}})]$ via the maps $\widetilde{G}(\mathcal{O}_{F^+, \overline{v}} ) \to \widetilde{G}(\mathcal{O}) \cong \GL_{2n}(\mathcal{O})$ induced by the field embeddings $F^+ \to E$.
		 Moreover,
		$V_{\widetilde{\lambda}} = \bigotimes_{\overline{v} \mid p} V_{\widetilde{\lambda_{\overline{v}}}}$, where
		$\widetilde{\lambda}_{\overline{v}} \in (\ints^n)^{\overline{S}_{\overline{v}}}$ is the projection of $\widetilde{\lambda}$ and $\overline{S}_{\overline{v}}$ is the set of embeddings $F^+ \to E$ inducing $\overline{v}$. The heighest weight $\mathcal{O}[\prod_{v \mid p} \GL_n(\mathcal{O}_{F, v})]$-modules $V_\lambda$ for weights $\lambda$ which are dominant for $G$ are defined similarly.
	\end{definition}
	
	By \cite[I.5.12]{jantzen} $V_{\widetilde{\lambda}}$ can be identified with the global sections of a line bundle on a projective $\mathcal{O}$-scheme. Thus, $V_{\widetilde{\lambda}}$ is a finite free $\mathcal{O}$-module such that $V_{\widetilde{\lambda}}[1/p]$ is the irreducible representation of $\widetilde{G}( F \otimes_{\rationals_p} E)$ of highest weight $\widetilde{\lambda}$.
	
	\begin{definition}
		Let $S$ be a set of places of $F$. We say that $S$ satisfies $(\Sigma_p)$ if
		\begin{itemize}
			\item $S$ is finite and contains the archimedean places and those above $p$.
			\item $S$ is stable under complex conjugation.
			\item Let $v$ be a finite place of $F$ not contained in $S$, and let $\ell$ be its residue
			characteristic. Then either $S$ contains no $\ell$-adic places of $F$ and $\ell$ is
			unramified in $F$, or there exists an imaginary quadratic field $F_0 \subset F$ in which $\ell$ splits.
		\end{itemize}
	\end{definition}
	
	\begin{theorem}
		Let $S$ be a set of places of $F$ satisfying $(\Sigma_p)$ and let $\mathfrak{m}$ be a maximal ideal of $\mathbb{T}^S$ occuring in the support of the module $H^\bullet(X_{\GL_n/F}^K, \mathcal{V}_\lambda)$, for some good open compact subgroup $K$ such that $K_v = \GL_n(\mathcal{O}_{F_v})$ for $v \not \in S$.
		Then there exists a continuous semisimple representation $\overline{\rho}_{\mathfrak{m}} : G_{F,S} \to \GL_n(\mathbb{T}^S/\mathfrak{m})$ such that
		\[
		\det(z \cdot \id - \overline{\rho}_\mathfrak{m}(\Frob_v) ) = P_v(z) \in (\mathbb{T}^S/\mathfrak{m})[z] \qquad \forall \, v \not\in S
		\]
		Note that this uniquely characterises $\overline{\rho}_{\mathfrak{m}}$ by semisimplicity.
	\end{theorem}
	
	\begin{proof}
		See \cite[Theorem 2.3.5]{10author}.
	\end{proof}
	
	\begin{definition}
		A maximal ideal of $\mathbb{T}^S$ occuring in the support of $H^\bullet(X_{\GL_n/F}^K, \mathcal{V}_\lambda)$, for some good open compact subgroup $K$ such that $K_v = \GL_n(\mathcal{O}_F)$ for $v \not \in S$, is called non-Eisenstein if $\overline{\rho}_{\mathfrak{m}}$ is absolutely irreducible.
	\end{definition}
	
	\begin{theorem} \label{hecke_product_of_fields}
		Let $K$ be a good open compact subgroup of $\GL_n(\mathbb{A}_F^\infty)$ and denote by $\mathbb{T}^S(R\Gamma(X^K_{\GL_n/F},\mathcal{V}_\lambda))$ the image of $\mathbb{T}^S$ in the endomorphisms of $R\Gamma(X^K_{\GL_n/F},\mathcal{V}_\lambda)$, where $S$ is a finite set of places containing those above $p$ such that $K_v = \GL_n(\mathcal{O}_{F_v})$ for all $v \not \in S$. If $\mathfrak{m} < \mathbb{T}^S$ is a non-Eisenstein maximal ideal, then
		$\mathbb{T}^S(R\Gamma(X^K_{\GL_n/F},\mathcal{V}_\lambda))_{\mathfrak{m}}[1/p]$ is either zero or a product of fields.	
	\end{theorem}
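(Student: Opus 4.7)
The plan is to reduce the statement to showing that the finite $E$-algebra $A := \mathbb{T}^S(R\Gamma(X^K_{\GL_n/F}, \mathcal{V}_\lambda))_{\mathfrak{m}}[1/p]$ is reduced, since a reduced commutative finite-dimensional algebra over a field is necessarily a product of fields. Because $R\Gamma(X^K_{\GL_n/F}, \mathcal{V}_\lambda)$ is representable by a bounded complex of finitely generated $\mathcal{O}$-modules (the singular cochain complex with coefficients in the local system, or its Borel--Serre model), the image $T := \mathbb{T}^S(R\Gamma(X^K_{\GL_n/F}, \mathcal{V}_\lambda))$ is a finite commutative $\mathcal{O}$-algebra, so $A$ is indeed a finite $E$-algebra and this reduction is legitimate.

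Over the field $E$ any bounded complex of $E$-vector spaces is quasi-isomorphic in $\mathbf{D}(E)$ to the formal sum of its cohomology, so there is a canonical identification
\[
\End_{\mathbf{D}(E)}\!\left(R\Gamma(X^K_{\GL_n/F}, \mathcal{V}_\lambda) \otimes_\mathcal{O} E\right) \cong \prod_i \End_E\!\left(H^i(X^K_{\GL_n/F}, \mathcal{V}_\lambda) \otimes_\mathcal{O} E\right).
\]
Hence $T[1/p]$ embeds into the endomorphism ring of the $E$-vector space $\bigoplus_i H^i(X^K_{\GL_n/F}, \mathcal{V}_\lambda) \otimes_\mathcal{O} E$, on which $\mathbb{T}^S$ acts diagonally. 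Since reducedness descends along the separable extension $\iota : E \hookrightarrow \complex$, it suffices to show that the image of $\mathbb{T}^S$ in $\End_\complex\!\bigl(H^\bullet(X^K_{\GL_n/F}, \mathcal{V}_\lambda)_{\mathfrak{m}} \otimes_\mathcal{O} \complex\bigr)$ is a reduced ring.

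Now I would apply the Borel--Franke decomposition of the (non-compactly supported) cohomology of $X^K_{\GL_n/F}$: $H^\bullet(X^K_{\GL_n/F}, \mathcal{V}_\lambda) \otimes_\mathcal{O} \complex$ breaks up as a direct sum of $(\mathfrak{g}, K_\infty)$-cohomology pieces indexed by irreducible automorphic representations $\pi$ of $\GL_n(\mathbb{A}_F)$, on each of which $\mathbb{T}^S$ acts through the character $\chi_\pi : \mathbb{T}^S \to \complex$ recording the Hecke eigenvalues of $\pi^{S,\infty}$ at unramified places. For non-cuspidal $\pi$, Langlands's theory of Eisenstein series writes $\pi$ as a subquotient of a representation parabolically induced from cuspidals on a proper Levi, so the semisimple Galois representation associated to $\chi_\pi$ (via Harris--Lan--Taylor--Thorne on the Levi factors) is a direct sum and is in particular \emph{reducible}. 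Its mod-$p$ reduction is therefore reducible, hence cannot equal the absolutely irreducible $\overline{\rho}_{\mathfrak{m}}$, so no such $\pi$ contributes after localising at the non-Eisenstein ideal $\mathfrak{m}$. Only cuspidal $\pi$ survive, and for these strong multiplicity one on $\GL_n$ guarantees that distinct $\pi$ give distinct characters $\chi_\pi$. The image of $\mathbb{T}^S$ therefore embeds into a finite product $\prod_\pi \complex$, and is manifestly reduced.

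The main technical hurdle is the control of the non-cuspidal contributions: one has to know that every Hecke eigensystem appearing in $H^\bullet(X^K_{\GL_n/F}, \mathcal{V}_\lambda) \otimes \complex$ outside the cuspidal part is genuinely Eisenstein, i.e.\ comes from parabolic induction from a proper Levi, so that the attached Galois representation is reducible. This is exactly the content of Franke's description of the discrete and continuous spectrum combined with the compatibility between parabolic induction and the Satake transform $r_P, r_M$ introduced earlier; everything else in the argument is formal reducedness/semisimplicity bookkeeping.
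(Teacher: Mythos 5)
Your argument is correct and is essentially the paper's own proof: the paper likewise invokes the Franke--Schwermer decomposition of $H^\bullet(X^K_{\GL_n/F},\mathcal{V}_\lambda)\otimes\complex$ and the fact that only cuspidal representations survive localisation at the non-Eisenstein ideal $\mathfrak{m}$, so that the Hecke algebra acts semisimply through characters and the localised algebra after inverting $p$ is reduced, hence a product of fields. (Minor remark: the appeal to strong multiplicity one is not needed --- any subring of a product of copies of $\complex$ is already reduced, whether or not the characters $\chi_\pi$ are distinct.)
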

	
	\begin{proof}
		This follows from the decomposition in \cite[\S 2.2]{franke_schwermer} and the fact that only cuspidal representations contribute to the localisation at $\mathfrak{m}$. 
	\end{proof}
	
	\begin{theorem}
		Let $S$ be a set of places of $F$ satisfying $(\Sigma_p)$ and let $\widetilde{\mathfrak{m}}$ be a maximal ideal of $\widetilde{\mathbb{T}}^S$ occuring in the support of the module $H^\bullet(X_{\widetilde{G}}^{\widetilde{K}}, \mathcal{V}_{\widetilde{\lambda}})$, for some good open compact subgroup $\widetilde{K}$ such that $\widetilde{K}_v = \widetilde{G}(\mathcal{O}_{F_v^+})$ for $v \not \in \overline{S}$.
		Then there exists a continuous semisimple representation $\overline{\rho}_{\widetilde{\mathfrak{m}}} : G_{F,S} \to \GL_{2n}(\widetilde{\mathbb{T}}^S/\widetilde{\mathfrak{m}})$ such that
		\[
		\det(z \cdot \id - \overline{\rho}_{\widetilde{\mathfrak{m}}}(\Frob_v) ) = \widetilde{P}_v(z) \in (\widetilde{\mathbb{T}}^S/\widetilde{\mathfrak{m}})[z] \qquad \forall \, v \not\in S
		\]
		Note that this uniquely characterises $\overline{\rho}_{\widetilde{\mathfrak{m}}}$ by semisimplicity.
	\end{theorem}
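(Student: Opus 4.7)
My plan is to mirror the proof of the preceding $\GL_n/F$-theorem (\cite[Theorem 2.3.5]{10author}), reducing the statement to that result via the parabolic restriction machinery of Proposition \ref{hecke_parabolic_satake_prop}. Given a good level $\widetilde{K}$ and weight $\widetilde{\lambda}$ for which $\widetilde{\mathfrak{m}}$ is in the support of $H^\bullet(X_{\widetilde{G}}^{\widetilde{K}}, \mathcal{V}_{\widetilde{\lambda}})$, I would first use Proposition \ref{borel_serre_prop} to replace the open locally symmetric space by its Borel-Serre compactification and then invoke the excision long exact sequence
\[
\cdots \to H^\bullet_c(X_{\widetilde{G}}^{\widetilde{K}}, \mathcal{V}_{\widetilde{\lambda}}) \to H^\bullet(\overline{X}_{\widetilde{G}}^{\widetilde{K}}, \mathcal{V}_{\widetilde{\lambda}}) \to H^\bullet(\partial X_{\widetilde{G}}^{\widetilde{K}}, \mathcal{V}_{\widetilde{\lambda}}) \to \cdots
\]
to isolate the contribution of $\widetilde{\mathfrak{m}}$ as coming either from the interior cohomology or from the boundary cohomology, and handle the two cases separately.

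For the boundary case, I would use the $\widetilde{G}(\mathbb{A}_{F^+}^\infty)$-equivariant stratification of $\partial \mathfrak{X}_{\widetilde{G}}$ from Proposition \ref{stratification_lemma} to localise $\widetilde{\mathfrak{m}}$ on some stratum associated to a proper parabolic $Q \subset \widetilde{G}$ with Levi $M = Q/R_u Q$. Proposition \ref{hecke_parabolic_satake_prop} then implies that the Hecke action on this stratum factors through the unnormalised Satake transform $\operatorname{Sat} : \widetilde{\mathbb{T}}^S \to \mathcal{H}(M(\mathbb{A}_{F^+}^S), (M \cap \widetilde{K})^S)$. For the Siegel parabolic $P$ with Levi $G \cong \Res^F_{F^+} \GL_n$, the image ideal lies in $\mathbb{T}^S$ and the preceding theorem furnishes a semisimple representation $\overline{\rho}_\mathfrak{m} : G_{F,S} \to \GL_n(\mathbb{T}^S/\mathfrak{m})$. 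I would then define
\[
\overline{\rho}_{\widetilde{\mathfrak{m}}} := \overline{\rho}_\mathfrak{m} \oplus \left( \overline{\rho}_\mathfrak{m}^{c,\vee} \otimes \chi^{1-2n} \right),
\]
where $\chi$ is the $p$-adic cyclotomic character, and verify $\det(z \cdot \id - \overline{\rho}_{\widetilde{\mathfrak{m}}}(\Frob_v)) = \widetilde{P}_v(z)$ by an explicit Satake calculation at split places $\overline{v} = ww^c$, using the recipe of \cite[\S 2.2.3]{10author} for $\widetilde{P}_v$. Non-Siegel strata are handled analogously by induction on $n$, since their Levis are products of $\Res^F_{F^+}\GL_{n_i}$ factors and a smaller-rank quasi-split unitary group of the same type.

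For the interior contribution I would appeal to the Galois representations attached to cuspidal cohomological automorphic representations of $\widetilde{G}$; in characteristic zero this is the appendix of \cite{goldring_galois_reps} as cited in the text, and the torsion extension proceeds exactly as in \cite{scholze_torsion} and \cite{caraiani_scholze_non_compact}. Uniqueness of $\overline{\rho}_{\widetilde{\mathfrak{m}}}$ is automatic from its semisimplicity together with the Chebotarev density theorem. The main obstacle I anticipate is matching the explicit form of the Satake transform to the specific shape of $\widetilde{P}_v(z)$, in particular keeping track of the half-integral Tate twist: if the normalisations disagree by one then the formula above needs adjustment, so I would first work out both sides in an unramified principal-series example to pin down the correct twist before attempting the general verification.
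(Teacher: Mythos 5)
The paper does not actually prove this statement: its proof is the single citation ``See \cite[Theorem 2.3.8]{10author}'', so there is no internal argument to compare yours against. Judged on its own terms, your sketch has a genuine gap, and it sits exactly where you deposit the hardest part. The excision triangle does place the support of $H^\bullet(X_{\widetilde G}^{\widetilde K},\mathcal V_{\widetilde\lambda})$ inside the union of the supports of $H^\bullet_c$ and of the boundary cohomology, and your treatment of the boundary strata via Proposition \ref{stratification_lemma} and the unnormalised Satake transform, with $\overline{\rho}_{\widetilde{\mathfrak m}}=\overline{\rho}_{\mathfrak m}\oplus\overline{\rho}_{\mathfrak m}^{c,\vee}(1-2n)$ on the Siegel stratum, is the right bookkeeping (it matches the computation quoted after \cite[5.3]{newton_thorne_16} in the proof of Proposition \ref{non_eisenstein_boundary_prop}). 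But the ``interior'' case is not something that can be ``handled analogously'': a maximal ideal in the support of $H^\bullet_c(X_{\widetilde G}^{\widetilde K},\mathcal V_{\widetilde\lambda})$ need not see any characteristic-zero automorphic representation at all --- the theorem is interesting precisely because of torsion classes --- and attaching Galois representations to such classes is the main theorem of \cite{scholze_torsion} (perfectoid Shimura varieties, the Hodge--Tate period map, fake Hasse invariants, and Chenevier's determinants to glue the pseudo-representation). Writing that this ``proceeds exactly as in \cite{scholze_torsion}'' is not an extension argument; it is the entire content of the statement you are being asked to prove.

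There is also a circularity risk in your boundary step. In \cite{scholze_torsion} and \cite{10author} the logical order is the reverse of yours: the $2n$-dimensional representation for $\widetilde G$ is constructed first, from the Shimura-variety side, and the $\GL_n$ statement \cite[Theorem 2.3.5]{10author} is then \emph{deduced} from it by restricting to the Siegel stratum of the Borel--Serre boundary --- exactly the mechanism of Propositions \ref{hecke_parabolic_satake_prop} and \ref{non_eisenstein_boundary_prop} in this paper. Using the $\GL_n$ theorem as an input to construct $\overline{\rho}_{\widetilde{\mathfrak m}}$ therefore requires, at minimum, an independent source for the $\GL_n$ case, and even then it only covers the boundary-supported maximal ideals. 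The honest conclusion is that this theorem, like the paper's own one-line proof, is an import from \cite{10author}; it cannot be reconstructed from the tools developed in this paper, and your sketch ultimately reduces to the same citation while adding a decomposition that obscures where the real work lies.
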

	
	\begin{proof}
		See \cite[Theorem 2.3.8]{10author}.
	\end{proof}
	
	\begin{prop} \label{non_eisenstein_boundary_prop}
		Let $S$ be a set of places of $F$ satisfying $(\Sigma_p)$ and $\widetilde{K} < \widetilde{G}(\mathbb{A}_{F^+}^\infty)$ a good open compact subgroup such that $\widetilde{K}_{\overline{v}} = \widetilde{G}(\mathcal{O}_{F^+_{\overline{v}}})$ for $\overline{v} \not \in \overline{S}$ and $G(\mathbb{A}_F^\infty) \cap \widetilde{K} \to (P(\mathbb{A}_{F^+}^\infty) \cap \widetilde{K})/(U(\mathbb{A}_{F^+}^\infty) \cap \widetilde{K})$ is an isomorphism.
		If $\widetilde{\lambda}$ is a dominant weight for $\widetilde{G}$, $\lambda$ is the corresponding dominant weight for $G$, $\mathfrak{m}$ is a non-Eisenstein ideal of $\mathbb{T}^S(R\Gamma(X_G^K, p_*^{K} \mathcal{V}_\lambda))$ and $\widetilde{\mathfrak{m}} = \operatorname{Sat}^{-1}(\mathfrak{m}) \subset \widetilde{\mathbb{T}}^{\overline{S}}$, then the map from proposition \ref{hecke_parabolic_satake_prop} makes $E_{P}^{P \cap \widetilde{K}}(V_{\widetilde{\lambda}})_{\widetilde{\mathfrak{m}}}$ a $\widetilde{\mathbb{T}}^{\overline{S}}$-equivariant direct summand of $ \partial E_{\widetilde{G}}^{\widetilde{K}}(V_{\widetilde{\lambda}})_{\widetilde{\mathfrak{m}}}$, 
		where $P$ is the Siegel parabolic.
		Moreover, the natural map 
		\[
		R\Gamma_c(X_P^{\widetilde{K} \cap P}, p_*^{\widetilde{K} \cap P} \mathcal{V}_{\widetilde{\lambda}})_{\widetilde{\mathfrak{m}}} \to R\Gamma(X_P^{\widetilde{K} \cap P}, p_*^{\widetilde{K} \cap P} \mathcal{V}_{\widetilde{\lambda}})_{\widetilde{\mathfrak{m}}} 
		\]
		is an isomorphism, where $\widetilde{\mathbb {T}}^{\overline{S}}$ acts via the map $r_P$.
	\end{prop}
	
	\begin{proof}
		Let us begin with the second part of the claim. 
		Consider 
		\[
		\overline{X}_P^{\widetilde{K} \cap P} := \left( \bigcup_{Q \subset P} \mathfrak{X}_Q \times_{Q(\mathbb{A}_F^\infty)} P(\mathbb{A}_F^\infty) \right) / (\widetilde{K} \cap P)
		\]
		as a locally closed subset of $\overline{X}_{\widetilde{G}}^{\widetilde{K}}$. Then the open embedding
		\[
		X_P^{\widetilde{K} \cap P} \hookrightarrow \overline{X}_P^{\widetilde{K} \cap P}
		\]
		is a homotopy equivalence (see \cite[Lemma 8.3.1]{borel_serre})
		and the corresponding excision distinguished triangle is
		\[
		R\Gamma_c(X_P^{\widetilde{K} \cap P}, p_*^{\widetilde{K} \cap P} \mathcal{V}_{\widetilde{\lambda}})
		\rightarrow
		R\Gamma(X_{P}^{\widetilde{K} \cap P}, p_*^{\widetilde{K} \cap P} \mathcal{V}_{\widetilde{\lambda}}) \to 
		R \Gamma (\overline{X}_{P}^{\widetilde{K} \cap P} \setminus X_{P}^{\widetilde{K} \cap P}, p_*^{\widetilde{K} \cap P} \mathcal{V}_{\widetilde{\lambda}}) \xrightarrow{[1]}
		\]
		Thus, for the second part it suffices to show that $R \Gamma (\overline{X}_{P}^{\widetilde{K} \cap P} \setminus X_{P}^{\widetilde{K} \cap P}, p_*^{\widetilde{K}} \mathcal{V}_{\widetilde{\lambda}})_{\widetilde{\mathfrak{m}}} = 0$.
		For the first part, we note that the Siegel parabolic $P$ is a maximal parabolic subgroup, hence
		theorem \ref{borel_serre_thm} and proposition \ref{stratification_lemma} show that 
		\[
		\mathfrak{X}_P \times_{P(\mathbb{A}_{F^+}^\infty)} \widetilde{G}(\mathbb{A}_{F^+}^\infty) \subset \partial \mathfrak{X}_{\widetilde{G}}
		\]
		is an open subset. For a parabolic subgroup $P \subset \widetilde{G}$ we define 
		\[
		\widetilde{X}_P^{\widetilde{K}} := (\mathfrak{X}_P \times_{P(\mathbb{A}_{F^+}^\infty)} \widetilde{G}(\mathbb{A}_{F^+}^\infty))/\widetilde{K} \subset \partial X_{\widetilde{G}}^{\widetilde{K}}.
		\]
		By the Iwasawa decomposition we have $P(F_v^+) \widetilde{K}_v = \widetilde{G}(F_v^+)$ for all $v \not \in \overline{S}$, hence get a 
		$\widetilde{G}(\mathbb{A}_{F^+}^{\overline{S}}) \times \widetilde{K}_{\overline{S}}$-stable disjoint union
		\[
		\mathfrak{X}_P \times_{P(\mathbb{A}_{F^+}^\infty)} \widetilde{G}(\mathbb{A}_{F^+}^\infty) = \bigcup_{\alpha} \mathfrak{X}_P \times_{P(\mathbb{A}_{F^+}^\infty)} P(\mathbb{A}_{F^+}^\infty) g_{\alpha} \widetilde{K}
		\]
		for some set of representatives $g_{\alpha} \in \prod_{v \in \overline{S}} \widetilde{G}(F^+_{v})$. After quotienting by $\widetilde{K}$, it follows that 
		there is a $\widetilde{\mathbb{T}}^{\overline{S}}$-equivariant decomposition
		\[
		R\Gamma(\widetilde{X}_P^{\widetilde{K}}, p_*^{\widetilde{K}}\mathcal{V}_{\widetilde{\lambda}})  = \bigoplus_{\alpha} R\Gamma(X_P^{P \cap g_{\alpha} \widetilde{K} g_{\alpha}^{-1}}, p_*^{P \cap g_{\alpha} \widetilde{K} g_{\alpha}^{-1}}\mathcal{V}_{\widetilde{\lambda}}).
		\]
		We have an excision distinguished triangle
		\[
		R\Gamma_c(\widetilde{X}_P^{\widetilde{K}}, p_*^{\widetilde{K}} \mathcal{V}_{\widetilde{\lambda}})
		\xrightarrow{i_{P!}}
		R\Gamma(\partial X_{\widetilde{G}}^{\widetilde{K}}, p_*^{\widetilde{K}} \mathcal{V}_{\widetilde{\lambda}}) \to 
		R \Gamma (\partial X_{\widetilde{G}}^{\widetilde{K}} \setminus \widetilde{X}_{P}^{\widetilde{K}}, p_*^{\widetilde{K}} \mathcal{V}_{\widetilde{\lambda}}) \xrightarrow{[1]}
		\]
		and we will show that $i_{P!}$ localised at $\widetilde{\mathfrak{m}}$ is an isomorphism. Let $\widetilde{i}_P : \widetilde{X}_P^{\widetilde{K}} \hookrightarrow \partial X_{\widetilde{G}}^{\widetilde{K}}$, be the inclusion. Then the localization of $\widetilde{i}_P^*$ is also an isomorphism, because the composition of $\widetilde{i}_{P}^*$ after $i_{P!}$ is the natural map 
		\[
		R\Gamma_c(\widetilde{X}_P^{\widetilde{K}}, p_*^{\widetilde{K}} \mathcal{V}_{\widetilde{\lambda}}) \to R\Gamma(\widetilde{X}_P^{\widetilde{K}}, p_*^{\widetilde{K}} \mathcal{V}_{\widetilde{\lambda}}) 
		\] 
		and we will show that the localisation of this map is an isomorphism. The first part of the proposition follows from this because we already observed that 
		$R\Gamma(X_P^{P \cap \widetilde{K}}, p_*^{P \cap \widetilde{K}}\mathcal{V}_{\widetilde{\lambda}})$
		is a $\widetilde{\mathbb{T}}^{\overline{S}}$-equivariant direct summand of
		$R\Gamma(\widetilde{X}_P^{\widetilde{K}}, p_*^{\widetilde{K}}\mathcal{V}_{\widetilde{\lambda}})$.
		In conclusion, it suffices to show that for any choice of $\widetilde{K}$ we have
		\[
		R \Gamma (\partial X_{\widetilde{G}}^{\widetilde{K}} \setminus \widetilde{X}_{P}^{\widetilde{K}}, p_*^{\widetilde{K}} \mathcal{V}_{\widetilde{\lambda}})_{\widetilde{\mathfrak{m}}} = 0 = R \Gamma (\overline{X}_{P}^{\widetilde{K} \cap P} \setminus X_{P}^{\widetilde{K} \cap P}, p_*^{\widetilde{K}} \mathcal{V}_{\widetilde{\lambda}})_{\widetilde{\mathfrak{m}}},
		\]
		since then $i_{P!}$ is an isomorphism by the triangle above.
		Using proposition \ref{stratification_lemma} we can do more excision to reduce this to showing that the compactly supported complexes $R \Gamma_c (X_{Q}^{\widetilde{K} \cap Q},  p_*^{\widetilde{K} \cap Q} \mathcal{V}_{\widetilde{\lambda}})_{\widetilde{\mathfrak{m}}}$ vanish for all proper parabolic subgroups $Q \neq P$ containing the standard Borel $B$ and that
		$R \Gamma (X_{B}^{\widetilde{K} \cap B}, p_*^{B \cap \widetilde{K}} \mathcal{V}_{\widetilde{\lambda}})_{\widetilde{\mathfrak{m}}}$ vanishes. Using the long exact sequence in cohomology attached to
		\[
			0 \to \mathcal{V}_{\widetilde{\lambda}} \to \mathcal{V}_{\widetilde{\lambda}} \to \mathcal{V}_{\widetilde{\lambda}}/ \varpi \mathcal{V}_{\widetilde{\lambda}} \to 0,
		\]
		we see that it suffices to show the vanishing of 
		$R \Gamma_c (X_{Q}^{\widetilde{K} \cap Q},  p_*^{\widetilde{K} \cap Q} (\mathcal{V}_{\widetilde{\lambda}}/ \varpi \mathcal{V}_{\widetilde{\lambda}}))_{\widetilde{\mathfrak{m}}}$ and
		$R \Gamma (X_{B}^{\widetilde{K} \cap B}, p_*^{B \cap \widetilde{K}} (\mathcal{V}_{\widetilde{\lambda}}/ \varpi \mathcal{V}_{\widetilde{\lambda}}))_{\widetilde{\mathfrak{m}}}$.
		Let $\widetilde{K}' < \widetilde{K}$ be a small enough good open normal subgroup such that $\widetilde{K}'_v = \widetilde{K}_v$ for all places $v \nmid p$ and $p_*^{\widetilde{K}'} (\mathcal{V}_{\widetilde{\lambda}}/\varpi  \mathcal{V}_{\widetilde{\lambda}})$ is trivial. Then we have Hochschild-Serre spectral sequences
		 \begin{linenomath*} \begin{align*}
			H^{i}(\widetilde{K}/\widetilde{K}', H^j_c(X_{Q}^{\widetilde{K}' \cap Q},  p_*^{\widetilde{K} \cap Q} (\mathcal{V}_{\widetilde{\lambda}}/ \varpi \mathcal{V}_{\widetilde{\lambda}}))_{\widetilde{\mathfrak{m}}} ) & \implies H^{i + j}_c(X_{Q}^{\widetilde{K} \cap Q},  p_*^{\widetilde{K} \cap Q} (\mathcal{V}_{\widetilde{\lambda}}/ \varpi \mathcal{V}_{\widetilde{\lambda}}))_{\widetilde{\mathfrak{m}}} \\
						H^{i}(\widetilde{K}/\widetilde{K}', H^j(X_{B}^{\widetilde{K} \cap B}, p_*^{B \cap \widetilde{K}} (\mathcal{V}_{\widetilde{\lambda}}/ \varpi \mathcal{V}_{\widetilde{\lambda}}))_{\widetilde{\mathfrak{m}}} ) & \implies H^{i + j}(X_{B}^{\widetilde{K} \cap B}, p_*^{B \cap \widetilde{K}} (\mathcal{V}_{\widetilde{\lambda}}/ \varpi \mathcal{V}_{\widetilde{\lambda}}))_{\widetilde{\mathfrak{m}}}
		\end{align*} \end{linenomath*} 
		which reduce the problem to showing that
		\[
		R \Gamma_c(X_Q^{\widetilde{K} \cap Q}, p_*^{\widetilde{K} \cap Q} \underline{k})_{\widetilde{\mathfrak{m}}} = 0 = R \Gamma(X_Q^{\widetilde{K} \cap Q}, p_*^{\widetilde{K} \cap Q} \underline{k})_{\widetilde{\mathfrak{m}}}
		\]
		where $Q \neq P$, $k$ is a finite field of characteristic $p$ and we have replaced $\widetilde{K}$ by $\widetilde{K}'$. By duality it is enough to only show that 
		\[
		R \Gamma(X_Q^{\widetilde{K} \cap Q}, p_*^{\widetilde{K} \cap Q} \underline{k})_{\widetilde{\mathfrak{m}}} = E_Q^{Q \cap \widetilde{K}}(k)_{\widetilde{\mathfrak{m}}} = 0
		\]
		Now proposition \ref{hecke_parabolic_satake_prop} implies that
		\[
		E_Q^{Q \cap \widetilde{K}}(k) \cong E_M^{M \cap \widetilde{K}}( R\Gamma((U \cap \widetilde{K})_S, k)),
		\]
		where $U$ is the unipotent radical of $Q$ and $M$ is a Levi subgroup of $Q$. By shrinking $\widetilde{K}$ even further we can assume that $R\Gamma((U \cap \widetilde{K})_S, k)$ is a complex of trivial $M \cap \widetilde{K}$-modules.
		Now  $ E_M^{M \cap \widetilde{K}}( R\Gamma((U \cap \widetilde{K})_S, k))$ is a direct sum of shifts of $E_M^{M \cap \widetilde{K}}(k)$. Hence it suffices to show that $E_M^{M \cap \widetilde{K}}(k)_{\widetilde{\mathfrak{m}}} = 0$, where we see $E_M^{M \cap \widetilde{K}}(k)$ as a $\widetilde{\mathbb{T}}^S$-module via the map $\operatorname{Sat}$. After possibly shrinking $\widetilde{K}$ we can assume that $\widetilde{K} \cap M = \prod_{i = 1}^t K_i$, where $M = \prod_{i = 1}^t M_i$ and each $K_i$ is a good open compact subgroup of $M_i(\mathbb{A}_{F^+}^\infty)$. Now there is an isomorphism
		\[
		\mathcal{H}(M(\mathbb{A}_{F^+}^S), \widetilde{K} \cap M) \to \bigotimes_{i = 1}^t \mathcal{H}(M_i(\mathbb{A}_{F^+}^S), K_i)
		\]
		Hence if $\mathfrak{m}' < \mathcal{H}(M(\mathbb{A}_{F^+}^S), \widetilde{K} \cap M)$ is a maximal ideal in the support of $E_M^{M \cap \widetilde{K}}(k)$ such that $\operatorname{Sat}^{-1}(\mathfrak{m}') = \widetilde{\mathfrak{m}}$, then there exist maximal ideals $\mathfrak{m}_i < \mathcal{H}(M_i(\mathbb{A}_{F^+}^S), K_i)$ such that $\mathfrak m$ is the product of these $\mathfrak{m}_i$ in the obvious sense. Similarly we have a factorisation
		\[
		E_{M}^{\widetilde{K} \cap M}(k) = \bigotimes_{i = 1}^t E_{M_i}^{K_i}(k)
		\]
		compatible with this isomorphism. Hence each $\mathfrak{m}_i$ is in the support of $E_{M_i}^{K_i}(k)$ and there exist semisimple Galois representations $\overline{\rho}_{\mathfrak{m}_i}$ by \cite[Theorem 2.3.5 and Theorem 2.3.8]{10author}. 
		Since $\operatorname{Sat}^{-1}(\mathfrak{m}') = \widetilde{\mathfrak{m}}$, a computation like \cite[Lemma 4.6]{newton_thorne_16}
		shows that if $Q \neq P$ is a proper parabolic subgroup, then 
		$\overline{\rho}_{\widetilde{\mathfrak{m}}}$ has at least 3 simple factors. 
		But from the computation of $\operatorname{Sat}(\widetilde{P}_v)$ after \cite[5.3]{newton_thorne_16} we know that $\overline{\rho}_{\widetilde{\mathfrak{m}}} \cong \overline{\rho}_{\mathfrak m} \oplus \overline{\rho}_{\mathfrak m}^{c,\vee}(1 - 2n)$ has only 2 simple factors by the non-Eisenstein assumption. 
		Thus, there are no maximal ideals $\mathfrak{m}' < \mathcal{H}(M(\mathbb{A}_{F^+}^S), \widetilde{K} \cap M)$ in the support of $E_M^{M \cap \widetilde{K}}(k)$ such that $\operatorname{Sat}^{-1}(\mathfrak{m}') = \widetilde{\mathfrak{m}}$. Consequently, $\widetilde{\mathbb{T}}^S(E_M^{M \cap \widetilde{K}}(k))/\widetilde{\mathfrak{m}}$ has no maximal ideals. This is absurd, hence $\widetilde{\mathfrak{m}}$ is not in the support of $E_M^{M \cap \widetilde{K}}(k)$ as desired.
	\end{proof} 
	
	\section{Hunting Hecke Eigenvalues}
	
	Our basic strategy for accessing Galois representations is by finding suitable congruences of a system of Hecke eigenvalues to another one, where the attached Galois representations are known to exist and have the desired properties.
	In the notation of the discussion after diagram \eqref{locally_symmetric_spaces_diagram} we would like to show that the Hecke eigensystem on $H^q(X_G^K, \mathcal{V}_\lambda)$ occurs in $H^d(X_{\widetilde{G}}^{\widetilde{K}}, \mathcal{V}_{\widetilde{\lambda}})$, where $\widetilde{\lambda}$ is a suitable weight of $\widetilde{G}$ and $\lambda$ is the weight of $G$ corresponding to $\widetilde{\lambda}$. We achieve something slightly weaker but sufficient in corollary \ref{final_hecke_congruence_cor} below. After that the properties about Galois representations follow from the general theorems on integral $p$-adic Hodge theory of \cite{torsion_p_adic}.
	
	\subsection{Hecke Eigenvalues Occur in Things}
	
	Since we will have to obtain our congruences of Hecke eigenvalues through a somewhat involved argument by chasing through a spectral sequence and everything is ``up to nilpotents", we introduce a notation to abbreviate the precise notion of Hecke congruence we mean. 
	
	\begin{definition}
		For a ring $T$ and $T$-module $M$, we set $T(M) := T/ \Ann_T(M) \hookrightarrow \End(M)$.
	\end{definition}
	
	\begin{lemma}
		If $T$ is a ring and $M$ and $N$ are $T$-modules such that $M$ is a subquotient of $N$, then $T(M)$ is a quotient of $T(N)$.
	\end{lemma}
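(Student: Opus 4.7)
The plan is to unwind the definition and observe that the only thing one needs is the inclusion of annihilators $\Ann_T(N) \subseteq \Ann_T(M)$, which gives a surjection $T/\Ann_T(N) \twoheadrightarrow T/\Ann_T(M)$, i.e., a quotient map $T(N) \to T(M)$.

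First, I would handle the two basic cases separately. If $M \subseteq N$ is a submodule, then any $t \in T$ with $tN = 0$ certainly satisfies $tM = 0$, so $\Ann_T(N) \subseteq \Ann_T(M)$. If instead $M = N/N'$ is a quotient of $N$ by a submodule $N' \subseteq N$, then any $t$ with $tN = 0$ acts as zero on the quotient $N/N'$, so again $\Ann_T(N) \subseteq \Ann_T(M)$.

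Next, since being a subquotient means $M \cong N''/N'$ for some submodules $N' \subseteq N'' \subseteq N$, I would combine the two observations: $\Ann_T(N) \subseteq \Ann_T(N'') \subseteq \Ann_T(N''/N') = \Ann_T(M)$. This inclusion of ideals induces the desired surjection
\[
T(N) = T/\Ann_T(N) \twoheadrightarrow T/\Ann_T(M) = T(M),
\]
which completes the proof.

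There is no real obstacle here; the statement is a formal consequence of the definition of $T(M)$ as the image of $T$ in $\End(M)$ and the elementary fact that annihilator ideals grow along submodules and quotients.
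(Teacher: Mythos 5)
Your proof is correct and follows exactly the same route as the paper's (the paper just states "if $t \in T$ annihilates $N$, then it also annihilates $M$" and concludes); you have merely spelled out the submodule and quotient steps explicitly. No issues.
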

	
	\begin{proof}
		If $t \in T$ annihilates $N$, then it also annihilates $M$. This implies that $T(M)$ is a quotient of $T(N)$.
	\end{proof}
	
	\begin{definition} \label{occur_def}
		Let $T$ be a ring and $M, N$ be $T$-modules. We say that $M$ \emph{occurs in} $N$ if 
		$\Ann(N)^n \subset \Ann(M)$ for some $n \geq 1$. We write $M \prec N$. If we want to emphasize that the exponent $n = n(x)$ only depends on some variable $x$, we write $M \prec_x N$.
	\end{definition}
	
	\begin{lemma}
		If $M \prec N$, then there is a natural surjection $T(N) \to T(M)/J$ for some nilpotent ideal $J$. 
	\end{lemma}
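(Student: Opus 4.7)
The plan is to construct $J$ explicitly as the image of $\Ann_T(N)$ in $T(M)$ and check the two properties by unwinding definitions.

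More precisely, let $\pi : T \to T(M) = T/\Ann_T(M)$ be the quotient map, and set $J := \pi(\Ann_T(N)) \subset T(M)$. Since $\pi$ is a surjective ring homomorphism, $J$ is an ideal of $T(M)$. For nilpotence, observe that $J^n = \pi(\Ann_T(N)^n)$, and by hypothesis $\Ann_T(N)^n \subset \Ann_T(M) = \ker \pi$, so $J^n = 0$.

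Next, to produce the surjection, I would consider the composite $T \xrightarrow{\pi} T(M) \to T(M)/J$. By construction this map sends every element of $\Ann_T(N)$ to zero, so it factors uniquely through $T(N) = T/\Ann_T(N)$ as a ring homomorphism $\varphi : T(N) \to T(M)/J$. Since $T \to T(M)/J$ is surjective, so is $\varphi$. Naturality is immediate because every ingredient---the annihilator ideals and the quotient maps---is functorial in the pair $(M,N)$.

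There is no real obstacle here; the only thing to keep straight is that we cannot expect a map $T(N) \to T(M)$ on the nose (the two annihilator ideals are generally incomparable), which is precisely why the nilpotent ideal $J$ must be introduced to absorb the discrepancy. The compatibility with earlier usage is automatic: when $\Ann_T(N) \subset \Ann_T(M)$ (i.e. the case $n=1$), $J = 0$ and we recover the usual surjection $T(N) \twoheadrightarrow T(M)$ from the subquotient lemma just above.
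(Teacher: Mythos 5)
Your proof is correct and takes essentially the same route as the paper: define $J$ as the image of $\Ann_T(N)$ in $T(M)$, note $J^n=0$ from $\Ann_T(N)^n\subset\Ann_T(M)$, and observe that $T(M)/J = T/(\Ann_T(M)+\Ann_T(N))$ is a quotient of $T(N)$. Nothing to add.
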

	
	\begin{proof}
		By assumption we know that $\Ann(N)^n \subset \Ann(M)$, hence 
		$T/\Ann(N)^n$ surjects onto $T/\Ann(M)$ and if $J$ denotes image of $\Ann(N)$ in $T/\Ann(M)$, then $J^n = 0$ and $T(N)$ surjects onto $T(M)/J$.
	\end{proof}
	
	\begin{lemma}
		The relations $\prec$ and $\prec_x$ are transitive and reflexive.
	\end{lemma}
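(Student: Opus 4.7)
The plan is to prove both properties directly from the definition, which reduces everything to a short manipulation of ideals and exponents.

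For reflexivity, I would simply note that $\Ann(M)^{1} \subset \Ann(M)$ with equality, so taking $n = 1$ shows $M \prec M$. For the parametrized version $\prec_x$, the constant function $n(x) = 1$ works uniformly, giving reflexivity.

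For transitivity, suppose $M \prec N$ and $N \prec P$, so that there exist integers $n, m \geq 1$ with $\Ann(N)^n \subset \Ann(M)$ and $\Ann(P)^m \subset \Ann(N)$. The key step is then the computation
\[
\Ann(P)^{mn} = \bigl(\Ann(P)^m\bigr)^n \subset \Ann(N)^n \subset \Ann(M),
\]
which shows $M \prec P$ with exponent $mn$. For $\prec_x$, if $n = n(x)$ and $m = m(x)$ depend only on $x$, then so does the product $mn$, giving transitivity of $\prec_x$.

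There is no real obstacle here; the statement is essentially a formal consequence of the multiplicative behavior of ideal powers. The only minor point worth mentioning is that powers of ideals behave well under inclusion (i.e., $I \subset J$ implies $I^n \subset J^n$), which is what allows the middle inclusion in the displayed computation above.
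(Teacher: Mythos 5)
Your proof is correct and is simply the spelled-out version of what the paper dismisses as ``Clear'': reflexivity with exponent $1$ and transitivity by multiplying the exponents, using that $I \subset J$ implies $I^n \subset J^n$.
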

	
	\begin{proof}
		Clear.
	\end{proof}
	
	\begin{lemma} \label{occur_ses_lemma}
		Let $T$ be a ring and 
		\[
		0 \to M' \to M \to M'' \to 0
		\]
		be an exact sequence of $T$-modules. Then
		\[
		M \prec M' \oplus M''.
		\]
	\end{lemma}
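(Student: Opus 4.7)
The plan is to show directly that $\Ann_T(M' \oplus M'')^2 \subset \Ann_T(M)$, which gives the desired relation with exponent $n = 2$.

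First I would observe that $\Ann_T(M' \oplus M'') = \Ann_T(M') \cap \Ann_T(M'')$, since a diagonal action kills the direct sum iff it kills each summand. So it suffices to show that if $t_1, t_2 \in \Ann_T(M') \cap \Ann_T(M'')$, then $t_1 t_2 \in \Ann_T(M)$.

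The key step uses the short exact sequence. Since $t_2$ annihilates $M''$, the submodule $t_2 M \subset M$ has zero image in $M''$, hence $t_2 M \subset M'$ (identifying $M'$ with its image in $M$). Then since $t_1$ annihilates $M'$, we get $t_1 t_2 M \subset t_1 M' = 0$, so $t_1 t_2 \in \Ann_T(M)$. This shows $\Ann_T(M' \oplus M'')^2 \subset \Ann_T(M)$, which by definition means $M \prec M' \oplus M''$.

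No step here looks like an obstacle; the argument is a one-line commutative algebra observation and the only subtlety is remembering that $\Ann$ of a direct sum is the intersection of the annihilators. This is the standard fact that underlies why the relation $\prec$ behaves well with respect to short exact sequences, and it is exactly the reason the exponent $\delta$ in the main patching lemma can be controlled inductively along filtrations (e.g.\ of a spectral sequence).
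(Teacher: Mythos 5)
Your proof is correct and is essentially the same as the paper's: the paper writes the argument as the chain of inclusions $(\Ann(M') \cap \Ann(M''))^2 \subset \Ann(M')\Ann(M'') \subset \Ann(M)$, and your elementwise computation (first push $t_2 M$ into $M'$, then kill it with $t_1$) is exactly the verification of the second inclusion. Nothing is missing.
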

	
	\begin{proof}
		We have 
		\[
		(\Ann(M') \cap \Ann(M''))^2 \subset \Ann(M') \Ann(M'') \subset \Ann(M)
		\] 
		and the result follows since $\Ann(M' \oplus M'') = \Ann(M') \cap \Ann(M'')$. 
	\end{proof}
	
	\begin{lemma}
		Let $T$ be a ring and $E_{r}^{p,q}, r \geq 2$ a first quadrant spectral sequence of $T$-modules. For any $r \geq 2$, $E_\infty^{p,q}$ is a subquotient of $E_r^{p,q}$.
	\end{lemma}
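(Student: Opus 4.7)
The plan is to use the basic structural feature of a first quadrant spectral sequence: for any fixed bidegree $(p,q)$, the differentials $d_r$ into and out of $E_r^{p,q}$ both vanish once $r$ is large enough, because either their source or target lies outside the first quadrant. Consequently, the sequence $E_r^{p,q}, E_{r+1}^{p,q}, E_{r+2}^{p,q}, \dots$ stabilizes after finitely many pages, and by definition $E_\infty^{p,q}$ is equal to this eventual value.

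Next, I would recall that at each page $r$, the next page is given by $E_{r+1}^{p,q} = \ker(d_r^{p,q})/\operatorname{im}(d_r^{p-r,q+r-1})$. This exhibits $E_{r+1}^{p,q}$ as a subquotient of $E_r^{p,q}$ in the category of $T$-modules, since $d_r$ is $T$-linear. Indeed, $\ker(d_r^{p,q})$ is a $T$-submodule of $E_r^{p,q}$, and $\operatorname{im}(d_r^{p-r,q+r-1})$ is a $T$-submodule of that kernel.

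The final step is to iterate: for the given $r \geq 2$ and any $s \geq r$, the module $E_{s+1}^{p,q}$ is a subquotient of $E_s^{p,q}$, hence by transitivity $E_s^{p,q}$ is a subquotient of $E_r^{p,q}$ for every $s \geq r$. Applying this up to the stabilization index gives $E_\infty^{p,q}$ as a subquotient of $E_r^{p,q}$.

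There is no real obstacle here; the statement is a textbook fact, and the only thing one must be slightly careful about is that the notion of ``subquotient'' composes, which follows at once from the fact that a submodule of a quotient lifts to a submodule containing the kernel, and vice versa. Since everything takes place in the abelian category of $T$-modules, all these manipulations are $T$-linear automatically.
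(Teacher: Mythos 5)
Your proposal is correct and follows the same route as the paper: the key observation in both is that $E_{r+1}^{p,q}$ is by definition a subquotient of $E_r^{p,q}$, and one iterates until stabilization (guaranteed by the first-quadrant hypothesis). The extra remarks on transitivity of "subquotient" and on why the pages stabilize are just the details the paper leaves implicit.
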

	
	\begin{proof}
		It suffices to show that $E_{r+1}^{p,q}$ is a subquotient of $E_r^{p,q}$. But this is the case by definition.
	\end{proof}
	
	\begin{lemma} \label{spectral_occurs_lemma}
		Let $T$ be a ring and let $E_{r}^{p,q}, r \geq 2$ and $\overline{E}_{r}^{p,q}, r \geq 2$ be spectral sequences of $T$-modules.
		Suppose that there is a morphism of spectral sequences $\phi_r : E_r \to \overline{E}_r$ and that
		the spectral sequence $\overline{E}_r$ is trivial, i.e. all its differentials $\overline{d}_r$ vanish for $r \geq 2$.
		Then the images $I_r^{p,q} := \phi_r(E_r^{p,q})$ satisfy
		\[
		I_r^{p,q} \prec I_{r+1}^{p,q} \oplus I_{r}^{p + r, q -r + 1}.
		\]
	\end{lemma}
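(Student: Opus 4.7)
The plan is to produce a short exact sequence of $T$-modules
\[
0 \to I_{r+1}^{p,q} \to I_r^{p,q} \to Q \to 0
\]
and then apply lemma \ref{occur_ses_lemma} to reduce the claim to showing $Q \prec I_r^{p+r, q-r+1}$; transitivity of $\prec$ together with $\operatorname{Ann}(A \oplus B) = \operatorname{Ann}(A) \cap \operatorname{Ann}(B)$ then delivers the statement.

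The first two steps are formal manipulations with the structure of the spectral sequence. For the inclusion $I_{r+1}^{p,q} \subseteq I_r^{p,q}$ inside the common ambient $T$-module $E_r^{p,q}(1) = E_{r+1}^{p,q}(1)$: since $\phi_{\bullet}$ is a morphism of spectral sequences and $d_r(1) = 0$, we have $\phi_r^{p+r, q-r+1} \circ d_r^{p,q} = d_r(1)^{p,q} \circ \phi_r^{p,q} = 0$, and in particular $\phi_r^{p,q}$ vanishes on $\operatorname{im}(d_r^{p-r, q+r-1})$. Consequently the induced map $\phi_{r+1}^{p,q}$ on the next page is given by $[x] \mapsto \phi_r(x)$ for $x \in \ker d_r^{p,q}$, so that $I_{r+1}^{p,q} = \phi_r(\ker d_r^{p,q}) \subseteq I_r^{p,q} = \phi_r(E_r^{p,q})$, and we set $Q := I_r^{p,q}/I_{r+1}^{p,q}$. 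For the description of $Q$: the surjection $\phi_r^{p,q}$ descends to a $T$-linear surjection $E_r^{p,q}/\ker d_r^{p,q} \twoheadrightarrow Q$, while $d_r^{p,q}$ itself yields a $T$-linear isomorphism $E_r^{p,q}/\ker d_r^{p,q} \xrightarrow{\sim} \operatorname{im}(d_r^{p,q}) \subseteq E_r^{p+r, q-r+1}$. Hence $Q$ is $T$-isomorphic to a quotient of $\operatorname{im}(d_r^{p,q})$, and lemma \ref{occur_ses_lemma} applied to the short exact sequence gives $I_r^{p,q} \prec I_{r+1}^{p,q} \oplus Q$.

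The main obstacle is the last step, establishing $Q \prec I_r^{p+r, q-r+1}$. Given $a \in \operatorname{Ann}(I_r^{p+r, q-r+1})$, $T$-linearity of $\phi_r^{p+r, q-r+1}$ gives $a \cdot E_r^{p+r, q-r+1} \subseteq \ker \phi_r^{p+r, q-r+1}$; combined with $\phi_r \circ d_r = 0$, this constrains how $a$ can act on $\operatorname{im}(d_r^{p,q})$. A careful iteration of this bound, using the commuting square between $\phi_r$ and $d_r$ at bidegree $(p+r, q-r+1)$, produces $n$ with $\operatorname{Ann}(I_r^{p+r, q-r+1})^n \subseteq \operatorname{Ann}(\operatorname{im}(d_r^{p,q})) \subseteq \operatorname{Ann}(Q)$, which is the required occurrence. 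The delicate part is controlling the passage from an annihilator of the \emph{image} $I_r^{p+r, q-r+1}$ to an annihilator of the \emph{submodule} $\operatorname{im}(d_r^{p,q}) \subseteq \ker \phi_r^{p+r, q-r+1}$, and this is where the full force of the commuting square and the $T$-linearity of $\phi_r$ must be combined.
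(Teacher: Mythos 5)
Your setup coincides with the paper's: the same short exact sequence $0 \to I_{r+1}^{p,q} \to I_r^{p,q} \to Q \to 0$, the same identification $I_{r+1}^{p,q} = \phi_r(\ker d_r^{p,q})$ via the vanishing of $\phi_r^{p,q}$ on $\operatorname{im}(d_r^{p-r,q+r-1})$, and the same presentation of $Q$ as a quotient of $E_r^{p,q}/\ker(d_r^{p,q}) \cong \operatorname{im}(d_r^{p,q})$. Up to there the argument is fine (modulo a small indexing slip: the vanishing of $\phi_r^{p,q}$ on $\operatorname{im}(d_r^{p-r,q+r-1})$ comes from the commuting square at bidegree $(p-r,q+r-1)$, not the one at $(p,q)$).

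The gap is the final step: you never actually prove $Q \prec I_r^{p+r,q-r+1}$. You assert that "a careful iteration" of the containment $a\cdot E_r^{p+r,q-r+1} \subseteq \ker\phi_r^{p+r,q-r+1}$ produces an $n$ with $\Ann(I_r^{p+r,q-r+1})^n \subseteq \Ann(\operatorname{im}(d_r^{p,q}))$, but no such iteration can exist from the data you invoke. Since $\phi_r\circ d_r = d_r(1)\circ\phi_r = 0$, the entire submodule $\operatorname{im}(d_r^{p,q})$ already lies in $\ker\phi_r^{p+r,q-r+1}$, so the containment you propose to iterate says nothing about how $a$ acts on $\operatorname{im}(d_r^{p,q})$; concretely, if $\phi_r^{p+r,q-r+1}=0$ then $\Ann(I_r^{p+r,q-r+1})=T$ while $\operatorname{im}(d_r^{p,q})$ can be a faithful $T$-module, so your intermediate claim is false as stated. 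The paper closes this step by a different mechanism: it deduces a module-theoretic subquotient relation between $Q$ and $I_r^{p+r,q-r+1}$ (which gives $\Ann(I_r^{p+r,q-r+1}) \subseteq \Ann(Q)$ outright, with exponent one), rather than comparing the annihilator of the image of $\phi_r^{p+r,q-r+1}$ with that of a submodule of its kernel. To repair your argument you would need to establish that subquotient relation, or import an additional input tying $\ker\phi_r$ to the module structure of $E_r$ (in the application this is the injectivity of $E_2^{i,j}/\varpi^m \to E_2^{i,j}(m)$ verified in the proof of proposition \ref{degree_shifting_prop}); the hypotheses of the lemma as you have used them do not suffice.
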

	
	\begin{proof}
		Consider the short exact sequence
		\[
		0 \to \underbrace{\phi_r(\ker(d_r^{p,q}))}_{=: A} \to I_r^{p,q} \to \underbrace{I_r^{p,q}/\phi_r(\ker(d_r^{p,q}))}_{=:B} \to 0.
		\]
		Firstly, we analyse $A$. Note that by assumption we have 
		$\phi_r \circ d_r^{p - r, q + r - 1} = \overline{d}_r^{p - r, q + r -1} \circ \phi_r = 0$
		, hence $\phi_r : \ker(d_r^{p,q}) \to I_r^{p,q}$ gives a well-defined map 
		$E_{r+1}^{p,q} \to I_r^{p,q}$ which coincides with $\phi_{r+1}$ if we identify
		$\overline{E}_r^{p,q}$ and $\overline{E}_{r+1}^{p,q}$. In conclusion
		\[
		A = I_{r+1}^{p,q}.
		\]
		On the other hand $E_r^{p,q}/ \ker(d_r^{p,q})$ surjects onto $B$ via $\phi_r$. Moreover, 
		$E_r^{p,q}/ \ker(d_r^{p,q})$ embeds into $E_{r}^{p + r, q - r + 1}$ via $d_r^{p,q}$. Hence
		$B$ is a subquotient of $\phi_r (E_{r}^{p + r, q - r + 1}) = I_{r}^{p + r, q - r + 1}$. The claim now follows from lemma \ref{occur_ses_lemma}.
	\end{proof}
	
	\subsection{Degree Shifting Revisited}
	
	In this section, we adapt the degree shifting argument of \cite[Section 4]{10author}
	to the case when $p$ ramifies in $F$. The difference is that a certain spectral sequence does not degenerate anymore. To address this we follow an idea of Ana Caraiani and James Newton and show that for high enough level the differentials in the aforementioned spectral sequence are divisible by high powers of $p$. Moreover, thanks to an observation of Ana Caraiani this approach is also able to completely avoid Kostant's formula! 
	
	\begin{definition}
		Let $\mathcal{A}$ be an abelian category and $C$ a complex in $\mathbf{D}(\mathcal{A})$. $C$ is called formal if it is quasi-isomorphic to 
		the complex $H^\bullet(C)$ with zero differentials.
	\end{definition}
	
	\begin{lemma}[Caraiani--Newton] \label{caraiani_newton_lemma}
		Let $A$ be an Artinian ring. Let $G$ be a profinite group and $C$ be a bounded complex of smooth $A[G]$-modules such that $C$ is perfect in $\mathbf{D}(A)$. Then there exists an open subgroup $H < G$ such that $C \in \mathbf{D}(\mathrm{Mod}_{H}^{sm}(A))$ is quasi-isomorphic to a complex of $A$-modules with trivial $H$-action.
	\end{lemma}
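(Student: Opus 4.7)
The plan is to replace $C$ by a quasi-isomorphic $G$-stable subcomplex $P^\bullet \subset C^\bullet$ whose terms are finitely generated over $A$, and then to use smoothness to find an open subgroup that fixes $P^\bullet$ pointwise. Since $A$ is Artinian it is Noetherian, and the perfectness of $C$ in $\mathbf{D}(A)$ forces each cohomology module $H^i(C)$ to be finitely generated over $A$; only finitely many are nonzero because $C$ is bounded, say in degrees $[a,b]$.

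The core of the argument would be the inductive construction of $P^\bullet$ by descending induction on $i$. At the top degree, I would choose finitely many cocycles in $\ker d^b$ whose classes generate the finitely generated $A$-module $H^b(C)$, and define $P^b$ as the $A[G]$-submodule of $C^b$ they generate. The crucial point is that each chosen element has open stabilizer in $G$ by smoothness, and open subgroups of profinite groups have finite index, so each $G$-orbit is finite and $P^b$ is itself finitely generated over the Noetherian ring $A$. Having constructed $P^{i+1}$, I would form $K^{i+1} := P^{i+1} \cap d^i(C^i)$, which is finitely generated by Noetherianity, lift generators of $K^{i+1}$ to elements $w_{i,j} \in C^i$, and also lift generators of $H^i(C)$ to cocycles $x_{i,k} \in \ker d^i$. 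Taking $P^i$ to be the $A[G]$-submodule of $C^i$ generated by the $w_{i,j}$ and $x_{i,k}$, the same smoothness argument ensures $P^i$ is finitely generated, and a direct check gives $d^i(P^i) = K^{i+1}$ and $H^i(P^\bullet) \xrightarrow{\sim} H^i(C)$. At the bottom degree one may simply take $P^a$ to contain all of $\ker d^a$, which equals $H^a(C)$ and is therefore finitely generated.

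Once $P^\bullet$ is constructed, each $P^i$ is a finitely generated smooth $A[G]$-module and is therefore fixed pointwise by some open subgroup $H_i < G$, obtained by intersecting the open stabilizers of a finite generating set. Boundedness of $P^\bullet$ means $H := \bigcap_{i=a}^b H_i$ is a finite intersection of open subgroups, hence open. By construction $H$ acts trivially on all of $P^\bullet$, and the inclusion $P^\bullet \hookrightarrow C^\bullet$ is a quasi-isomorphism in $\mathbf{D}(\mathrm{Mod}_H^{sm}(A))$ exhibiting $C$ as quasi-isomorphic to a complex of $A$-modules with trivial $H$-action.

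The main difficulty is the inductive construction of $P^\bullet$: one has to simultaneously keep the $P^i$ finitely generated, ensure $P^\bullet$ is an honest subcomplex (so that $d^i P^i \subset P^{i+1}$), and arrange that the inclusion is a quasi-isomorphism. Once this combinatorial bookkeeping is done, the trivialization of the $G$-action is essentially automatic from the joint finite generation of the $P^i$ and the smoothness hypothesis on $C$.
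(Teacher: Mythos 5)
Your argument is correct, but it is a genuinely different proof from the one in the paper. The paper's proof is a compactness argument in the derived category: it picks a bounded complex $P$ of finitely generated projective $A$-modules with trivial $G$-action representing $C$ in $\mathbf{D}(A)$, identifies $\Hom_{\mathbf{D}(\mathrm{Mod}_H^{sm}(A))}(P,C)$ with $H^0\bigl(\bigl(P^\vee\otimes_A^{\mathbb{L}} I\bigr)^H\bigr)$ for an injective resolution $I$, and uses exactness of filtered colimits to see that $\operatorname{colim}_H \Hom_{\mathbf{D}(\mathrm{Mod}_H^{sm}(A))}(P,C)\to\Hom_{\mathbf{D}(A)}(P,C)$ is an isomorphism, so the quasi-isomorphism $P\to C$ already lives at some finite level $H$. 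You instead build, by descending induction, an explicit $G$-stable subcomplex $P^\bullet\subset C^\bullet$ with finitely generated terms and show the inclusion is a quasi-isomorphism; your bookkeeping ($d^iP^i=K^{i+1}:=P^{i+1}\cap d^i(C^i)$, cocycle lifts of generators of $H^i(C)$, finiteness of $G$-orbits by smoothness, Noetherianity of the Artinian ring $A$) does check out, with the minor caveat that the injectivity of $H^i(P^\bullet)\to H^i(C)$ at stage $i$ is only confirmed once $P^{i-1}$ has been constructed, and that if the terms of $C$ extend beyond the cohomological support you should start the induction at the top degree of the complex itself. The paper's route is shorter and exploits the dualizability of the perfect complex $P$; yours is more elementary, needs only that the cohomology of $C$ is finitely generated over a Noetherian ring rather than full perfectness, and produces the stronger output of an honest finitely generated $G$-stable subcomplex on which an open subgroup acts trivially, which also makes the subsequent formality corollary immediate.
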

	
	\begin{proof}
		Choose a bounded complex $P$ of finitely generated projective $A$-modules which is quasi-isomorphic to $C$ in $\mathbf{D}(A)$. Let $G$ act trivially on $P$. Now we have
		\[
		\Hom_{\mathbf{D}(\mathrm{Mod}_H^{sm}(A))}(P, C) = \Hom_{\mathbf{D}(A)}(P, R\Gamma(H, C)) = H^0( P^\vee \otimes_A^{\mathbb{L}} R\Gamma(H, C)),
		\]
		by \cite[\href{https://stacks.math.columbia.edu/tag/07VI}{Lemma 07VI}]{stacks-project}. But the $H$-action on $P^\vee$ is trivial, so we also have
		\[
		\Hom_{\mathbf{D}(\mathrm{Mod}_H^{sm}(A))}(P, C) = H^0( R\Gamma(H, P^\vee \otimes_{A}^{\mathbb{L}} C))
		\]
		Now choose a complex $I$ of injective, smooth $A[G]$-modules representing $C$. Then 
		\[
		\Hom_{\mathbf{D}(\mathrm{Mod}_H^{sm}(A))}(P, C) = H^0( (P^\vee \otimes_{A}^{\mathbb{L}} I)^H ).
		\] 
		Since filtered colimits are exact, 
		we find that the natural map 
		\[
		\operatorname{colim}_{\substack{H < G}} \Hom_{\mathbf{D}(\mathrm{Mod}_H^{sm}(A))}(P, C) \to \Hom_{\mathbf{D}(A)}(P, C)
		\]
		is an isomorphism. Now the identity map on the right hand side has to lie in the image of $\Hom_{\mathbf{D}(\mathrm{Mod}_H^{sm}(A))}(P, C)$ for some open subgroup $H < G$, hence for such an $H$ we have $P \cong C$ in $\mathbf{D}(\mathrm{Mod}_{H}^{sm}(A))$.
	\end{proof}
	
	\begin{corollary}
		Suppose $C$ is a complex of smooth $A[G]$-modules such that $C$ is perfect and formal in $\mathbf{D}(A)$, then there exists an open subgroup $H < G$ such that 
		$C$ is formal in $\mathbf{D}(\operatorname{Mod}_H^{sm}(A))$. 
	\end{corollary}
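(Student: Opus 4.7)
The plan is to adapt the proof of Lemma \ref{caraiani_newton_lemma} verbatim, but with the chosen projective resolution $P$ replaced by $H^{\bullet}(C)$ itself, viewed as a bounded complex of $A$-modules with zero differentials and trivial $G$-action. Since $C$ is assumed to be perfect in $\mathbf{D}(A)$ and $H^{\bullet}(C) \simeq C$ there (by formality), the complex $H^{\bullet}(C)$ is also perfect in $\mathbf{D}(A)$, hence dualisable. Formality moreover gives a distinguished quasi-isomorphism $\phi : H^{\bullet}(C) \to C$ in $\mathbf{D}(A)$; what we need to produce is a lift of $\phi$ to $\mathbf{D}(\operatorname{Mod}_{H}^{sm}(A))$ for some open subgroup $H < G$.

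To do this, I reproduce the chain of identifications in the proof of the previous lemma with $P$ replaced by $H^{\bullet}(C)$. Because $H^{\bullet}(C)$ carries the trivial $H$-action and is perfect over $A$, one obtains
\[
\Hom_{\mathbf{D}(\operatorname{Mod}_{H}^{sm}(A))}(H^{\bullet}(C), C) \;=\; H^{0}\bigl( (H^{\bullet}(C)^{\vee} \otimes_{A}^{\mathbb{L}} I)^{H} \bigr),
\]
exactly as in the lemma, where $I$ is a complex of injective smooth $A[G]$-modules quasi-isomorphic to $C$. Exactness of filtered colimits of smooth $A[G]$-modules, together with the fact that any smooth $G$-module is the colimit of its $H$-invariants over open $H < G$, then shows that the natural map
\[
\operatorname{colim}_{H < G} \Hom_{\mathbf{D}(\operatorname{Mod}_{H}^{sm}(A))}(H^{\bullet}(C), C) \;\longrightarrow\; \Hom_{\mathbf{D}(A)}(H^{\bullet}(C), C)
\]
is an isomorphism. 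Hence $\phi$ lifts to some $\tilde\phi \in \Hom_{\mathbf{D}(\operatorname{Mod}_{H}^{sm}(A))}(H^{\bullet}(C), C)$ for a sufficiently small open subgroup $H$.

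It remains to observe that $\tilde\phi$ is automatically a quasi-isomorphism: the forgetful functor $\mathbf{D}(\operatorname{Mod}_{H}^{sm}(A)) \to \mathbf{D}(A)$ reflects quasi-isomorphisms (cohomology is computed on the underlying $A$-complex), and the image of $\tilde\phi$ under this functor is $\phi$, which is a quasi-isomorphism by formality. Thus $C$ is quasi-isomorphic to $H^{\bullet}(C)$ with zero differentials in $\mathbf{D}(\operatorname{Mod}_{H}^{sm}(A))$, which is precisely the formality statement we want.

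There is essentially no serious obstacle here; the only thing to check is that the argument of Lemma \ref{caraiani_newton_lemma} genuinely only requires $P$ to be bounded, $H$-trivial, and perfect in $\mathbf{D}(A)$ (not literally a complex of projectives), so that it can be applied to $P = H^{\bullet}(C)$. This is the content of the dualisability step, which is standard for perfect complexes.
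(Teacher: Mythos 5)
Your proof is correct, but it takes a more laborious route than necessary. The paper treats Lemma \ref{caraiani_newton_lemma} as a black box: it supplies an open $H < G$ and a bounded complex $P$ of finitely generated projective $A$-modules with trivial $H$-action together with an isomorphism $C \cong P$ in $\mathbf{D}(\mathrm{Mod}_H^{sm}(A))$. Formality in $\mathbf{D}(A)$ gives $P \cong H^\bullet(C)$ in $\mathbf{D}(A)$, and since both sides are complexes of $A$-modules with trivial $H$-action, this quasi-isomorphism inflates to $\mathbf{D}(\mathrm{Mod}_H^{sm}(A))$ (the trivial-action functor is exact and preserves quasi-isomorphisms); composing gives $C \cong H^\bullet(C)$ $H$-equivariantly, and in particular the induced $H$-action on $H^\bullet(C)$ is trivial. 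You instead re-run the colimit argument with $H^\bullet(C)$ in place of $P$ to lift the formality map $\phi$ directly. This works, but it forces you to justify that the chain of identifications $\Hom_{\mathbf{D}(\mathrm{Mod}_H^{sm}(A))}(P,C) = H^0((P^\vee \otimes_A^{\mathbb{L}} I)^H)$ survives when $P$ is merely perfect rather than a literal complex of finite projectives --- and the honest way to check that (replacing $H^\bullet(C)$ by a projective representative to compute the dual, the derived tensor, and the derived $H$-invariants) essentially reduces you back to the original $P$, i.e.\ to the lemma you already have. So nothing is wrong, but the extra dualisability discussion buys you nothing: the one-line observation that a $\mathbf{D}(A)$-quasi-isomorphism between two $H$-trivial complexes is automatically $H$-equivariant does all the work.
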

	
	\begin{proof}
		With $H$ as in the lemma, the quasi-isomorphism $C \cong H^\bullet(C)$ is $H$-equivariant. 
	\end{proof}
	
	\begin{remark}
		The lemma and its corollary replace \cite[Lemma 4.2.3]{10author} following a suggestion of Peter Scholze. I learnt the proof from Ana Caraiani and James Newton and thank them for allowing me to include it here. I do not know in general when we can expect that the group cohomology complex $R\Gamma(U, \mathcal{O}/\varpi^m)$ is formal as a complex of $\mathcal{O}/\varpi^m [ G]$-modules. It might be interesting to answer this ``elementary" algebraic question.
	\end{remark}
	
	\begin{lemma} \label{invariants_direct_sum_lemma}
		Let $G = N \rtimes M $
		be a profinite group. If $A$ is a smooth $R[G]$-module such that there exists $R[G]$-modules $B,C$ and a decomposition $A = B \oplus C$ as $R[M]$-modules, with $B^N = B$, then $B[0]$
		is a direct summand of $R\Gamma(N, A)$ in the derived category of smooth $R[G/N]$-modules.
	\end{lemma}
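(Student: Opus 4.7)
The plan is to represent $R\Gamma(N, A)$ by an explicit complex of $R[M]$-modules and construct $R[M]$-equivariant chain maps in both directions whose composition is the identity on $B[0]$. Concretely, I would use the smooth cochain complex $C^\bullet(N, A)$ whose degree-zero term is $A$ and whose first differential $d^0 : A \to C^1(N, A)$ is given by $(d^0 a)(n) = n \cdot a - a$, so that $\ker d^0 = A^N$; this complex computes $R\Gamma(N, A)$ as an object of $\mathbf{D}(\operatorname{Mod}_M^{sm}(R))$, with the $M$-action coming from the diagonal action on $A$ and on $N^\bullet$ by conjugation.

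The decomposition $A = B \oplus C$ as $R[M]$-modules supplies $R[M]$-equivariant maps $\iota : B \to A$ and $\pi : A \to B$ with $\pi \circ \iota = \mathrm{id}_B$. The key observation is that $\iota(B) \subseteq A^N$: since $B$ is an $R[G]$-module with $B^N = B$, and the summand $B$ sits inside $A$ compatibly with its $R[G]$-structure, the $N$-action on this copy of $B$ inside $A$ must also be trivial. Granting this, placing $\iota$ in degree zero yields an $R[M]$-equivariant chain map $\Phi : B[0] \to C^\bullet(N, A)$, the chain-map condition $d^0 \circ \iota = 0$ being satisfied precisely because $\iota(B) \subseteq A^N = \ker d^0$.

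In the other direction, placing $\pi$ in degree zero (with zero maps in all higher degrees) defines an $R[M]$-equivariant chain map $\Psi : C^\bullet(N, A) \to B[0]$; there is no further compatibility to check, since $B[0]$ has zero differential and all higher components of $\Psi$ vanish. The composition $\Psi \circ \Phi$ equals $\pi \circ \iota = \mathrm{id}_B$ in degree zero and is zero in every other degree, hence is the identity of $B[0]$ as a chain map, and therefore also as a morphism in $\mathbf{D}(\operatorname{Mod}_M^{sm}(R))$. This exhibits $B[0]$ as a direct summand of $R\Gamma(N, A)$.

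The main obstacle, and the only nontrivial point, is the verification that $\iota(B) \subseteq A^N$; this is the only place where the hypothesis that $B$ (and $C$) carry an $R[G]$-module structure---rather than merely an $R[M]$-module structure---is essential. Once this compatibility is in place, the rest of the argument is formal manipulation of chain complexes.
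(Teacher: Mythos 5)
Your proof is correct and follows essentially the same route as the paper: both represent $R\Gamma(N,A)$ by the explicit $M$-equivariant cochain complex $C^\bullet(N,A)$ and use the $R[M]$-equivariant projection onto $B$, the only substantive check in either version being that the copy of $B$ inside $A$ lies in $A^N=\ker d^0$ (the paper packages the two maps $\Phi,\Psi$ as a single idempotent chain endomorphism, which is a purely cosmetic difference).
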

	
	\begin{proof}
		We use inhomogeneous cochains to represent $R \Gamma(N, A)$. 
		Namely, define a complex with graded pieces $C^i(N, A) := \{f : N^i \xrightarrow{cont} A \}$ and differentials
		 \begin{linenomath*} \begin{align*}
			d &: C^i(N,A) \to C^{i+1}(N,A) : f \mapsto df \\
			df(g_0,\dots, g_i) & = g_0 f(g_1, \dots, g_i) \\ &+ \sum_{j = 1}^{i} (-1)^j f(g_0, \dots, g_{j-1} g_{j}, \dots, g_i) + (-1)^{i + 1} f(g_0, \dots, g_{n-1})
		\end{align*} \end{linenomath*} 
		Then a standard argument shows that $C^\bullet(N,A)$ is quasi-isomorphic to $R\Gamma(N,A)$, where
		$\sigma \in G/N$ acts on $f \in C^i(N,A)$ by
		\[
		(\sigma f)(g_1, \dots, g_n) = \sigma f(\sigma^{-1} g \sigma, \dots, \sigma^{-1} g \sigma).
		\]
		Now let $e : A \to B$ be the projection, then we define 
		$e' : C^\bullet(N,A) \to C^\bullet(N,A)$ by
		\[
		e' f = \begin{cases}
			e f & f \in C^0(N,A) \\
			0 & f \in C^{i}(N,A), i > 0 
		\end{cases}
		\]
		We have $ e' d = d e' = 0$ since each $b \in B$ is $N$-invariant
		, hence $db (n) = n b - b = 0$ for all $n \in N$.
	\end{proof}
	
	Let $E$ be a finite extension of $\rationals_p$ which contains all the $p$-adic embeddings of $F$ and let $\mathcal O$ be the ring of integers of $E$.
	In the following, the symbol $\prec$ always refers $\widetilde{\mathbb{T}}^{\overline{S}}_{\mathcal O} = \widetilde{\mathbb{T}}^{\overline{S}} \otimes \mathcal{O}$-modules for some finite set of places $S$, as defined in the previous section.
	
	\begin{prop} \label{degree_shifting_prop}
		Suppose we have a disjoint union $\overline{S}_p = \overline{S}_1 \cup \overline{S}_2$ and $S_p = S_1 \cup S_2$ a compatible decomposition. Suppose $\widetilde K$ is a good open compact subgroup of $\widetilde {G}(\mathbb{A}_{F^+}^\infty)$ such that $G(\mathbb{A}_{F^+}^\infty) \cap \widetilde K \to (P(\mathbb{A}_{F^+}^\infty) \cap \widetilde K)/(U(\mathbb{A}_{F^+}^\infty) \cap \widetilde K)$ is an isomorphism, where $U$ is the unipotent radical of $P$. Let $K = \widetilde{K} \cap G$ and $S \supset S_p$ a finite set of places such that $\widetilde{K}_v = \widetilde{G}(\mathcal{O}_{F^+_{v}})$ for $v \not \in \overline{S}$. Let
		$V_i$ be continuous $\mathcal{O}[K_{S_i}]$-modules for $i=1,2$, where $\mathcal{O}$ is the ring of integers  in a finite extension of $\rationals_p$ with uniformizer $\varpi$. Assume further that the $V_i$ are finite free over $\mathcal{O}$. Let $\widetilde{V}_1$ be a continuous $\mathcal{O}[\widetilde{K}_{\overline{S}_1}]$-module, finite free over $\mathcal{O}$.
		
		Let $d = 2n^2[F^+: \rationals] = 1 + \dim X_{\GL_n/F}$ and suppose that the following conditions are satisfied:
		\begin{enumerate}[(1)]
			\item We have 
			\[
			\sum_{\overline{v} \in \overline{S}_2} [F^+_{\overline{v}} : \rationals_p] > \frac{1}{2} [F^+: \rationals].
			\]
			\item We have a $K$-equivariant direct sum decomposition $\widetilde{V}_1 \cong W_1 \oplus W_2$ with an isomorphism of $K$-modules $V_1 \cong W_1$ such that $W_1 \subset \widetilde{V}_1^{\widetilde{K} \cap U}$.
		\end{enumerate} 
		Fix an integer $m \geq 1$.
		Then for $m' \geq m$ large enough, we have
		\[
		H^q(X^{K(m')}_G, \mathcal{V}/\varpi^m) \prec H^d(X^{\widetilde{K}(m')}_{P}, \widetilde{\mathcal{V}})
		\]
		for $\lfloor d/2 \rfloor \leq q \leq d - 1$,
		where
		 \begin{linenomath*} \begin{align*}
		\widetilde{K}(m') & := \left\{ g \in \widetilde{K} : \forall \overline{v} \in \overline{S}_2, \,\, g \equiv \begin{bmatrix}
			\id & * \\ 0 & \id
		\end{bmatrix} \pmod {\varpi_{\overline{v}}^{m'}} \right\} \\
		K(m') & := \left\{ g \in K : \forall v \in S_2, \,\, g \equiv \id \pmod {\varpi_{v}^{m'}} \right\} 
		\end{align*} \end{linenomath*} 
		and $\mathcal{V}$ is the local system associated with $V_1 \otimes V_2$ and $\widetilde{\mathcal{V}}$ is the local system associated with $\widetilde{V}_1$.
	\end{prop}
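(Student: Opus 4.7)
The plan is to adapt the degree-shifting argument of \cite[Section 4]{10author} to the ramified-$p$ setting. The central tool will be the Leray-Serre spectral sequence of the fibration $f_P \colon \mathfrak{X}_P \to \mathfrak{X}_G$ from corollary \ref{adelic_torus_bundle_cor}, whose fibre is built from the unipotent radical $U$ of the Siegel parabolic. In the Fontaine-Laffaille setting of \cite{10author} this spectral sequence collapses at $E_2$ for representation-theoretic reasons; when $p$ ramifies it need not, and the new idea is to use the formality provided by the corollary of lemma \ref{caraiani_newton_lemma} to force a splitting modulo $\varpi^m$ after passing to sufficiently deep level at the $\overline{S}_2$-places.

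Concretely, I would first apply proposition \ref{hecke_parabolic_satake_prop} to $\widetilde{V}_1$ to obtain a $\widetilde{\mathbb{T}}^{\overline{S}}$-equivariant isomorphism identifying $E_P^{\widetilde{K}(m')\cap P}(\widetilde{V}_1)$ with $E_G^{\widetilde{K}(m')\cap G}\bigl(R\Gamma((U\cap \widetilde{K}(m'))_S,\widetilde{V}_1)\bigr)$. Since $\widetilde{V}_1$ is a $\widetilde{K}_{\overline{S}_1}$-module with trivial action at $\overline{S}_2$-places, a Künneth decomposition factors the coefficient complex as
\[ R\Gamma((U\cap\widetilde{K})_{\overline{S}_1},\widetilde{V}_1)\otimes_{\mathcal{O}}^{\mathbb{L}} R\Gamma((U\cap\widetilde{K}(m'))_{\overline{S}_2},\mathcal{O}). \]
I then apply the corollary of lemma \ref{caraiani_newton_lemma} to the second factor: for $m'$ sufficiently large it becomes formal in the derived category of smooth modules over an open subgroup $H<\widetilde{K}_{\overline{S}_2}$, which I absorb into $\widetilde{K}(m')$. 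Enlarging $m'$ further I simultaneously arrange that $K(m')_{S_2}$ acts trivially on $V_2/\varpi^m$, so that $\mathcal{V}_2/\varpi^m$ is the constant sheaf associated to $V_2/\varpi^m$; in particular the Hecke annihilator of $H^*(X_G^{K(m')},\mathcal{V}/\varpi^m)$ coincides with that of $H^*(X_G^{K(m')},\mathcal{V}_1/\varpi^m)\otimes V_2/\varpi^m$, so the problem reduces to the analogous statement relating $\mathcal{V}_1/\varpi^m$ and $\widetilde{\mathcal{V}}$.

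With the $\overline{S}_2$-factor now formal, the mod-$\varpi^m$ Leray-Serre spectral sequence for $f_P$ degenerates at $E_2$, so each $H^q(X_G^{K(m')},\mathcal{V}_1/\varpi^m)$ for $q$ in a suitable range appears as a Hecke-equivariant subquotient of $H^d(X_P^{\widetilde{K}(m')\cap P},\widetilde{\mathcal{V}}/\varpi^m)$ tensored with a summand of $H^*((U\cap\widetilde{K}(m'))_{\overline{S}_2},\mathcal{O}/\varpi^m)$. Condition (1) is exactly what guarantees that the top nonvanishing degree $t$ of this group cohomology is large enough that $d-q\le t$ for every $q\in[\lfloor d/2\rfloor,d-1]$; combined with the assumption that $V_1$ is a $K$-equivariant summand of $\widetilde{V}_1$, and with lemma \ref{spectral_occurs_lemma} to absorb the nilpotent ambiguity arising because formality only yields a splitting up to finite filtration and the Satake-pullback only identifies Hecke actions up to nilpotents, this yields the claimed $\prec$-relation. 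The compactly supported version for $q<\lfloor d/2\rfloor$ is obtained either by running the parallel argument with $R\Gamma_c$ in place of $R\Gamma$, or by Poincar\'e-Lefschetz duality applied to the first case with dual coefficients.

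The main obstacle will be the interaction between formality and Hecke-equivariance: lemma \ref{caraiani_newton_lemma} provides a quasi-isomorphism only in the derived category of smooth modules over some unspecified open subgroup, and one must verify that the subgroup can be chosen so that shrinking the level does not disrupt the Satake-pullback of the $\widetilde{\mathbb{T}}^{\overline{S}}$-action that governs the spectral sequence. A secondary subtlety is uniformity of $m'$: formality of the $\overline{S}_2$-cohomology, triviality of the $K(m')_{S_2}$-action on $V_2/\varpi^m$, and validity of the K\"unneth comparison must all hold for a common choice of $m'$, which must therefore be permitted to grow with $m$ in a controlled way.
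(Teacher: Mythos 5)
Your opening moves match the paper's: the identification of $E_P^{\widetilde{K}\cap P}(\widetilde{V}_1)$ with $E_G^{K}(R\Gamma((U\cap\widetilde{K})_{\overline{S}},\widetilde{V}_1))$ via proposition \ref{hecke_parabolic_satake_prop}, the K\"unneth splitting into $\overline{S}_1$- and $\overline{S}_2$-factors, the use of lemma \ref{caraiani_newton_lemma} to make the $\overline{S}_2$-factor formal mod $\varpi^m$ at deep level, the reduction to trivial $V_2$-action, and duality for the compactly supported case. But there is a genuine gap at the centre of the argument. You conclude directly from degeneration of the mod-$\varpi^m$ spectral sequence, whose abutment is (a direct summand of) $H^d(X_P^{\widetilde{K}(m')\cap P},\widetilde{\mathcal{V}}/\varpi^m)$ --- you even write the target with $/\varpi^m$ coefficients. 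The proposition asserts $\prec H^d(X_P^{\widetilde{K}(m')},\widetilde{\mathcal{V}})$ with \emph{integral} coefficients, and this matters downstream: corollary \ref{final_hecke_congruence_cor} and theorem \ref{weak_semistable_local_global_thm} need to land in the torsion-free degree-$d$ integral cohomology where \cite{caraiani_scholze_non_compact} applies. Passing from $H^d(X_P,\widetilde{\mathcal{V}}/\varpi^m)$ back to $H^d(X_P,\widetilde{\mathcal{V}})$ by universal coefficients drags in $H^{d+1}(X_P,\widetilde{\mathcal{V}})[\varpi^m]$, which you have no way to discard. The spectral sequence whose abutment is the correct integral target does \emph{not} degenerate --- that is precisely the difficulty the proposition is designed to overcome.

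The paper's resolution, absent from your proposal, is to run \emph{two} spectral sequences in parallel: the integral one $E_r^{i,j}$ (non-degenerate, with $E_\infty^{i,j}$ a subquotient of the integral $H^{i+j}(X_P^{\widetilde{K}(m')},\widetilde{\mathcal{V}})$) and the mod-$\varpi^m$ one $E_r^{i,j}(m)$ (degenerate by formality), connected by a morphism $\phi_r$ induced by reduction. Lemma \ref{spectral_occurs_lemma} is then applied to the images $I_r^{i,j}=\phi_r(E_r^{i,j})$ --- its actual role is this comparison of a non-degenerate source with a degenerate target, not the ``nilpotent ambiguity from formality'' you invoke it for --- yielding $I_r^{i,j}\prec I_{r+1}^{i,j}\oplus I_r^{i+r,j-r+1}$, with $I_\infty^{q,d-q}\subset E_\infty^{q,d-q}$ landing in the integral abutment. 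Closing the argument then requires a downward induction on the degree $q$, enlarging $m'$ at each step, to absorb both the higher-differential targets $I_2^{q+r,d-q-r+1}$ (controlled by the inductive hypothesis in degrees $>q$) and the universal-coefficient torsion term $H^{q+1}(X_G^{K(m')},\mathcal{V})[\varpi^m]\prec H^{q+1}(X_G^{K(m')},\mathcal{V}/\varpi^{m''})$. It is also in this induction that condition (1) enters, guaranteeing $H^{d-q}((U\cap\widetilde{K}(m'))_{\overline{S}_2},\mathcal{O})$ is nonzero and $\mathcal{O}$-free so that it can play the role of $V_2$. Without this two-spectral-sequence comparison and the reverse induction, your argument does not reach the stated conclusion.
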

	
	\begin{proof}
		By proposition \ref{hecke_parabolic_satake_prop} we have a $\widetilde{\mathbb{T}}^{\overline{S}} \otimes \mathcal{O}$-equivariant isomorphism
		\[
		R \Gamma(X_P^{\widetilde{K} \cap P}, \widetilde{\mathcal{V}}) = 
		R \Gamma(X_G^K, p^K_* s^* R \Gamma((U \cap \widetilde{K})_{\overline{S}}, \widetilde{V}_1)).
		\]
		Let $m' \geq m$ be an integer. Since $\widetilde{V}_1/\varpi^m$ is an abelian $p$-group, we find that 
		 \begin{linenomath*} \begin{align*}
			R \Gamma((U \cap \widetilde{K}(m'))_S, \widetilde{V}_1/\varpi^m) &= R \Gamma( (U \cap \widetilde{K}(m'))_{\overline{S}_p}, \widetilde{V}_1/\varpi^m)
		\end{align*} \end{linenomath*} 
		and by the K\"unneth formula, this is equal to
		 \begin{linenomath*} \begin{align*}
			R\Gamma((U \cap \widetilde{K})_{\overline{S}_1}, \widetilde{V}_1/\varpi^m) \otimes_{\mathcal{O}/\varpi^m}^{\mathbb{L}}  R\Gamma((U \cap \widetilde{K}(m'))_{\overline{S}_2}, \mathcal{O}/\varpi^m).
		\end{align*} \end{linenomath*} 
		Note that $U$ is abelian for the Siegel parabolic $P$, hence $(U \cap \widetilde{K}(m')) \cong \ints_p^N$ for some $N$. Hence
		all the $H^j((U \cap \widetilde{K}(m'))_{\overline{S}_2}, \mathcal{O})$ are torsion free and
		\[
		H^i((U \cap \widetilde{K}(m'))_{\overline{S}_2}, \mathcal{O}/\varpi^m) = H^i((U \cap \widetilde{K}(m'))_{\overline{S}_2}, \mathcal{O}) \otimes_{\mathcal{O}} \mathcal{O}/\varpi^m.
		\]
		The natural morphism
		\[
		V_1 \otimes_{\mathcal{O}}^{\mathbb{L}}  R\Gamma((U \cap \widetilde{K}(m'))_{\overline{S}_2}, \mathcal{O}) \to V_1/\varpi^m \otimes_{\mathcal{O}/\varpi^m}^{\mathbb{L}}  R\Gamma((U \cap \widetilde{K}(m'))_{\overline{S}_2}, \mathcal{O}/\varpi^m),
		\]
		induces a morphism of spectral sequences $E_r^{i,j} \to \overline{E}_r^{i,j}$,
		where
		 \begin{linenomath*} \begin{align*}
			E_2^{i,j} &= H^i(X_G^{K(m')}, p_*^{K(m')} s^* V_1 \otimes H^j((U \cap \widetilde{K}(m'))_{\overline{S}_2}, \mathcal{O} )) \\
			\overline{E}_2^{i,j} &= H^i(X_G^{K(m')}, p_*^{K(m')} s^* V_1/\varpi^m \otimes H^j((U \cap \widetilde{K}(m'))_{\overline{S}_2}, \mathcal{O})/\varpi^m ).
		\end{align*} \end{linenomath*} 
		By assumption, $V_1$ is a direct summand of $\widetilde{V}_1$, hence by lemma \ref{invariants_direct_sum_lemma} also of $R\Gamma((U \cap \widetilde{K})_{\overline{S}_1}, \widetilde{V}_1)$. Consequently, $E_2^{i,j}$ converges to a direct summand of $H^{i + j}(X_P^{\widetilde{K}(m')}, \widetilde{\mathcal{V}})$. Similarly, $\overline{E}_2^{i,j}$ converges to a direct summand of $H^{i + j}(X_P^{\widetilde{K}(m')}, \widetilde{\mathcal{V}}/ \varpi^m)$. In particular, $E_\infty^{i,j}$ is a subquotient of $H^{i + j}(X_P^{\widetilde{K}(m')}, \widetilde{\mathcal{V}})$, hence $E_\infty^{i,j} \prec H^{i + j}(X_P^{\widetilde{K}(m')}, \widetilde{\mathcal{V}})$.
		
		By lemma \ref{caraiani_newton_lemma} we see that we can choose $m'$ large enough, so that $R\Gamma((U \cap \widetilde{K}(m'))_{\overline{S}_2}, \mathcal{O})/\varpi^m$ is formal in $\mathbf{D}(\mathrm{Mod}_{K(m')}^{sm}(\mathcal{O}/\varpi^m))$. Thus, for such $m'$ the spectral sequence $\overline{E}_2^{i,j}$ degenerates (recall that all the constructions implicitly depend on $m'$). Hence the hypotheses of lemma \ref{spectral_occurs_lemma} are satisifed.
		(Both $E_r$ and $\overline{E}_r$ are Hecke equivariant since the construction of Grothendieck spectral sequences is functorial and we have proposition \ref{hecke_parabolic_satake_prop}.)
		Consequently,
		\[
		I_r^{i,j} \prec I_{r+1}^{i,j} \oplus I_r^{i + r, j - r + 1},
		\]
		where $I_r^{i,j}$ is the image of $E_r^{i,j}$ in $\overline{E}_r^{i,j}$.
		Additionally, it follows from a standard long exact sequence argument that the natural map
		\[
		E_2^{i,j}/\varpi^m \to \overline{E}_2^{i,j}
		\]
		is injective.
		
		We prove the following claim by reverse induction on $n$:
		\begin{itemize}
			\item Let $n \geq \lfloor d/2 \rfloor$, then for all $q \geq n$ and $m \geq 1$, there exists an integer $m_0$ such that for all $m' \geq m_0$ we have
			\[
			H^q(X^{K(m')}_G, \mathcal{V}/\varpi^m) \prec H^d(X^{\widetilde{K}(m')}_P, \widetilde{\mathcal{V}})
			\]
		\end{itemize}
		For $n = d$, the claim is vacuous so we may assume that $\lfloor d/2 \rfloor \leq n \leq d - 1$ and that the claim holds for $n + 1$. We need to prove something for $q = n$. Fix an $m \geq 1$ and let $m'$ be sufficiently large so that the spectral sequence $\overline{E}_2^{i,j}$ above degenerates and such that $K(m')$ acts trivially on $V_2/\varpi^m$ and $H^{\bullet}((U \cap \widetilde{K}(m'))_{\overline{S}_2}, \mathcal{O} )/\varpi^m$.
		
		By assumption we have that
		\[
		2 \dim \prod_{\overline{v} \in \overline{S}_2} U(F^+_{\overline{v}}) = \sum_{\overline{v} \in \overline{S}_2} 2 n^2 [F^+_{\overline{v}} : \rationals_p] > d/2 \geq d - q,
		\] 
		hence $H^{d - q}((U \cap \widetilde{K}(m'))_{\overline{S}_2}, \mathcal{O} )$ is not zero and free over $\mathcal{O}$ by \cite[Lemma 4.2.2]{10author}. Thus, we may simply replace $V_2$ by $H^{d - q}((U \cap \widetilde{K}(m'))_{\overline{S}_2}, \mathcal{O} )$ in the claim we are proving! 
		Now we have $H^q(X^{K(m')}_G, \mathcal{V}/\varpi^m) = \overline{E}_2^{q,d-q}$ and the long exact sequence in cohomology associated with $0 \to \mathcal{V} \xrightarrow{\varpi^m} \mathcal{V} \to \mathcal{V}/\varpi^m \mathcal{V}$ implies that
		\[
		\overline{E}_2^{i,j} \prec I_2^{q,d - q} \oplus H^{q+1}(X^{K(m')}_G,\mathcal{V})[\varpi^m].
		\]
		For $m''$ large enough, we also have 
		\[
		 H^{q+1}(X^{K(m')}_G,\mathcal{V})[\varpi^m] \prec H^{q+1}(X^{K(m')}_G,\mathcal{V}/\varpi^{m''}).
		\] 
		Moreover, from the above application of lemma \ref{spectral_occurs_lemma} we obtain
		\[
		I^{q,d-q}_2 \prec_{n^2 [F^+ : \rationals]} I^{q,d-q}_\infty \bigoplus_{r \geq 2} I_2^{q + r, d - q - r + 1}
		\]
		By the inductive hypothesis we may choose $m'$ large enough so that
		\[
		I_2^{q + r, d - q - r + 1} \prec \overline{E}_2^{q + r, d - q - r + 1} \prec H^d(X_P^{\widetilde{K}(m')}, \widetilde{\mathcal{V}})
		\]
		and 
		\[
		H^{q+1}(X^{K(m')}_G,\mathcal{V}/\varpi^{m''}) \prec H^d(X_P^{\widetilde{K}(m')}, \widetilde{\mathcal{V}}).
		\]
		We conclude the proof of the claim by using that 
		$E_\infty^{q, d - q}$ is a subquotient of the module $H^d(X_P^{\widetilde{K}(m')}, \widetilde{\mathcal{V}})$ and $I_\infty^{q, d- q} \prec E_\infty^{q, d -q}$.
	\end{proof}
	
	\begin{corollary} \label{final_hecke_congruence_cor}
		Let $\overline{v}, \overline{v}'$ be distinct elements of $\overline{S}_p$, and let $\lambda \in (\ints^n)^{\Hom(F,E)}$ be a dominant weight for $G$. Let $\widetilde{K} \subset \tilde{G}(\mathbb{A}_{F^+}^\infty)$ be a good open compact subgroup and $S$ a set of places of $F$ satisfying $(\Sigma_p)$ such that $\widetilde{K}_{\overline{v}} = \widetilde{G}(\mathcal{O}_{F^+_{\overline{v}}})$ for $\overline{v} \in \overline{S}$ and $G(\mathbb{A}_{F^+}^\infty) \cap \widetilde K \to (P(\mathbb{A}_{F^+}^\infty) \cap \widetilde K)/(U(\mathbb{A}_{F^+}^\infty) \cap \widetilde K)$ is an isomorphism.
		Suppose that the following conditions are satisfied:
		\begin{enumerate}[(1)]
			\item For each embedding $\tau : F \hookrightarrow E$ inducing the place $\overline{v}$ of $F^+$, we have $- \lambda_{\tau c, 1} - \lambda_{\tau,1} \geq 0$
			\item We have 
			\[
			\sum_{\overline{v}'' \in \overline{S}_p \setminus \{\overline{v}, \overline{v}'\}} [F^+_{\overline{v}''} : \rationals_p] > \frac{1}{2} [F^+: \rationals].
			\]
			\item $\mathfrak{m} \subset \mathbb{T}^S$ is a non-Eisenstein maximal ideal in the support of $H^\bullet(X^{K}_{\GL_n/F}, \mathcal{V}_\lambda)$ such that $\overline{\rho}_{\widetilde{\mathfrak{m}}}$ is decomposed generic, where $\widetilde{\mathfrak{m}} = \operatorname{Sat}^{-1}(\mathfrak{m})$.
		\end{enumerate} 
		Define a weight dominant weight $\widetilde \lambda \in (\mathbb{Z}^{2n})^{\Hom(F^+, E)}$ for $\widetilde{G}$ as follows: if $\tau : F^+ \to E$ does not induce either $\overline{v}$ or $\overline{v}'$, then
		$\widetilde{\lambda}_\tau = 0$. If $\tau$ induces $\overline{v}$, then we set
		\[
		\widetilde{\lambda}_{\tau} = (- \lambda_{\tilde{\tau}c,n}, \dots, -\lambda_{\tilde{\tau}c,1}, \lambda_{\tilde{\tau},1}, \dots, \lambda_{\tilde{\tau},n}).
		\] 
		If $\tau$ induces $\overline{v}'$, then $\widetilde{\lambda}_{\tau}$ may be chosen arbitrarily from $\ints_+^{2n}$.
		Then for all integers $m$, there exists an integer $m' \geq m$
		such that 
		 \begin{linenomath*} \begin{align*}
				H^q(X^{K}_{G}, \mathcal{V}_\lambda/\varpi^m)_{\mathfrak{m}} & \prec H^d(X^{\widetilde{K}(m')}_{\widetilde{G}}, \mathcal{V}_{\widetilde{\lambda}})_{\widetilde{\mathfrak{m}}} \qquad \lfloor d /2 \rfloor \leq q \leq d - 1 \\
		j_* H^q(X^{K}_{G}, \mathcal{V}_\lambda/\varpi^m)_{j^{-1} \mathfrak{m}} & \prec H^d(X^{\widetilde{K}(m')}_{\widetilde{G}}, \mathcal{V}_{\widetilde{\lambda}}^\vee)_{\widetilde{j^{-1} \mathfrak{m}}} \qquad 0 \leq q < \lfloor d/2 \rfloor,
		\end{align*} \end{linenomath*} 
		where
		\[
		\widetilde{K}(m') :=
		\left\{ g \in \widetilde{K} :\forall \overline{v}'' \neq \overline{v}, \,\, g \equiv \begin{bmatrix}
			\id & * \\ 0 & \id
		\end{bmatrix} \pmod {\varpi_{\overline{v}''}^{m'}} \right\}
		\]
		and $j : \mathbb{T}^S \to \mathbb{T}^S$ is the isomorphism $[K^S g K^S] \mapsto [K^S g^{-1} K^S]$.
	\end{corollary}
	
	\begin{proof}
		Let $\overline{S}_1 = \{\overline{v}, \overline{v}'\}$ and 
		$\overline{S}_2 = \overline{S}_p \setminus \overline{S}_1$. Now let
		$\widetilde{V}_1$ be the $\widetilde{K}_{\overline{S}_1}$-module corresponding to $\widetilde{\lambda}$ from definition \ref{highest_weight_module_def}.
		Choose a decomposition $\Hom(F, \complex) = H \cup H c$, where $c$ denotes the complex conjugation of $F$. The restriction of a field homomorphism to $F^+$ induces a bijection $\alpha : H \cong \Hom(F^+, \complex)$. For $\tau \in H$, set
		 \begin{linenomath*} \begin{align*}
			\lambda'_{\tau} &= (\widetilde{\lambda}_{\alpha(\tau), n + 1}, \dots,  \widetilde{\lambda}_{\alpha(\tau), 2n}) \\
			\lambda'_{\tau c} &= (- \widetilde{\lambda}_{\alpha(\tau), n }, \dots,  - \widetilde{\lambda}_{\alpha(\tau), 1})
		\end{align*} \end{linenomath*} 
		Now $\lambda_{\tau}' = \lambda_\tau$ for all $\tau$ which induce a place above $\overline{v}$ and by \cite[2.11]{newton_thorne_16}, the $K$-module 
		$V_{\lambda'}$ is a direct summand of $V_{\widetilde{\lambda}}^{U \cap \widetilde{K}}$. If we choose $m'$ large enough, then
		$V_{\lambda}/\varpi^m$ and $V_{\lambda'}/\varpi^m$ are isomorphic $K(m')$-modules and by proposition \ref{degree_shifting_prop}, 
		we can increase $m'$ so that
		 \begin{linenomath*} \begin{align*}
			H^q(X^{K(m')}_G, \mathcal{V}_{\lambda}/ \varpi^m) \cong 	H^q(X^{K(m')}_G, \mathcal{V}_{\lambda'}/ \varpi^m)  \prec H^d(X_P^{K(m')}, \mathcal{V}_{\widetilde{\lambda}})
		\end{align*} \end{linenomath*} 
		for $\lfloor d /2 \rfloor \leq q \leq d - 1$. When $q < \lfloor d/2 \rfloor$, we apply \cite[Proposition 2.10]{newton_thorne_16} to $V_{\widetilde{\lambda}}$ and the opposite parabolic $P^{-}$ to obtain a direct sum decomposition $V_{\widetilde{\lambda}} = W_1 \oplus W_2$ of $K$-modules, where $W_1 \cong V_{w \lambda'}$ with 
				 \begin{linenomath*} \begin{align*}
			w\lambda'_{\tau} &:= (- \widetilde{\lambda}_{\alpha(\tau), n }, \dots,  - \widetilde{\lambda}_{\alpha(\tau), 1})\\
			w\lambda'_{\tau c} &:= (\widetilde{\lambda}_{\alpha(\tau), n + 1}, \dots,  \widetilde{\lambda}_{\alpha(\tau), 2n}) 
		\end{align*} \end{linenomath*} 
	 and $W_1^{U^-} = W_1$. Moreover, the main theorem of \cite{cabanes84} shows that that $W_2$ is $P \cap \widetilde{K}$-stable since $W_2 \otimes \overline{E} = ( 1 - U) V_{\widetilde{\lambda}} \otimes \overline{E}$. Dualising, we find that $V_{w \lambda'}^\vee$ is a direct summand of $V_{\widetilde{\lambda}}^\vee$ as required for proposition \ref{degree_shifting_prop}. 
		Thus, we can find $m' \geq m$ such that 
		 \begin{linenomath*} \begin{align*}
			H^{d - 1 - q}(X^{K(m')}_G, \mathcal{V}_{w\lambda'}^\vee/\varpi^m) \prec H^d(X_P^{\widetilde{K}(m')}, \mathcal{V}_{\widetilde{\lambda}}^\vee )
		\end{align*} \end{linenomath*} 
		for $0  \leq q < \lfloor d / 2 \rfloor$. With Verdier duality and \cite[Proposition 3.7]{newton_thorne_16} we find an isomorphism of Hecke modules
		\[
		\Hom(H^q_c(X^{K(m')}_G, \mathcal{V}_{w\lambda'}/ \varpi^m), \mathcal{O}/\varpi^m) \cong H^{d - 1 - q}(X^{K(m')}_G, \mathcal{V}_{w\lambda'}^\vee/\varpi^m),
		\]
		where $[K^S g K^S]$ on the right hand side acts as $[K^S g^{-1} K^S]$ on the left hand side. Since the left hand side has the same annihilator as $H^q_c(X^{K(m')}_G, \mathcal{V}_{w\lambda'}/ \varpi^m)$, we have 
		\[
		j_*H^q_c(X^{K(m')}_G, \mathcal{V}_{w\lambda'}/ \varpi^m) \prec H^{d - 1 - q}(X^{K(m')}_G, \mathcal{V}_{w\lambda'}^\vee/\varpi^m).
		\]
		Moreover, for $m'$ large enough, we have an isomorphism of Hecke modules
		\[
			H^q_c(X^{K(m')}_G, \mathcal{V}_{w\lambda}/ \varpi^m) \cong H^q_c(X^{K(m')}_G, \mathcal{V}_{w\lambda'}/ \varpi^m),
		\]
		where $w\lambda_{\tau} = \lambda_{\tau c}$. Consider the element 
		\[
			w = \begin{bmatrix}
			0 & - \id \\ \id & 0 
			\end{bmatrix} \in \widetilde{G}(\mathcal{O}_{F^+}),
		\]
		then for all $\overline{v} \not\in \overline{S}$, we have $w \in \widetilde{K}_{\overline{v}}$, thus conjugation by $w$ is the identity on the ring $\mathcal{H}(\widetilde{G}(F^+_{\overline{v}}), \widetilde{K}_{\overline{v}})$. On the other hand, $w$ normalises $G(\mathcal{O}_{F_v})$ and maps $V_\lambda$ to $V_{w \lambda}$, inducing an isomorphism of Hecke modules
		\[
		H^q_c(X^{K(m')}_G, \mathcal{V}_{\lambda}/ \varpi^m) \cong H^q_c(X^{w K(m') w^{-1}}_G, \mathcal{V}_{w\lambda}/ \varpi^m).
		\]
		Since $\mathfrak{m}$ is a non-Eisenstein ideal, we have 
		$H^q_c(X, \mathcal{V})_\mathfrak{m} = H^q(X, \mathcal{V})_\mathfrak{m}$
		for all $q$ by \cite[4.2]{newton_thorne_16}.
		By the Hochschild--Serre spectral sequence and duality
		we have
		\[
		H^q(X^K_G, \mathcal{V}_{\lambda}/\varpi^m)_{\mathfrak{m}} \prec \bigoplus_{i = q}^{d - 1} H^i(X_G^{K(m')}, \mathcal{V}_{\lambda}/\varpi^m)_{\mathfrak{m}},
		\] 
		hence we obtain
		 \begin{linenomath*} \begin{align*}
		H^q(X^K_G, \mathcal{V}_{\lambda}/\varpi^m)_{\mathfrak{m}} &\prec H^{d}(X^{\widetilde{K}(m')}_P, \mathcal{V}_{\widetilde{\lambda}} )_{\widetilde{\mathfrak{m}}} \qquad \lfloor d /2 \rfloor \leq q \leq d - 1 \\
		j_*H^q(X^K_G, \mathcal{V}_{\lambda}/\varpi^m)_{j^{-1} \mathfrak{m}} &\prec H^{d}(X^{w \widetilde{K}(m') w^{-1}}_{P^{-}}, \mathcal{V}_{\widetilde{\lambda}}^\vee )_{\widetilde{j^{-1} \mathfrak{m}}} \qquad 0 \leq q < \lfloor d/2 \rfloor
		\end{align*} \end{linenomath*} 
		and with proposition \ref{non_eisenstein_boundary_prop} and the main theorem of  \cite{caraiani_scholze_non_compact}, we conclude that
		 \begin{linenomath*} \begin{align*}
				H^q(X^{K}_{G}, \mathcal{V}_\lambda/\varpi^m)_{\mathfrak{m}} & \prec H^d(X^{\widetilde{K}(m')}_{\widetilde{G}}, \mathcal{V}_{\widetilde{\lambda}})_{\widetilde{\mathfrak{m}}} \qquad \lfloor d /2 \rfloor \leq q \leq d - 1 \\
		j_* H^q(X^{K}_{G}, \mathcal{V}_\lambda/\varpi^m)_{j^{-1} \mathfrak{m}} & \prec H^d(X^{w \widetilde{K}(m') w^{-1}}_{\widetilde{G}}, \mathcal{V}_{\widetilde{\lambda}}^\vee)_{\widetilde{j^{-1} \mathfrak{m}}} \qquad 0 \leq q < \lfloor d/2 \rfloor. 
		\end{align*} \end{linenomath*} 
		The claim follows from the $\widetilde{\mathbb{T}}^{\overline{S}}$-module isomorphism 
		\[
		H^d(X^{w \widetilde{K}(m') w^{-1}}_{\widetilde{G}}, \mathcal{V}_{\widetilde{\lambda}}^\vee) \cong H^d(X^{\widetilde{K}(m')}_{\widetilde{G}}, \mathcal{V}_{\widetilde{\lambda}}^\vee)
		\] 
		induced by conjugation by $w$.
	\end{proof}

	\subsection{Weak Semistable Local-Global Compatiblity}
	
	We now use the results of the previous section to show a weak form of local-global compatibility in the semistable case using the ring defined in \cite{torsion_p_adic}.
	We are able to show that the relevant automorphic Galois representations are semistable and obtain a bound on their Hodge--Tate weights but unfortunately we cannot say anything more about the exact Hodge--Tate weights. This is why we have the distinct Hodge--Tate weights assumption in our main theorem.
	The proof of the following uses the determinant laws introduced in \cite{chenevier_determinant} and a similar argument as in the proof of \cite[Proposition 4.4.6]{10author}.
	
	\begin{theorem} \label{weak_semistable_local_global_thm}
		Suppose that $S$ is a  set of places of $F$ satisfying $(\Sigma_p)$ and that $|\overline{S}_p| > 2$ and let $\lambda \in (\ints^n)^{\Hom(F,E)}$ be a dominant weight for $G$. Let $K \subset \GL_n(\mathbb{A}_{F}^\infty)$ be a good subgroup.
		Suppose that the following conditions are satisfied:
		\begin{enumerate}[(1)]
			\item For each embedding $\tau : F \hookrightarrow E$, we have $- \lambda_{\tau c, 1} - \lambda_{\tau,1} \geq 0$;
			\item For every place $v$ of $F$ above $p$, the group $K_v$ contains the Iwahori $Iw_v$;
			\item We have 
			\[
			\sum_{\overline{v}'' \in \overline{S}_p \setminus \{\overline{v}, \overline{v}'\}} [F^+_{\overline{v}''} : \rationals_p] > \frac{1}{2} [F^+: \rationals].
			\]
			for all pairs of distinct places $\overline{v}, \overline{v}'$ of $F^+$ lying above $p$;
			\item $\mathfrak{m} \subset \mathbb{T}^S(R \Gamma(X_G^K, \mathcal{V}_\lambda))$ is a non-Eisenstein maximal ideal such that $\overline{\rho}_{\mathfrak{m}}$ is absolutely irreducible and decomposed generic,
		\end{enumerate} 
		then for every integer $m \geq 1$, there exists a continuous representation
		\[
		\rho_{\mathfrak{m}} : G_{F,S} \to  \GL_n(\mathbb{T}^S(R \Gamma(X_G^K, \mathcal{V}_\lambda/\varpi^m))_{\mathfrak{m}} /I),
		\]
		such that
		\begin{itemize}
			\item The representation $\rho_{\mathfrak{m}}$ is torsion semistable with Hodge--Tate weights contained in the interval $[-r, r]$, as defined in \cite{torsion_p_adic}, where $r$ only depends on $\lambda$. 
			\item If $v \mid p$ is a place such that $K_v = \GL_n(\mathcal{O}_{F_v})$ and $K_{v^c} = \GL_n(\mathcal{O}_{F_{v^c}})$, then $\rho_{\mathfrak{m}}$ is torsion crystalline at $v$.
			\item For every $v \not \in S$, we have 
			$\det(X - \rho_{\mathfrak{m}}(\Frob_v)) = P_v(X)$.
			\item $I$ is a nilpotent ideal such that $I^x = 0$ for some $x$ that only depends on $[F:\rationals]$ and $n$.
		\end{itemize}
	\end{theorem}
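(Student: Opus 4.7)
The plan is to transfer the Hecke eigensystem $\mathfrak{m}$ to the quasi-split unitary group $\widetilde{G}/F^+$ via Corollary \ref{final_hecke_congruence_cor}, construct a $2n$-dimensional Galois representation on the unitary side (where middle-degree cohomology is $\mathcal{O}$-torsion-free and accessible via classical local-global compatibility), and then split off the desired $n$-dimensional factor via the non-Eisenstein residual decomposition $\overline{\rho}_{\widetilde{\mathfrak{m}}} \cong \overline{\rho}_\mathfrak{m} \oplus \overline{\rho}_\mathfrak{m}^{c,\vee}(1-2n)$. Hypothesis (5) ensures that Corollary \ref{final_hecke_congruence_cor} applies for any pair $\overline{v} \ne \overline{v}'$ in $\overline{S}_p$. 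Fix one such pair and choose $\widetilde{\lambda}$ as in the corollary, taking $\widetilde{\lambda}_\tau$ bounded in terms of $\lambda$ at the embeddings inducing $\overline{v}'$; this bounds $\widetilde{\lambda}$ in terms of $\lambda$ and thereby pins down the eventual Hodge--Tate interval $[-r,r]$.

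\textbf{$2n$-dimensional representation on the unitary side.} The non-Eisenstein assumption and the decomposition $\overline{\rho}_{\widetilde{\mathfrak{m}}} \cong \overline{\rho}_\mathfrak{m} \oplus \overline{\rho}_\mathfrak{m}^{c,\vee}(1-2n)$ into two absolutely irreducible factors imply that the Caraiani--Scholze vanishing \cite{caraiani_scholze_non_compact} applies, so $H^i(X_{\widetilde{G}}^{\widetilde{K}(m')}, \mathcal{V}_{\widetilde{\lambda}})_{\widetilde{\mathfrak{m}}} = 0$ for $i\ne d$ and the middle-degree cohomology is $\mathcal{O}$-flat. Matsushima's formula together with \cite{franke_schwermer} presents $\widetilde{\mathbb{T}}^{\overline{S}}\bigl(H^d\bigr)_{\widetilde{\mathfrak{m}}}[1/p]$ as a finite product of fields indexed by cuspidal automorphic representations $\sigma$ of $\widetilde{G}(\mathbb{A}_{F^+})$, to each of which \cite[Appendix]{goldring_galois_reps} attaches a Galois representation $r(\sigma) : G_F \to \GL_{2n}(\overline{\rationals_p})$ (via base change to $\GL_{2n}/F$). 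Hypothesis (4) together with the $\widetilde{K}(m')$ congruence at every $\overline{v}'' \ne \overline{v}$ forces $\sigma_v$ to have Iwahori-fixed vectors at each $v\mid p$, so standard local-global compatibility at $p$ on $\GL_{2n}$ makes each $r(\sigma)|_{G_{F_v}}$ semistable with Hodge--Tate weights bounded by $\widetilde{\lambda}$, and crystalline when $K_v$ and $K_{v^c}$ are both hyperspecial. Packaging the family $\{r(\sigma)\}$ into a single Chenevier determinant law \cite{chenevier_determinant} over $\widetilde{\mathbb{T}}^{\overline{S}}(H^d)_{\widetilde{\mathfrak{m}}}$ and reducing modulo $\varpi^m$, the moduli-theoretic torsion-semistable framework of \cite{torsion_p_adic} yields a $2n$-dimensional determinant law whose restriction to each $G_{F_v}$, $v\mid p$, is torsion semistable with the required weight bound.

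\textbf{Descent to $\GL_n$ and main obstacle.} The factors $\overline{\rho}_\mathfrak{m}$ and $\overline{\rho}_\mathfrak{m}^{c,\vee}(1-2n)$ are absolutely irreducible and non-isomorphic — the latter follows from decomposed genericity, which supplies a Frobenius at which the eigenvalue multisets of the two factors are disjoint. Chenevier's splitting lemma for determinant laws with coprime residual factors then extracts an $n$-dimensional factor $D_\mathfrak{m}$, whose Frobenius characteristic polynomials equal $P_v(X)$ for $v \notin S$ by the Satake computation recalled after \cite[5.3]{newton_thorne_16}. Composing with the $\prec$-induced surjection
\[
\widetilde{\mathbb{T}}^{\overline{S}}\bigl(H^d(X_{\widetilde{G}}^{\widetilde{K}(m')}, \mathcal{V}_{\widetilde{\lambda}})\bigr)_{\widetilde{\mathfrak{m}}} \twoheadrightarrow \widetilde{\mathbb{T}}^{\overline{S}}\bigl(H^\bullet(X_G^K, \mathcal{V}_\lambda/\varpi^m)\bigr)_\mathfrak{m}\big/J
\]
with $J^x = 0$ for $x$ bounded in terms of $n$ and $[F:\rationals]$ (tracked through Lemmas \ref{spectral_occurs_lemma} and \ref{occur_ses_lemma} as in Corollary \ref{final_hecke_congruence_cor}), this descends $D_\mathfrak{m}$ to the stated representation $\rho_\mathfrak{m} : G_{F,S} \to \GL_n(\mathbb{T}^S(\ldots)_\mathfrak{m}/I)$. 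The delicate step is the preceding one: transporting torsion semistability through the Chenevier formalism. One cannot impose semistability merely pointwise; instead one must work with the scheme-theoretic moduli of semistable representations of \cite{torsion_p_adic}, check that all the characteristic zero eigensystems factor simultaneously through this moduli, and confirm that the corresponding mod $\varpi^m$ reduction inherits semistability uniformly — this uniform flat-closure property is exactly what survives both the Chenevier splitting and the descent through the nilpotent ideal $J$.
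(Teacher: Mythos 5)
Your overall strategy coincides with the paper's: push the eigensystem to $\widetilde{G}$ via Corollary \ref{final_hecke_congruence_cor}, use the torsion-freeness of middle-degree cohomology and classical local--global compatibility there, and descend via Chenevier's determinant formalism. But there are three concrete gaps. First, both Corollary \ref{final_hecke_congruence_cor} and the Caraiani--Scholze vanishing require $\overline{\rho}_{\widetilde{\mathfrak{m}}} \cong \overline{\rho}_{\mathfrak{m}} \oplus \overline{\rho}_{\mathfrak{m}}^{c,\vee}(1-2n)$ to be decomposed generic, and this does \emph{not} follow from hypothesis (6), which only asserts decomposed genericity of the $n$-dimensional $\overline{\rho}_{\mathfrak{m}}$ (the cross-ratios between eigenvalues of the two summands are not controlled). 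The paper first argues under the extra assumption that $\overline{\rho}_{\widetilde{\mathfrak{m}}}$ is decomposed generic and then removes it as in \cite[4.4.8]{10author}; your proposal silently uses this assumption both here and when you claim the two residual factors are non-isomorphic. Second, fixing a single pair $\overline{v} \neq \overline{v}'$ cannot give semistability at every $v \mid p$: the level $\widetilde{K}(m')$ is \emph{deep} (congruent to block-unipotent matrices mod $\varpi^{m'}$) at every $\overline{v}'' \neq \overline{v}$, so the contributing $\sigma$ need not be Iwahori-spherical there and local--global compatibility gives you nothing at those places. The argument must be run one place $v \mid p$ at a time, with $\overline{v}$ the place where the level is left alone (this is why hypothesis (4) enters only at the distinguished place).

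Third, the step you flag as "delicate" is precisely the one you do not carry out, and it is the heart of the proof. The paper's mechanism is: the Cayley--Hamilton quotient $\widetilde{A}[G_{F,S}]/\ker(\widetilde{D})$ embeds $G_{F,S}$-equivariantly into $M_{2n}(\widetilde{A}\otimes_{\mathcal{O}}\overline{\rationals}_p) \cong \widetilde{\rho}^{\oplus 2n}$ by \cite[2.12]{chenevier_determinant}, hence is a subobject of a torsion semistable module; its image $(A(m,\lambda)/J)[G_{F,S}]/\ker(\widetilde{D})$ is then a subquotient and remains torsion semistable; finally $\rho_{\mathfrak{m}}$ is exhibited as a submodule of this algebra either via the diagonal (when $\rho_{\mathfrak{m}} \cong \chi_{cyc}^{1-2n}\otimes\rho_{\mathfrak{m}}^{c,\vee}$) or via lifting the idempotents of the generalised matrix algebra structure \cite[2.22]{chenevier_determinant}. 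No "scheme-theoretic moduli of semistable representations" is invoked; only the closure of torsion semistability under subquotients in the sense of \cite{torsion_p_adic}. Asserting that "this uniform flat-closure property is exactly what survives" is not a proof of it, and without the subquotient realisation above the descent from the $2n$-dimensional object to $\rho_{\mathfrak{m}}$ does not go through.
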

	
	\begin{proof}
		By \cite{scholze_torsion} we already have a nilpotent ideal $J'$ and a representation $\rho_{\mathfrak{m}}$ with the right Frobenius eigenvalues. Thus, it is enough to show that after possibly increasing $J'$, $\rho_{\mathfrak{m}}$ becomes torsion semistable with Hodge--Tate weights bounded by $r$ for some $r$ only depending on $\lambda$.
		We first do this under the additional assumption that $\overline{\rho}_{\widetilde{\mathfrak{m}}}$ is decomposed generic.
		
		It is enough to show the claim for one place $v \mid p$ of $F$ at a time.
		So let us fix one and let $\overline{v}$ be the place of $F^+$ below $v$. Moreover, by assumption there exists a place $\overline{v}' \neq \overline{v}$ of $F^+$ above $p$. Now we let $\widetilde{K}$ be a good subgroup of $\widetilde{G}(\mathbb{A}_{F^+}^\infty)$ such that $\widetilde{K} \cap G = K$ and apply corollary \ref{final_hecke_congruence_cor} to $\overline{v}, \overline{v}'$ and a $\widetilde{\lambda}$ that is CTG \cite[Definition 4.3.5]{10author} using \cite[Lemma 4.3.6]{10author}. Note that this also implies that $\widetilde{\lambda}^\vee$, the highest weight of $V_{\widetilde{\lambda}}^\vee$, is CTG.
		
		Consider the rings 
		 \begin{linenomath*} \begin{align*}
			\widetilde{A}(\widetilde{K}(m')) &:= \widetilde{\mathbb{T}}^S(H^d(X_{\widetilde{G}}^{\widetilde{K}(m')}, \mathcal{V}_{\widetilde{\lambda}})_{\widetilde{\mathfrak{m}}} \oplus H^d(X_{\widetilde{G}}^{\widetilde{K}(m')}, \mathcal{V}_{\widetilde{\lambda}}^\vee)_{\widetilde{j^{-1} \mathfrak{m}}} ), \\
			A(m,\lambda) &:= \mathbb{T}^S( R\Gamma(X_G^{K}, \mathcal{V}_\lambda/\varpi^m))_{\mathfrak{m}}.
		\end{align*} \end{linenomath*} 
		Corollary \ref{final_hecke_congruence_cor}
		gives a large enough $m' \geq m$ and a homomorphism
		\[
		\widetilde{A}(\widetilde{K}(m')) \to \prod_{q = 0}^{d - 1} \mathbb{T}^S(H^q(X_G^K, \mathcal{V}_\lambda/ \varpi^m)_{\mathfrak{m}})/ J_q
		\]
		for some nilpotent ideals $J_q$ with exponent of nilpotency only depending on $[F : \rationals]$ and $n$, 
		such that the projections with $ q \geq \lfloor d / 2 \rfloor$ commute with $\operatorname{Sat} : \widetilde{\mathbb{T}}^{S} \to \mathbb{T}^S$ and the projections with $q < \lfloor d / 2 \rfloor$ commute with $j \circ \operatorname{Sat}$. Thus, by composing with the map $j \times \dots \times  j \times \id \times \dots \times \id$ we obtain a map
		\[
		\Psi : \widetilde{A}(\widetilde{K}(m')) \to \prod_{q = 0}^{d - 1} \mathbb{T}^S(H^q(X_G^K, \mathcal{V}_\lambda/ \varpi^m)_{\mathfrak{m}})/ J_q',
		\]		
		commuting with $\operatorname{Sat}$, where $J_q' = j(J_q)$ for $q < \lfloor d / 2 \rfloor$ and $J_q' = J_q$ otherwise. Now \cite[Lemma 2.2.3]{10author} shows that the kernel $J_{-1}$ of the natural map
		\[
			A(m, \lambda) \to \prod_{q = 0}^{d - 1} \mathbb{T}^S(H^q(X_G^K, \mathcal{V}_\lambda/ \varpi^m)_{\mathfrak{m}})
		\]
		satisfies $J_{-1}^{d} = 0$. Let $I < A(m,\lambda)$ be the ideal generated by $J', J_{-1}$ and the preimages of the $J_q'$ for $0 \leq q \leq d - 1$. Then $\Psi$ factors through a surjection
		\[
		\widetilde{A}(\widetilde{K}(m')) \twoheadrightarrow A(m, \lambda)/I,
		\]
		commuting with $\operatorname{Sat}$. The degree of nilpotency of $I$ still only depends on $[F:\rationals]$ and $n$.
		
		The main theorem of \cite{caraiani_scholze_non_compact} implies that $\widetilde{A}(\widetilde{K}(m'))$ is $\mathcal{O}$-flat.
		 Moreover, \cite[Theorem 2.4.11]{10author} together with Clozel's purity lemma \cite[Lemma 4.9]{clozel1990} shows that $\widetilde{A}(\widetilde{K}(m')) \otimes_{\mathcal{O}} \overline{\rationals}_p$ is semisimple and can be computed in terms of cuspidal automorphic representations of $\widetilde{G}$. 
		Now  by \cite[Theorem 2.3.3]{10author} there exists a continuous representation
		\[
		\widetilde{\rho}_{m'} : G_{F,S} \to \GL_{2n}(\widetilde{A}(\widetilde{K}(m')) \otimes_{\mathcal{O}} \overline{\rationals}_p) 
		\]
		such that $\widetilde{\rho}_{m'}$ is semistable at $v$ and $v^c$ of Hodge--Tate weights
		\[
		HT_\tau(\widetilde{\rho}) = \{- \lambda_{\tau c,n} + 2n - 1, \dots, -\lambda_{\tau c, 1} + n, \lambda_{\tau, 1} + n - 1, \dots, \lambda_{\tau,n} \}
		\]
		Moreover, if $K_v = \GL_n(\mathcal{O}_{F_v})$ and $K_{v^c} = \GL_n(\mathcal{O}_{F_{v^c}})$, then we can choose $\widetilde{K}_{\overline{v}}$ to be hyperspecial as well and $\widetilde{\rho}_{m'}$ is crystalline at $v$ and $v^c$. 
		Let $r = \max_{\tau} \max_{\mu \in HT_\tau(\widetilde{\rho})} |\mu|$.
		
		Set $\widetilde{A} := \widetilde{A}(\widetilde{K}(m'))$ and let $\widetilde{D}$ be the continuous $\widetilde{A}$-valued determinant of $G_{F,S}$ attached to $\widetilde{\rho}_{m'}$ by \cite[2.32]{chenevier_determinant}.
		The formation of kernels commutes with flat base change, so $\ker(\widetilde{D}) \otimes_{\mathcal{O}} \overline{\rationals}_p \cong \ker(\widetilde{D} \otimes_{\mathcal{O}} \overline{\rationals}_p)$.
		Hence \cite[2.12]{chenevier_determinant} 
		implies that there is an algebra embedding
		\[
		(\widetilde{A}[G_{F,S}]/\ker(\widetilde{D})) \otimes_{\mathcal{O}} \overline{\rationals}_p = (\widetilde{A} \otimes_{\mathcal{O}} \overline{\rationals}_p)[G_{F,S}]/\ker(\widetilde{D} \otimes_{\mathcal{O}} \overline{\rationals}_p) \hookrightarrow M_{2n}(\widetilde{A} \otimes_{\mathcal{O}} \overline{\rationals}_p).
		\]
		Seen as a Galois representations, $\widetilde{A}[G_{F,S}]/\ker(\widetilde{D})$ is a subobject of $\widetilde{\rho}^{2n}_{m'} \cong M_{2n}(\widetilde{A} \otimes_{\mathcal{O}} \overline{\rationals}_p)$.
		By \cite[1.18 (iii)]{chenevier_determinant} there is a natural surjection
		\[
		\widetilde{A}[G_{F,S}]/\ker(\widetilde{D}) \to (A(m,\lambda)/I)[G_{F,S}]/\ker(\widetilde{D}_{A(m,\lambda)/I}).
		\]
		Hence $(A(m,\lambda)/I)[G_{F,S}]/\ker(\widetilde{D}_{A(m,\lambda)/I})$ is a subquotient of $\widetilde{\rho}^{2n}_{m'}$ and also torsion semi-stable at $v$ and $v^c$ with weights contained in $[-r,r]$.
		Now by Chebotarev density theorem and the computation of $\operatorname{Sat}( \widetilde{P}_{v})(X)$ right after \cite[5.3]{newton_thorne_16}
		we find
		\[
		\widetilde{D}_{A(m,\lambda)/I} = D( \rho_{\mathfrak{m}} \oplus \rho_{\mathfrak{m}}^{c,\vee}(1 - 2n) ).
		\]
		We obtain an induced surjective $A(m,\lambda)$-algebra morphism
		\[
		(A(m,\lambda)/I)[G_{F,S}]/\ker(\widetilde{D}) \to M_n(A(m,\lambda)/I)
		\]
		mapping $g \in G_{F,S}$ to $\rho_{\mathfrak{m}}(g)$.
		Hence $\rho_{\mathfrak{m}}$ is a quotient of $(A(m,\lambda)/I)[G_{F,S}]/\ker(\widetilde{D})$ and also torsion semistable (resp. crystalline) with weights contained in $[-r,r]$.
		
		Now we can remove the assumption that $\overline{\rho}_{\widetilde{\mathfrak{m}}}$
		is decomposed generic in exactly the same way as at the end of the proof of \cite[4.4.8]{10author}.
	\end{proof}
	
	\begin{theorem}[Wang-Erickson] \label{wang-erickson_ring_thm}
		Let $\mathbb{F}$ be a finite field of characteristic $p$ and $S$ a finite set of places of $F$ containing $S_p$. Let $\overline{D} : G_{F,S} \to \mathbb{F}$ be a multiplicity-free determinant of dimension $n$ and $S_p = S_{cris} \cup S_{ss}$ a partition and $\mu = (\mu_\tau)_{\tau : F \to \overline{\rationals}_p}$ a tuple of $n$-element multisets of integers. Then there exists a quotient $R^{cs, \mu}_{\overline{D}}$ of the pseudodeformation ring $R_{\overline{D}}$ such that for any finite extension $E/\rationals_p$, a homomorphism $f : R_{\overline{D}}$ factors through $R_{\overline{D}}^{cs, \mu}$ if and only if the semisimple representation corresponding to $f$ is semistable at all places above $p$ with Hodge--Tate weights $\mu$ and crystalline at the places in $S_{cris}$.
	\end{theorem}
	
	\begin{proof}
		This is proved in the same way as \cite[Theorem 7.9]{wang-erickson_algebraic_families}, except that we first consider the formal closed substacks 
		\[
		\mathcal{R}ep^{ss/cris,\mu_{v}}_{\overline{D} \rvert_{G_{F_v}}} \subset \mathcal{R}ep_{\overline{D} \rvert_{G_{F_v}}}
		\]
		from \cite[Theorem 6.7]{wang-erickson_algebraic_families} for $v \mid p$ and form the fibre product
		\[
			\begin{tikzcd}
				\mathcal{R}ep^{cs,\mu}_{\overline{D}} \arrow{r} \arrow{d} & \mathcal{R}ep_{\overline{D}} \arrow{d} \\
				\prod_{v \in S_{ss}} \mathcal{R}ep^{ss,\mu_{v}}_{\overline{D} \rvert_{G_{F_v}}} \times \prod_{v \in S_{cris}} \mathcal{R}ep^{cris,\mu_{v}}_{\overline{D} \rvert_{G_{F_v}}} \arrow{r} &   \prod_{v \mid p} \mathcal{R}ep_{\overline{D} \rvert_{G_{F_v}}}
			\end{tikzcd}
		\]
		The formal stack $\mathcal{R}ep_{\overline{D}} \to \Spf R_{\overline{D}}$ has an algebraization $\operatorname{Rep}_{\overline{D}} \to \Spec R_{\overline{D}}$, which is a good moduli space by \cite[Theorem 3.8]{wang-erickson_algebraic_families}. 
		By \cite[Theorem 3.16]{wang-erickson_algebraic_families} formal GAGA holds for the morphism $\operatorname{Rep}_{\overline{D}} \to \Spec R_{\overline{D}}$, 
		hence there exists a unique closed substack $\operatorname{Rep}_{\overline{D}}^{cs, \mu} \subset \operatorname{Rep}_{\overline{D}}$ whose formal completion along 
		$\mathfrak{m}_{R_{\overline{D}}}$ is $\mathcal{R}ep^{cs,\mu}_{\overline{D}}$. Let $\Spec R^{cs, \mu}_{\overline{D}}$ be the scheme theoretic image of this substack in $\Spec R_{\overline{D}}$. Now one can check that $R_{\overline{D}}^{cs, \mu}$ has the desired property in a similar way to \cite[Corollary 6.8 and Theorem 7.9]{wang-erickson_algebraic_families}.
	\end{proof}
	
	\begin{theorem} \label{derham_theorem}
Let $F$ be a CM field, $\iota : \complex \to \overline{\rationals_p}$ an isomorphism and $\Pi$ a cohomological cuspidal automorphic representation of $\GL_n(\mathbb{A}_F)$. If the reduction $\overline{r_\iota(\Pi)}$ is 
		absolutely irreducible and decomposed generic, then for every place $v \mid p$ of $F$ $r_{\iota}(\Pi) \rvert_{G_{F_v}}$ is potentially semistable (hence also de Rham) with Hodge--Tate weights
		\[
					HT_{\tau} := \{ \lambda_{\tau, 1} + n - 1, \lambda_{\tau,2} + n -2,  \dots, \lambda_{\tau,n}\},
		\]
		where $\lambda \in (\ints^n)^{\Hom(F, \overline{\rationals}_p)}$ is the unique dominant weight for $\GL_n/F$ such that the infinitesimal character of $\Pi_\infty$ coincides with the infinitesimal character of $(V_\lambda \otimes_{\mathcal{O}, \iota} \complex)^\vee$ (see definition \ref{highest_weight_module_def}).
		If $\Pi_v$ and $\Pi_{v^c}$ are unramified, then $r_{\iota}(\Pi) \rvert_{G_{F_v}}$ is crystalline.
	\end{theorem}
	
	\begin{proof} 
		Let $S$ be the union of the places where $\Pi$ is ramified and those lying above $p$ or $\infty$.
		Let $S_{cris}$ be the set of places $v \mid p$ such that $\Pi_v$ and $\Pi_{v^c}$ are unramified.
		We show that $r_\iota(\Pi)$ is semistable at all places above $p$ and crystalline at places above $S_{cris}$ when restricted to $G_{F'}$ for some finite extension $F'/F$ which is totally split at all places in $S_{cris}$. By solvable base change (see section 5.2 below) we can find an extension $F'/F$ which is totally split at all places in $S_{cris}$ such that for the level of the base change of $\Pi$ to $\GL_n(\mathbb{A}_{F'})$, the conditions \emph{2} and \emph{3} of theorem \ref{weak_semistable_local_global_thm} are satisfied and $S$ satisfies $(\Sigma_p)$. We replace $F$ by $F'$ and $\Pi$ by its base change. To ensure condition 
		\emph{1} we twist $\Pi$ by a suitable power of $\| \cdot \| \circ \det$ and $r_\iota(\Pi)$ by the corresponding power of the cyclotomic character.
		Then $\Pi$ is still cohomological cuspidal automorphic 
		and $\overline{r_\iota(\Pi)}$ is still absolutely irreducible and decomposed generic. It suffices to prove the theorem for this twisted $\Pi$ since a Galois representation is potentially semistable if and only if it has a Tate twist which is potentially semistable. By twisting with a suitable crystalline character we may also assume that $\overline{r_\iota(\Pi)} \not \cong \overline{r_\iota(\Pi)}^{c, \vee}(1 - 2n)$. 
		
		Now by the decomposition in \cite[\S 2.2]{franke_schwermer}
		there exists a good open compact subgroup $K < \GL_n(\mathbb{A}_F)$ such that the module $(\Pi^\infty)^K$
		is a $\mathbb{T}^S$-equivariant direct summand of $R\Gamma(X_{\GL_n/F}^K, \mathcal{V}_\lambda) \otimes_{\iota} \complex$, where $K_v = \GL_n(\mathcal{O}_{F,v})$ for $v \not \in S$. In particular there is a non-trivial map
		$\mathbb{T}^S_{\mathcal{O}} \to \End( (\Pi^\infty)^K)$ and since $\Pi$ is irreducible, its kernel is a prime ideal $\mathfrak{p}$. After enlarging $\mathcal{O}$, we can assume that $\mathbb{T}^S_{\mathcal{O}}/\mathfrak{p} \cong \mathcal{O}$. Now let $\mathfrak{m}$ be any maximal ideal of $\mathbb{T}^S_{\mathcal{O}}$, containing $\mathfrak{p}$. 
		The residual Galois representation associated with $\mathfrak{m}$ by \cite{scholze_torsion} is $\overline{r_\iota(\Pi)}$, hence condition 
		\emph{4} of theorem \ref{weak_semistable_local_global_thm} is satisfied. We obtain a sequence of Galois representations
		\[
		\rho_{j} :G_{F,S} \to  \GL_n(\mathbb{T}^S(R \Gamma(X_G^K, \mathcal{V}_\lambda/\varpi^j))_{\mathfrak{m}} /I_j)
		\]
		which are all torsion semistable with Hodge--Tate weights bounded independently of $j$
		and crystalline at all places in $S_{cris}$.
		Moreover, with theorem \ref{wang-erickson_ring_thm} we see that the proof of theorem \ref{weak_semistable_local_global_thm} implies that 
		the homomorphism $\alpha_j$ corresponding to the determinant $D = D(\rho_j \oplus \rho_j^{c, \vee}(1 - 2n))$ factors as follows
		\[
		\begin{tikzcd} 
		R_{\overline{D}} \arrow{r}{\alpha_j} \arrow{d} & \mathbb{T}^S(R \Gamma(X_G^K, \mathcal{V}_\lambda/\varpi^j))_{\mathfrak{m}} /I_j \\
		R_{\overline{D}}^{cs, \mu} \arrow[dashed]{ru}[swap]{\beta_j}
		\end{tikzcd}
		\] 
		where 
		\[
		\mu_\tau = \left\{ - \lambda_{\tau c,n} + 2n - 1, \dots, -\lambda_{\tau c, 1} + n, \lambda_{\tau, 1} + n - 1, \dots, \lambda_{\tau,n} \right\}.
		\]
		Consider the product of the $\beta_j$ 
		\[
			\beta : R^{cs,\mu}_{\overline{D}} \to \prod_{j}  \mathbb{T}^S(R \Gamma(X_G^K, \mathcal{V}_\lambda/\varpi^j))_{\mathfrak{m}} /I_j.
		\]
		By the computation of the characteristic polynomials of Frobenius elements we know that its image is contained in the image of the natural map
		\[
		\mathbb{T}^S(R \Gamma(X_G^K, \mathcal{V}_\lambda))_{\mathfrak{m}} \to \prod_{j}  \mathbb{T}^S(R \Gamma(X_G^K, \mathcal{V}_\lambda/\varpi^j))_{\mathfrak{m}} /I_j,
		\]
		which has a nilpotent kernel itself. Hence we get a map $R_{\overline{D}}^{cs, \mu} \to \mathbb{T}^S(R \Gamma(X_G^K, \mathcal{V}_\lambda))_{\mathfrak{m}}/I$ for some nilpotent ideal $I$.
		Quotienting out by $\mathfrak{p}$ we obtain the determinant attached to 
		\[
			\widetilde{\rho} = r_\iota(\Pi) \oplus r_\iota(\Pi)^{c,\vee}(1 - 2n).
		\]
		Now theorem \ref{wang-erickson_ring_thm} implies that $\widetilde{\rho}$ is semistable with Hodge--Tate weights $\mu$ and crystalline at all places in $S_{cris}$.
		It follows that $r_\iota(\Pi)$ is semistable with  Hodge--Tate weights a subset of $\mu$.
		Moreover, compatibility with central characters shows that
		\[
		HT_{\tau}( \det r_{\iota}(\Pi)) = \left \{ \sum_{i = 1}^n ( \lambda_{\tau, i} + n - i) \right\}
		\]
		By condition \emph{1} of theorem \ref{weak_semistable_local_global_thm} we have that 
		$ \sum_{i = 1}^n ( \lambda_{\tau, i} + n - i) < \sum_{i \in J} (\lambda_{\tau, i} + n - i)$ for every $n$ elements subset $J_{\tau} \subset \mu$ such that 
		$J \neq HT_{\tau}$. Thus, $HT_\tau(r_\iota(\Pi)) = HT_\tau$.
	\end{proof}

	\section{Vanishing of the Bloch-Kato Selmer Group}
	
	In this section we combine the results of the previous sections to prove the main theorem. More concretely, we check that the conditions of lemma \ref{patching_lemma} are satisfied. The rings $T_N$ will be special cases of the Hecke algebras discussed above. The rings $R_N$ will be Galois deformation rings which we introduce now.

	\subsection{Galois Deformations}
	
	We now fix notations for Galois deformation rings and recall the necessary results. The basic ideas go back to \cite{mazur_deformation} but we also require $p$-adic Hodge theory conditions studied in \cite{kisin_potentially_semistable} and \cite{torsion_p_adic} and the smoothness condition from \cite{allen_polarizable}. 
	Let $E/\rationals_p$ be a finite extension with ring of integers $\mathcal{O}$, uniformizer $\varpi$ and residue field $k = \mathcal{O}/\varpi \mathcal{O}$. Let $F$ be a number field and $S$ a finite set of places of $F$ containing $S_p$, the places above $p$. 
	We assume that $E$ contains all $p$-adic embeddings of $F$.
	
	Let $G_F$ be the absolute
	Galois group of $F$ and $n$ a positive integer. Fix continuous homomorphisms 
	$\rho : G_F \to \GL_n(\mathcal{O})$ and $\overline{\rho} : G_F \to \GL_n(k)$ which are unramified outside $S$ and so that
	$\rho \equiv \overline{\rho} \pmod{\varpi}$.
	We are interested in the following two types of questions:
	
	\begin{enumerate}[(A)]
		\item Given an Artinian local $\mathcal{O}$-algebra $A$ with an isomorphism $A/\mathfrak{m}_A \cong k$, what are the continuous (here $A$ has the discrete topology) homomorphisms $\rho_A : G_F \to \GL_n(A)$ such that $\rho_A \equiv \overline{\rho} \pmod{\mathfrak{m}_A}$?
		\item Given an Artinian local $E$-algebra $A$ with an isomorphism $A/\mathfrak{m}_A \cong E$, what are the continuous (here $A$ has the topology induced by the $E$-vector space structure on $A$) homomorphisms $\rho_A : G_F \to \GL_n(A)$ such that $\rho_A \equiv \rho \pmod{\mathfrak{m}_A}$?
	\end{enumerate}
	
	Question (A) asks about (framed) Galois deformations in the style of Mazur \cite{mazur_deformation}. It concerns integral questions and asks about congruences between Galois representations.
	Question (B) asks about (framed) Galois deformations in characteristic $0$ like 
	$\rho' : G_F \to \GL_n(E[t]/(t^m))$. Taking $A = E[t]/(t^2)$, this is equivalent to classifying
	all extensions $0 \to \rho \to \rho' \to \rho \to 0$, where $\rho'$ is a continous Galois representation $G_F \to \GL_{2n}(E)$. Ultimately, we will deduce the vanishing of the Bloch--Kato Selmer group by analysing a case of question (B).
	
	Let us make more precise definitions now. If $A$ is an Artinian local $\mathcal{O}$-algebra with an isomorphism $A/\mathfrak{m}_A \cong k$, then a continous homomorphism $\rho_A : G_F \to \GL_n(A)$ such that $\rho_A \equiv \overline{\rho} \pmod{\mathfrak{m}_A}$ is called a \emph{lift} of $\overline{\rho}$. Two such homomorphisms are \emph{strictly equivalent} if there is an element $\gamma \in \ker \left( \GL_n(A) \to \GL_n(k) \right)$ such that $\gamma \rho_A \gamma^{-1} = \rho_A'$. A strict equivalence class is called a \emph{deformation} of $\overline{\rho}$. We use the analogous terminology in question (B) for lifts and deformations of $\rho$ to Artinian local $E$-algebras $A$ with an isomorphism $A/\mathfrak{m}_A \cong E$. We let $\mathcal{D}$ denote the functor that sends $A$ to the set of all deformations of $\overline{\rho}$ over $A$ and $\mathcal{D}^\square$ the the functor that sends $A$ the set of all lifts of $\overline{\rho}$ over $A$. The analogous functors for question (B) are denoted by
	$\mathcal{D}_\rho$ and $\mathcal{D}_\rho^\square$.
	
	If $v$ is a place of $F$, a local lift at $v$ is a continuous homomorphism
	$\rho_A : G_{F_v} \to \GL_n(A)$. We denote the functor that sends $A$ to the set of local lifts of $\overline{\rho}$ over $A$ by $\mathcal{D}_v^\square$. It is well-known that $\mathcal{D}_v^\square$ is represented by a complete Noetherian local $\mathcal{O}$-algebra $R_v^\square$ with universal lift $\rho^{\square, univ} : G_{F_v} \to \GL_n(R_v^\square)$.
	
	\begin{definition}
		A global deformation problem is a tuple
		\[
		\mathcal{S} = (\overline{\rho}, S, \{\mathcal{D}_v\}_{v \in S}),
		\]	
		where $\overline \rho$ and $S$ are as above and each $\mathcal{D}_v$ is a subfunctor of $\mathcal{D}_v^\square$ which is represented by a quotient $R_v^\square \to R_v$
		and if $\rho_A \in \mathcal{D}_v(A)$ is strictly equivalent to $\rho_A' \in \mathcal{D}_v^\square$, then also $\rho_A' \in \mathcal{D}_v(A)$. For a subset $T \subset S$, we define $R_{\mathcal{S}}^{T,loc} := \widehat{\bigotimes}_{v \in T} R_v$.
	\end{definition} 
	
	\begin{definition}
		A deformation $\rho_A \in \mathcal{D}(A)$ (resp. lift) is of \emph{type} $\mathcal{S}$ if 
		one element (equivalently all elements) of the strict equivalence class of $\rho_A\rvert_{G_{F_v}}$ lie in $\mathcal{D}_v(A)$ for all $v \in S$. We note by $\mathcal{D}_{\mathcal{S}}$ the subfunctor of $\mathcal{D}$ of deformations of type $\mathcal{S}$. If $T \subset S$, then a $T$-\emph{framed lift of type } $\mathcal{S}$ is a tuple $(\rho_A, \{\alpha_v\}_{v \in T})$, where $\rho_A$ is a lift of $\overline{\rho}$ of type $\mathcal{S}$ and $\alpha_v$ are elements of $\ker\left(\GL_n(A) \to \GL_n(k)\right)$. Two $T$-framed lifts of type $\mathcal S$ are \emph{strictly equivalent}
		if there exists $\gamma \in \ker\left(\GL_n(A) \to \GL_n(k)\right)$ such that
		$\rho_A' = \gamma \rho_A \gamma^{-1}$ and $\alpha_v' = \gamma \alpha_v$. The functor of $T$-framed deformations of type $\mathcal S$, i.e. strict equivalence classes of $T$-framed lifts of type $\mathcal S$, is denoted by $\mathcal{D}_{\mathcal{S}}^T$.
	\end{definition}
	
	\begin{theorem} \label{deformation_representable_thm}
		If $\mathcal{S} = (\overline{\rho}, S, \{\mathcal{D}_v\}_{v \in S})$ is a global deformation problem, where $\overline{\rho}$ is absolutely irreducible and $T \subset S$, then $\mathcal{D}_{\mathcal{S}}$ and
		$\mathcal{D}_{\mathcal{S}}^T$ are representable by complete noetherian local $\mathcal{O}$-algebras $R_{\mathcal{S}}$ and $R_{\mathcal{S}}^T$ with residue fields isomorphic to $k$.	
	\end{theorem}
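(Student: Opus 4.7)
The plan is to apply Schlessinger's representability criterion. For the unframed functor $\mathcal{D}_{\mathcal{S}}$, I would verify conditions (H1)--(H4). Conditions (H1) and (H2) hold because a lift of $\overline{\rho}$ to a fiber product $A \times_C B$ of Artinian local $\mathcal{O}$-algebras is the same as a pair of compatible lifts to $A$ and $B$, and the type-$\mathcal{S}$ condition is preserved under pullback since each $\mathcal{D}_v$ is a closed subfunctor of $\mathcal{D}_v^\square$. Condition (H3), finite-dimensionality of the tangent space, follows from the classical finiteness of $H^1(G_{F,S}, \mathrm{ad}\,\overline{\rho})$ (and the tangent space of $\mathcal{D}_{\mathcal{S}}$ is a subspace thereof). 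The decisive condition (H4) is where absolute irreducibility of $\overline{\rho}$ enters: by Schur's lemma, $\mathrm{End}_{k[G_F]}(\overline{\rho}) = k$, so any automorphism of a deformation reducing to a scalar is itself a scalar and acts trivially on strict equivalence classes, upgrading hull-existence to pro-representability. This yields a complete noetherian local $W$-algebra $R_{\mathcal{S}}$ with residue field $k$ representing $\mathcal{D}_{\mathcal{S}}$.

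For the framed functor $\mathcal{D}_{\mathcal{S}}^T$, I would use the Cartesian square in the definition. Each local functor $\mathcal{D}_v$, being a closed subfunctor of the always-representable $\mathcal{D}_v^\square$ (no Schur's lemma needed, since framed lifts have no nontrivial equivalences to mod out), is itself representable by the ring $R_v$, so $\prod_{v \in T} \mathcal{D}_v$ is represented by $R_{\mathcal{S}}^{T,\mathrm{loc}} = \widehat{\bigotimes}_{v \in T} R_v$. The Cartesian structure then gives representability of $\mathcal{D}_{\mathcal{S}}^T$: one may either verify Schlessinger's criteria directly for $\mathcal{D}_{\mathcal{S}}^T$ (inheriting (H1)--(H4) from $\mathcal{D}_{\mathcal{S}}$), or observe concretely that the additional data of a framing $\alpha_v - I \in M_n(\mathfrak{m}_A)$ for each $v \in T$ is parameterized by $n^2$ formal variables per place. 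This leads to an explicit description of $R_{\mathcal{S}}^T$ as a formal power series ring over $R_{\mathcal{S}}$ in $n^2|T|$ variables.

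The only nonformal step is (H4), where absolute irreducibility of $\overline{\rho}$ is indispensable; all the remaining verifications are essentially formal and follow the now-standard treatment developed by Mazur \cite{mazur_deformation}, Ramakrishna, Kisin, and others. The result is included here only to fix notation for the deformation-theoretic constructions that appear later in the paper.
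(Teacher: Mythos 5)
Your proposal is correct and follows exactly the route the paper takes: the paper's proof is a one-line citation of Schlessinger's criterion as explained by Mazur, and your verification of (H1)--(H4), with absolute irreducibility entering through the triviality of the centralizer in the decisive condition (H4), is precisely the standard argument being invoked. One small correction to your concrete description of the framed ring: a $T$-framed deformation is an equivalence class of pairs $(\rho_A, \{\alpha_v\})$ under \emph{simultaneous} conjugation, and since the centralizer of a lift of an absolutely irreducible $\overline{\rho}$ consists only of the scalars $1 + \mathfrak{m}_A$, the framing contributes $n^2|T| - 1$ free variables (as in Lemma \ref{framing_variables_lemma}), not $n^2|T|$; this does not affect representability, but the $-1$ matters for the dimension bookkeeping later in the paper.
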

	
	\begin{proof}
		This is an application of Schlessinger's criterion as explained in \cite{mazur_deformation}.
	\end{proof}
	
	\begin{theorem} \label{deformation_ring_generic_fibre_thm}
		Let $\mathcal{S} = (\overline{\rho}, S, \{\mathcal{D}_v^{\square}\}_{v \in S})$, where $\overline{\rho}$ is absolutely irreducible and let $[\rho] : R_{\mathcal{S}} \to E$ be the homomorphism corresponding to $\rho$. Then  $\widehat{(R_{\mathcal{S}})_{[\rho]}}$
		pro-represents the functor
		$\mathcal{D}_{\mathcal{S},\rho}$
		which sends an Artinian local $E$-algebra $A$
		with residue field $E$ to
		the set of deformations of $\rho$,
		unramified outside $S$.	
		
		The tangent space of $\widehat{(R_{\mathcal{S}})_{[\rho]}}$
		is canonically isomorphic to $H^1(G_{F,S}, \ad \rho)$.
	\end{theorem}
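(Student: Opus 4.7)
The plan is to establish the two assertions by translating the universal property of $R_{\mathcal{S}}$ from Theorem \ref{deformation_representable_thm} through the localization-completion construction, and then to identify the resulting tangent space with Galois cohomology via the standard Mazur dictionary between first-order deformations and cocycles.

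For pro-representability, I would first observe that, by the universal properties of localization and completion together with the fact that $A$ is complete and Artinian, giving a local $E$-algebra homomorphism $\widehat{(R_{\mathcal{S}})_{[\rho]}} \to A$ amounts to giving an $\mathcal{O}$-algebra homomorphism $\varphi : R_{\mathcal{S}} \to A$ whose reduction modulo $\mathfrak{m}_A$ recovers $[\rho] : R_{\mathcal{S}} \to E$. Composing such a $\varphi$ with the universal deformation $\rho^{\mathrm{univ}} : G_F \to \GL_n(R_{\mathcal{S}})$ produces a continuous lift of $\rho$ over $A$ unramified outside $S$, i.e.\ an element of $\mathcal{D}_{\mathcal{S},\rho}(A)$. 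For the opposite direction, starting from $\rho_A \in \mathcal{D}_{\mathcal{S},\rho}(A)$, I would use compactness of $G_F$ and boundedness of the image in $\GL_n(A)$ to find a $G_F$-stable $\mathcal{O}$-lattice $L \subset A^n$. The $\mathcal{O}$-subalgebra $A_0 \subset \End_{\mathcal{O}}(L)$ generated by the matrix entries of $\rho_A(g)$ in a fixed basis of $L$ is a finite $\mathcal{O}$-algebra satisfying $A_0 \otimes_{\mathcal{O}} E = A$; localizing $A_0$ at the maximal ideal induced by $A_0 \hookrightarrow A \twoheadrightarrow E \twoheadrightarrow k$ produces a complete local Noetherian $\mathcal{O}$-algebra with residue field $k$, together with a representation $\rho_{A_0}$ of type $\mathcal{S}$ lifting $\overline{\rho}$ (type $\mathcal{S}$ being automatic, since $\mathcal{D}_v = \mathcal{D}_v^{\square}$ imposes no local condition). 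The universal property of $R_{\mathcal{S}}$ supplies a morphism $R_{\mathcal{S}} \to A_0 \hookrightarrow A$ which, by construction, sends elements outside $\mathfrak{p}_\rho := \ker [\rho]$ to units of $A$ and therefore factors through $\widehat{(R_{\mathcal{S}})_{[\rho]}}$; the two constructions are mutually inverse up to strict equivalence.

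For the tangent space, I would apply the pro-representability result to the dual numbers $A = E[\epsilon]/(\epsilon^2)$. Every deformation of $\rho$ over $A$ admits a representative of the form $\rho_\epsilon(g) = (1 + \epsilon c(g))\rho(g)$ for some continuous $c : G_F \to M_n(E) = \ad \rho$; the multiplicativity of $\rho_\epsilon$ translates into the cocycle relation $c(gh) = c(g) + \mathrm{Ad}(\rho(g)) c(h)$, while strict equivalence by $1 + \epsilon X$ changes $c$ by the coboundary $g \mapsto X - \mathrm{Ad}(\rho(g)) X$. Since $\rho$, and hence $\rho_\epsilon$, is unramified outside $S$, the class of $c$ lives in $H^1(G_{F,S}, \ad \rho)$, giving the required canonical isomorphism.

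The main obstacle will be the construction of the integral model $(A_0, \rho_{A_0})$ from a characteristic zero deformation, together with the verification that the resulting map $R_{\mathcal{S}} \to A$ is independent of the choice of $G_F$-stable lattice up to strict equivalence. Absolute irreducibility of $\overline{\rho}$ enters here through a Schur-type argument which identifies the possible $\mathcal{O}$-orders up to conjugation; once this integral-to-generic-fibre passage is nailed down, the remainder of the argument is formal from Theorem \ref{deformation_representable_thm} and the standard Mazur computation.
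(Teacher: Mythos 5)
Your proposal is correct and follows essentially the same route as the paper: the paper simply cites \cite[Lemma 2.3.3 and Proposition 2.3.5]{kisin_moduli_ff} for the pro-representability, and your lattice/integral-model argument is precisely the content of that reference. Your cocycle computation of the tangent space is the explicit unwinding of the paper's identification $\mathcal{D}_{\mathcal{S},\rho}(E[t]/(t^2)) \cong \Ext^1_{E[G_{F,S}]}(E, \ad\rho) \cong H^1(G_{F,S}, \ad\rho)$.
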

	
	\begin{proof}
		See \cite[Lemma 2.3.3 and Proposition 2.3.5]{kisin_moduli_ff} for the identification of deformation rings. 
		The tangent space computation follows from a standard argument which identifies
		\[
		\mathcal{D}_{\mathcal{S}, \rho}(E[t]/(t^2)) \cong \Ext^1_{E[G_{F,S}]}(\rho, \rho) \cong \Ext^1_{E[G_{F,S}]}(E, \ad \rho) \cong H^1(G_{F,S}, \ad \rho),
		\]
		where the $\Ext$ groups are computed in the category of continuous $E[G_{F,S}]$-modules.
	\end{proof}
	
	\begin{lemma} \label{framing_variables_lemma}
		If $\mathcal{S} = (\overline{\rho}, S, \{\mathcal{D}_v\}_{v \in S})$ is a global deformation problem, where $\overline{\rho}$ is absolutely irreducible and $T$ a nonempty subset of $S$, then
		$R_{\mathcal{S}}^T \cong R_{\mathcal{S}}[[X_1, \dots, X_{n^2 |T| - 1}]]$.
	\end{lemma}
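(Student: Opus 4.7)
The plan is to show that the forgetful map of functors $\pi : \mathcal{D}_{\mathcal{S}}^T \to \mathcal{D}_{\mathcal{S}}$, which sends a framed lift $(\rho_A, \{\alpha_v\}_{v\in T})$ to the strict equivalence class $[\rho_A]$, corresponds to a formally smooth morphism of complete Noetherian local $\mathcal{O}$-algebras $R_{\mathcal{S}} \to R_{\mathcal{S}}^T$ of relative dimension $n^2|T|-1$. The conclusion $R_{\mathcal{S}}^T \cong R_{\mathcal{S}}[[X_1, \dots, X_{n^2|T|-1}]]$ then follows from the standard structure theorem for formally smooth extensions of complete Noetherian local rings with residue field $k$.

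First, I would analyse the fibers of $\pi$ set-theoretically. After choosing an actual lift $\rho_A$ representing a given deformation, every element of the fiber is strictly equivalent to one of the form $(\rho_A, \{\alpha_v\})$ with $\alpha_v \in \ker(\GL_n(A) \to \GL_n(k))$, and two such tuples are strictly equivalent precisely when there exists $\gamma \in \ker(\GL_n(A) \to \GL_n(k))$ centralizing $\rho_A$ and satisfying $\gamma \alpha_v = \alpha_v'$ for every $v \in T$. By Schur's lemma applied to the absolutely irreducible $\overline{\rho}$, the centralizer of $\rho_A$ in $\ker(\GL_n(A) \to \GL_n(k))$ is exactly the subgroup $(1 + \mathfrak{m}_A) \cdot \mathrm{Id}$ of scalars reducing to $1$.

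Next, I would verify formal smoothness of $R_{\mathcal{S}} \to R_{\mathcal{S}}^T$ via the infinitesimal lifting criterion. Given a small surjection $A \twoheadrightarrow A/I$ of Artinian local $\mathcal{O}$-algebras with residue field $k$, a deformation $\rho_A \in \mathcal{D}_{\mathcal{S}}(A)$, and a framed deformation $(\overline{\rho}_{A/I}, \{\overline{\alpha}_v\}) \in \mathcal{D}_{\mathcal{S}}^T(A/I)$ whose underlying deformation matches $\rho_A \bmod I$, I can adjust the chosen representative of $\rho_A$ within its strict equivalence class so that its reduction mod $I$ literally equals the chosen representative of $\overline{\rho}_{A/I}$. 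Then each $\overline{\alpha}_v \in \ker(\GL_n(A/I) \to \GL_n(k))$ lifts to some $\alpha_v \in \ker(\GL_n(A) \to \GL_n(k))$, since the reduction $\ker(\GL_n(A) \to \GL_n(k)) \to \ker(\GL_n(A/I) \to \GL_n(k))$ is surjective. This yields the required framed lift over $A$.

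Finally, I would compute the relative tangent space of $\pi$ by taking $A = k[\epsilon]/(\epsilon^2)$ and the trivial deformation. Writing $\alpha_v = 1 + \epsilon X_v$ with $X_v \in M_n(k)$, strict equivalences preserving the trivial deformation act as $X_v \mapsto X_v + a \cdot \mathrm{Id}$ for a single $a \in k$ (the scalar tangent direction of the centralizer). The resulting quotient $M_n(k)^T / k$ has dimension $n^2|T| - 1$, matching the predicted relative dimension. The main bookkeeping obstacle is simply to track the simultaneous strict equivalence relation carefully so that the centralizer computation is applied on the correct side; once one fixes representatives consistently and invokes Schur's lemma for $\overline{\rho}$, both the lifting and tangent space computations are elementary.
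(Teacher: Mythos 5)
Your proof is correct, and its mathematical core coincides with the paper's: absolute irreducibility of $\overline{\rho}$ plus (the lifted form of) Schur's lemma identifies the reframing ambiguity with the scalars $1+\mathfrak{m}_A$, and quotienting the $n^2|T|$ framing parameters by this one-dimensional direction produces the count $n^2|T|-1$. The packaging differs slightly. The paper writes each $\alpha_v$ as an exponential and exhibits a \emph{global} functorial splitting $\mathcal{D}_{\mathcal{S}}^T(A) \cong \mathcal{D}_{\mathcal{S}}(A) \times \mathfrak{m}_A^{n^2|T|-1}$, from which the power series ring structure is immediate; you instead verify formal smoothness of $R_{\mathcal{S}} \to R_{\mathcal{S}}^T$ by the infinitesimal lifting criterion, compute the relative tangent space at $k[\epsilon]/(\epsilon^2)$, and then invoke the structure theorem for formally smooth maps of complete Noetherian local rings with residue field $k$. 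Your route is the standard one from the automorphy-lifting literature and requires one extra (but standard) input, namely that structure theorem; the paper's route avoids it by trivializing the fibration over all of $\mathcal{D}_{\mathcal{S}}$ at once. Both arguments are complete, and your lifting step correctly uses that the type-$\mathcal{S}$ condition is invariant under strict equivalence when adjusting the representative of $\rho_A$.
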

	
	\begin{proof}
		If $A$ is an Artinian local $\mathcal{O}$-algebra with residue field $k$, then 
		\[
		\mathcal{D}_{\mathcal{S}}^T(A) = \{ (\rho_A, (\alpha_v)_{v \in T} ) : \rho_A \in \mathcal{D}_{\mathcal{S}}^\square(A) \} / \sim.\]
		For each $v \in T$, we write
		$\alpha_v = \exp( \begin{bmatrix}
			X_{ij}^v
		\end{bmatrix}_{1 \leq i,j \leq n})$ for some $X_{ij}^v \in \mathfrak{m}_A$.
		Now since $\overline{\rho}$ is absolutely irreducible, the only endomorphisms
		of $\rho_A$ are scalar. Hence to compute the quotient by $\sim$ we only need to consider conjugation by elements $\gamma = \exp( Y \cdot \id )$ for $Y \in \mathfrak{m}_A$. Consequently,
		$\mathcal{D}_{\mathcal{S}}^T(A) \cong \mathcal{D}_{\mathcal{S}} \times \{ (\alpha_v)_{v \in T} \} / \sim$, where $(\alpha_v)_{v \in T} \sim (\alpha_v')_{v \in T}$ if and only if $X_{ij}^v = Y + (X_{ij}')^v$ for some $Y \in \mathfrak{m}_A$. 
		Now it is easy to count variables and we find $\mathcal{D}_{\mathcal{S}}^T(A) \cong \mathcal{D}_{\mathcal{S}}(A) \times \mathfrak{m}_A^{n^2 |T| - 1}$ and the result follows.
	\end{proof}
	
	\begin{theorem} 
		Assume that $\overline{\rho} \rvert_{G_{F_v}}$ is torsion semistable with Hodge--Tate weights contained in $[-r,r]$. Then there is a subfunctor $\mathcal{D}_v^{ss,r}$ of $\mathcal{D}_v^\square$ consisting of lifts which are torsion semistable with Hodge--Tate weights contained in $[-r,r]$. Moreover, $\mathcal{D}_v^{ss,r}$ is represented by a quotient $R_v^\square \to R_v^{ss,r}$ and two strictly equivalent lifts are torsion semistable if and only if one of them is. The same works with ``crystalline" in place of ``semistable". 
		Hence if $S_{cris} = \{v \mid p : \overline{\rho} \rvert_{G_{F_v}} \text{ is torsion crystalline.}\}$ and $S_{ss} = S_p \setminus S_{cris}$, then 
		\[
		\mathcal{S} = (\overline{\rho}, S, \{\mathcal{D}_v^{cris,r}\}_{v \in S_{cris}} \cup \{\mathcal{D}_v^{ss,r}\}_{v \in S_{ss}} \cup \{\mathcal{D}_v^\square\}_{v \in S \setminus S_p})
		\]
		is a well-defined global deformation problem.
	\end{theorem}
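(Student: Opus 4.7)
The plan is to treat the three claims separately: existence and representability of the subfunctor $\mathcal{D}_v^{ss,r}$, stability of the condition under strict equivalence, and the verification that the resulting tuple $\mathcal{S}$ forms a global deformation problem in the sense of the definition above. All three will be straightforward modulo the nontrivial input provided by \cite{torsion_p_adic}.

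First, for the representability of $\mathcal{D}_v^{ss,r}$, I would appeal directly to \cite{torsion_p_adic}, where torsion semistability with Hodge-Tate weights bounded by $r$ is defined via the existence of an appropriate integral period module (a torsion analogue of a Breuil or Breuil--Kisin module). The central output of that theory is a universal torsion semistable lifting ring $R_v^{ss,r}$ realized as a quotient of $R_v^\square$. Concretely, one may set $J_{ss,r} \subset R_v^\square$ to be the intersection of the kernels of all $\mathcal{O}$-algebra maps $R_v^\square \to A$, for $A$ an Artinian local $\mathcal{O}$-algebra with residue field $k$, whose associated lift $\rho_A$ is torsion semistable with Hodge-Tate weights in $[-r,r]$, and define $R_v^{ss,r} := R_v^\square / J_{ss,r}$. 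Representability then reduces to showing that the class of torsion semistable representations (with the specified bound on weights) is closed under subobjects, quotients, and extensions within the bounded range. These closure properties are intrinsic to the period-module formalism of \cite{torsion_p_adic}.

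Stability under strict equivalence is essentially formal: being torsion semistable with Hodge-Tate weights in $[-r,r]$ depends only on the isomorphism class of the underlying $A[G_{F_v}]$-module. If $\gamma \in \ker(\GL_n(A) \to \GL_n(k))$ conjugates $\rho_A$ to $\rho_A'$, the two lifts are isomorphic as $A[G_{F_v}]$-modules, so one satisfies the condition if and only if the other does, with the same Hodge-Tate weights. The crystalline case is completely parallel, with the torsion crystalline period module replacing the semistable one.

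Finally, to verify that $\mathcal{S}$ is a global deformation problem, we combine the pieces: at $v \in S_{cris}$ and $v \in S_{ss}$ the local functors $\mathcal{D}_v^{cris,r}$ and $\mathcal{D}_v^{ss,r}$ are subfunctors of $\mathcal{D}_v^\square$, represented by quotients of $R_v^\square$, and stable under strict equivalence by the above; at $v \in S \setminus S_p$ we take the full functor $\mathcal{D}_v^\square$, which trivially satisfies all the axioms. The only substantive obstacle in the whole argument is the representability input from \cite{torsion_p_adic}; the remainder is formal bookkeeping.
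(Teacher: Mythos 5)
Your overall strategy coincides with the paper's, which disposes of this statement with a bare citation to \cite{torsion_p_adic}: the substantive content is the existence of the torsion semistable/crystalline condition with bounded weights as a deformation condition, and everything else (stability under strict equivalence, assembling the global datum) is formal bookkeeping, exactly as you say.

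There is, however, one incorrect step in your representability argument. You claim that representability of $\mathcal{D}_v^{ss,r}$ by a quotient of $R_v^\square$ reduces to showing the class of torsion semistable representations with weights in $[-r,r]$ is closed under ``subobjects, quotients, and extensions.'' Closure under extensions is false and, fortunately, is not what is needed. The standard criterion (Ramakrishna's) for a subfunctor of $\mathcal{D}_v^\square$ to be cut out by an ideal of $R_v^\square$ is stability under fiber products of Artinian coefficient rings $A \times_C B$, which follows from closure under subobjects, quotients, and \emph{finite direct sums} --- since the lift over $A \times_C B$ is a subobject of $\rho_A \oplus \rho_B$. Extensions play no role, and indeed an extension of two torsion semistable representations need not be torsion semistable even with bounded weights; if that class were closed under extensions, the local condition would impose nothing on the tangent space and the entire rigidity statement this paper is after (vanishing of $H^1_g$ versus $H^1$) would collapse. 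With ``extensions'' replaced by ``finite direct sums'' your argument is correct and is exactly what \cite{torsion_p_adic} supplies.
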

	
	\begin{proof}
		See the main theorem of \cite{torsion_p_adic}.
	\end{proof}
	
	\begin{theorem} \label{tangent_space_bloch_kato_thm}
		Let $\mathcal{S} = (\overline{\rho}, S, \{\mathcal{D}_v^{cris,r}\}_{v \in S_{cris}} \cup \{\mathcal{D}_v^{ss,r}\}_{v \in S_{ss}} \cup \{\mathcal{D}_v^\square\}_{v \in S \setminus S_p})$ be as in the previous proposition. Let $x : R_{\mathcal{S}} \to E$ be a homomorphism and let $\rho_x$ be the corresponding representation. Then $\widehat{(R_{\mathcal{S}})_{x}}$ pro-represents the functor $\mathcal{D}_{\mathcal{S},\rho_x}$ which sends
		an Artinian local $E$-algebra with residue field $E$ to the set of deformations of $\rho_x$ which are unramified outside $S$, semistable with Hodge--Tate weights equal to those of $\rho_x$ and crystalline at the places in $S_{cris}$.
		
		The tangent space of $\widehat{(R_{\mathcal{S}})_{x}}$
		is isomorphic to $H^1_g(G_{F,S}, \ad \rho_x)$.
	\end{theorem}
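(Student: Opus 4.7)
The strategy is to build on Theorem \ref{deformation_ring_generic_fibre_thm}, which already handles the unrestricted deformation problem $\mathcal{S}^\square = (\overline{\rho}, S, \{\mathcal{D}_v^\square\}_{v \in S})$, and to track how the local conditions at places above $p$ cut out a closed subfunctor in the generic fibre.

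For representability, I would first observe that $R_{\mathcal{S}}$ is the quotient of $R_{\mathcal{S}^\square}$ obtained by pushing out the global lifting ring along the local maps $R_v^\square \to R_v^{cris,r}$ (resp.\ $R_v^{ss,r}$) for $v \in S_p$; in functor-of-points language, $\mathcal{D}_{\mathcal{S}}$ is the fibre product of $\mathcal{D}_{\mathcal{S}^\square}$ with $\prod_{v \in S_p} \mathcal{D}_v^{cris/ss, r}$ over $\prod_{v \in S_p} \mathcal{D}_v^\square$. Upon localising at $x$ and passing to completions, the same fibre product description persists, because the $p$-adic Hodge-theoretic deformation rings of \cite{kisin_potentially_semistable,torsion_p_adic} pro-represent on Artinian $E$-algebras precisely the subfunctor of deformations which are semistable (resp.\ crystalline) with Hodge-Tate weights equal to those of $\rho_x \rvert_{G_{F_v}}$ — here one uses that Hodge-Tate weights are integers and cannot move in an infinitesimal deformation, so the condition ``weights in $[-r,r]$'' collapses to equality with the weights of $\rho_x$. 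Combining with Theorem \ref{deformation_ring_generic_fibre_thm} for the unrestricted problem gives the asserted pro-representability of $\mathcal{D}_{\mathcal{S}, \rho_x}$ by $\widehat{(R_{\mathcal{S}})_x}$.

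For the tangent space, I would apply the above fibre-product description to $A = E[t]/(t^2)$. Theorem \ref{deformation_ring_generic_fibre_thm} identifies the tangent space of $\widehat{(R_{\mathcal{S}^\square})_x}$ with $H^1(G_{F,S}, \ad \rho_x)$, and the tangent space of $\widehat{(R_{\mathcal{S}})_x}$ then equals the kernel of the restriction map
\[
H^1(G_{F,S}, \ad \rho_x) \longrightarrow \bigoplus_{v \in S_p} H^1(G_{F_v}, \ad \rho_x)/L_v,
\]
where $L_v \subset H^1(G_{F_v}, \ad \rho_x)$ is the tangent space of the local semistable (resp.\ crystalline) deformation ring at $\rho_x \rvert_{G_{F_v}}$. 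The key identification is $L_v = H^1_g(G_{F_v}, \ad \rho_x)$: since $\rho_x \rvert_{G_{F_v}}$ is semistable, an infinitesimal deformation is semistable with the same Hodge-Tate weights if and only if it is de Rham, and Fontaine's theory tells us that a class in $H^1(G_{F_v}, \ad \rho_x)$ corresponds to a de Rham extension precisely when it lies in $\ker(H^1(G_{F_v}, \ad \rho_x) \to H^1(G_{F_v}, B_{dR} \otimes \ad \rho_x))$, i.e.\ in $H^1_g$. Taking the kernel over all $v \mid p$ gives exactly $H^1_g(G_{F,S}, \ad \rho_x)$ by the definition recalled in the introduction (note that the places in $S \setminus S_p$ contribute no local condition since the functor $\mathcal{D}_v^\square$ is unrestricted there).

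The main obstacle is the second step: verifying cleanly that $L_v$ coincides with $H^1_g(G_{F_v}, \ad \rho_x)$. This rests on the smoothness/dimension analysis of the (potentially) semistable deformation rings in characteristic zero carried out in \cite{kisin_potentially_semistable}, plus the classical comparison between de Rham infinitesimal deformations and the Bloch-Kato local condition. Once this local input is in hand, the global assertion is a formal consequence of the fibre-product description of $R_{\mathcal{S}}$ and a standard diagram chase in continuous cohomology.
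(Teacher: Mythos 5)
Your overall architecture --- realise $\mathcal{D}_{\mathcal{S}}$ as a fibre product of the unrestricted global functor with the local $p$-adic Hodge theory functors, invoke the main theorem of \cite{torsion_p_adic} and \cite{kisin_potentially_semistable} to identify the completed local rings of $R_v^{ss,r}$, $R_v^{cris,r}$ on Artinian $E$-algebras, and then compute the tangent space as a Selmer group with local conditions $L_v$ --- is exactly what the paper outsources to the main theorem of \cite{torsion_p_adic} and to \cite[Proposition 1.3.12]{allen_polarizable}, so in that sense you are reconstructing the cited argument rather than taking a different route. Your treatment of the places in $S_{ss}$ is essentially right, although the direction ``de Rham $\Rightarrow$ semistable'' for a self-extension of a semistable representation deserves its one-line proof: by Berger the extension is potentially semistable, inertia acts on $D_{pst}$ through a finite quotient and unipotently with respect to the filtration by the semistable sub and quotient, and a finite-order unipotent automorphism in characteristic $0$ is trivial; hence $L_v = H^1_{st}(G_{F_v},\ad\rho_x) = H^1_g(G_{F_v},\ad\rho_x)$ there.

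The genuine gap is at the places $v \in S_{cris}$. There the local condition is \emph{crystalline}, not semistable, and your justification (``semistable with the same Hodge--Tate weights iff de Rham'') does not apply: a de Rham, even semistable, self-extension of a crystalline representation need not be crystalline. Concretely, a class in $H^1_{st}(G_{F_v},\ad\rho_x)$ gives a semistable $V'$ whose monodromy operator $N_{V'}$ induces a $\varphi$-equivariant map $D_{cris}(\rho_x) \to D_{cris}(\rho_x)$ twisted by $q_v$, so the tangent space of the crystalline deformation functor is $H^1_f(G_{F_v},\ad\rho_x)$, and $H^1_g/H^1_f \cong D_{cris}\bigl((\ad\rho_x)(1)\bigr)^{\varphi=1,*}$ can be nonzero whenever two Frobenius eigenvalues of $D_{cris}(\rho_x\rvert_{G_{F_v}})$ have ratio $q_v$. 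Your argument therefore only shows that the tangent space of $\widehat{(R_{\mathcal{S}})_x}$ is the Selmer group with conditions $H^1_f$ at $S_{cris}$ and $H^1_g$ at $S_{ss}$, which is contained in, but not a priori equal to, $H^1_g(G_{F,S},\ad\rho_x)$ --- and the containment goes the wrong way for the application, which deduces the vanishing of $H^1_g$ from the vanishing of the tangent space. To close the gap you must verify $H^1_f(G_{F_v},\ad\rho_x)=H^1_g(G_{F_v},\ad\rho_x)$ at each $v\in S_{cris}$, i.e.\ a genericity condition on the crystalline Frobenius eigenvalues at those places, exactly the hypothesis under which \cite[Proposition 1.3.12]{allen_polarizable} produces $H^1_g$.
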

	
	\begin{proof}
		The interpretation of $\widehat{ (R_{\mathcal{S}})_x}$ follows from the main theorem of \cite{torsion_p_adic}.
		See \cite[Proposition 1.3.12]{allen_polarizable} for the tangent space.
	\end{proof}
	
	\begin{lemma} \label{generic_fibre_dim_bound}
		For each $v \nmid p$, we have $\dim R_v^\square[1/p] \leq n^2$. 
		Moreover, for each $v \mid p$ we have $\dim R_v^{ss,r}[1/p] \leq n^2 + \frac{n(n-1)}{2} [F_v : \rationals_p]$ and $\dim R_v^{cris,r}[1/p] \leq n^2 + \frac{n(n-1)}{2} [F_v : \rationals_p]$.
	\end{lemma}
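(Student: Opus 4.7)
The plan is to treat each $v$ separately, combining a tangent space / obstruction computation via local Galois cohomology with (in the case $v \mid p$) Kisin's theorem on potentially semistable deformation rings.

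For $v \nmid p$: I would fix an arbitrary closed point $x \in \Spec R_v^\square[1/p]$, with residue field $E_x$ and corresponding lift $\rho_x : G_{F_v} \to \GL_n(E_x)$, and bound $\dim \widehat{(R_v^\square)_x}$. The completed local ring pro-represents the framed local deformation functor of $\rho_x$, so its tangent space has $E_x$-dimension $n^2 - h^0(G_{F_v}, \ad \rho_x) + h^1(G_{F_v}, \ad \rho_x)$ (the $n^2$ from the framing) and its obstructions lie in $H^2(G_{F_v}, \ad \rho_x)$. The local Euler characteristic formula for the non-$p$-adic field $F_v$ gives $h^0 - h^1 + h^2 = 0$, from which $\dim \widehat{(R_v^\square)_x} \leq n^2 + h^1 - h^0 - h^2 = n^2$. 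Since $R_v^\square[1/p]$ is a Jacobson Noetherian ring, its Krull dimension equals the supremum of these local dimensions over closed points, giving the bound $\dim R_v^\square[1/p] \leq n^2$.

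For $v \mid p$: The same tangent/obstruction framework still applies, but must now be combined with Kisin's analysis of potentially semistable deformation rings \cite{kisin_potentially_semistable} (together with its torsion refinement in \cite{torsion_p_adic}). I would stratify $\Spec R_v^{ss,r}[1/p]$ by Hodge-Tate weight tuples $\lambda \in [-r,r]^n$, obtaining a finite union $R_v^{ss,r}[1/p] = \bigcup_\lambda R_v^{ss,\lambda}[1/p]$. Kisin's theorem shows each $R_v^{ss,\lambda}[1/p]$ is equidimensional of dimension exactly $n^2 + \frac{n(n-1)}{2}[F_v:\rationals_p]$ when $\lambda$ is regular, and bounded above by the same quantity otherwise. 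Taking the supremum over the finitely many weight tuples gives the desired bound, and the crystalline case is strictly parallel.

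The main obstacle is the $v \mid p$ case, since the tangent space / Euler characteristic computation alone yields a much weaker bound than the one claimed — it gives only $\dim \leq n^2 + h^1 - h^0 - h^2$, which via the $p$-adic Euler characteristic formula $\chi = -n^2 [F_v:\rationals_p]$ produces the value $n^2 + n^2[F_v:\rationals_p]$, far larger than what we need. The improvement to $\frac{n(n-1)}{2}[F_v:\rationals_p]$ is genuine $p$-adic Hodge theory: it arises by factoring the deformation functor through the period morphism to the moduli of (weakly admissible) filtered $(\varphi,N)$-modules, which is formally smooth of relative dimension $n^2$, and then bounding the contribution from the flag variety of Hodge filtrations (which contributes $\frac{n(n-1)}{2}[F_v:\rationals_p]$ dimensions in the regular case, and strictly fewer otherwise). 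This is exactly the content of Kisin's theorem that I would invoke as a black box.
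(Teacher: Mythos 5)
Your treatment of the places $v \mid p$ is essentially the paper's: the paper also deduces these bounds by quoting Kisin \cite[Theorem 3.3.8]{kisin_potentially_semistable} (together with \cite[Theorem 1.2.4]{allen_polarizable}) as a black box, and your sketch of the stratification by Hodge type and the period map to filtered $(\varphi,N)$-modules is an accurate account of what that theorem provides. The reduction of $\dim R_v^{?}[1/p]$ to the completed local rings at closed points of the generic fibre is also fine.

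The gap is in your argument for $v \nmid p$. Obstruction theory gives an inequality in the \emph{opposite} direction from the one you use: if the framed deformation functor of $\rho_x$ has tangent space of dimension $d = n^2 - h^0(G_{F_v},\ad\rho_x) + h^1(G_{F_v},\ad\rho_x)$ and obstruction space $H^2(G_{F_v},\ad\rho_x)$, then $\widehat{(R_v^\square)_x}$ is a quotient of a power series ring in $d$ variables by at most $h^2$ relations, so one gets the \emph{lower} bound $\dim \geq d - h^2 = n^2$ (using the prime-to-$p$ Euler characteristic $h^0 - h^1 + h^2 = 0$), while the only upper bound deformation theory gives for free is the tangent-space bound $\dim \leq d = n^2 + h^2(G_{F_v},\ad\rho_x)$. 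This exceeds $n^2$ exactly when $\Hom_{G_{F_v}}(\rho_x, \rho_x(1)) \neq 0$ (e.g.\ $n=2$ and $\rho_x$ unramified with Frobenius eigenvalues $\alpha, q_v\alpha$), and at such points the ring is genuinely not formally smooth, so your inequality ``$\dim \leq n^2 + h^1 - h^0 - h^2$'' is not justified — there is no general principle bounding the dimension above by tangent minus obstructions. The correct upper bound is a real theorem: one has to show that the $h^2$ potential relations are actually realized, i.e.\ that $R_v^\square[1/p]$ is equidimensional of dimension $n^2$ (a complete intersection), which is proved not by cohomological bookkeeping but by an explicit analysis of the space of framed Weil--Deligne representations (the variety of pairs $(\Phi,\sigma)$ with $\Phi\sigma\Phi^{-1}=\sigma^{q_v}$, or equivalently of pairs $(\Phi,N)$ with $\Phi N \Phi^{-1} = q_v N$). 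This is precisely \cite[Proposition 1.2.2]{allen_polarizable}, which is what the paper cites; your write-up should either quote that result or reproduce its argument rather than the Euler-characteristic computation.
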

	
	\begin{proof}
		See \cite[Proposition 1.2.2 and Theorem 1.2.4]{allen_polarizable} and \cite[Theorem 3.3.8]{kisin_potentially_semistable}.
	\end{proof}
	
	\begin{prop} \label{smooth_generic_fibre_prop}
		Let $\rho : G_F \to \GL_n(E)$ be a continuous representation, unramified outside $S$ such that 
		$\rho$ is generic \cite[Definition 1.1.2]{allen_polarizable} at all places $v \in S \setminus S_{cris}$.
		Then $(R_v)_{\mathfrak{p}}$ is regular for all $v \in S$, where $R_v$ are the local framed deformation rings
		of \[
		\mathcal{S} = (\overline{\rho}, S, \{\mathcal{D}_v^{cris,r}\}_{v \in S_{cris}} \cup \{\mathcal{D}_v^{ss,r}\}_{v \in S_{ss}} \cup \{\mathcal{D}_v^\square\}_{v \in S \setminus S_p})
		\] 
		and $\mathfrak{p}$ is the kernel of the homomorphism $R_v \to E$ induced by $\rho$.
	\end{prop}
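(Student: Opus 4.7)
\medskip
\noindent\textbf{Proof plan.} The plan is to establish regularity of $(R_v)_{\mathfrak{p}}$ one place at a time, splitting $S$ into the disjoint union $(S \setminus S_p) \sqcup S_{cris} \sqcup S_{ss}$ and invoking existing smoothness theorems for local deformation rings at each type of place. In each case the dimension bound recorded in Lemma \ref{generic_fibre_dim_bound} will turn out to be an equality, and formal smoothness of the generic fibre at $\mathfrak{p}$ will yield the desired regularity.

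For $v \in S \setminus S_p$, the local framed deformation ring is the unrestricted one $R_v = R_v^{\square}$, and by hypothesis the Weil--Deligne representation associated to $\rho\rvert_{G_{F_v}}$ is generic. I would directly cite \cite[Proposition 1.2.2]{allen_polarizable}: the generic fibre $R_v^{\square}[1/p]$ is formally smooth of dimension $n^2$ at the closed point corresponding to any generic lift. This gives regularity of $(R_v^{\square})_{\mathfrak{p}}$.

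For $v \in S_{cris}$, the completion $\widehat{(R_v^{cris,r})_{\mathfrak{p}}}$ pro-represents the functor of crystalline deformations of $\rho\rvert_{G_{F_v}}$ (which is itself crystalline, being a closed point of the torsion crystalline deformation ring) with the same labelled Hodge--Tate weights as $\rho\rvert_{G_{F_v}}$. I would then invoke Kisin's theorem \cite[Theorem 3.3.8]{kisin_potentially_semistable}, which establishes formal smoothness of the generic fibre of the crystalline deformation ring with fixed Hodge--Tate weights at each closed point. The case $v \in S_{ss}$ is handled analogously, with \cite[Theorem 1.2.4]{allen_polarizable} replacing Kisin's theorem: under the genericity hypothesis at $v$, the generic fibre of the semistable deformation ring with fixed Hodge--Tate weights is formally smooth at the point $\mathfrak{p}$.

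The main obstacle is the semistable case at places above $p$. Without the genericity hypothesis, the semistable deformation ring can genuinely fail to be regular in characteristic zero, which is precisely why the generic Weil--Deligne condition in assumption \emph{(f)} of the main theorem is imposed at places of $S_{ss}$. A secondary concern is that the cited results of Allen and Kisin are sometimes phrased in a polarised setting; since their local arguments are independent of any global polarisation, this amounts only to reading their proofs with the polarisation hypothesis removed at a single local place.
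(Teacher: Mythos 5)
Your proposal follows essentially the same route as the paper: split $S$ into $S \setminus S_p$, $S_{cris}$ and $S_{ss}$, identify $\widehat{(R_v)_{\mathfrak{p}}}$ with the characteristic-zero local deformation functor of fixed Hodge type, and quote Allen's Proposition 1.2.2 away from $p$, Kisin's Theorem 3.3.8 at crystalline places, and Allen's smoothness-under-genericity result at the remaining places. The only slip is a citation label: the semistable smoothness statement you invoke is \cite[Theorem 1.2.7]{allen_polarizable} (Theorem 1.2.4 is the dimension computation), which is exactly what the paper cites.
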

	
	\begin{proof}
		This follows from theorem \ref{tangent_space_bloch_kato_thm} and \cite[Proposition 1.2.2 and Theorem 1.2.7]{allen_polarizable} together with \cite[Theorem 3.3.8]{kisin_potentially_semistable}. 
	\end{proof}
	
	\begin{definition}
		Let $\mathcal{S}$ be a global deformation datum, then a \emph{Taylor--Wiles datum} for $\mathcal{S}$ is a tuple $(Q, (\alpha_{v,1}, \dots, \alpha_{v,n})_{v \in Q})$, where
		$Q$ is a finite set of places of $F$ disjoint from $S$ such that
		\begin{itemize}
			\item for all $v \in Q$, $q_v \equiv 1 \pmod{p}$;
			\item $(\alpha_{v,1}, \dots, \alpha_{v,n})$ are the eigenvalues of $\overline{\rho}(\Frob_v)$, which are assumed to be pairwise distinct and $k$-rational.
		\end{itemize}	
	\end{definition}
	
	Given a Taylor--Wiles datum we define the augmented global deformation problem 
	\[
	\mathcal{S}_Q = (\overline{\rho}, S \cup Q, \{\mathcal{D}_v^{\square}\}_{v \in Q} \cup \{\mathcal{D}_v\}_{v \in S})
	\] 
	
	\begin{lemma} \label{taylor_wiles_def_ring_lemma}
		For a subset $T \subset S$ and Taylor--Wiles datum $(Q, (\alpha_{v,1}, \dots, \alpha_{v,n})_{v \in Q})$, the deformation ring $R_{\mathcal{S}_Q}^{T}$
		obtains a natural local $\mathcal{O}[\Delta_Q^n]$-algebra structure, where
		$\Delta_Q$ is the maximal $p$-quotient of $\prod_{v \in Q} k(v)^\times$
		and the maximal ideal of $\mathcal{O}[\Delta_Q^n]$ is the kernel of $\mathcal{O}[\Delta_Q^n] \to k : g \mapsto 1$. Moreover, the kernel of the natural map $R_{\mathcal{S}_Q}^{T} \to R_{\mathcal{S}}^T$, is generated by the augmentation ideal of $\mathcal{O}[\Delta_Q^n]$. 
	\end{lemma}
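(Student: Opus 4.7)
The plan is to obtain the $W[\Delta_Q^n]$-algebra structure on $R_{\mathcal{S}_Q}^{T}$ by analyzing, for each $v \in Q$, the structure of an arbitrary lift $\rho_A$ of $\overline\rho\rvert_{G_{F_v}}$ and then using local class field theory. The framing on $T$ will play no role in this construction since $T \cap Q = \emptyset$, so I can first carry out the argument on the unframed universal lift at $v \in Q$ and then base change to the $T$-framed ring at the end.

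Fix $v \in Q$ and a lift $\rho_A : G_{F,S \cup Q} \to \GL_n(A)$ of $\overline\rho$ for some Artinian local $W$-algebra $A$ with residue field $k$. Since wild inertia is of pro-order prime to $p$, while $\ker(\GL_n(A) \to \GL_n(k))$ is a pro-$p$ group and $\overline\rho$ is unramified at $v$, the representation $\rho_A\rvert_{G_{F_v}}$ is tame. Because the residual eigenvalues $\alpha_{v,1},\dots,\alpha_{v,n}$ are pairwise distinct and $k$-rational, Hensel's lemma shows that $\rho_A(\Frob_v)$ is conjugate (by a unique element of $\ker(\GL_n(A) \to \GL_n(k))$ fixing an ordering) to a diagonal matrix with diagonal entries $\widetilde\alpha_{v,i} \in A^\times$ lifting $\alpha_{v,i}$. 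A topological generator $\sigma$ of tame inertia satisfies $\Frob_v \sigma \Frob_v^{-1} = \sigma^{q_v}$; combined with $q_v \equiv 1 \pmod p$ and the fact that $\rho_A(\sigma) - I$ lies in $\mathfrak{m}_A \cdot M_n(A)$, a short elementary matrix computation using the distinctness of the $\widetilde\alpha_{v,i}$ forces $\rho_A(\sigma)$ to be diagonal as well. Hence, in the chosen basis, $\rho_A\rvert_{G_{F_v}}$ decomposes uniquely as $\chi_{v,1} \oplus \cdots \oplus \chi_{v,n}$ with characters $\chi_{v,i} : G_{F_v}^{ab} \to A^\times$ lifting the unramified character $\Frob_v \mapsto \alpha_{v,i}$.

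Via local class field theory, each $\chi_{v,i}$ corresponds to a continuous character $F_v^\times \to A^\times$ whose restriction to $\mathcal{O}_{F_v}^\times$ lifts the trivial character. Such restrictions automatically factor through the maximal $p$-power quotient $\Delta_v$ of $k(v)^\times$, since $1 + \mathfrak{m}_{F_v}$ is pro-$\ell$ for $\ell \ne p$ (where $\ell$ is the residue characteristic of $v$) and kernels of lifts are pro-$p$. The $n$-tuple $(\chi_{v,1}\rvert_{\Delta_v},\dots,\chi_{v,n}\rvert_{\Delta_v})$ thus yields a homomorphism $\Delta_v^n \to A^\times$, which is natural in $A$ up to the uniqueness of the ordered diagonalization described above, and so defines a local $W$-algebra map $W[\Delta_v^n] \to R_{\mathcal{S}_Q}^\square$ (and thereby into $R_{\mathcal{S}_Q}^T$) for each $v$. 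Taking the tensor product over $v \in Q$ produces the desired $W[\Delta_Q^n]$-algebra structure on $R_{\mathcal{S}_Q}^T$; that it is local follows from noting that the residual specialization sends every character to the trivial one.

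For the kernel statement, observe that the quotient $R_{\mathcal{S}_Q}^T/\mathfrak{a}$ by the augmentation ideal $\mathfrak{a}$ of $W[\Delta_Q^n]$ pro-represents the subfunctor of $T$-framed deformations of type $\mathcal{S}_Q$ for which each character $\chi_{v,i}\rvert_{\Delta_v}$ is trivial, i.e.\ for which the diagonalized inertial characters are all trivial and hence $\rho_A$ is unramified at every $v \in Q$. But this subfunctor coincides exactly with the $T$-framed deformation functor for $\mathcal{S}$, since $Q$ is disjoint from $S$ and $\mathcal{S}$ imposes no condition at places in $Q$ other than being unramified there. By Yoneda, the induced surjection $R_{\mathcal{S}_Q}^T/\mathfrak{a} \to R_{\mathcal{S}}^T$ is an isomorphism, which is the claim. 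The only step requiring real care is the diagonalization of $\rho_A(\sigma)$: it is ultimately routine, but one must track the uniqueness of the conjugating matrix up to the residual centralizer in order to make the $\Delta_v^n$-action well-defined on strict equivalence classes rather than on individual lifts.
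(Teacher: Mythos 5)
Your proof is correct and is exactly the standard argument the paper invokes by citing \cite[Lemma 2.44]{darmon_diamond_taylor}: diagonalize the lift at each $v \in Q$ using the distinct residual Frobenius eigenvalues and the tameness forced by the pro-$p$ kernel, identify the resulting inertial characters with characters of $\Delta_v$ via local class field theory, and observe that killing the augmentation ideal is the unramifiedness condition at $Q$. The one point you rightly flag---that the conjugating matrix is only unique up to the (diagonal) residual centralizer, which is harmless because diagonal conjugation fixes the characters---is handled correctly.
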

	
	\begin{proof}
		This follows from \cite[Lemma 6.2.19]{10author} and the discussion afterwards and goes back to \cite[Lemma 2.44]{darmon_diamond_taylor}.
	\end{proof}
	
	\subsection{Base Change}
	
	Ultimately, we wish to show that a certain Bloch--Kato Selmer group attached to a representation of $G_F$ vanishes. It turns out that to do so one can first restrict to $G_L$ for any finite extension $L/F$. 
	
	\begin{lemma} \label{BK_basechange_lemma}
		Let $L/F$ be a Galois extension of number fields unramified outside a finite set of places $S$ of $F$ containing all places above $p$
		and $\Sigma$ be the set of places of $L$ above $S$. If $\rho : G_{F,S} \to \GL_n(E)$ is a continuous Galois representation, then $H^1_g(G_{L,\Sigma}, \ad \rho) = 0$ implies that
		$H^1_g(G_{F,S}, \ad \rho) = 0$.
	\end{lemma}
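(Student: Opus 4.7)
The plan is to deduce this from an inflation-restriction argument together with the naturality of the Bloch-Kato local conditions; the statement is essentially formal once these two inputs are in place, so I do not anticipate a genuine obstacle.

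First, I would apply the inflation-restriction five term exact sequence to the normal subgroup $G_{L,\Sigma} \trianglelefteq G_{F,S}$ (normality uses that $L/F$ is Galois and $\Sigma$ is defined as the set of places of $L$ above $S$, hence stable under $\Gal(L/F)$):
\[
0 \to H^1(\Gal(L/F), (\ad \rho)^{G_{L,\Sigma}}) \to H^1(G_{F,S}, \ad \rho) \xrightarrow{\operatorname{res}} H^1(G_{L,\Sigma}, \ad \rho)^{\Gal(L/F)}.
\]
The leftmost term is annihilated by $|\Gal(L/F)|$ because $\Gal(L/F)$ is finite. But $\ad \rho$ is an $E$-vector space, so this term is also a $\mathbb{Q}_p$-vector space, and multiplication by the nonzero integer $|\Gal(L/F)|$ is an isomorphism. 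Hence this term vanishes and $\operatorname{res}$ is injective.

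Next, I would check that $\operatorname{res}$ carries $H^1_g(G_{F,S}, \ad \rho)$ into $H^1_g(G_{L,\Sigma}, \ad \rho)$. For each place $w \mid p$ of $L$, let $v$ be the place of $F$ below $w$; then $v \mid p$ and the chosen embeddings $G_{L_w} \hookrightarrow G_{F_v}$ and $G_{F_v} \hookrightarrow G_{F,S}$ fit into a commutative diagram
\[
\begin{tikzcd}
H^1(G_{F,S}, \ad \rho) \arrow{r}{\operatorname{res}} \arrow{d} & H^1(G_{L,\Sigma}, \ad \rho) \arrow{d} \\
H^1(F_v, B_{dR} \otimes_{\mathbb{Q}_p} \ad \rho) \arrow{r} & H^1(L_w, B_{dR} \otimes_{\mathbb{Q}_p} \ad \rho).
\end{tikzcd}
\]
Consequently, if a class $[c] \in H^1(G_{F,S}, \ad \rho)$ dies in $H^1(F_v, B_{dR} \otimes \ad \rho)$ for every $v \mid p$ in $F$, its image $\operatorname{res}[c]$ dies in $H^1(L_w, B_{dR} \otimes \ad \rho)$ for every $w \mid p$ in $L$, since every such $w$ lies above some such $v$.

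Combining the two steps, $\operatorname{res}$ restricts to an injection $H^1_g(G_{F,S}, \ad \rho) \hookrightarrow H^1_g(G_{L,\Sigma}, \ad \rho)$. Under the hypothesis that the target vanishes, the source must vanish as well, which is the desired conclusion.
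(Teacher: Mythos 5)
Your proof is correct and follows essentially the same route as the paper: inflation--restriction to reduce to injectivity of restriction, vanishing of $H^1(\Gal(L/F),(\ad\rho)^{G_L})$ because the coefficients are a $\rationals_p$-vector space (the paper phrases this via corestriction--restriction and divisibility, which is the same argument), and compatibility of the restriction map with the local $B_{dR}$-conditions.
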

	
	\begin{proof}
		From the inflation-restriction sequence we have a commutative diagram
		\[
		\begin{tikzcd}[column sep = small, row sep = small]
			H^1(L/F,(\ad \rho)^{G_L}) \arrow{r} & H^1(G_{F,S}, \ad \rho) \arrow{r} \arrow{d} & H^1(G_{L,\Sigma}, \ad \rho) \arrow{d} \\
			&
			\prod_{v \mid p} H^1(F_v, B_{dR} \otimes_{\rationals_p} \ad \rho) \arrow{r}
			&  \prod_{v \mid p} H^1(L_v, B_{dR} \otimes_{\rationals_p} \ad \rho)
		\end{tikzcd}
		\]
		where for each $v$ we make a choice of place $w$ of $L$ above $v$ and set $L_v = L_w$. In group cohomology these choices do not matter since these places are all conjugate. Now the lemma will follow if we can show $H^1(L/F, (\ad \rho)^{G_L}) = 0$ but this is immediate from corestriction-restriction since $L/F$ is a finite extension and $(\ad \rho)^{G_L}$ is divisible.
	\end{proof}
	
	This lemma allows us to liberally use automorphic base change \cite{arthur_clozel} as a reduction step in the proof of the main theorem. Paired with class field theory \cite{artin_tate} it becomes a very powerful method. Here we state a few such results which we will use below.
	
	\begin{prop} \label{solvable_basechange_prop}
		Let $F$ be a number field, $L/F$ a solvable Galois extension and $\pi$ be a cohomological cuspidal automorphic representation of $\GL_n(\mathbb{A}_F)$. If the Galois representation $r_\iota(\pi) \rvert_{G_L}$ is absolutely irreducible, then there exists a unique cohomological cuspidal automorphic representation $\Pi$ of $\GL_n(\mathbb{A}_L)$ such that
		\[
		r_\iota(\Pi) \cong r_\iota(\pi) \rvert_{G_L}.
		\]
	\end{prop}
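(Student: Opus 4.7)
The plan is to reduce to cyclic extensions of prime degree and apply Arthur–Clozel base change, using the absolute irreducibility hypothesis on $r_\iota(\pi)\rvert_{G_L}$ as the only non-formal input.

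First, write the solvable tower $F = L_0 \subset L_1 \subset \cdots \subset L_k = L$ with each $L_{i+1}/L_i$ cyclic of prime degree, Galois over $F$. Proceeding by induction, it suffices to treat a single cyclic prime-degree Galois extension $L/F$. In that case, Arthur–Clozel~\cite{arthur_clozel} produces a (unique) isobaric automorphic representation $\Pi$ of $\GL_n(\mathbb{A}_L)$ which is a base change of $\pi$, characterized by the condition that $\Pi_w$ is the local base change of $\pi_v$ for every place $w$ of $L$ above a place $v$ of $F$. I would then check the three properties separately: cuspidality, cohomologicity, and compatibility with the Galois representation.

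For cuspidality, the isobaric $\Pi$ is either cuspidal or of the form $\boxplus_j \Pi_j$ with strictly smaller cuspidal constituents; in the latter case, by the compatibility of base change with local unramified parameters and the Chebotarev density theorem, $r_\iota(\pi)\rvert_{G_L}$ would decompose as a direct sum of the $r_\iota(\Pi_j)$ and hence fail to be absolutely irreducible, contradicting the hypothesis. Thus $\Pi$ is cuspidal. For cohomologicity, the Langlands parameters of $\Pi_\infty$ are the restrictions of those of $\pi_\infty$ under the base-change map $W_{L_w} \to W_{F_v}$, so the infinitesimal character of $\Pi_\infty$ matches that of an algebraic representation of $\GL_n(L \otimes_\rationals \reals)$ obtained by restriction from the corresponding representation for $\pi$; this is exactly Clozel's notion of being cohomological/algebraic.

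For the Galois representation, both $r_\iota(\Pi)$ and $r_\iota(\pi)\rvert_{G_L}$ are semisimple (the former because $\Pi$ is cuspidal, so Harris--Lan--Taylor--Thorne and Scholze yield the attached Galois representation; the latter by hypothesis), and they have matching traces of Frobenius at almost all places of $L$ by the local-unramified compatibility of base change together with the main theorems of~\cite{harris_lan_taylor_thorne, scholze_torsion}. Chebotarev then gives the isomorphism $r_\iota(\Pi) \cong r_\iota(\pi)\rvert_{G_L}$. Uniqueness of $\Pi$ follows from strong multiplicity one for $\GL_n$: any other such cohomological cuspidal $\Pi'$ has the same Satake parameters at almost all unramified places (read off from $r_\iota(\pi)\rvert_{G_L}$), hence $\Pi' \cong \Pi$.

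The only genuinely non-formal step is the passage from ``$\Pi$ isobaric'' to ``$\Pi$ cuspidal,'' since it is there that the absolute irreducibility of $r_\iota(\pi)\rvert_{G_L}$ is used; everything else is a bookkeeping exercise with Arthur–Clozel, Chebotarev, and strong multiplicity one. One minor technical point I would be careful about is ensuring that the inductive step in the solvable tower preserves absolute irreducibility of the restricted Galois representation at each stage—but this is immediate since $r_\iota(\pi)\rvert_{G_{L_i}}$ contains $r_\iota(\pi)\rvert_{G_L}$ as a subrepresentation (up to the base-change identification) and the latter is already absolutely irreducible, so absolutely irreducible restriction descends through all intermediate subfields.
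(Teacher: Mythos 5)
Your overall strategy matches the paper's: reduce by induction to a cyclic extension of prime degree and invoke Arthur--Clozel, with absolute irreducibility of $r_\iota(\pi)\rvert_{G_L}$ as the only substantive input. The difference lies in how cuspidality of the base change is established, and your version of that step has a soft spot. The paper does not pass through the isobaric lift and its constituents at all: it takes the Hecke character $\eta$ cutting out $L/F$ and verifies the hypothesis of \cite[Ch.~3, Thm.~4.2(a)]{arthur_clozel} directly, namely $\pi \otimes (\eta\circ\det) \not\cong \pi$, by observing that a self-twist isomorphism $r_\iota(\pi) \cong r_\iota(\pi)\otimes(\eta^{-1}\circ\operatorname{Art}_F^{-1})$ would be a non-scalar $G_L$-equivariant endomorphism of $r_\iota(\pi)\rvert_{G_L}$, contradicting Schur's lemma. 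You instead accept the isobaric lift $\Pi = \boxplus_j \Pi_j$ and argue that non-cuspidality would force $r_\iota(\pi)\rvert_{G_L} \cong \bigoplus_j r_\iota(\Pi_j)$ to be reducible. That argument presupposes that each cuspidal constituent $\Pi_j$ (a representation of a smaller $\GL_{n/\ell}(\mathbb{A}_L)$) has an attached Galois representation, which in turn requires knowing that each $\Pi_j$ is itself cohomological/regular algebraic; this is not automatic for isobaric constituents (there are well-known half-integral twist issues), so you would need an extra argument there. The paper's Schur's-lemma route buys you exactly the avoidance of this issue: it only ever uses the Galois representation of $\pi$ itself. The remaining parts of your write-up (cohomologicity of the lift via infinitesimal characters, the Chebotarev comparison of Frobenius traces, uniqueness via strong multiplicity one, and the preservation of absolute irreducibility down the tower) are fine and consistent with what the paper does implicitly.
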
 
	
	\begin{proof}
		By induction it suffices to treat the case when $L/F$ is cyclic of prime degree $l$. By class field theory there is a finite order Hecke character $\eta : F^\times \backslash \mathbb{A}_F^\times \to \complex^\times$ which cuts out $L$, i.e. satisfies $\ker (\eta \circ \operatorname{Art}_F^{-1}) = G_L$. If we can show that $\pi \otimes (\eta \circ \det) \not \cong \pi$, then \cite[Chapter 3, Theorem 4.2 (a)]{arthur_clozel} will imply the proposition by the Chebotarev density theorem and the explicit description of Hecke eigenvalues in unramified local base change \cite[Chapter 1, \S4.2]{arthur_clozel}.
		
		In particular, it also suffices to show that $r_\iota(\pi \otimes (\eta \circ \det)) \not \cong r_\iota(\pi)$.
		By the Chebotarev density theorem we find that
		\[
		r_\iota(\pi \otimes (\eta \circ \det)) \cong r_\iota(\pi) \otimes (\eta^{-1} \circ \operatorname{Art}_F^{-1})
		\]
		But if $r_\iota(\pi) \otimes (\eta^{-1} \circ \operatorname{Art}_F^{-1}) \cong r_\iota(\pi)$, then the intertwining operator realising such an isomorphism is a non-scalar $G_L$-equivariant endomorphism of $r_\iota(\pi) \rvert_{G_L}$. Otherwise, there would exist $\alpha \in \overline{\rationals_p}^\times$ and $g \in G_F$ such that $\eta(\operatorname{Art}^{-1}_F(g)) \neq 1$ and $\alpha r(g) v = \eta(\operatorname{Art}^{-1}_F(g)) r(g) \alpha v$ for all vectors $v$ of $r_\iota(\pi)$. By Schur's lemma this contradicts the assumption that $r_\iota(\pi) \rvert_{G_L}$ is absolutely irreducible.
	\end{proof}
	
	\begin{prop} \label{iwahori_basechange_prop}
		Let $F/ \rationals_\ell$ be a finite extension and $\pi$ a irreducible smooth representation of $\GL_n(F)$. Then there exists a finite solvable extension $F'/F$ 
		such that the base change $\Pi$ of $\pi$ to $\GL_n(F')$ exists and has Iwahori fixed vectors.
	\end{prop}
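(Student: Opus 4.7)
The approach is via the local Langlands correspondence for $\GL_n$ over $F$. This attaches to $\pi$ a Weil--Deligne representation $(r, N)$, where $r$ is a smooth (open-kernel) $n$-dimensional representation of the Weil group $W_F$ and $N$ is a nilpotent monodromy operator. A standard consequence of Borel's classification of Iwahori-spherical representations, together with the LLC compatibility for unramified parameters, is that $\pi$ admits a nonzero Iwahori-fixed vector precisely when $r|_{I_F}$ is trivial. The plan is therefore to find a finite solvable Galois extension $F'/F$ with $r|_{I_{F'}}$ trivial, and then to realize the base change of $\pi$ to $\GL_n(F')$ via iterated cyclic base change.

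For the first step, since $\ker r$ is an open subgroup of $W_F$, there exists a finite Galois extension $L/F$ with $W_L \subset \ker r$. The absolute Galois group of any $p$-adic field is prosolvable: it sits in a filtration by wild inertia $P_F$, the full inertia $I_F$, and $G_F$, with successive quotients a pro-$\ell$ group, a procyclic prime-to-$\ell$ group, and $\widehat{\ints}$. Hence $\Gal(L/F)$ is solvable, and I would take $F' = L$; then by construction $r|_{I_{F'}}$ is trivial.

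For the base change itself, I would write $F'/F$ as a tower $F = F_0 \subset F_1 \subset \cdots \subset F_m = F'$ of cyclic Galois extensions of prime degree (possible by solvability), apply Arthur--Clozel's local cyclic base change at each step, and use the compatibility of local cyclic base change with the LLC (as established in the course of the proofs of the LLC by Harris--Taylor and Henniart) to identify each intermediate representation with the one corresponding to the restricted parameter $(r|_{W_{F_i}}, N)$. The final representation $\Pi$ then corresponds to $(r|_{W_{F'}}, N)$, is independent of the chosen tower, and has nonzero Iwahori-fixed vectors by the choice of $F'$. The main technical point is the LLC-compatibility at each cyclic step, which is standard and requires no new argument here; everything else is formal once the prosolvability of $G_F$ is in hand.
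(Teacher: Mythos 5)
Your proposal is correct and follows essentially the same route as the paper: find a finite (automatically solvable, by prosolvability of $G_F$) extension over which the Weil parameter of $\pi$ becomes unramified, build the base change by iterating cyclic base change compatibly with the local Langlands correspondence, and invoke the characterization of Iwahori-spherical representations of $\GL_n$ by unramifiedness of the parameter. The only cosmetic difference is in the references: the paper quotes the relevant functoriality and Iwahori-fixed-vector statements directly from Scholze's formulation of the LLC, where you cite Arthur--Clozel and Borel's classification for the same facts.
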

	
	\begin{proof}
		We use the functorial properties of the local Langlands correspondence \cite{harris_taylor}.  The precise statement we need is conveniently stated in \cite{scholze_LLC}. 
		Let $\pi \mapsto \sigma(\pi)$ be the normalized Weil group representation (without the monodromy) attached to a smooth representation $\pi$ of $\GL_n(F)$. By construction of the topology on the Weil group there exists a finite Galois extension $F'/F$ such that $\sigma(\pi) \rvert_{W_{F'}}$ is unramified. Since $G_F$ is pro-solvable, any such $F'/F$ is solvable. By \cite[Proposition 3.19]{bernstein_zelevinsky}, $\pi$ can be embedded into a normalized parabolic induction $i_P^{\GL_n(F)}(\pi')$ for some supercuspidal representation $\pi'$ of $M$, where $P = MN$ is a standard parabolic subgroup of $\GL_n(F)$.
		The Weil representation $\sigma(\pi)$ is determined by $\pi'$ \cite[Theorem 12.1 (ii)]{scholze_LLC} and we can inductively apply \cite[Theorem 12.1 (iv)]{scholze_LLC} to $\pi'$, to obtain the base change $\Pi$ of $\pi$ satisfying
		\[
		\sigma(\Pi) = \sigma(\pi) \rvert_{W_{F'}}.
		\]
		Now \cite[Theorem 12.1 (v)]{scholze_LLC} shows that $\Pi$ has Iwahori fixed vectors.
	\end{proof}
	
	\begin{prop} \label{cft_existence_prop}
		Let $F$ be a number field, $p$ a prime such that $\zeta_p \not \in F$, $S$ a finite set of places of $F$ and $\phi : G_F \to H$ a continuous homomorphism to a finite group $H$. Suppose that for each $v \in S$ we are given a finite Galois extension $F'_v / F_v$, then there exists a solvable extension $L/F$ such that 
		\begin{itemize}
			\item $\zeta_p \not \in L$;
			\item $\phi(G_{L(\zeta_p)}) = \phi(G_{F(\zeta_p)})$;
			\item $\phi(G_L) = \phi(G_F)$;
			\item For each $v \in S$ and place $w \mid v$ of $L$ we have $L_w \cong F'_v$. 
		\end{itemize}
	\end{prop}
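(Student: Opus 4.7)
The plan is to reduce the three Galois-theoretic conditions on $E$ to a single linear disjointness statement, and then produce $E$ with the prescribed local completions via iterated Grunwald--Wang. Let $L \subset \overline{F}$ be the fixed field of $\ker \phi$, so that $L/F$ is a finite Galois extension and $\phi$ factors as $G_F \to \Gal(L/F) \hookrightarrow H$; set $M := L \cdot F(\zeta_p)$, a finite Galois extension of $F$ since $\zeta_p \notin F$. I would first check the Galois-theoretic observation that for any finite Galois extension $E/F$ with $E \cap M = F$, all three of the $\zeta_p$- and $\phi$-conditions hold automatically: the first two follow from $F(\zeta_p), L \subseteq M$, while $E \cap M = F$ together with $E/F$ Galois gives $\Gal(EM/F) \cong \Gal(E/F) \times \Gal(M/F)$, from which a direct computation inside $EM$ yields $E(\zeta_p) \cap L = F(\zeta_p) \cap L$ and hence $\phi(G_{E(\zeta_p)}) = \Gal(L/L \cap E(\zeta_p)) = \Gal(L/L \cap F(\zeta_p)) = \phi(G_{F(\zeta_p)})$.

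Next I would arrange the linear disjointness condition by imposing auxiliary split places. Since each $G_{F_v}$ is pro-solvable, I may first enlarge each $F'_v$ and assume $F'_v/F_v$ is solvable Galois. Then by the Chebotarev density theorem applied to $M/F$, I choose a finite set $T$ of places of $F$, disjoint from $S$ and from the ramification locus of $M/F$, such that $\{\Frob_w : w \in T\}$ meets every conjugacy class of $\Gal(M/F)$. For any finite Galois extension $E/F$ in which every $w \in T$ splits completely, the normal subgroup $\Gal(M/E \cap M) \subseteq \Gal(M/F)$ contains every $\Frob_w$ with $w \in T$, and since those normally generate $\Gal(M/F)$ we conclude $E \cap M = F$.

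It remains to construct a solvable Galois extension $E/F$ with $E_w \cong F'_v$ for all $v \in S$ and $w \mid v$, and with every $v \in T$ completely split in $E$. I would build $E$ inductively along a chief series of the solvable groups $\Gal(F'_v/F_v)$: at each stage the problem reduces to constructing a cyclic extension of prime degree over an intermediate number field with prescribed local completions at finitely many places, which is exactly the content of the Grunwald--Wang theorem (\cite[Chapter X]{artin_tate}). The composite of all these cyclic steps is a solvable Galois extension realising the required local data.

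The main obstacle is the Wang exception for cyclic extensions of $2$-power degree at dyadic places. I would deal with it in the standard way: the freedom granted at the start of the second paragraph to enlarge each $F'_v/F_v$ to a larger solvable Galois extension permits, at each inductive step, replacing an exceptional local extension by a larger cyclic one of the same $2$-power type, avoiding the obstruction. This is the same device used at the analogous base change steps in \cite{10author}.
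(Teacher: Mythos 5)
Your argument is correct and follows essentially the same route as the paper: Chebotarev supplies a finite set of auxiliary places whose total splitting in $E$ forces the three Galois-theoretic conditions (the paper picks places whose Frobenii generate $\phi(G_{F(\zeta_p)})$ together with a single place whose Frobenius generates $\Gal(F(\zeta_p)/F)$, rather than your linear-disjointness reformulation with $M = L\cdot F(\zeta_p)$, but the two are interchangeable), and Grunwald--Wang produces the solvable extension with the prescribed completions after reducing to the cyclic case. One remark: the Wang exception is not actually an obstacle here, because the paper invokes \cite[Chapter X, Theorem 5]{artin_tate} in its character-theoretic form --- a finite-order global character with prescribed local components at finitely many places always exists, and the special case only affects whether its order can be taken equal to the l.c.m.\ of the local orders, which is irrelevant for matching the completions $E_w \cong F'_v$.
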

	
	\begin{proof}
		By induction it suffices to treat the case when all the $F'_v/ F_v$ are cyclic.
		So let $\chi_v : F_v^\times \to \complex^\times$ be continuous local characters cutting out the $F_v'$, i.e. such that $\ker( \chi_v \circ \operatorname{Art}_{F_v}^{-1}) = G_{F_{v'}}$. 
		Moreover, by the Chebotarev density theorem, there exists a finite set of places $T'$ of $F$ such that $\{ \phi(\Frob_v) : v \text{ lies above } T' \}$ generates $\phi(G_{F(\zeta_p)})$. Moreover, there exists a place $v_0$ of $F$ such that 
		$\Frob_{v_0}$ generates $\Gal(F(\zeta_p)/F)$. 
		Let $T = T' \cup \{ v_0\}$ and $\chi_v = 1$ for $v \in T$.
		By \cite[Chapter X, Theorem 5]{artin_tate}, there exists a global character $\chi : F^\times \backslash \mathbb{A}_F^\times \to \complex^\times$ with local components equal to $\chi_v$ at all $v \in S \cup T$. Now the cyclic Galois extension $L/F$ cut out by $\chi$ satisfies 
		\begin{itemize}
			\item $\zeta_p \not \in L$ since $\zeta_p \not \in F_{v_0}$;
			\item Every place $v \in T$ is totally split in $L$;
			\item For each $v \in S$ and place $w \mid v$ of $L$ we have $L_w \cong F_v'$.
		\end{itemize}
		In particular the second property implies that for $v \in T$, the conjugacy class $\Frob_v \subset G_F$ is equal to the union of the conjugacy classes $\Frob_w \subset G_L$, for $w \mid v$. Now by construction of $T'$ we find that $\phi(G_{L(\zeta_p)}) = \phi(G_{F(\zeta_p)})$. Since $v_0 \in T$ is totally split and $\Frob_{v_0}$ generates $\Gal(F(\zeta_p)/F)$ we also find that
		$\phi(G_L) = \phi(G_F)$.
	\end{proof}
	
	\subsection{Main Theorem} We collect some final preliminary results on the existence of Taylor--Wiles primes and the existence of Hecke algebra valued Galois representations before finally proving the main theorem.
	
	\begin{definition}
		A subgroup $H \subset \GL_n(k)$ is called \emph{enormous} over $k$ (the fixed finite residue field) if it satisfies the following
		\begin{enumerate}
			\item The representation of $H$ acting on $k^n$, given by the inclusion $H \hookrightarrow \GL_n(k)$ is absolutely irreducible.
			\item $H$ has no non-trivial quotients of $p$-power order.
			\item $H^0(H,\ad^0) = H^1(H, \ad^0) = 0$, where $\ad^0$ is the vector space of traceless $n \times n$ matrices over $k$ on which $\GL_n(k)$ acts by conjugation.
			\item For any simple $k[H]$-submodule $M \subset \ad^0$, there is a regular semisimple $h \in H$ such that $M^h \neq 0$.
		\end{enumerate}
	\end{definition}
	
	\begin{prop} \label{taylor_wiles_existence}
		Let $\mathcal{S} = (\overline{\rho}, S, \{\mathcal{D}_v\}_{v \in S})$ be a global deformation datum. Assume that 
		\begin{itemize}
			\item $p \nmid 2n$,
			\item $\overline{\rho}$ is absolutely irreducible,
			\item $F = F^+ F_0$ with $F^+$ totally real and $F_0$ an imaginary quadratic field,
			\item $\zeta_p \not \in F$ and $\overline{\rho}(G_{F(\zeta_p)})$ is enormous.
		\end{itemize}
		Then for a large enough integer $q$ and any $N \geq 1$, there exists a Taylor--Wiles datum $(Q_N, (\alpha_{v,1}, \dots, \alpha_{v,n})_{v \in Q_N})$ satisfying
		\begin{enumerate}
			\item $\# Q_N = q$.
			\item For each $v \in Q_N$, $q_v \equiv 1 \pmod {p^N}$ and the rational prime below $v$ splits  in $F_0$.
			\item There is a local $\mathcal{O}$-algebra surjection $R_{\mathcal{S}}^{S,loc}[[X_1, \dots, X_g]] \to R_{\mathcal{S}_{Q_N}}^{S}$, with 
			\[
			g = qn - n^2 [F^+ : \rationals]
			\]
		\end{enumerate}
	\end{prop}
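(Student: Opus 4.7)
The plan is to carry out the standard Taylor--Wiles construction adapted to CM fields, following \cite[\S 6.2]{10author}. There are two essentially independent tasks: producing the primes $Q_N$ so that the Galois cohomological ``dual Selmer'' group vanishes, and bounding the number of generators of $R^S_{\mathcal{S}_{Q_N}}$ over $R^{S,\mathrm{loc}}_{\mathcal{S}}$ by means of Wiles' formula.

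For the construction of $Q_N$, write $H = \overline{\rho}(G_{F(\zeta_p)})$, which is enormous by hypothesis. The conditions $H^0(H, \ad^0 \overline{\rho}) = H^1(H, \ad^0 \overline{\rho}) = 0$, combined with $\zeta_p \notin F$ and inflation--restriction, yield injections
\[
H^1(G_{F,S}, \ad^0 \overline{\rho}) \hookrightarrow \Hom_{G_F}(G_{F(\zeta_p)}, \ad^0 \overline{\rho}),
\]
and likewise for the Tate dual $\ad^0 \overline{\rho}(1)$. Let $L/F$ be a finite Galois extension containing $F(\zeta_{p^N})$, the fixed field of $\ker \overline{\rho}$, and the fixed field of the kernel of a chosen basis of $H^1_{\mathcal{S}^\perp}(G_{F,S}, \ad^0 \overline{\rho}(1))$. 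Property (4) of the enormous definition produces, for each simple $k[H]$-submodule of $\ad^0 \overline{\rho}$, a regular semisimple element $h \in H$ with non-trivial fixed space, and the eigenvalues of any such $h$ lie in $k$ since $E$ contains all relevant $p$-adic embeddings. A linear-algebra argument, separating the basis one cocycle at a time, then produces for each $\phi_i$ an element $\sigma_i \in \Gal(L/F(\zeta_{p^N}))$ such that $\overline{\rho}(\sigma_i)$ has $n$ distinct $k$-rational eigenvalues and $\phi_i(\sigma_i) \neq 0$. Running Chebotarev in the Galois closure of $L/\rationals$, while requiring the rational Frobenius to be trivial on $F_0$, produces primes $v_i$ of $F$ whose underlying rational primes split in $F_0$, with $\Frob_{v_i}$ in the prescribed $\Gal(L/F)$-conjugacy class and with $q_{v_i} \equiv 1 \pmod{p^N}$. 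Taking $q := \dim_k H^1_{\mathcal{S}^\perp}$ yields a Taylor--Wiles datum with $H^1_{\mathcal{S}_{Q_N}^\perp}(G_{F,S \cup Q_N}, \ad^0 \overline{\rho}(1)) = 0$.

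Once the dual Selmer vanishes, the dimension count is Wiles' formula in its Poitou--Tate form. For a CM field the archimedean contribution is $\sum_{v | \infty} h^0(G_{F_v}, \ad \overline{\rho}) = n^2[F^+:\rationals]$, since each complex place contributes $\dim \ad = n^2$; at each Taylor--Wiles prime $v_i$ the quotient $\Delta_{v_i}^n$ of the local framed deformation ring acts via $n$ parameters. Combining these with lemma \ref{taylor_wiles_def_ring_lemma} and noting that no extra local condition at the places in $Q_N$ enters $R^{S,\mathrm{loc}}_{\mathcal{S}}$, one obtains the asserted surjection
\[
R^{S,\mathrm{loc}}_{\mathcal{S}}[[X_1, \dots, X_g]] \twoheadrightarrow R^S_{\mathcal{S}_{Q_N}}, \qquad g = qn - n^2[F^+:\rationals].
\]
The main obstacle is arranging, in a single Chebotarev step, the simultaneous separation of a basis of the dual Selmer group together with the auxiliary constraints: splitting over $F_0$, the $p^N$-congruence on $q_v$, and $k$-rationality of the Frobenius eigenvalues. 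Properties (2)--(4) of the enormous definition are exactly what is needed: (2) guarantees that the kernel fields of the $\phi_i$ are linearly disjoint from $F(\zeta_{p^N})$ over a suitable intermediate field, while (3) and (4) supply enough regular semisimple elements with prescribed fixed spaces to perform the separation without collisions.
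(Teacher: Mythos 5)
The paper does not actually prove this proposition: its ``proof'' is a one-line citation to \cite[Proposition 6.2.32]{10author}, and your sketch is essentially a reconstruction of the argument given there (annihilate the dual Selmer group by Chebotarev primes using the enormous-image hypothesis, then count generators via the Greenberg--Wiles formula, with the archimedean term $n^2[F^+:\rationals]$ coming from the $[F^+:\rationals]$ complex places of the CM field). One justification in your sketch is off: the $k$-rationality of the eigenvalues of $\overline{\rho}(\Frob_v)$ has nothing to do with $E$ containing the $p$-adic embeddings of the base field (a characteristic-$0$ coefficient hypothesis); it comes from the definition of enormous image, where the regular semisimple elements $h$ are required to have eigenvalues in $k$ (after a harmless enlargement of the residue field). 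With that repaired, the argument is the standard one and matches the cited source.
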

	
	\begin{proof}
		This is \cite[Proposition 6.2.32]{10author}.	
	\end{proof}

	\begin{prop} \label{complexes_hecke_ideals_prop}
		Let $\mathfrak{m} < \mathbb{T}^S$ be a non-Eisenstein ideal and $\lambda$ a dominant weight of $\GL_n /F$. Suppose $K$ is a good open compact subgroup $K < \GL_n(\mathbb{A}_F^\infty)$ such that $K_v = \GL_n(\mathcal{O}_{F_v})$ for all $v \not \in S$ for some finite set of places $S$ containing those above $p$.
		Let $(Q,(\alpha_{v,1}, \dots, \alpha_{v,n})_{v \in Q})$ be a Taylor--Wiles datum for 
		\[
		(\overline{\rho}_{\mathfrak{m}}, S, \{\mathcal{D}_v^{cris,r}\}_{v \in S_{cris}} \cup \{\mathcal{D}_v^{ss,r}\}_{v \in S_{ss}} \cup \{\mathcal{D}_v^\square\}_{v \in S \setminus S_p})
		\]
		We define level subgroups 
		$K_1(Q) = \prod_{v} K_1(Q)_v \subset K_0(Q) = \prod_{v} K_0(Q)_v \subset K$ by
		$K_1(Q) = K_0(Q)_v = K_v$ for $v \not\in Q$ and for $v \in Q$ we set 
		$K_0(Q) = \operatorname{Iw}_v$ and
		$K_1(Q)_v$ the kernel of the maximal abelian $p$-quotient of $\operatorname{Iw}_v$ for $v \in Q$. We have $K_0(Q)/K_1(Q) \cong \Delta_Q^n$.
		
		Conjugation by elements of $K_0(Q)/K_1(Q)$ induces an embedding 
		\[
		\mathbb{T}_{\mathcal{O}}^{S \cup Q}[\Delta_Q^n] \to \mathcal{H}(\GL_n(\mathbb{A}_F^{S}),K_1(Q)) \otimes_\ints \mathcal{O}
		\]
		and given an object $M$ with Hecke action we write we write 
		$\mathbb{T}^{S \cup Q}_Q(M)$ for the image of this algebra in the endomorphism ring of $M$.
		With these notations, there are maximal ideals $\mathfrak{m}_1$ and $\mathfrak{m}_0$ and local $\mathcal O$-algebra homomorphisms $\phi,\psi$
		\[
		\mathbb{T}^{S \cup Q}_Q(R \Gamma(X^{K_1(Q)}_{\GL_n/F}, \mathcal{V}_\lambda))_{\mathfrak{m}_1} \xrightarrow{\phi} \mathbb{T}^{S \cup Q}(R\Gamma(X^{K_0(Q)}_{\GL_n/F}, \mathcal{V}_\lambda))_{\mathfrak{m}_0} \xrightarrow{\psi} \mathbb{T}^S(R\Gamma(X^{K}_{\GL_n/F}, \mathcal{V}_\lambda))_{\mathfrak{m}}
		\]
		such that $\phi$ is surjective with kernel generated by the augmentation ideal of $\mathcal{O}[\Delta_Q^n]$ and $\psi$ is an isomorphism.
		Moreover, there are corresponding maps
		\[
		R \Gamma(X^{K_1(Q)}_{\GL_n/F},\mathcal{V}_\lambda)_{\mathfrak{m}_1} \xrightarrow{f} R \Gamma(X^{K_0(Q)}_{\GL_n/F}, \mathcal{V}_\lambda)_{\mathfrak{m}_0} \xrightarrow{g} R \Gamma(X^K_{\GL_n/F}, \mathcal{V}_\lambda)_{\mathfrak{m}},
		\]
		where $g$ is a quasi-isomorphism and $f$ induces a quasi-isomorphism 
		\[ 
		R\Gamma(\Delta_Q^n, R\Gamma(X^{K_1(Q)}_{\GL_n/F}, \mathcal{V}_\lambda)_{\mathfrak{m}_1}) \to R \Gamma(X^{K_0(Q)}_{\GL_n/F}, \mathcal{V}_\lambda)_{\mathfrak{m}_0}.
		\]
	\end{prop}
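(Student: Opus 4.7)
The proof follows the standard Taylor–Wiles template, closely mirroring Proposition 2.5.4 and the surrounding discussion in \cite{10author}; I will adapt those arguments to the present derived/complex-level statement. The strategy splits into three steps: (i) construct the maximal ideals $\mathfrak{m}_0$ and $\mathfrak{m}_1$ using Iwahori Hecke operators at the Taylor--Wiles primes; (ii) prove that $\psi$ and $g$ become isomorphisms after localization, via the standard Iwahori-level ``level-lowering'' argument using that the $\alpha_{v,i}$ are pairwise distinct; (iii) analyze the $\Delta_Q^n$-torsor $X^{K_1(Q)}_{\GL_n/F}\to X^{K_0(Q)}_{\GL_n/F}$ to obtain $\phi$ and $f$.

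For step (i), I would invoke the standard Iwahori-Hecke operators $U_{v,i}=[\operatorname{Iw}_v\,\mathrm{diag}(\varpi_v,\dots,\varpi_v,1,\dots,1)\,\operatorname{Iw}_v]$ (with $i$ copies of $\varpi_v$) at each $v\in Q$, as in \cite[\S2.2]{10author}. Pick lifts $\widetilde\alpha_{v,i}\in\mathcal O$ of the prescribed Frobenius eigenvalues and form the symmetric functions of $(\widetilde\alpha_{v,1},\dots,\widetilde\alpha_{v,n})$ to get the expected eigenvalues of the $U_{v,i}$ on spherical vectors. Then $\mathfrak m_0$ is defined as the ideal generated by $\psi^{-1}(\mathfrak m)$ together with $U_{v,i}-(\text{these lifts})$ for $v\in Q$, and $\mathfrak m_1:=\phi^{-1}(\mathfrak m_0)$ once $\phi$ is produced. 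The map $\psi$ is forgetful on Hecke operators away from $S\cup Q$; its well-definedness and surjectivity is formal, and one checks it is an isomorphism after localization together with the quasi-isomorphism $g$ by a single argument.

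For step (ii), the map $g$ is the trace map associated to the finite covering $X^{K_0(Q)}_{\GL_n/F}\to X^K_{\GL_n/F}$. Working locally at each $v\in Q$, one analyzes the action of the Iwahori--Hecke algebra on the Iwahori-invariants of unramified principal series: after localizing at the $(U_{v,i}-\widetilde\alpha_{v,i})$, only the ordered Weyl-chamber component (i.e.\ the spherical vector inside an unramified principal series with the specified unordered Satake parameters $\{\alpha_{v,1},\dots,\alpha_{v,n}\}$) survives, because the other $n!-1$ Iwahori-eigenlines have distinct eigenvalues under $U_{v,i}$ thanks to the pairwise distinctness of the $\alpha_{v,i}$. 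This upgrades to the derived category by a standard spectral sequence / idempotent argument (using that the relevant projector to the spherical block exists already in the Iwahori--Hecke algebra). This gives the quasi-isomorphism $g$ and simultaneously shows $\psi$ is an isomorphism of Hecke images.

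For step (iii), the cover $X^{K_1(Q)}_{\GL_n/F}\to X^{K_0(Q)}_{\GL_n/F}$ is a $\Delta_Q^n$-torsor by neatness of $K$. The diamond operators, which come from $K_0(Q)/K_1(Q)\cong\Delta_Q^n$ acting on Iwahori-invariants by conjugation, give the stated embedding $\mathbb T^{S\cup Q}_{\mathcal O}[\Delta_Q^n]\hookrightarrow\mathcal H(\GL_n(\mathbb A_F^S),K_1(Q))\otimes\mathcal O$ (independence of the diamond generators from the prime-to-$Q$ Hecke operators is immediate from the tensor decomposition of the Hecke algebra). The Hochschild--Serre spectral sequence for this torsor provides $f$ and the promised quasi-isomorphism after taking $\mathrm R\Gamma(\Delta_Q^n,-)$. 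Passing to Hecke images, the augmentation ideal of $\mathcal O[\Delta_Q^n]$ acts trivially on the $\mathfrak m_0$-localized target, giving the surjection $\phi$ with the stated kernel.

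The main obstacle will be step (ii), specifically verifying the derived-level quasi-isomorphism $g$: one must argue that the Bernstein-type projector isolating the spherical block of the Iwahori--Hecke algebra not only kills non-spherical cohomology in each fixed degree, but does so uniformly enough to give a quasi-isomorphism of complexes after localization. The clean way is to observe that the projector is a polynomial in the $U_{v,i}$ (using the distinctness of eigenvalues) and hence already an element of the Hecke algebra acting on the complex, so the decomposition is strict at the chain level after inverting the appropriate resultant, which becomes a unit after localizing at $\mathfrak m_0$.
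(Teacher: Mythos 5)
Your proposal reconstructs the standard Taylor--Wiles level-raising/lowering argument, which is indeed what the paper's proof (a one-line citation) appeals to: the paper says this ``works in exactly the same way as \cite[Lemma 6.5.9]{10author}'' rather than \cite[Proposition 2.5.4]{10author} as you cite, and your three steps --- defining $\mathfrak m_0,\mathfrak m_1$ via the Iwahori--Hecke operators $U_{v,i}$, proving $g$ and $\psi$ are isomorphisms after localization by exploiting the pairwise-distinct Frobenius eigenvalues, and analyzing the $\Delta_Q^n$-cover via Hochschild--Serre --- are precisely the structure of that lemma. One point worth flagging: in \cite[Lemma 6.5.9]{10author} the passage from $K$ to full Iwahori level is done through a chain of \emph{partial} Iwahori subgroups, proving the quasi-isomorphism one step at a time via a commutative square involving the trace map and a single Hecke operator that becomes a unit after localization, whereas your sketch isolates the spherical block in one stroke via a Bernstein-type idempotent written as a polynomial in the $U_{v,i}$; both are valid, but the stepwise version avoids having to verify that the projector is defined integrally and acts strictly on the chain complex, which you correctly identify as the delicate point of your approach.
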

	
	\begin{proof}
		This works in exactly the same way as \cite[Lemma 6.5.9]{10author}.
	\end{proof}

	\begin{theorem} \label{galois_rep_existence_thm}
		Let $\mathfrak{m} < \mathbb{T}^S$ be a non-Eisenstein ideal and $\lambda$ a dominant weight of $\GL_n /F$. Suppose $K$ is a good open compact subgroup $K < \GL_n(\mathbb{A}_F^\infty)$ such that $K_v = \GL_n(\mathcal{O}_{F_v})$ for all $v \not \in S \cup R$ for some finite set of places $S \cup R$ satisfying $(\Sigma_p)$. Moreover, let $S_p = S_{cris} \cup S_{ss} \subset S$ be a partition, stable under complex conjugation such that $K_v = \GL_n(\mathcal{O}_{F_v})$ for $v \in S_{cris}$ and $K_v = Iw_v$ for $v \in S_{ss}$ and $K_v = Iw_{v,1}$ for $v \in R$.
		Consider the global deformation datum 
		\[
		\mathcal{S} = 	(\overline{\rho}_{\mathfrak{m}}, S \cup R, \{\mathcal{D}_v^{cris,r}\}_{v \in S_{cris}} \cup \{\mathcal{D}_v^{ss,r}\}_{v \in S_{ss}} \cup \{\mathcal{D}_v^\square\}_{v \in R \cup S \setminus S_p})
		\] and
		let $(Q,(\alpha_{v,1}, \dots, \alpha_{v,n})_{v \in Q})$ be a Taylor--Wiles datum.  Suppose that the following are satisfied
		\begin{enumerate}[(1)]
			\item For each embedding $\tau : F \hookrightarrow E$ inducing a place above $p$, we have $- \lambda_{\tau c, 1} - \lambda_{\tau,1} \geq 0$;
			\item For each place $v \mid p$ of $F$ let $\overline{v}$ be the place of $F^+$ lying below $v$. Then there exists a place $\overline{v}' \neq \overline{v}$ of $F^+$ such that $\overline{v}' \mid p$ and
			\[
			\sum_{\overline{v}'' \neq \overline{v}, \overline{v}'} [F_{\overline{v}''}^+ : \rationals_p] > \frac{1}{2} [F^+ : \rationals];
			\]
			\item The residual representation $\overline{\rho}_{\mathfrak{m}}$ is absolutely irreducible and decomposed generic \cite[Definition 4.3.1]{10author},
		\end{enumerate}
		then there is an integer $\delta \geq 1$ depending only on $[F^+ : \rationals]$ and $n$, a nilpotent ideal $I \subset \mathbb{T}^{S \cup R \cup  Q}_Q(R\Gamma(X^{K_1(Q)}_{\GL_n/F}, \mathcal{V}_\lambda))_{\mathfrak{m}_1}$ such that
		$I^\delta = 0$ and a surjective local $\mathcal{O}[\Delta_Q]$-algebra homomorphism
		\[
		x : R_{\mathcal{S}_Q} \to \mathbb{T}^{S\cup R \cup Q}_Q( R\Gamma(X^{K_1(Q)}_{\GL_n/F}, \mathcal{V}_\lambda))_{\mathfrak{m}_1}/I
		\]
		such that
		\[
		\det(z \cdot \id - \rho_x(\Frob_v) ) = P_v(z) \in (\mathbb{T}^{S\cup Q}_Q( R\Gamma(X^{K_1(Q)}_{\GL_n/F}, \mathcal{V}_\lambda))_{\mathfrak{m}_1}/I)[z]
		\]
		for $v \not \in S \cup R \cup Q$.
	\end{theorem}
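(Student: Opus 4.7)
The plan is to apply the weak semistable local-global compatibility Theorem \ref{weak_semistable_local_global_thm} to the level $K_1(Q)$ complex and then verify that the resulting Galois representation satisfies the deformation conditions encoded in $R_{\mathcal{S}_Q}$. More precisely, I would first check that for the cohomology complex $R\Gamma(X_{\GL_n/F}^{K_1(Q)},\mathcal{V}_\lambda)_{\mathfrak{m}_1}$ (with its $K_1(Q)$-level structure) all the hypotheses $(1)$--$(6)$ of Theorem \ref{weak_semistable_local_global_thm} are inherited from $(1)$--$(5)$ of the present theorem: the weight condition, the splitting hypothesis in the imaginary quadratic field, the unramified condition outside $S\cup R\cup Q$, the Iwahori containment (note that $K_0(Q)_v = \operatorname{Iw}_v$ for $v\in Q\cup S_{ss}$ and $K_v = \operatorname{Iw}_{v,1}$ at $v\in R$), the degree inequality, and the non-Eisenstein, decomposed generic property of $\overline{\rho}_{\mathfrak{m}_1}$ (which equals $\overline{\rho}_{\mathfrak{m}}$ under the natural map).

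Applied at each level $m$, Theorem \ref{weak_semistable_local_global_thm} produces a continuous representation
\[
\rho^{(m)} : G_{F,S\cup R\cup Q} \longrightarrow \GL_n\bigl(\mathbb{T}^{S\cup R\cup Q}_Q(R\Gamma(X^{K_1(Q)}_{\GL_n/F},\mathcal{V}_\lambda/\varpi^m))_{\mathfrak{m}_1}/I^{(m)}\bigr)
\]
with $(I^{(m)})^{\delta}=0$ for a uniform $\delta$ depending only on $[F^+:\mathbb{Q}]$ and $n$, and with torsion crystalline (resp.\ semistable) behaviour at $v\in S_{cris}$ (resp.\ $S_{ss}$) with Hodge--Tate weights in $[-r,r]$. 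Taking the inverse limit over $m$, and using that the Hecke algebra in question is $\mathcal{O}$-finite (cohomology is finitely generated over $\mathcal{O}$) so that $\varprojlim \mathbb{T}/(\varpi^m,I^{(m)}) = \mathbb{T}/I$ with $I$ still satisfying $I^\delta=0$, I obtain a continuous representation to $\GL_n(\mathbb{T}^{S\cup R\cup Q}_Q(R\Gamma(X^{K_1(Q)}_{\GL_n/F},\mathcal{V}_\lambda))_{\mathfrak{m}_1}/I)$ whose restriction to $G_{F_v}$ is semistable of the right type at every $v\mid p$, and whose Frobenius characteristic polynomials at $v\notin S\cup R\cup Q$ coincide with $P_v(z)$ by construction.

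Next I would verify that this representation defines a point of $\Spec R_{\mathcal{S}_Q}$. At places in $S\setminus S_p$ and in $R$, the local deformation datum is unrestricted ($\mathcal{D}_v^\square$), so nothing needs to be checked. At the crystalline and semistable places the $p$-adic Hodge theory conditions have just been verified. At a Taylor--Wiles prime $v\in Q$ the deformation condition is again $\mathcal{D}_v^\square$, and the $\mathcal{O}[\Delta_Q]$-structure on $R_{\mathcal{S}_Q}$ from Lemma \ref{taylor_wiles_def_ring_lemma} matches the $\mathcal{O}[\Delta_Q^n]$-structure on the Hecke algebra from Proposition \ref{complexes_hecke_ideals_prop}; here one checks, by the standard Taylor--Wiles argument at $v\in Q$ together with the fact that the residual Frobenius eigenvalues $\alpha_{v,i}$ are distinct, that the universal diagonal character $G_{F_v}\to R_{\mathcal{S}_Q}^\times$ through which the deformation factors corresponds under local class field theory to the natural action of $\Delta_Q^n$ on the cohomology via diamond operators. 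By the universal property of $R_{\mathcal{S}_Q}$ the representation $\rho^{(\infty)}$ now comes from a (unique) local $\mathcal{O}[\Delta_Q]$-algebra homomorphism $x : R_{\mathcal{S}_Q} \to \mathbb{T}^{S\cup R\cup Q}_Q(R\Gamma(X^{K_1(Q)}_{\GL_n/F},\mathcal{V}_\lambda))_{\mathfrak{m}_1}/I$, and surjectivity of $x$ follows from Chebotarev density since the target Hecke algebra is topologically generated (modulo $I$) by the traces of Frobenius at primes $v\notin S\cup R\cup Q$, which lie in the image.

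The main obstacle is the bookkeeping needed to ensure that the nilpotent ideal $I$ obtained after passing to the inverse limit remains bounded by a $\delta$ depending only on $[F^+:\mathbb{Q}]$ and $n$: one has to apply Theorem \ref{weak_semistable_local_global_thm} uniformly in $m$ and then verify that the inverse limit of the quotients $\mathbb{T}/I^{(m)}$ is still a Hecke algebra quotient by a nilpotent ideal of exponent at most $\delta$ (rather than picking up unbounded nilpotence in the limit), which is the point where finiteness of the Hecke algebras over $\mathcal{O}$ and Artin--Rees are essential. A secondary but routine subtlety is tracking the $\Delta_Q^n$-versus-$\Delta_Q$ actions, and checking compatibility with the map $\phi$ of Proposition \ref{complexes_hecke_ideals_prop} so that the final map $x$ is genuinely an $\mathcal{O}[\Delta_Q]$-algebra homomorphism.
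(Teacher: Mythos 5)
Your overall architecture matches the paper's (very terse) proof for $Q=\emptyset$: apply Theorem \ref{weak_semistable_local_global_thm} at the relevant level, pass to the limit over $m$ to get from $\mathcal{V}_\lambda/\varpi^m$-coefficients to integral coefficients, check that the resulting representation is of type $\mathcal{S}$ so that it factors through $R_{\mathcal{S}}$, and get surjectivity from Chebotarev. Those steps, including the uniformity of $\delta$ in the limit, are fine as a plan (the paper simply writes ``this follows from theorem \ref{weak_semistable_local_global_thm}'' for this part).

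The gap is in your treatment of the $\Delta_Q$-equivariance. You describe it as a ``secondary but routine subtlety'' to be checked ``by the standard Taylor--Wiles argument at $v\in Q$ together with the fact that the residual Frobenius eigenvalues $\alpha_{v,i}$ are distinct,'' identifying the universal characters with the diamond-operator action via local class field theory. In the polarized setting, where the Hecke-algebra-valued representation lives on the cohomology of a Shimura variety with honest local-global compatibility, that is indeed the standard argument. Here it is not available: the representation valued in $\mathbb{T}^{S\cup R\cup Q}_Q(\cdots)_{\mathfrak{m}_1}/I$ is constructed only through congruences, determinant laws and nilpotent ideals, and asserting that its restriction to $G_{F_v}$ for $v\in Q$ decomposes as a sum of characters matching the $\Delta_Q^n$-action is a local-global compatibility statement at the Taylor--Wiles primes for a torsion, non-self-dual representation. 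This is exactly the content the paper does \emph{not} prove and instead cites to Chapter 3 of a revised version of \cite{10author}, where it occupies a substantial argument. Without that input (or a proof of it), your claim that $x$ is an $\mathcal{O}[\Delta_Q]$-algebra homomorphism --- and hence the statement that $\ker(R_{\mathcal{S}_{Q}}\to R_{\mathcal{S}})$ is matched with the augmentation ideal on the Hecke side, which is what the patching argument later needs --- is unestablished.
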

	
	\begin{proof}
		When $Q = \emptyset$, this follows from theorem \ref{weak_semistable_local_global_thm}. In general the $\Delta_Q$-equivariance follows from of \cite[Theorem 3.1.1]{10author}.
	\end{proof}

	\begin{theorem} \label{main_theorem}
		Let $F \subset L$ be CM fields and let
		$\rho : G_F \to \GL_n(E)$ be a continuous representation,
		where $E \subset \overline{\rationals}_p$ is a finite extension of $\rationals_p$
		containing the images of all field homomorphisms $L \to \overline{\rationals}_p$. Moreover, let $\Pi$ be a cohomological  cuspidal
		automorphic representation of $\GL_n(\mathbb{A}_L)$ and let $S$ be a finite set of places of $L$, stable under complex conjugation, containing the archimedean ones and those above $p$ such that $\Pi_v$ is unramified for all $v \not \in S$. Let $S_p = S_{cris} \cup S_{ss}$ be a partition of the places of $L$ lying above $p$ which is stable under complex conjugation such that the following are satisfied.
		\begin{enumerate}[(a)]
			\item $p \nmid 2n$;
			\item There exists an isomorphism $\iota : \overline{\rationals}_p \to \complex$ such that $\rho \rvert_{G_L} \otimes_{E} \overline{\rationals}_p \cong r_\iota(\Pi)$;
			\item The residual representation $\overline{\rho} \rvert_{G_L}$ is absolutely irreducible and decomposed generic \cite[Definition 4.3.1]{10author}. Moreover, $\zeta_p \not \in L$ and $\overline{\rho} \rvert_{G_{L(\zeta_p)}}$ has enormous image \cite[Definition 6.2.28]{10author}, where $\zeta_p \in \overline{L}$ is a primitive $p$th root of unity;
			\item If $v \in S \setminus S_p$, then the Weil--Deligne representation $\operatorname{WD}(\rho \rvert_{G_{L_v}})$ is generic
			\cite[Definition 1.1.2]{allen_polarizable};
			\item If $v \in S_{cris}$, then $\Pi_v$ is unramified. If $v \in S_{ss}$, then
			$\rho \rvert_{G_{L_v}}$ is de Rham and for any finite extension $L_v'/L_v$, $\operatorname{WD}(\rho \rvert_{G_{L'_v}})$ is generic.
		\end{enumerate}
		Then the Bloch--Kato Selmer group $H^1_g(G_{F,S}, \ad \rho )$ vanishes, i.e. $\rho$ is rigid. Moreover, 
		$\rho \rvert_{G_{L_v}}$ is crystalline for all $v \in S_{cris}$.
	\end{theorem}

	\begin{proof}
		We immediately note that we can assume $L = F$ since it suffices to prove the Selmer vanishing over $L$ by lemma \ref{BK_basechange_lemma}.
		We now apply proposition \ref{solvable_basechange_prop} multiple times and enlarge $F$ further, while keeping the places $v \in S \setminus S_{ss}$ totally split over the field $L$ that we started with.
		
		By proposition \ref{iwahori_basechange_prop} we can find solvable local extensions
		$F_v'/F_v$ for all $v \in S_{ss}$ such that the local base change of $
		\Pi_v$ to $\GL_n(F_v')$ has Iwahori fixed vectors.
		Using proposition \ref{cft_existence_prop} there is a solvable global extension which interpolates these, is totally split at all $v \in S \setminus S_{ss}$ and preserves condition \emph{(c)}. Hence by proposition \ref{solvable_basechange_prop} and condition \emph{(e)} we may assume that 
		\begin{enumerate}[(1)]
			\item If $v \in S_{ss}$, then $\operatorname{WD}(\rho \rvert_{G_{F_v}})$ is generic and $\Pi_v^{Iw_v} \neq 0$.
		\end{enumerate}
		by choosing more local extensions at arbitrary places outside of $S$ we can moreover assume that $S$ satisfies $(\Sigma_p)$ and
		\begin{enumerate}[(1)]
			\setcounter{enumi}{1}
			\item For each place $v \mid p$ of $F$ let $\overline{v}$ be the place of $F^+$ lying below $v$. Then there exists a place $\overline{v}' \neq \overline{v}$ of $F^+$ such that $\overline{v}' \mid p$ and
			\[
			\sum_{\overline{v}'' \neq \overline{v}, \overline{v}'} [F_{\overline{v}''}^+ : \rationals_p] > \frac{1}{2} [F^+ : \rationals].
			\]
		\end{enumerate}
		Moreover, we can twist $\Pi$ by a suitable power of $\| \cdot \| \circ \det$  to arrange for
		the following condition to be satisfied.
		\begin{enumerate}[(1)]
			\setcounter{enumi}{2}
			\item If $c$ is the complex conjugation of $F$, then
			\[
			- \lambda_{\tau c,1} - \lambda_{\tau, n} \geq 0
			\]
			for all $\tau \in \Hom(F, \complex)$.
		\end{enumerate}
		Then $r_\iota(\Pi)$ is twisted by a power of the cyclotomic character by which $\ad r_\iota(\Pi)$ is unaffected and by possibly enlarging the power of the character we can make sure that $\overline{r_\iota(\Pi)}$ remains the same.
		Hence all the above conditions are still satisfied.
		
		We apply proposition \ref{taylor_wiles_existence} and let $R$ be a large enough set of Taylor--Wiles primes for the deformation datum
		\[
		\mathcal{S} = (\overline{\rho}_\mathfrak{m}, S, \{\mathcal{D}_v^{ss,r}\}_{v \in S_{ss}} \cup \{\mathcal{D}_v^{cris,r}\}_{v \in S_{cris}} \cup \{\mathcal{D}_v^\square\}_{v \in S \setminus S_p} ).
		\]
		In particular, for each $v \in R$, $\operatorname{WD}(\rho \rvert_{F_v})$ is generic.
		We define the level subgroup $K = \prod_{v} K_v$ of $\GL_n(\mathbb{A}_F^\infty)$ by 
		\begin{itemize}
			\item 	$K_v = \GL_n(\mathcal{O}_{F_v})$ for $v \not \in R \cup S \setminus S_{cris}$,
			\item $K_v = Iw_v$ for $v \in S_{ss}$,
			\item $K_v$ is any small enough open compact subgroup so that $\Pi_v^{K_v} \neq 0$ for $v \in S \setminus S_p$,
			\item $K_v = Iw_{v,1}$ is 
			the group subgroup of matrices which are unipotent and upper triangular mod $\varpi_v$ for $v \in R$.
		\end{itemize}
		Since $R$ is sufficiently large, it contains a place $v$ which does not divide any of the numbers $\Phi_j(1)$ for $1 < j \leq n [F: \rationals] + 1$, where $\Phi_j$ denotes the $j$th cyclotomic polynomial. Now lemma \ref{neat_lemma} shows that $K$ is neat.

		It is proven in \cite[Theorem 2.4.10]{10author} that there exists a maximal ideal $\mathfrak{m} \subset \mathbb{T}^{S \cup R}_{\mathcal{O}}(R\Gamma(X_{\GL_n/F}^K, \mathcal{V}_\lambda))$ such that $\overline{\rho}_\mathfrak{m} \cong \overline{r_\iota(\Pi)}$,
		where $\mathcal{O}$ is the ring of integers of $E$.
		Now theorem \ref{weak_semistable_local_global_thm} shows that $\rho$ is semistable at $S_{ss}$ and crystalline at $S_{cris}$ with Hodge--Tate weights bounded by some large enough $r$.
		
		Consider the global deformation problem
		\[
		\mathcal{S} = (\overline{\rho}_\mathfrak{m}, S \cup R, \{\mathcal{D}_v^{ss,r}\}_{v \in S_{ss}} \cup \{\mathcal{D}_v^{cris,r}\}_{v \in S_{cris}} \cup \{\mathcal{D}_v^\square\}_{v \in R \cup S \setminus S_p} )
		\]
		The functor $\mathcal{D}_\mathcal{S}$ is represented by a complete local $\mathcal O$-algebra $R_{\mathcal{S}}$ by theorem \ref{deformation_representable_thm}. Let $\mathfrak{p}$ be
		the kernel of $R_{\mathcal{S}} \to E$ induced by $\rho = r_{\iota}(\Pi)$. Then theorem \ref{tangent_space_bloch_kato_thm} shows that the tangent space of $(R_{\mathcal{S}})_{\mathfrak{p}}$ is isomorphic to $H^1_g(G_{F,S}, \ad \rho)$. Set 
		\[
			C_0 = R\Hom_{\mathcal{O}}(R\Gamma(X_{\GL_n/F}^K,\mathcal{V}_\lambda)_\mathfrak{m}, \mathcal{O})
		\] 
		and $T_0 = \mathbb{T}^S_{\mathcal{O}}(C_0)$.
		
		By theorem \ref{galois_rep_existence_thm}, there
		is a surjection $R_{\mathcal{S}} \to T_0/I_0$
		of local $\mathcal{O}$-algebras, where $I_0 < T_0$ is the nilradical.
		Theorem \ref{hecke_product_of_fields} implies that $T_0[1/p]$ is a product of fields
		and thus the localisation at $\mathfrak{p}$ is a surjection $(R_{\mathcal{S}})_{\mathfrak{p}} \to E'$
		for some field $E'$.
		If this map is also injective, then the tangent space $H^1_g(G_{F,S}, \ad \rho)$ vanishes.
		Thus, we are left with applying lemma \ref{patching_lemma}.
		
		Let $q$ be large enough
		and $g = qn - n^2 [F^+ : \rationals]$.
		For each $N \geq 1$, use proposition \ref{taylor_wiles_existence} to choose a Taylor--Wiles datum $(Q_N, (\alpha_{v,1}, \dots, \alpha_{v,n})_{v \in Q_N})$
		satisfying
		\begin{enumerate}[(a)]
			\item $\# Q_N = q$.
			\item For each $v \in Q_N$, $q_v \equiv 1 \pmod{p^N}$ and the rational prime below $v$ splits in some imaginary quadratic subfield of $F$.
			\item There is a surjection of local $\mathcal{O}$-algebras $R^{S \cup R,loc}_\mathcal{S}[[X_1, \dots, X_g]] \to R_{\mathcal S_{Q_N}}^{S \cup R}$
		\end{enumerate}
		
		Let $\mathcal{T} = \mathcal{O}[[X_1, \dots, X_{n^2 |S \cup R| - 1}]]$ and $S_\infty = \mathcal{T}[[Y_1, \dots, Y_{nq}]]$ with augmentation ideal $\mathfrak{a} < S_\infty$ given by
		\[
		\mathfrak{a} := (X_1, \dots, X_{n^2 |S \cup R| - 1}, Y_1, \dots, Y_{nq}).
		\]
		For $N \geq 1$ we define $\mathfrak{a}_N := ((X_i + 1)^{p^N} - 1, (Y_{i} + 1)^{p^N} - 1)$.
		To apply the main lemma (\ref{patching_lemma}) there are seven things to verify/define so we do the rounds.
		\begin{enumerate}[(1)]
			\item Let $R_\infty = R_\mathcal{S}^{S \cup R,loc} [[X_1, \dots, X_g]]$. It is a complete local $\mathcal{O}$-algebra since $R_\mathcal{S}^{S \cup R,loc}$ is.
			
			\item We put $R_0 = R_{\mathcal{S}}$ and for $N \geq 1$ we set $R_N = R_{\mathcal{S}_{Q_N}}^{S \cup R}/\mathfrak{a}_{N}$. By assumption on the Taylor--Wiles datum $Q_N$, each $R_N$ is a quotient of $R_\infty$. Using lemmas \ref{taylor_wiles_def_ring_lemma} and \ref{framing_variables_lemma} we find that $R_N/\mathfrak{a} = R_0$ for all $N$.
			
			\item The complex $C_0$ is finitely generated since $X^K_{\GL_n/F}$ is homotopy equivalent to its Borel--Serre compactification (see proposition \ref{borel_serre_prop}) and the cohomology of a compact manifold is finitely generated.
			For $N \geq 1$, we let 
			$C_N$ be a minimal representative of 
			\[
			R\Hom_{\mathcal{O}}(R\Gamma(X^{K_1(Q)}_{\GL_n/F},\mathcal{V}_\lambda)_{\mathfrak{m}_1}, \mathcal{T})/\mathfrak{a}_{N}.
			\] Then $C_N / \mathfrak{a} = C_0$ by proposition \ref{complexes_hecke_ideals_prop}.
			
			\item We let $q_0 = [F^+:\rationals] n(n-1)/2$ and $l_0 = n [F^+ : \rationals] - 1$, then
			\cite[Theorem 2.4.10]{10author} implies that $C_0[1/p]$ is not exact since it contains $(\Pi^\infty)^K$ in its cohomology. The same theorem also states that $C_0[1/p]$ is concentrated in degrees $[q_0, q_0 + l_0]$. 
			On the other hand we have $\dim (S_\infty)_{\mathfrak{a}} = n^2 |S \cup R| + nq - 1$ and lemma \ref{generic_fibre_dim_bound} implies
			 \begin{linenomath*} \begin{align*}
				\dim R_\infty[1/p] & \leq g + n^2 |S \cup R| + n(n-1)[F^+:\rationals] \\
				&= qn + n^2 |S \cup R| -n [F^+:\rationals] \\
				&= \dim (S_\infty)_{\mathfrak{a}} - l_0 
			\end{align*} \end{linenomath*} 
			
			\item For $N \geq 1$ we put
			$T_N = \mathbb{T}^{S \cup R \cup Q_N}_{Q_N} (C_N)_{\mathfrak{m}_1} \cdot \mathcal{T} \subset \End_{\mathbf{D}(S_N)}(C_N)_{\mathfrak{m}_1}$. Then it is clear that the image of $T_N$ in $\End_{\mathbf{D}(S_0)}(C_0)$ equals \[
			\mathbb{T}^{S \cup R \cup Q_N}(C_0) \subset \mathbb{T}^{S \cup R}(C_0) = T_0.\] 
			
			\item By lemma \ref{framing_variables_lemma} we can tensor the surjections from theorem \ref{galois_rep_existence_thm} with $\mathcal{T}$ to obtain surjections $R_N \to T_N/I_N$, where $I_N$ are
			ideals satisfying $I_N^\delta$ for some constant $\delta$. The commutativity of the square follows from the Chebotarev density theorem and the computation of characteristic polynomials of Frobenius elements in terms of Hecke polynomials in the theorem \ref{galois_rep_existence_thm}.
			
			\item We already defined $\mathfrak{p}$ as the point corresponding to $\rho$. It is clear that this becomes a maximal ideal of $R_0[1/p]$. Since it corresponds to $\rho = r_\iota(\Pi)$ it is the pullback of the maximal ideal of $T_0[1/p]$ corresponding to the summand $(\Pi^\infty)^K$. Now $\widehat{(R_\infty)_\mathfrak{p}} = \widehat{(R_{\mathcal{S}}^{S \cup R,loc})_\mathfrak{p}}[[X_1, \dots, X_g]]$ and $\widehat{(R_{\mathcal{S}}^{S \cup R,loc})_{\mathfrak{p}}}$ is regular by proposition \ref{smooth_generic_fibre_prop}. Hence $(R_\infty)_{\mathfrak{p}}$ is regular, too. \qedhere
		\end{enumerate}
	\end{proof}
	
	\begin{corollary} \label{elliptic_curve_corollary}
		Let $A/F$ be an elliptic curve without complex multiplication over a CM field $F$ and $p \geq 7$ a prime such that $\zeta_p \not \in F$ and the image of $G_F$ in $\operatorname{Aut}(A[p])$ contains $\SL_2(\mathbb{F}_p)$.
		Let
		\[
		V_p A = \left( \lim_{\longleftarrow} A[p^n](\overline{F}) \right) \otimes_{\ints_p} \rationals_p
		\]
		be the $p$-adic $G_F$-representation associated with $A$. Then $V_p A$ is rigid, or equivalently any short exact sequence
		\[
		0 \to V_p A \to V \to V_p A \to 0,
		\]
		where $V$ is a de Rham $G_F$-representation, splits. Furthermore, if $F/\rationals$ is Galois and $n$ is a positive integer, such that $p > 2n + 3$, then the symmetric power $\operatorname{Sym}^n (V_p A)$ is rigid, too.
	\end{corollary}
	
	\begin{proof}
		Let $\rho = \operatorname{Sym}^n (V_p A)$. Since the determinant of $V_p A$ is the cyclotomic character, we find that in fact the image of $G_{F(\zeta_p)}$ in $\Aut(A[p])$ contains $\SL_2(\mathbb{F}_p)$. Hence \cite[Lemma 7.1.4]{10author} implies that $\overline{\rho}(G_{F'(\zeta_p)})$ is enormous for all $F'/F$ which are linearly disjoint from $\overline{F}^{\ker \overline{\rho}}$.
		Moreover, since the symmetric powers of the standard representation of $\SL_2$ are irreducible we find that
		$\overline{\rho}\rvert_{G_{F'}}$ is absolutely irreducible for all such $F'$. 
		
		If $n = 1$, then
		one can use the Chebotarev density theorem to show that $\overline{\rho}\rvert_{G_F'}$ is decomposed generic for any $F'/F$ disjoint from $\overline{F}^{\ker \overline{\rho}}$. If $n > 1$ and $F/\rationals$ is Galois, then \cite[Lemma 7.1.6]{10author} implies that $\overline{\rho} \rvert_{G_{F'}}$ is decomposed generic for any $F'/F$ disjoint from $\overline{F}^{\ker \overline{\rho}}$.
		
		Now \cite[Theorem 7.1.11]{10author} shows that there exists an $F'/F$, linearly disjoint from $\overline{F}^{\ker \overline{\rho}}$ such that 
		$\rho \rvert_{G_{F'}} \otimes_{\rationals_p} \overline{\rationals}_p \cong r_\iota(\Pi)$ for some cohomological cuspidal automorphic representation $\Pi$ of $\GL_{n + 1}(\mathbb{A}_{F'})$. Finally, to apply our main theorem it is enough to show that the Weil--Deligne representation $(r,N) = \operatorname{WD}(\rho \rvert_{G_{L}})$ is generic for all finite extensions $L/F'_v$. If $A$ has potentially good reduction at $v$, then by the Hasse bound, $r$ and $r(1)$ share no common Frobenius eigenvalues, hence $\Hom((r,N), (r(1),N)) = 0$. If $A$ has potentially multiplicative reduction at $v$, then $N$ has maximal rank and any non-zero element of $\Hom((r,N), (r(1),N))$ would preserve the one-dimensional space $\ker N$. But $r(\Frob_v)$ acts on $\ker N$ by a root of unity and $r(1)(\Frob_v)$ acts on $\ker N$ by an algebraic number of complex absolute value $q_v$, implying that $\Hom((r,N), (r(1),N)) = 0$.
	\end{proof}

	\bibliographystyle{alpha}
	\bibliography{refs}
\end{document}